\newtheorem{acknowledgement}{Acknowledgements}
\def\mod{\mathrm{mod}\ }
    \newcommand{\BC}{{\mathbb {C}}} \newcommand{\BD}{{\mathbb {D}}}
     \newcommand{\BH}{{\mathbb {H}}}
     \newcommand{\BJ}{{\mathbb {J}}}
     \newcommand{\BL}{{\mathbb {L}}}
     \newcommand{\BN}{{\mathbb {N}}}
    \newcommand{\BQ}{{\mathbb {Q}}} \newcommand{\BR}{{\mathbb {R}}}
    \newcommand{\BS}{{\mathbb {S}}} 
    \newcommand{\BU}{{\mathbb {U}}} 
     \newcommand{\BX}{{\mathbb {X}}}
     \newcommand{\BZ}{{\mathbb {Z}}}
    \newcommand{\fr}{{\mathfrak{r}}} \newcommand{\fC}{{\mathfrak{C}}}
    \newcommand{\GL}{{\mathrm{GL}}}
    \renewcommand{\Im}{{\mathfrak{Im}\,}}
    \newcommand{\fc}{{\mathfrak{c}}} 
    \renewcommand{\Re}{{\mathfrak{Re}\,}}
\def\frc{\mathfrak{c}}
\def\-{^{-1}}
\def\ux{\boldsymbol x}
\def\ualpha{\boldsymbol r}
\def\udelta{\boldsymbol \delta}
\def\ulambda{\boldsymbol \lambda}
\def\uLambda{\boldsymbol \varLambda}
\def\usigma{\boldsymbol \varsigma}
\def\unu{\boldsymbol \nu}
\def\urho{\boldsymbol \varrho}
\def\ue{\boldsymbol e}
\def\-{^{-1}}
\def\ut{\boldsymbol t}
\def\ualpha{\boldsymbol \alpha}
\def\lp {\left (}
\def\rp {\right )}
\def\EC {\EuScript C}
\def\Voronoi{Vorono\" \i \hskip 3 pt}
\newcommand{\delete}[1]{}
    \newcommand{\sgn}{{\mathrm{sgn}}}
    \newcommand{\ds}{\displaystyle}
    \newcommand{\sstyle}{\scriptstyle}
    \newcommand{\ra}{\rightarrow}
    \theoremstyle{plain}
    \newtheorem{thm}{Theorem}[section] \newtheorem{cor}[thm]{Corollary}
    \newtheorem{lem}[thm]{Lemma}  \newtheorem{prop}[thm]{Proposition}
     \newtheorem{defn}[thm]{Definition}
    \newtheorem{term}[thm]{Terminology}
    \newtheorem {rem}[thm]{Remark}
        \newtheorem*{observ}{Observation}
        \newtheorem*{assum}{Assumption}
    \numberwithin{equation}{section}
\newsavebox\myboxA
\newsavebox\myboxB
\newlength\mylenA
\newcommand*\xoverline[2][0.75]{%
	\sbox{\myboxA}{$\m@th#2$}%
	\setbox\myboxB\null
	\ht\myboxB=\ht\myboxA%
	\dp\myboxB=\dp\myboxA%
	\wd\myboxB=#1\wd\myboxA
	\sbox\myboxB{$\m@th\overline{\copy\myboxB}$}
	\setlength\mylenA{\the\wd\myboxA}
	\addtolength\mylenA{-\the\wd\myboxB}%
	\ifdim\wd\myboxB<\wd\myboxA%
	\rlap{\hskip 0.5\mylenA\usebox\myboxB}{\usebox\myboxA}%
	\else
	\hskip -0.5\mylenA\rlap{\usebox\myboxA}{\hskip 0.5\mylenA\usebox\myboxB}%
	\fi}
\begin{document}

\title[Theory of Bessel Functions of High Rank - I]{Theory of Bessel Functions of High Rank - 
I: \\ Fundamental Bessel Functions}

\subjclass[2010]{33E20, 33E30}
\keywords{fundamental Bessel functions, formal integral representations, Bessel equations}

\author{Zhi Qi}
\address{Department of Mathematics\\ The Ohio State University\\100 Math Tower\\231 West 18th Avenue\\Columbus, OH 43210\\USA}
\email{qi.91@buckeyemail.osu.edu}

\begin{abstract}
In this article we introduce a new category of special functions called  \textit{fundamental Bessel functions} arising from the \Voronoi summation formula for $\GL_n (\BR)$. The fundamental Bessel functions of rank one and two are the oscillatory exponential functions $e^{\pm i x}$ and the classical Bessel functions respectively. 
The main implements and subjects of our study of  fundamental Bessel functions are their formal integral representations and Bessel equations. 
\end{abstract}

\maketitle

\begin{footnotesize}
\tableofcontents
\end{footnotesize}

\section{Introduction}


\subsection{Background}\label{sec: Background}

$ $ 
\textit{Hankel transforms} 
(of high rank) are introduced as an important constituent of the \textit{\Voronoi  summation formula} by Miller and Schmid in \cite{Miller-Schmid-2004, Miller-Schmid-2006, Miller-Schmid-2009}.
This summation formula is a fundamental analytic tool in number theory and has its roots in representation theory. 

In this article, we shall develop the analytic theory of \textit{fundamental Bessel functions}\footnote{The Bessel functions  studied here are called \textit{fundamental} in order to be distinguished from the Bessel functions for $\GL_n (\BR)$. 
The latter should be regarded as the foundation of harmonic analysis on $\GL_n (\BR)$. Some evidences show that fundamental Bessel functions are actually the building blocks of the Bessel functions for $\GL_n (\BR)$. See \cite[\S 3.2]{Qi-Thesis} for $\GL_3 (\BR)$ (and $\GL_3 (\BC)$).

Throughout this article, we shall drop the adjective {\it fundamental} for brevity. Moreover, the usual Bessel functions will be referred to as classical Bessel functions.}. These Bessel functions  constitute the integral kernels of  Hankel transforms. Thus, to motivate our study, we shall start with introducing   Hankel transforms and their number theoretic and representation theoretic background.

\subsubsection{Two expressions of a Hankel transform}

Let $n $ be a positive integer, and let $(\ulambda, \udelta) = (\lambda_1, ..., \lambda_n, \delta_1, ..., \delta_n) \in \BC^n \times (\BZ/2 \BZ)^n$. \\

The first expression of the Hankel transform  of rank $n$ associated with $(\ulambda, \udelta)$ is based on   signed Mellin transforms as follows.

Let $\mathscr S (\BR )$ denote the space of Schwartz functions on $\BR $. For $\lambda \in \BC$, $j \in \BN = \{ 0, 1, 2, ... \} $ and $\eta \in \BZ/2\BZ$, let $\upsilon$ be a smooth function on $\BR^\times = \BR \smallsetminus \{ 0 \}$ such that $\sgn (x)^{\eta } \lp \log |x| \rp^{-j} |x|^{- \lambda } \upsilon (x) \in \mathscr S (\BR)$.
For $\delta \in \BZ/ 2\BZ$, the \textit{signed Mellin transform $\EuScript M_{\delta} \upsilon$ with order $\delta$} of  $\upsilon$ is defined by
\begin{equation} \label{1def: Mellin transform}
\EuScript M_{\delta} \upsilon (s) = \int_{\BR^\times} \upsilon (x) \sgn (x)^\delta |x|^{s } d^\times x.
\end{equation}
Here $d^\times x = |x|\- d x$ is the standard multiplicative Haar measure on $\BR^\times$. The Mellin inversion formula is
\begin{equation*}
\upsilon (x) = \sum_{\delta \in \BZ/ 2\BZ } \frac { \sgn (x)^\delta} {4 \pi i} \int_{(\sigma)} \EuScript M_\delta \upsilon (s) |x|^{-s} d s,  \hskip 10 pt \sigma > - \Re \lambda,
\end{equation*}
where the contour of integration $ (\sigma)$ is the vertical line   from $\sigma - i \infty$ to $\sigma + i \infty$.

Let $\mathscr S (\BR ^\times)$ denote the space of smooth functions on $\BR^\times $ whose derivatives are rapidly decreasing at both zero and infinity.
We associate with $\upsilon \in \mathscr S (\BR ^\times)$ 
a function $\Upsilon$ on $\BR ^\times$ satisfying the following two identities
\begin{equation}\label{1eq: Hankel transform identity}
\EuScript M_\delta \Upsilon (s ) =  \left( \prod_{l      = 1}^n G_{\delta_{l     } + \delta} (s - \lambda_l     ) \right) \EuScript M_\delta \upsilon ( 1 - s),  \hskip 10 pt \delta \in \BZ/2 \BZ,
\end{equation}
where $G_{\delta} (s)$ denotes the gamma factor
\begin{equation} \label{1eq: definition of G delta}
G_\delta (s) = i^\delta \pi^{ \frac 1 2 - s} \frac {\Gamma \lp \frac 1 2 ( {s + \delta} ) \rp} {\Gamma \lp \frac 1 2 ( {1 - s + \delta} ) \rp} = 
\left\{ \begin{split}
& 2(2 \pi)^{-s} \Gamma (s) \cos \left(\frac {\pi s} 2 \right), \hskip 10 pt \text { if } \delta = 0,\\
& 2 i (2 \pi)^{-s} \Gamma (s) \sin  \left(\frac {\pi s} 2 \right), \hskip 9 pt \text { if } \delta = 1,
\end{split} \right.
\end{equation}
where for the second equality we apply the duplication formula and Euler's reflection formula of the Gamma function,
\begin{equation*}
\Gamma (1-s) \Gamma (s) = \frac \pi {\sin (\pi s)}, \hskip 10 pt \Gamma (s) \Gamma \lp s + \frac 1 2 \rp = 2^{1-2s} \sqrt \pi \Gamma (2 s).
\end{equation*}
$\Upsilon$ is called the \textit{Hankel transform of index $(\ulambda, \udelta)$} of $\upsilon$\footnote{Note that if $\upsilon$ is the $f$ in \cite{Miller-Schmid-2009} then $|x| \Upsilon((-)^n x)$ is their $F(x)$.}.
According to \cite[\S 6]{Miller-Schmid-2006}, $\Upsilon$ is smooth on $\BR^\times$ and decays rapidly at infinity, along with all its derivatives. At the origin,  $\Upsilon$ has singularities of some very particular type. Indeed,  $\Upsilon (x) \in \sum_{l      = 1}^n \sgn (x)^{\delta_l     } |x|^{-\lambda_l     }  \mathscr S (\BR)$ when no two components of $\ulambda$ differ by an integer, and in the nongeneric case powers of $\log |x|$ will be included. 

By the Mellin inversion,
\begin{equation}\label{1eq: Psi defined by Gamma functions}
\Upsilon (x) 
= \sum_{\delta \in \BZ/ 2\BZ } \frac { \sgn (x)^\delta} {4 \pi i} \int_{(\sigma)} \left( \prod_{l      = 1}^n G_{\delta_{l     } + \delta} (s - \lambda_l      ) \right) \EuScript M_\delta \upsilon (1 - s) |x|^{ - s} d s,
\end{equation}
for $\sigma > \max  \left\{ \Re  \lambda_l      \right \} $.\\

In \cite{Miller-Schmid-2009} there is an alternative description of $\Upsilon$ defined by the \textit{Fourier type transform}, in symbolic notion, as follows
\begin{equation}\label{1eq: generalized Fourier transform}
\begin{split}
\Upsilon (x) =  \frac 1 { |x|} \int_{\BR^{\times \, n}} \upsilon \left( \frac {x_1 ... x_n} { x} \right) \left( \prod_{l      = 1}^{n} \left( \sgn (x_l     )^{\delta_l     } |x_l     |^{- \lambda_l     } e ( x_l     ) \right) \right) d x_n d x_{n-1} ... d x_1,
\end{split}
\end{equation}
with $e(x) = e^{2\pi i x}$. The integral  in  (\ref{1eq: generalized Fourier transform}) converges when performed as iterated integral in the order $d x_n d x_{n-1} ... d x_1$, starting from $x_n$, then $x_{n-1}$, ..., and finally $x_1$, provided $\Re  \lambda_1 >  ... > \Re  \lambda_{n-1} > \Re  \lambda_n$, and it has meaning for arbitrary values of $\ulambda \in \BC^n$ by analytic continuation. 

According to \cite{Miller-Schmid-2009}, though \textit{less suggestive than} (\ref{1eq: generalized Fourier transform}), the expression (\ref{1eq: Psi defined by Gamma functions}) of Hankel transforms is \textit{more useful in applications}. 
Indeed, {all} the applications of the \Voronoi summation formula in analytic number theory so far are based on (\ref{1eq: Psi defined by Gamma functions}) with exclusive use of Stirling's asymptotic formula of the Gamma function (see Appendix \ref{appendix: asymptotic}). On the other hand, there is no occurrence of the Fourier type integral transform \eqref{1eq: generalized Fourier transform} in the literature other than Miller and Schmid's foundational work. It will however be shown in this article that the expression \eqref{1eq: generalized Fourier transform} should {\it not}  be  of only aesthetic interest.

\begin{assum}
Subsequently, we shall always assume that the index $\ulambda$ satisfies  $\sum_{l      = 1}^n \lambda_l      = 0$\footnote{This condition is just a matter of normalization. Equivalently, the corresponding representations  of $\GL_n (\BR)$ are trivial on the positive component of the center. With this condition on $\ulambda$, the associated Bessel functions can be expressed in a simpler way.}. Accordingly, we define the complex hyperplane $\BL^{n-1} = \left\{\ulambda \in \BC^n : \sum_{l      = 1}^n \lambda_l      = 0 \right \}$.
\end{assum}

\subsubsection{Background of Hankel transforms in number theory and representation theory}

For $n = 1$, the number theoretic background lies on the local theory in Tate's thesis at the real place. 
Actually, in view of \eqref{1eq: generalized Fourier transform}, 
the Hankel transform of rank one and index $(\ulambda, \udelta) = (0, \delta)$ is essentially the (inverse) Fourier transform,
\begin{equation}\label{1eq: n=1, Hankel}
\Upsilon (x) = \int_{\BR} \upsilon (y) \sgn ( xy)^\delta  e( xy) d y.
\end{equation}
The  \Voronoi  summation formula of rank one is the summation formula of Poisson. Recall that Riemann's proof of the functional equation of his $\zeta$-function relies on the Poisson summation formula, whereas Tate's thesis reinterprets this using the Poisson summation formula for the adele ring.





For $n=2$, the Hankel transform associated with a $\GL_2$-automorphic form has been present in the literature as part of the \Voronoi summation formula for $\GL_2$ for decades. See, for instance,
\cite[Proposition 1]{Harcos-Michel} and the references there. 
According to \cite[Proposition 1]{Harcos-Michel} (see also Remark \ref{2rem: n=2}), we have
\begin{equation} \label{1eq: n=2, Hankel}
\Upsilon (x) = \int_{\BR^\times} \upsilon (y) J_F (xy) dy, \hskip 10 pt x \in \BR^\times,
\end{equation}
where, if $F$ is a Maa\ss  \hskip 3 pt form of eigenvalue $\frac 1 4 + t^2$ and weight $k$,
\begin{align}
\nonumber J _F (x) & = - \frac \pi {\cosh (\pi t)} \left( Y_{ 2i t} (4 \pi \sqrt x) + Y_{- 2i t} (4 \pi \sqrt x) \right) \\
\nonumber & = \frac { \pi i} {\sinh (\pi t)} \left( J_{ 2i t} (4 \pi \sqrt x) - J_{- 2i t} (4 \pi \sqrt x) \right) \\
\label{1eq: n=2, Maass form, Bessel functions, k even}  & = \pi i \lp e^{-\pi t } H^{(1)}_{ 2i t} (4\pi \sqrt x) - e^{ \pi t} H^{(2)}_{ 2i t} (4 \pi \sqrt x) \rp, \\ 
\nonumber J_F (- x) & = 4 \cosh (\pi t) K_{2 i t} (4 \pi \sqrt x) \\
\nonumber & = \frac { \pi i} {\sinh (\pi t)} \left( I_{ 2i t} (4 \pi \sqrt x) - I_{- 2i t} (4 \pi \sqrt x) \right), \hskip 10 pt x > 0,
\end{align}
for $k$ even,
\begin{align}
\nonumber J _F (x) & = - \frac \pi {\sinh (\pi t)} \left( Y_{ 2i t} (4 \pi \sqrt x) - Y_{- 2i t} (4 \pi \sqrt x) \right) \\
\nonumber & =  \frac { \pi i} {\cosh (\pi t)} \left( J_{ 2i t} (4 \pi \sqrt x) + J_{- 2i t} (4 \pi \sqrt x) \right) \\
\label{1eq: n=2, Maass form, Bessel functions, k odd} & = 
\pi i \lp e^{-\pi t } H^{(1)}_{ 2i t} (4\pi \sqrt x) + e^{ \pi t} H^{(2)}_{ 2i t} (4 \pi \sqrt x) \rp \\
\nonumber  J_F (- x) &  = 4 \sinh (\pi t) K_{2 i t} (4 \pi \sqrt x) \\
\nonumber & = \frac { \pi i} {\cosh (\pi t)} \left(I_{ 2i t} (4 \pi \sqrt x) - I_{- 2i t} (4 \pi \sqrt x) \right), \hskip 10 pt x > 0,
\end{align}
for $k$ odd \footnote{For this case there are two insignificant typos in \cite[Proposition 1]{Harcos-Michel}.},
and if $F$ is a holomorphic cusp form of weight $k$,
\begin{equation}\label{1eq: n=2, holomorphic form, Bessel functions}
J_F (x) = 2 \pi i^k J_{k-1} (4 \pi \sqrt x), \hskip 10 pt J_F (-x) = 0, \hskip 10 pt x > 0.
\end{equation}
Thus the integral kernel $J _F$ has an expression in terms of Bessel functions, where, in standard notation, $J_{\nu}$, $Y_\nu$, $H^{(1)}_\nu$, $H^{(2)}_\nu$, $I_\nu$ and $K_\nu$ are the various Bessel functions (see for instance \cite{Watson}). 
Here,  the following connection formulae {\rm (\cite[3.61 (3, 4, 5, 6), 3.7 (6)]{Watson})} have been applied  in  \eqref{1eq: n=2, Maass form, Bessel functions, k even} and \eqref{1eq: n=2, Maass form, Bessel functions, k odd},
\begin{align}
& Y_\nu (x) = \frac {J_\nu (x) \cos (\pi \nu) - J_{- \nu} (x)}{\sin ( \pi \nu)}, \hskip 10 pt Y_{-\nu} (x) = \frac {J_\nu (x) - J_{- \nu} (x) \cos (\pi \nu)}{\sin ( \pi \nu)},\\
& \label{2eq: connection formulae}
H^{(1)}_\nu (x) = \frac {J_{-\nu} (x) - e^{- \pi i \nu} J_\nu (x) }{i \sin ( \pi \nu)}, \hskip 14 pt H^{(2)}_\nu (x) = \frac { e^{\pi i \nu} J_\nu (x) - J_{-\nu} (x)}{i \sin ( \pi \nu)},\\
& \label{1eq: connection formula K} K_{\nu} (x) =  \frac {\pi \lp I_{-\nu} (x) - I_\nu (x) \rp} {2 \sin (\pi \nu)}.
\end{align}
The theory of Bessel functions has been extensively studied since the early 19th century, and we refer the reader to Watson's beautiful book \cite{Watson} for an encyclopedic treatment. 

For $n \geq 3$, Hankel transforms are formulated in Miller and Schmid \cite{Miller-Schmid-2006, Miller-Schmid-2009}, given that $(\ulambda, \udelta)$ is a certain parameter of a cuspidal $\GL_n(\BZ)$-automorphic representation of $\GL_n (\BR)$. It is the archimedean ingredient that relates the weight functions on two sides of the identity in the \Voronoi summation formula for $\GL_n( \BZ) $. For $n=1, 2$ the Poisson and the \Voronoi summation formula are also interpreted from their perspective in \cite{Miller-Schmid-2004-1}.

Using the global theory of $\GL_n \times \GL_1$-Rankin-Selberg $L$-functions, Inchino and Templier \cite{Ichino-Templier} extend Miller and Schmid's work and prove the \Voronoi summation formula for any irreducible cuspidal automorphic representation of $\GL_n$ over an arbitrary number field for $n \geq 2$. 
According to \cite{Ichino-Templier}, the two defining identities \eqref{1eq: Hankel transform identity} of the associated Hankel transform follow from renormalizing the corresponding local functional equations of the $\GL_n \times \GL_1$-Rankin-Selberg zeta integrals over $\BR$. 

\subsubsection{Bessel kernels}

In the case $n \geq 3$, when applying the \Voronoi summation formula, it might have been realized by many authors that, similar to (\ref{1eq: n=1, Hankel}, \ref{1eq: n=2, Hankel}), Hankel transforms of rank $n$ should also admit integral kernels, that is,
\begin{equation*} 
\Upsilon (x) = \int_{\BR ^\times} \upsilon (y) J_{(\ulambda, \udelta)} (xy ) d y.
\end{equation*} 
We shall call $J_{(\ulambda, \udelta)}$ the \textit{{\rm ({\it fundamental})} Bessel kernel of index $(\ulambda, \udelta)$}.

Actually, it will be seen in \S \ref{sec: Definition of Bessel functions} that an  expression of $J_{(\ulambda, \udelta)} (\pm x)$, $x \in \BR_+ = (0, \infty)$, in terms of   certain Mellin-Barnes type integrals involving the Gamma function (see (\ref{2eq: Bessel kernel}, \ref{2eq: Bessel kernel as the inverse Mellin transform of the gamma factor})) may be easily derived from the first expression \eqref{1eq: Psi defined by Gamma functions} of the Hankel transform of index $(\ulambda, \udelta)$. Moreover, the analytic continuation of $J_{(\ulambda, \udelta)} (\pm x) $ from $ \BR_+ $ onto the Riemann surface $\BU$, the universal cover of $\BC \smallsetminus \{ 0 \}$, can be realized as a Barnes type integral via modifying the integral contour of a  Mellin-Barnes type integral (see Remark {\rm \ref{rem: Barnes integral}}). In the literature, we have seen applications of the asymptotic expansion of $J_{(\ulambda, \udelta)}  (\pm x)$ obtained from applying Stirling's asymptotic formula  of the Gamma function to the Mellin-Barnes type integral (see Appendix \ref{appendix: asymptotic}). There are however two limitations of this method. Firstly, it is {\it only}  applicable when $\ulambda$ is regarded  as fixed constant and hence the dependence on $\ulambda$ of the error term can not be clarified. Secondly, it is {\it not} applicable to a Barnes type integral and therefore the domain of the asymptotic expansion can not be extended from $\BR_+$.  
In this direction from \eqref{1eq: Psi defined by Gamma functions}, it seems that we can not proceed any further.

The novelty of this article is an approach to Bessel kernels starting from the second expression (\ref{1eq: generalized Fourier transform}) of  Hankel transforms. This approach is more accessible,  at least in symbolic notions, in view of the simpler form of (\ref{1eq: generalized Fourier transform}) compared to \eqref{1eq: Psi defined by Gamma functions}. Once we can make sense of the symbolic notions in (\ref{1eq: generalized Fourier transform}), some well-developed methods from analysis and differential equations may be exploited so that we are able to understand   Bessel kernels to a much greater extent. 


\subsection{Outline of article}


\subsubsection{Bessel functions and their formal integral representations}

First of all,  in \S \ref{sec: Definition of Bessel functions}, we introduce the   \textit{Bessel function  $J (x; \usigma, \ulambda)$ of indices $\ulambda \in \BL^{n-1}$ and $\usigma \in \{+, -\}^n$}. It turns out that the Bessel kernel  $J_{(\ulambda, \udelta)} (\pm x)$ can be formulated as a signed sum of   $J \big(2 \pi x^{\frac 1 n}; \usigma, \ulambda \big)$, $x \in \BR_+$.  Our task is therefore understanding each Bessel function  $J (x; \usigma, \ulambda)$. 

In \S \ref{sec: formal integral representation of the Bessel functions}, with some manipulations on the Fourier type expression (\ref{1eq: generalized Fourier transform}) of   the Hankel transform of index $(\ulambda, \udelta)$ in a symbolic manner,   we obtain a \textit{formal} integral representation of the Bessel function $J (x; \usigma, \ulambda)$. If we define $\unu = (\nu_1, ..., \nu_{n-1}) \in \BC^{n-1}$ by $\nu_{l     } = \lambda_l      - \lambda_n$, with $l      = 1, ..., n-1$, then the formal integral is given by
\begin{equation}\label{1eq: formal integral J nu (x; sigma)}
J_{\unu} (x; \usigma) = \int_{\BR_+^{n-1}} \left(\prod_{l      = 1}^{n-1} t_l     ^{ \nu_l      - 1} \right) e^{i x \left(\varsigma_n t_1 ... t_{n-1} + \sum_{l      = 1}^{n-1} \varsigma_l      t_l     \- \right)} d t_{n-1} ... dt_1.
\end{equation}

Justification of this formal integral representation is the main subject of \S \ref{sec: Rigorous interpretations of formal integral representation} and \S \ref{sec: Relating Bessel functions}. For this, we partition the formal integral $J_{\unu} (x; \usigma)$ according to some partition of unity on $\BR^{n-1}_+$, and then 
repeatedly apply \textit{two} kinds of partial integration operators on each resulting integral. In this way, $J_{\unu} (x; \usigma)$ can be transformed into a finite sum of absolutely convergent multiple integrals. This sum of integrals is regarded as the rigorous definition of  $J_{\unu} (x; \usigma)$. However, the simplicity of the expression (\ref{1eq: formal integral J nu (x; sigma)}) is sacrificed after these technical procedures. Furthermore, it is shown that
\begin{equation}\label{1eq: J(x; sigma; lambda) = J nu (s; sigma)}
J (x; \usigma, \ulambda) = J_{\unu} (x; \usigma),
\end{equation}
where $ J_{\unu} (x; \usigma)$ on the right is now rigorously understood.

\subsubsection{Asymptotics via stationary phase}

In \S \ref{sec: Bessel functions of K-type and H-Bessel functions}, we either adapt  techniques or   apply  results from {the method of stationary phase} to study the asymptotic behaviour of each integral in the rigorous definition of $J_{\unu} (x; \usigma)$, and hence  $J_{\unu} (x; \usigma)$ itself, for large argument. Even in the classical case $n=2$, our method is entirely new, as the coefficients in the asymptotic expansions are formulated in a way that is quite different from what is known in the literature (see \S \ref{sec: remark on the coefficients in the asymptotics}).

When all the components of $\usigma$ are identically $\pm$, we denote $J (x; \usigma, \ulambda)$, respectively  $J_{\unu} (x; \usigma)$, by $H^\pm (x; \ulambda)$, respectively $H_{\unu}^\pm (x )$, and call it an \textit{$H$-Bessel function}\footnote{If a statement or a formula includes $\pm$ or $\mp$, then it should be read with $\pm$ and $\mp$ simultaneously replaced by  either  $+$ and $-$ or $-$ and $+$.}. 
This pair of $H$-Bessel functions will be of paramount significance in our treatment.

It is shown that $H^{\pm} (x; \ulambda) = H_{\unu}^\pm (x )$ admits an analytic continuation from $\BR_+$ onto the half-plane $\BH^{\pm} = \left\{ z \in \BC \smallsetminus \{0\} : 0 \leq \pm \arg z \leq \pi \right\}$. We have the asymptotic expansion
\begin{equation}\label{1eq: asymptotic expansion 1}
\begin{split}
H^{\pm}  (z; \ulambda) = n^{- \frac 1 2} & (\pm 2 \pi i)^{ \frac { n-1} 2} e^{ \pm i n z} z^{ - \frac { n-1} 2} \\
& \lp \sum_{m=0}^{M-1} ( \pm i )^{ - m} B_{m} (\ulambda) z^{- m} + O_{\mathfrak R, M, n} \left( \mathfrak C^{2 M} |z|^{-M + \frac { n-1} 2} \right) \rp,
\end{split}
\end{equation}
for all $z \in \BH^\pm$ such that $ |z| \geq \fC$, where $\mathfrak C =  \max  \left\{ | \lambda_l      | \right \} + 1$, $\mathfrak R = \max  \left\{ |\Re \lambda_l      | \right \}$, $M \geq 0$, $B_{m} (\ulambda) $ is a certain symmetric polynomial in $\ulambda$ of  degree $2m $, with $B_{0} (\ulambda) = 1$. In particular, these two $H$-Bessel functions oscillate and decay proportionally to $x^{-\frac { n-1} 2}$ on $\BR_+$. 

All the other Bessel functions are called \textit{$K$-Bessel functions} and are shown to be Schwartz functions at infinity. 

\subsubsection{Bessel equations}

The differential equation, namely \textit{Bessel equation}, satisfied by the Bessel function $J (x; \usigma, \ulambda)$ is discovered in \S \ref{sec: Recurrence relations and differential equations of the Bessel functions}.  

Given $\ulambda \in \BL^{n-1}$, there are exactly two Bessel equations
\begin{equation}\label{1eq: Bessel equations}
\sum_{j = 1}^{n} V_{n, j} (\ulambda) x^{j} w^{(j)} + \left( V_{n, 0} (\ulambda) - \varsigma (in)^n  x^n \right) w  = 0, \hskip 10 pt \varsigma \in \{+, - \},
\end{equation}
where $V_{n, j} (\ulambda)$ is some explicitly given symmetric polynomial in $\ulambda$ of degree $n-j$. We call $\varsigma$ the \textit{sign} of the Bessel equation (\ref{1eq: Bessel equations}). $J (x; \usigma, \ulambda)$ satisfies the Bessel equation of sign $S_n(\usigma) = \prod_{l      = 1}^n \varsigma_{l     }$.

The entire \S \ref{sec: Bessel equations} is devoted to the study of Bessel equations. Let $\BU$ denote the Riemann surface   associated with $\log z$, that is, the universal cover of $\BC \smallsetminus \{ 0 \}$. Replacing $x$ by $z$ to stand for complex variable in the Bessel equation (\ref{1eq: Bessel equations}),   the domain is extended from $\BR_+$ to $\BU$. According to the theory of linear ordinary differential equations with analytic coefficients, $J (x; \usigma, \ulambda)$ admits an analytic continuation onto $\BU$.

Firstly, since zero is a regular singularity, the Frobenius method may be exploited to find a solution $J_{l     } (z; \varsigma, \ulambda)$ of (\ref{1eq: Bessel equations}), for each $l      = 1, ..., n$, defined by the following series,
\begin{equation*}
J_{l     } (z; \varsigma, \ulambda) = \sum_{m=0}^\infty \frac { (\varsigma i^n)^m  z^{ n (- \lambda_{l      } + m)} } { \prod_{k = 1}^n \Gamma \lp  { \lambda_{ k } - \lambda_{l     }}  + m + 1 \rp}.
\end{equation*}
$J_{l     } (z; \varsigma, \ulambda)$ are called  {\it Bessel functions of the first kind}, since
they generalize the Bessel functions $J_{\nu} (z)$ and the modified Bessel functions $I_{\nu} (z)$ of the first kind.

It turns out that each $J (z; \usigma, \ulambda)$ may be expressed in terms of $J_{l     } \left(z; S_n(\usigma), \ulambda\right)$. This leads to the following connection formula
\begin{equation}\label{1eq: connection formula 1}
J(z ; \usigma, \ulambda) = e \lp \pm \frac {\sum_{l      \in L_ \mp (\usigma)} \lambda_{l      }} 2 \rp H^\pm \Big(e^{\pm \pi i \frac { n_{\mp} (\usigma)} n} z; \ulambda \Big),
\end{equation}
where $L_\pm (\usigma) = \{ l      : \varsigma_l      = \pm \}$ and $n_\pm (\usigma) = \left| L_\pm (\usigma) \right|$. Thus the Bessel function $J(z ; \usigma, \ulambda)$ is determined up to a constant by the pair of integers $(n_+ (\usigma), n_- (\usigma) )$, called the \textit{signature} of 
$J(z ; \usigma, \ulambda)$. 

Secondly, $\infty$ is an irregular singularity of rank one. The formal solutions at infinity serve as the asymptotic expansions of some actual solutions of Bessel equations. 

Let $\xi$ be an $n$-th root of $\varsigma 1$. There exists a unique formal solution $\widehat J (z; \ulambda; \xi)$ of the Bessel equation of sign $\varsigma$ in the following form
\begin{equation*} 
\widehat J (z; \ulambda; \xi) = e^{i n \xi z} z^{- \frac { n-1} 2} \sum_{m=0}^\infty B_m (\ulambda; \xi) z^{-m},
\end{equation*}
where $B_m (\ulambda; \xi)$ is a symmetric polynomial in $\ulambda$ of degree $2 m$, with $B_0 (\ulambda; \xi) = 1$. The coefficients of  $B_m (\ulambda; \xi)$ depend only on $m$, $\xi$ and $n$. There exists a {\it unique} solution $J (z; \ulambda; \xi)$ of the Bessel equation of sign $\varsigma$ that possesses $\widehat J (z; \ulambda; \xi)$ as its asymptotic expansion on the sector
\begin{equation*}
\BS_{\xi } = \left\{ z \in \BU : \left| \arg z - \arg ( i \xoverline \xi) \right|  < \frac {\pi} n \right \},
\end{equation*}
or any of its open subsector.

The study of the theory of asymptotic expansions for ordinary differential equations can be traced back to Poincar\'e. There are abundant references on this topic, for instance,  \cite[Chapter 5]{Coddington-Levinson}, \cite[Chapter III-V]{Wasow} and \cite[Chapter 7]{Olver}. However, the author is not aware of any error analysis in the index aspect 
in the literature except for differential equations of second order in \cite{Olver}. Nevertheless, with some effort, a very satisfactory error bound is attainable.

For $0 < \vartheta < \frac 1 2\pi $ define the sector
\begin{equation*}
\BS'_{\xi } (\vartheta) = \left\{ z \in \BU : \left| \arg z - \arg ( i \xoverline \xi) \right|  < \pi + \frac {\pi} n - \vartheta \right \}.
\end{equation*}
The following asymptotic expansion is established in \S \ref{sec: Error Bounds for the asymptotic expansions},
\begin{equation}\label{1eq: asymptotic expansion 2}
J (z; \ulambda; \xi) = e^{i n \xi z} z^{- \frac { n-1} 2} \lp \sum_{m=0}^{M-1} B_m (\ulambda; \xi) z^{-m} + O_{M, n} \lp \fC^{2 M} |z|^{- M } \rp \rp
\end{equation}
for all $z \in \BS'_{\xi} (\vartheta)$ with $ |z| \ggg_{M, \vartheta, n} \fC^2 $. 

For a $2n$-th root of unity $\xi$, $J( z; \ulambda; \xi)$ is called a \textit{Bessel function of the second kind}. We have the following formula that relates all the the Bessel functions of the second kind to either $J (z; \ulambda;   1)$ or $J (z; \ulambda; - 1)$  upon rotating the argument by a $2n$-th root of unity,
\begin{equation}\label{1eq: connection formula 2}
J( z; \ulambda; \xi) = (\pm \xi)^{ \frac {n-1} 2} J(\pm \xi z; \ulambda; \pm 1).
\end{equation}

\subsubsection{\texorpdfstring{Connections between $J (z; \usigma, \ulambda)$ and $J(z; \ulambda; \xi )$}{Connections between $J (z; \varsigma, \lambda)$ and $J(z; \lambda; \xi )$}}

Comparing the asymptotic expansions of $H^{\pm} (z; \ulambda)$ and $J (z; \ulambda; \pm 1)$ in (\ref{1eq: asymptotic expansion 1}) and \eqref{1eq: asymptotic expansion 2}, 
we obtain the identity
\begin{equation}\label{1eq: identity H}
H^{\pm} (z; \ulambda) = n^{- \frac 1 2} (\pm 2 \pi i)^{ \frac { n-1} 2} J (z; \ulambda; \pm 1).
\end{equation}
It follows from (\ref{1eq: connection formula 1}) and (\ref{1eq: connection formula 2}) that 
\begin{equation*}
\begin{split}
J (z; \usigma, \ulambda) = \frac { (\mp 2\pi i)^{\frac { n-1} 2} } {\sqrt n} e \lp \pm \frac {(n-1) n_\pm(\usigma)} {4 n} \mp \frac {\sum_{l      \in L_\pm (\usigma)} \lambda_{l      }} 2 \rp J \Big(z; \ulambda; \mp e^{\mp \pi i \frac { n_\pm(\usigma) } n} \Big).
\end{split}
\end{equation*}
Thus (\ref{1eq: asymptotic expansion 2}) may be applied to improve the  error estimate in the asymptotic expansion \eqref{1eq: asymptotic expansion 1} of the $H$-Bessel function $H^{\pm} (z; \ulambda)$ when  $ |z| \ggg_{M, n} \fC^2$  and also to show the exponential decay of $K$-Bessel functions on $\BR_+$.

\subsubsection{\texorpdfstring{Connections between $J_{l     } (z; \varsigma, \ulambda) $ and $ J (z; \ulambda; \xi) $}{Connections between $J_{l     } (z; \varsigma, \lambda) $ and $ J (z; \lambda; \xi) $}} The identity \eqref{1eq: identity H} also yields connection formulae between the two kinds of Bessel functions,  in terms of a certain Vandermonde matrix and its inverse. \\

\begin{acknowledgement}
The author is grateful to his advisor, Roman Holowinsky, who brought him to the area of analytic number theory, gave him much enlightenment, guidance and encouragement. The author would like to thank James Cogdell, Ovidiu Costin, Stephen Miller and Zhilin Ye for valuable comments and helpful discussions.
\end{acknowledgement}


\section{Preliminaries on Bessel functions} \label{sec: Bessel functions}

In \S \ref{sec: Definition of Bessel functions} and \ref{sec: formal integral representation of the Bessel functions}, we shall introduce the Bessel function $J (x ; \usigma, \ulambda)$, with $\usigma \in \{+, - \}^n$ and $\ulambda \in \BL^{n-1}$. Two expressions of $J (x ; \usigma, \ulambda)$ arise from the two formulae \eqref{1eq: Psi defined by Gamma functions} and \eqref{1eq: generalized Fourier transform} of  the Hankel transform of index $(\ulambda, \udelta)$. The first is a Mellin-Barnes type contour integral and the second is a {formal} multiple integral. In \S \ref{sec: Classical Bessel function} and \ref{sec: special example}, some examples of  $J (x ; \usigma, \ulambda)$ are provided for the purpose of illustration.\\


Let $\upsilon \in \mathscr S (\BR^\times)$ be a Schwartz function on $\BR^\times$. Without loss of generality, we assume  $\upsilon( - y) = (-)^\eta \upsilon (y)$, with $\eta \in \BZ/2 \BZ$.

\subsection{\texorpdfstring{The definition of the Bessel function $J (x ; \usigma, \ulambda)$}{The definition of the Bessel function $J (x ; \varsigma, \lambda)$}} \label{sec: Definition of Bessel functions}
We start with reformulating (\ref{1eq: definition of G delta}) as
\begin{equation*} 
G_\delta (s) = 
(2 \pi)^{-s} \Gamma (s) \left (e\left ( \frac s 4 \right) + (-)^\delta e\left (- \frac s 4 \right) \right).
\end{equation*}
Inserting this formula of $G_{\delta} $ into 
(\ref{1eq: Psi defined by Gamma functions}),  $\Upsilon (x)$ then splits as follows
\begin{equation}\label{2eq: signed sum of Psi (x; sigma)}
\Upsilon (x) = \sgn (x)^{\eta} \sum_{ \usigma \in \{+, - \}^n} \left( \prod_{l      = 1}^n \varsigma_l      ^{\delta_l      + \eta} \right) \Upsilon (|x|; \usigma), 
\end{equation}
with $\usigma = (\varsigma_1, ..., \varsigma_n)$, where 
\begin{equation}\label{2eq: Psi (x; sigma) double integral of Gamma functions}
\Upsilon (x; \usigma) = \frac 1 {2 \pi i}  \int_{(\sigma )} \int_{0}^\infty \upsilon (y) y^{ - s } d y \cdot G(s; \usigma, \ulambda)  ((2 \pi)^n x)^{ - s} ds, \hskip 10 pt x \in \BR_+,
\end{equation}
and
\begin{equation} \label{2eq: definition of G(s;  sigma, lambda)}
G (s; \usigma, \ulambda) = \prod_{l      = 1}^{n} \Gamma (s - \lambda_l      ) e \left(\frac {\varsigma_l      (s - \lambda_l      )} 4 \right).
\end{equation}

Since all the derivatives of $\upsilon $ rapidly decay at both zero and infinity, repeating partial integrations yields the bound
\begin{equation*}
\int_{0}^\infty \upsilon (y) y^{ - s } d y \lll_{\Re s, M, \upsilon} (|\Im s|+1)^{-M},
\end{equation*}
for any nonnegative integer $M $. Hence the iterated double integral in (\ref{2eq: Psi (x; sigma) double integral of Gamma functions}) is convergent due to  Stirling's formula.

Choose  $ \rho <  \frac 1 2 - \frac 1 n $ so that $ \sum_{l      = 1}^n \lp \rho - \Re  \lambda_l      - \frac 1 2 \rp <  -1$. 
Without passing through any pole of $G (s; \usigma, \ulambda)$, we shift the vertical line  $ (\sigma)$ to a contour $\EuScript C$  that starts from $\rho - i \infty$, ends at $\rho + i \infty$, and remains vertical at infinity. After this contour shift, the double integral in (\ref{2eq: Psi (x; sigma) double integral of Gamma functions}) becomes absolutely convergent by Stirling's formula. Changing the order of integration is therefore legitimate and yields
\begin{equation}\label{2eq: Psi (x; sigma) as Hankel transform}
\Upsilon (x; \usigma) = \int_{0}^\infty \upsilon (y) J \big( 2\pi (xy)^{\frac 1 n}; \usigma, \ulambda\big) d y,
\end{equation}
with
\begin{equation}\label{2eq: definition of J (x; sigma)}
J (x ; \usigma, \ulambda) = \frac 1  {2 \pi i}\int_{\EuScript C} G(s; \usigma, \ulambda) x^{- n s} d s.
\end{equation}
For $\ulambda \in \BL^{n-1}$ and $\usigma \in \{+, - \}^n$, the function $J (x; \usigma, \ulambda)$ defined by (\ref{2eq: definition of J (x; sigma)}) is called a \textit{Bessel function} and the integral in \eqref{2eq: definition of J (x; sigma)}  a {\it Mellin-Barnes type integral}. We view $J \big(x^{\frac 1 n}; \usigma, \ulambda\big)$ as the inverse Mellin transform of $ G(s; \usigma, \ulambda)$. 

Suitably choosing the integral contour $\EuScript C$, it may be verified that $J (x; \usigma, \ulambda)$ is a smooth function of $x$ and is analytic with respect to $\ulambda$.

\begin{rem}
The contour of integration  $ (\sigma)$ does not need modification if the components of $\usigma$ are not identical. For further discussions of the integral in the definition {\rm (\ref{2eq: definition of J (x; sigma)})} of $J (x; \usigma, \ulambda)$ see Remark {\rm \ref{rem: Barnes integral}}. 
\end{rem}

\begin{rem}\label{rem: Bessel kernel}
We have
\begin{equation}\label{2eq: Hankel transform, with Bessel kernel}
\Upsilon (x) = \int_{\BR ^\times} \upsilon (y) J_{(\ulambda, \udelta)} (xy ) d y, \hskip 10 pt x \in \BR^\times,
\end{equation}
 for any $\upsilon \in \mathscr S (\BR ^\times)$, where the Bessel kernel $J_{(\ulambda, \udelta)}$ is given by
\begin{equation}\label{2eq: Bessel kernel}
\begin{split}
J_{(\ulambda, \udelta)} \lp \pm x \rp & = \frac 1 2 \sum_{\delta \in \BZ/2\BZ} (\pm)^{ \delta} \sum_{\usigma \in \{+, -\}^n}  \left( \prod_{l      = 1}^n \varsigma_l      ^{\delta_l      + \delta} \right) J \big(2 \pi x^{\frac 1 n}; \usigma, \ulambda \big) \\
& = \sum_{\sstyle \usigma \in \{+, -\}^n \atop \sstyle \prod  \varsigma_{l} = \pm } \left( \prod_{l      = 1}^n \varsigma_l      ^{\delta_l   } \right) J \big(2 \pi x^{\frac 1 n}; \usigma, \ulambda \big),  
\end{split}
\end{equation}
with $x \in \BR_+$.
Moreover,
\begin{equation}\label{2eq: Bessel kernel as the inverse Mellin transform of the gamma factor}
J_{(\ulambda, \udelta)} \lp x \rp = \sum_{\delta \in \BZ/2\BZ} \frac { \sgn (x)^\delta} {4 \pi i} \int_{\EuScript C}
\left( \prod_{l      = 1}^n G_{\delta_{l     } + \delta} (s - \lambda_l     ) \right) |x|^{- s} d s.
\end{equation}
\end{rem}

\subsection{\texorpdfstring{The formal integral representation of  $J (x ; \usigma, \ulambda)$}{The formal integral representation of  $J (x ; \varsigma, \lambda)$}} \label{sec: formal integral representation of the Bessel functions}

In this section, we assume $n \geq 2$. Since we shall manipulate the Fourier type integral transform (\ref{1eq: generalized Fourier transform}) only in a symbolic manner, the restrictions on the index $\ulambda$ that guarantee the convergence of the iterated integral in (\ref{1eq: generalized Fourier transform}) will not be imposed here.

With the parity condition on the weight function $\upsilon$,  (\ref{1eq: generalized Fourier transform}) may be written as
\begin{equation}\label{1eq: Upsilon = sum, 2}
\begin{split}
\Upsilon (x) = \frac { \sgn (x)^{\eta}} {|x|} \sum_{ \usigma \in \{+, - \}^n} & \left( \prod_{l      = 1}^n \varsigma_l      ^{\delta_l      + \eta} \right) \\
& \int_{\BR_+^n} \upsilon \left(\frac { x_1 ... x_n} {|x|} \right) \left( \prod_{l      = 1}^n x_l     ^{- \lambda_l     } e( \varsigma_l      x_l     ) \right) dx_n d x_{n-1} ... d  x_1.
\end{split}
\end{equation}
Comparing \eqref{1eq: Upsilon = sum, 2} with (\ref{2eq: signed sum of Psi (x; sigma)})\footnote{To justify our comparison, we use the fact that the associated $2^n \times 2^n$ matrix 
	is equal to the $n$-th tensor power of $  \begin{pmatrix}
	1 & (-1)^{\eta} \\
	1 & (-1)^{1 + \eta}
	\end{pmatrix}$ and hence is invertible.}, we arrive at 
\begin{align*}
\Upsilon (x; \usigma) = \frac {1} {|x|} \int_{\BR_+^n} \upsilon \left(\frac { x_1 ... x_n} {|x|} \right) \left( \prod_{l      = 1}^n x_l     ^{- \lambda_l     } e( \varsigma_l      x_l     ) \right) dx_n d x_{n-1} ... d  x_1.
\end{align*}
The change of variables $x_{n} = |x| y (x_1 ... x_{n-1})\- $, $ x_{l     } = y_{l     }\- $, $l      = 1, ..., n - 1$, turns this further into
\begin{equation}\label{2eq: Psi (x; sigma) another integral representation}
\begin{split}
\Upsilon (x; \usigma) = \int_{\BR_+^n} \upsilon \left( y \right) (x y)^{- \lambda_n } & \left( \prod_{l      = 1}^{n-1} y_l     ^{ \lambda_l      - \lambda_n - 1} \right) \\
& e \left( \varsigma_n  xy y_1 ... y_{n-1} + \sum_{l      = 1}^{n-1} \varsigma_l      y_l     \- \right) dy d y_{n-1} ... d y_1.
\end{split}
\end{equation}
Comparing now \eqref{2eq: Psi (x; sigma) another integral representation} with (\ref{2eq: Psi (x; sigma) as Hankel transform}), if one {\it formally} changes the order of the integrations, which is \textit{not} permissible since the integral is \textit{not} absolutely convergent, then $J(x; \usigma, \ulambda)$ can be expressed as a symbolic integral as below,
\begin{equation*}
\begin{split}
  \ J ( 2\pi x ; \usigma, \ulambda)  
=  x^{- n \lambda_n } \int_{\BR_+^{n-1}}  \left( \prod_{l      = 1}^{n-1} y_l     ^{ \lambda_l      - \lambda_n - 1} \right) e \left( \varsigma_n  x^n y_1 ... y_{n-1} + \sum_{l      = 1}^{n-1} \varsigma_l      y_l     \- \right) d y_{n-1} ... dy_1.
\end{split}
\end{equation*}
Another change of variables $y_{l     } = t_{l     } x\-$, along with the assumption  $\sum_{l      = 1}^{n} \lambda_l      = 0$,  yields
\begin{equation}\label{2eq: formal integral representation of J (x; sigma)}
J ( x ; \usigma, \ulambda) = \int_{\BR_+^{n-1}} \left(\prod_{l      = 1}^{n-1} t_l     ^{ \lambda_l      - \lambda_n - 1} \right) e^{i x \left(\varsigma_n t_1 ... t_{n-1} + \sum_{l      = 1}^{n-1} \varsigma_l      t_l     \- \right)} d t_{n-1} ... dt_1.
\end{equation}
The above integral is {\it not} absolutely convergent and will be referred to as the \textit{formal integral representation of $J ( x ; \usigma, \ulambda)$}.

\begin{rem}
Before realizing its connection with the Fourier type transform   {\rm (\ref{1eq: generalized Fourier transform})}, the formal integral representation  of $J ( x ; \usigma, \ulambda)$ was derived by the author from {\rm (\ref{1eq: Psi defined by Gamma functions})} based on a symbolic application of the product-convolution principle of the Mellin transform together with the following formula {\rm (\cite[{\bf 3.764}]{G-R})} 
\begin{equation}\label{2eq: Gamma and exponential via Mellin}
\Gamma (s) e \lp \pm \frac  s 4 \rp = \int_0^\infty e^{\pm ix} x^{s } d^\times x, \hskip 10 pt 0 < \Re s < 1.
\end{equation}
Though not specified, this principle is implicitly suggested in Miller and Schmid's work, especially, \cite[Theorem 4.12, Lemma 6.19]{Miller-Schmid-2004} and \cite[(5.22, 5.26)]{Miller-Schmid-2006} {\rm ({\it see also \cite[\S 5]{Qi2}})}.
\end{rem}


\subsection{The classical cases}\label{sec: Classical Bessel function}

\subsubsection{The case $n = 1$}

\begin{prop} \label{prop: n=1}
	Suppose $n = 1$. Choose the contour $\EC$  as in \S {\rm \ref{sec: Definition of Bessel functions}};  $\EC$ starts from $\rho - i \infty$ and ends at $\rho + i \infty$, with  $\rho < - \frac 1 2$, and all the nonpositive integers lie on the left side of  $\EC$. We have
	\begin{equation}\label{2eq: n = 1, Mellin inversion}
	e^{\pm i x} = \frac 1 {2\pi i} \int_{\EC} \Gamma (s) e \lp \pm \frac s 4 \rp x^{-s} d s.
	\end{equation}
	Therefore
	\begin{equation*}
	J (x; \pm, 0) = e^{\pm i x}.
	\end{equation*}
\end{prop}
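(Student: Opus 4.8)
The plan is to read the asserted identity \eqref{2eq: n = 1, Mellin inversion} as the Mellin inversion of the transform pair \eqref{2eq: Gamma and exponential via Mellin}, and then to move the line of integration onto the prescribed contour $\EC$. First I would recall \eqref{2eq: Gamma and exponential via Mellin}, which I can re-derive by rotating the ray of integration in $\int_0^\infty e^{\pm i x} x^{s} d^\times x$ from $\arg x = 0$ to $\arg x = \pm \frac \pi 2$: by Cauchy's theorem this turns $e^{\pm i x}$ into $e^{- x}$ and produces the factor $(\pm i)^s = e\lp \pm \frac s 4 \rp$, leaving $\Gamma(s)$, valid for $0 < \Re s < 1$. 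This exhibits $\Gamma(s) e\lp \pm \frac s 4 \rp$ as the signed Mellin transform of the bounded oscillatory function $e^{\pm i x}$ on $\BR_+$, and Mellin inversion in the strip gives, for $0 < \sigma < \frac 1 2$,
\begin{equation*}
e^{\pm i x} = \frac 1 {2 \pi i} \int_{(\sigma)} \Gamma (s) e \lp \pm \frac s 4 \rp x^{-s} d s.
\end{equation*}

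Next I would shift the vertical line $(\sigma)$ leftward onto the contour $\EC$ of \S\ref{sec: Definition of Bessel functions}. The only singularities of the integrand are the simple poles of $\Gamma(s)$ at the nonpositive integers, and both $(\sigma)$ with $0 < \sigma < \frac 1 2$ and $\EC$ keep all of these poles strictly on their left; hence the deformation sweeps across no pole, and the two line integrals agree provided the horizontal connectors at height $\Im s = \pm T$ drop out as $T \to \infty$. Here I would invoke Stirling's formula (Appendix \ref{appendix: asymptotic}): on such a segment $\Re s = u$ stays in the bounded range between $\rho$ and $\sigma$, while $\left| \Gamma(u + i t) \right| \lll |t|^{u - \frac 1 2} e^{-\frac \pi 2 |t|}$ and $\left| e\lp \pm \frac s 4 \rp \right| = e^{\mp \frac \pi 2 \Im s}$. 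On the side where the exponential factor grows it is dominated by the Gaussian-type decay of $\Gamma$, and on the opposite side one is left with $|t|^{u - \frac 1 2}$, where $u \le \sigma < \frac 1 2$; in both cases the integrand tends to $0$. This yields \eqref{2eq: n = 1, Mellin inversion}, and comparing with the definition \eqref{2eq: definition of J (x; sigma)} of $J$ at $n = 1$, $\ulambda = 0$, where $G(s; \pm, 0) = \Gamma(s) e\lp \pm \frac s 4 \rp$, gives $J(x; \pm, 0) = e^{\pm i x}$.

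As an independent check that also fixes the normalisation, I would instead close $\EC$ to the left for $0 < x < 1$, where $\left| x^{-s} \right| = x^{-\Re s} \to 0$ as $\Re s \to -\infty$, and sum residues. Since $\Res_{s = -k} \Gamma(s) = \frac{(-1)^k}{k!}$ and $e\lp \pm \frac s 4 \rp$ evaluates at $s = -k$ so that the residue of the full integrand is $\frac{(\pm i x)^k}{k!}$, summing over $k \in \BN$ reproduces the everywhere-convergent Taylor series of $e^{\pm i x}$; by the analyticity of $J(x; \pm, 0)$ the equality then propagates from $(0,1)$ to all of $\BR_+$ (indeed to $\BU$). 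The step I expect to be the main obstacle is the \emph{asymmetric} decay of $G(s) = \Gamma(s) e\lp \pm \frac s 4 \rp$ in the imaginary direction — exponential on one side but only polynomial on the other, because $e\lp \pm \frac s 4 \rp$ cancels exactly half of Stirling's exponential decay. This one-sidedness is precisely what forces the choice $\rho < -\frac 1 2$ (to make the defining integral of $J$ even converge) and what must be controlled when justifying the vanishing of the horizontal connectors above, or equivalently of the closing arc in the residue computation.
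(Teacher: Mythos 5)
The decisive gap is in your second step, the claim that ``Mellin inversion in the strip gives, for $0<\sigma<\frac 12$, $e^{\pm i x} = \frac 1 {2\pi i}\int_{(\sigma)}\Gamma(s) e\lp \pm \frac s 4\rp x^{-s}\,ds$.'' No standard inversion theorem yields this: the forward transform $\int_0^\infty e^{\pm i x} x^{s-1}\,dx$ is only conditionally convergent (the integrand has modulus $x^{\sigma-1}$, not integrable at infinity for any $\sigma>0$), and the inversion integral itself is not absolutely convergent on $(\sigma)$ either, since on the ``bad'' side of the line the integrand decays only like $|t|^{\sigma-\frac12}$ with $\sigma-\frac12>-1$. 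This is precisely the point of the remark the paper places immediately after the proposition: \eqref{2eq: n = 1, Mellin inversion} should be \emph{viewed} as the Mellin inversion of \eqref{2eq: Gamma and exponential via Mellin}, but ``the Mellin inversion formula does not apply in the rigorous sense.'' You also misdiagnose where the obstacle sits: the horizontal connectors in your shift from $(\sigma)$ to $\EC$ are harmless (the polynomial decay $|t|^{u-\frac12}$, $u\le\sigma<\frac12$, kills them), and the shift would be fine \emph{if} you had the identity on $(\sigma)$ to start from --- that starting identity is exactly what is missing. The paper's proof circumvents this by regularizing: for $\Re z>0$ the pair $\Gamma(s)z^{-s}=\int_0^\infty e^{-zx}x^{s}\,d^\times x$ is absolutely convergent, so inversion is legitimate there; the contour is then shifted to $\EC$, one sets $z=e^{\mp\lp \frac 12 \pi-\epsilon\rp i}$, and since $\rho<-\frac12$ Stirling's formula makes the integral over $\EC$ absolutely convergent uniformly in $\epsilon$, so the limit $\epsilon\to 0$ may be taken under the integral sign.

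Your third paragraph, by contrast, is the germ of a correct proof, genuinely different from the paper's, and it would repair the argument if promoted from ``independent check'' to the actual proof. The integral over $\EC$ converges absolutely (this is what $\rho<-\frac12$ buys). For $0<x<1$, close the contour to the left along $\Re s=-N-\frac12$: by the reflection formula and Stirling one has $\left|\Gamma(s)e\lp\pm\frac s4\rp\right|\lll (1+|t|)^{-N-1}$ there, so the discarded vertical segment contributes $O\lp x^{N+1/2}\rp$, which tends to $0$ as $N\to\infty$; the residues $\Res_{s=-k}\,\Gamma(s)e\lp\pm\frac s4\rp x^{-s}=\frac{(\pm i x)^k}{k!}$ sum to $e^{\pm i x}$; and real-analyticity of both sides propagates the identity from $(0,1)$ to all of $\BR_+$. (One caveat your computation in fact settles: as printed, \eqref{2eq: n = 1, Mellin inversion} omits the factor $\frac 1{2\pi i}$ carried by the definition \eqref{2eq: definition of J (x; sigma)}; the statement consistent with $J(x;\pm,0)=e^{\pm i x}$ is the $\frac 1{2\pi i}$-normalized one, which is exactly what your residue sum verifies.) So the route you present as the main proof stalls at an inversion step the paper explicitly flags as non-rigorous, while the route you present as a sanity check --- the classical expansion of a Barnes-type integral into its power series --- is the one that actually works, provided you carry out the contour estimates you defer.
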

\begin{proof}
	Let $\Re z > 0$. For $\Re s > 0$, we have the formula
	\begin{equation*}
	\Gamma (s) z^{-s} = \int_0^\infty e^{- z x} x^{s } d^\times x,
	\end{equation*}
	where the integral is absolutely convergent. The Mellin inversion formula  yields 
	\begin{equation*}
	e^{- z x} = \frac 1 {2\pi i} \int_{(\sigma)} \Gamma (s) z^{-s} x^{-s} d s, \hskip 10 pt \sigma > 0.
	\end{equation*} 
	Shifting the contour of integration from $(\sigma)$ to $\EC$, one sees that
	\begin{equation*}
	e^{- z x} = \frac 1 {2\pi i} \int_{\EC} \Gamma (s) z^{-s} x^{-s} d s.
	\end{equation*}
	Choose $z = e^{\mp \lp \frac {1 } 2 \pi - \epsilon \rp i}$, $ \pi > \epsilon > 0$. In view of  Stirling's formula, the convergence of the integral above is uniform in $\epsilon$. Therefore, we obtain \eqref{2eq: n = 1, Mellin inversion} by letting $\epsilon \ra 0$.
\end{proof}

\begin{rem}
	 Observe that the integral in \eqref{2eq: Gamma and exponential via Mellin} is only conditionally convergent, the Mellin inversion formula does not apply in the rigorous sense. Nevertheless, \eqref{2eq: n = 1, Mellin inversion} should be view as the Mellin inversion of \eqref{2eq: Gamma and exponential via Mellin}.
\end{rem}

\begin{rem}
	It follows from the proof of Proposition {\rm \ref{prop: n=1}} that the formula
	\begin{equation}\label{2eq: Mellin n=1}
	e^{- e(a) x } = \frac 1 {2\pi i} \int_{\EC} \Gamma (s) e\lp - a s \rp x^{-s} d s
	\end{equation}
	is valid for any $a \in \left[- \frac 1 4,  \frac 1 4\right]$.
\end{rem}


\subsubsection{The case $n = 2$}

\begin{prop}\label{prop: Classical Bessel functions}
Let $\lambda \in \BC$. Then
\begin{align*}
J (x; \pm, \pm, \lambda, - \lambda) & = \pm \pi i e^{\pm \pi i \lambda} H^{(1, 2)}_{2 \lambda} (2 x), \\
J (x; \pm, \mp, \lambda, - \lambda) & = 2 e^{\mp \pi i \lambda} K_{2 \lambda} (2 x).
\end{align*}
Here $H^{(1)}_{\nu}$ and $H^{(2)}_{\nu}$ are Bessel functions of the third kind, also known as Hankel functions, whereas $K_{\nu}$ is the  modified Bessel function of the second kind, occasionally called the $K$-Bessel function.
\end{prop}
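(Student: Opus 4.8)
The plan is to evaluate the Mellin--Barnes integral \eqref{2eq: definition of J (x; sigma)} in the rank-two cases directly, reducing everything to the classical Mellin--Barnes representation of $K_\nu$ and then rotating the argument to reach the Hankel functions. First I would specialize $G(s;\usigma,\ulambda)$ from \eqref{2eq: definition of G(s;  sigma, lambda)} to $n=2$ and $\ulambda=(\lambda,-\lambda)$. Since $e(a)=e^{2\pi i a}$, for equal signs the two exponential phases add, $e(\pm s/2)=e^{\pm\pi i s}$, giving $G(s;\pm,\pm,\lambda,-\lambda)=\Gamma(s-\lambda)\Gamma(s+\lambda)\,e^{\pm\pi i s}$, whereas for opposite signs they cancel into a constant, $e(\mp\lambda/2)=e^{\mp\pi i\lambda}$, so that $G(s;\pm,\mp,\lambda,-\lambda)=e^{\mp\pi i\lambda}\,\Gamma(s-\lambda)\Gamma(s+\lambda)$.

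I would treat the opposite-sign case next, as it is the cleaner one. There the integrand has genuine exponential decay $e^{-\pi|\Im s|}$ along vertical lines, so the contour $\EC$ can be pushed back, without crossing any pole of $\Gamma(s\mp\lambda)$, to a straight vertical line to the right of all poles. Recalling the standard Mellin transform $\int_0^\infty K_\nu(x)x^{s-1}dx=2^{s-2}\Gamma(\tfrac{s-\nu}2)\Gamma(\tfrac{s+\nu}2)$, valid for $\Re s>|\Re\nu|$, Mellin inversion with $\nu=2\lambda$ together with the substitutions $s\mapsto 2s$ and $x\mapsto 2x$ yields
$$\frac1{2\pi i}\int_{\EC}\Gamma(s-\lambda)\Gamma(s+\lambda)x^{-2s}\,ds=2K_{2\lambda}(2x),\qquad x\in\BR_+.$$
Multiplying by $e^{\mp\pi i\lambda}$ gives $J(x;\pm,\mp,\lambda,-\lambda)=2e^{\mp\pi i\lambda}K_{2\lambda}(2x)$, the second asserted identity.

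For the equal-sign case I would introduce $\Phi(w)=\frac1{2\pi i}\int_{\EC}\Gamma(s-\lambda)\Gamma(s+\lambda)w^{-2s}\,ds$. Since $\EC$ is vertical at $\Re s=\rho$ with $2\rho-1<-1$, Stirling's formula shows that this integral converges throughout the closed right half-plane $|\arg w|\le\frac\pi2$ (exponentially in the open half-plane, and only polynomially, hence still convergently, on the two boundary rays), is analytic for $|\arg w|<\frac\pi2$, and extends continuously to $\arg w=\mp\frac\pi2$. On $\BR_+$ the previous step gives $\Phi(w)=2K_{2\lambda}(2w)$; as $K_{2\lambda}(2w)$ is analytic for $|\arg w|<\pi$, the identity theorem propagates this to the open half-plane and continuity carries it to the boundary rays. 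Because $e^{\pm\pi i s}x^{-2s}=(e^{\mp\pi i/2}x)^{-2s}$, one has $J(x;+,+,\lambda,-\lambda)=\Phi(e^{-\pi i/2}x)$ and $J(x;-,-,\lambda,-\lambda)=\Phi(e^{\pi i/2}x)$, i.e. $2K_{2\lambda}(2e^{\mp\pi i/2}x)$. Applying Watson's rotation formulas $K_\nu(z)=\frac{\pi i}2 e^{\nu\pi i/2}H^{(1)}_\nu(iz)$ (for $-\pi<\arg z\le\frac\pi2$) and $K_\nu(z)=-\frac{\pi i}2 e^{-\nu\pi i/2}H^{(2)}_\nu(-iz)$ (for $-\frac\pi2<\arg z\le\pi$) with $z=2e^{\mp\pi i/2}x$, so that $\pm iz=2x$, then converts these values into $\pm\pi i\,e^{\pm\pi i\lambda}H^{(1,2)}_{2\lambda}(2x)$.

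I expect the main obstacle to be exactly this equal-sign case, because the relevant argument $e^{\mp\pi i/2}x$ lies on the boundary of the half-plane of absolute convergence---the analytic shadow of the fact that $H$-Bessel functions oscillate rather than decay. The delicate point is justifying the boundary limit: one must exploit the bulging shape of $\EC$ together with the choice $2\rho-1<-1$ to ensure that $\Phi$ is continuous up to the rays $\arg w=\mp\frac\pi2$, not merely analytic on the open half-plane. Everything else---the simplification of $G$, the $K$-Bessel Mellin--Barnes identity, and the final bookkeeping with the rotation formulas---is routine.
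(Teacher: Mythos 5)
Your proof is correct, and it takes a genuinely different route from the paper's. The paper handles the opposite-sign case exactly as you do, via the $K$-Bessel Mellin pair, but for the equal-sign case it decomposes the phase $e^{\pm \pi i s}$ into cosine and sine and matches the two pieces against the Mellin transforms of $Y_{2\lambda}(2\sqrt{x})$ and $J_{2\lambda}(2\sqrt{x})$ taken from \cite{G-R}; since those transforms converge only in the strip $\frac 12 |\Re \nu| < \Re s < \frac 14$, this yields $J (x; \pm, \pm, \lambda, - \lambda) = \pm \pi i e^{\pm \pi i \lambda} ( J_{2 \lambda} (2 x) \pm i Y_{2 \lambda} (2 x) )$ only for $|\Re \lambda| < \frac 14$, after which the paper continues analytically in the parameter $\lambda$ and invokes $H_\nu^{(1,2)} = J_\nu \pm i Y_\nu$. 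You instead absorb the phase into the argument, writing $e^{\pm \pi i s} x^{-2s} = (e^{\mp \pi i/2} x)^{-2s}$, so that both sign patterns reduce to the single Mellin--Barnes function $\Phi(w)$; you then continue the identity $\Phi(w) = 2 K_{2\lambda}(2w)$ in the variable $w$ up to the boundary rays $\arg w = \mp \frac \pi 2$ and finish with the classical $K$-to-$H^{(1,2)}$ rotation formulas. Your route needs only one classical Mellin pair instead of three, works uniformly in $\lambda \in \BC$ with no restriction-and-continuation step in the parameter, and is the rank-two analogue of the paper's own proof of Proposition \ref{prop: n=1} (letting $\epsilon \to 0$ in $z = e^{\mp (\frac 12 \pi - \epsilon) i}$); it also anticipates the argument-rotation identities the paper establishes later (Lemma \ref{lem: phi (e pi i m/n z) is also a solution}, Lemma \ref{lem: J(z ; usigma; lambda) relations to H +}). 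The price is exactly the boundary step you single out: on the ray $\arg w = \mp \frac \pi 2$, Stirling's formula leaves only the polynomial decay $|\Im s|^{2\rho - 1}$ on one half of the contour, so absolute and uniform convergence of the integral over $\EC$ --- and hence continuity of $\Phi$ on the closed half-plane, which the identity theorem alone does not give --- hinges on the normalization $2\rho - 1 < -1$ built into the definition of $\EC$; your treatment of this point is sound.
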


\begin{proof}
The following formulae are derived from \cite[{\bf 6.561} 14-16]{G-R} along with Euler's reflection formula of the Gamma function. 
\begin{equation*} 
\pi \int_0^\infty J_{\nu} (2 \sqrt x) x^{s-1} d x = \Gamma \left( s + \frac \nu 2\right) \Gamma \left( s - \frac \nu 2\right) \sin \left(\pi \left(s - \frac \nu 2 \right) \right)
\end{equation*}
for $- \frac {1} 2 \Re  \nu< \Re s < \frac 1 4$,
\begin{equation*} 
- \pi \int_0^\infty  Y_{\nu} (2 \sqrt x) x^{s-1} d x  = \Gamma \left( s + \frac \nu 2\right) \Gamma \left( s - \frac \nu 2\right) \cos \left(\pi \left(s - \frac \nu 2 \right) \right)
\end{equation*}
for $ \frac {1} 2 |\Re  \nu|< \Re s < \frac 1 4$, and
\begin{equation*} 
2 \int_0^\infty  K_{\nu} (2 \sqrt x) x^{s-1} d x  = \Gamma \left( s + \frac \nu 2\right) \Gamma \left( s - \frac \nu 2\right)
\end{equation*}
for $ \Re s > \frac {1} 2|\Re  \nu|$. For $\Re s$   in the given ranges, these integrals are absolutely convergent.  It follows immediately from the Mellin inversion formula that
\begin{align*}
J (x; \pm, \pm, \lambda, - \lambda) & = \pm \pi i e^{\pm \pi i \lambda} (J_{2 \lambda} (2 x) \pm i Y_{2 \lambda} (2 x)), \hskip 10 pt |\Re \lambda| < \frac 1 4, \\
J (x; \pm, \mp, \lambda, - \lambda) & = 2 e^{\mp \pi i \lambda} K_{2 \lambda} (2 x).
\end{align*}
In view of the analyticity in $\lambda$, the first formula remains valid even if $|\Re \lambda| \geq \frac 1 4$ by the theory of analytic continuation. 
Finally, we conclude the proof by recollecting the formula $H_\nu^{(1, 2)} (x) = J_\nu (x) \pm i Y_\nu (x)$.
\end{proof}

\begin{rem}\label{2rem: n=2}
Let $\lambda = i t$ if $F$ is a Maa\ss  \hskip 3 pt  form of eigenvalue $\frac 1 4 + t^2$ and weight $k$, and let $\lambda = \frac 1 2 {(k - 1)}  $ if $F$ is a holomorphic cusp form of weight $k$. Then $F$ is parametrized by $(\ulambda, \udelta) = (\lambda, - \lambda, k (\mod 2), 0)$ and $J _F = J_{(\ulambda, \udelta)}$. From the formula {\rm (\ref{2eq: Bessel kernel})} of the Bessel kernel, we have 
\begin{align*}
J_{(\ulambda, \udelta)} \lp x \rp & = J (2 \pi \sqrt x; +, +, \lambda, - \lambda) + (-)^k J (2 \pi \sqrt x; - , -, \lambda, - \lambda),\\ 
J_{(\ulambda, \udelta)} \lp - x \rp & = J (2 \pi \sqrt x; +, -, \lambda, - \lambda) + (-)^k J (2 \pi \sqrt x; - , +, \lambda, - \lambda).
\end{align*} 
Thus, Proposition {\rm\ref{prop: Classical Bessel functions}} implies {\rm (\ref{1eq: n=2, Maass form, Bessel functions, k even}, \ref{1eq: n=2, Maass form, Bessel functions, k odd}, \ref{1eq: n=2, holomorphic form, Bessel functions})}.
\end{rem}

When  $x > 0$ and $| \Re  \nu| < 1$, we have the following integral representations of Bessel functions (\cite[6.21 (10, 11), 6.22 (13)]{Watson})
\begin{align*}
H^{(1, 2)}_\nu (x) & = \pm \frac {2 e^{\mp \frac 1 2 \pi i \nu }} {\pi i} \int_0^\infty e^{\pm i x \cosh r} \cosh (\nu r) d r,\\
K_{\nu} (x) & = \frac 1 {\cos \left( \frac {1 } 2 \pi \nu \right)} \int_0^\infty \cos (x \sinh r) \cosh (\nu r) d r.
\end{align*}
The change of variables $t = e^r$ yields
\begin{align*}
 \pm \pi i e^{\pm \frac 1 2 \pi i \nu } H^{(1, 2)}_\nu (2 x)  & = \int_0^\infty t^{\nu - 1} e^{\pm i x (t + t\-)} d t,\\
2 e^{\pm \frac 1 2 \pi i \nu } K_{\nu} (2 x) & =  \int_0^\infty t^{\nu - 1} e^{\pm i x (t - t\-)} d t.
\end{align*}
The integrals in these formulae  are exactly the formal integrals in \eqref{2eq: formal integral representation of J (x; sigma)} in the case $n=2$. They   \textit{conditionally} converge  if  $| \Re  \nu| < 1$, but diverge if otherwise. 

\subsection{A prototypical example} \label{sec: special example}

According to \cite[3.4 (3, 6), 3.71 (13)]{Watson},
\begin{equation*}
J_{\frac 1 2} (x) = \lp \frac 2 {\pi x} \rp^{\frac 1 2} \sin x, \hskip  10 pt
J_{- \frac 1 2} (x) = \lp \frac 2 {\pi x} \rp^{\frac 1 2} \cos x.
\end{equation*}
The connection formulae in \eqref{2eq: connection formulae} (\cite[3.61 (5, 6)]{Watson})
then imply that
\begin{equation*} 
H^{(1)}_{\frac 1 2} (x) = - i \lp \frac 2 {\pi x} \rp^{\frac 1 2} e^{i x}, \hskip 10 pt H^{(2)}_{\frac 1 2} (x) = i  \lp \frac 2 {\pi x} \rp^{\frac 1 2} e^{- i x}.
\end{equation*}
Moreover, \cite[3.71 (13)]{Watson} reads
\begin{equation*}
K_{\frac 1 2} (x) = \lp \frac \pi {2 x} \rp^{\frac 1 2} e^{- x}.
\end{equation*}
Therefore, from the formulae in Proposition \ref{prop: Classical Bessel functions} we have
\begin{equation*}
J \lp x; \pm, \pm, \tfrac 1 4 , - \tfrac 1 4\rp = \lp \frac {\pi  } x \rp^{\frac 1 2} e^{\pm 2 i  x \pm \frac 1 4 \pi i}, \hskip 10 pt J \lp x; \pm, \mp, \tfrac 1 4 , - \tfrac 1 4 \rp = \lp \frac {\pi } x \rp^{\frac 1 2} e^{- 2 x \mp \frac 1 4 \pi i}.
\end{equation*}
These formulae admit generalizations to arbitrary rank.
\begin{prop}
	\label{prop: special example}
	For $\usigma \in \{ +, -\}^n$ we define $L_\pm (\usigma) = \{ l      : \varsigma_l      = \pm \}$ and $n_\pm (\usigma) = \left| L_\pm (\usigma) \right|$.
	Put $\xi (\usigma) = i e^{ {\pi} i \frac {n_- (\usigma) - n_+ (\usigma) } {2n} } =  \mp e^{\mp \pi i \frac { n_{\pm} (\usigma)} n}$. Suppose $\ulambda = \frac 1 n \lp  \frac {n-1} 2, ..., - \frac {n-1} 2 \rp$. 
	Then  
\begin{equation}\label{2eq: special example}
J (x; \usigma, \ulambda) = \frac { c(\usigma)} {\sqrt n} \lp \frac {2\pi} x \rp^{\frac {n-1} 2} e^{ i n \xi (\usigma) x},
\end{equation}
with $c(\usigma) = e \lp \mp \frac {n-1} 8 \mp \frac {n_{\pm} (\usigma) } {2n} \pm \frac 1 {2 n}  \sum_{l      \in L_{\pm} (\usigma)}   {l     } \rp$. 
\end{prop}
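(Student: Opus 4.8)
The plan is to evaluate the Mellin--Barnes integral \eqref{2eq: definition of J (x; sigma)} directly, exploiting that for the prototypical $\ulambda$ the shifts $s-\lambda_l$ form an arithmetic progression of common difference $1/n$. Writing $\lambda_l = \frac{n+1-2l}{2n}$, one checks that $s-\lambda_l = z + \frac{l-1}{n}$ with $z = s - \frac{n-1}{2n}$, so that
\begin{equation*}
\prod_{l=1}^n \Gamma(s-\lambda_l) = \prod_{j=0}^{n-1}\Gamma\lp z + \tfrac jn\rp.
\end{equation*}
The Gauss multiplication formula then collapses this product into the single gamma factor $(2\pi)^{\frac{n-1}2}\, n^{\frac n2 - ns}\,\Gamma\lp ns - \frac{n-1}2\rp$. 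This is the conceptual heart of the argument: for this very special index the rank-$n$ product of gammas in $G(s;\usigma,\ulambda)$ degenerates to a single gamma, so the Bessel function ought to reduce to a rank-one object of the type computed in Proposition \ref{prop: n=1}.

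Next I would factor the exponential part of \eqref{2eq: definition of G(s; sigma, lambda)} as
\begin{equation*}
\prod_{l=1}^n e\lp \frac{\varsigma_l(s-\lambda_l)}4\rp = e\lp \frac{(n_+(\usigma)-n_-(\usigma))\, s}4\rp\, e\lp -\frac14\sum_{l=1}^n \varsigma_l\lambda_l\rp,
\end{equation*}
separating the $s$-dependent phase from the constant one. Inserting both factorizations into \eqref{2eq: definition of J (x; sigma)} and substituting $u = ns - \frac{n-1}2$ turns the $s$-integral into a rank-one Mellin--Barnes integral of the shape $\frac1{2\pi i}\int \Gamma(u)(nx)^{-u} e(au)\,du$, with $a = \frac{n_+(\usigma)-n_-(\usigma)}{4n}$. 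Since $|n_+(\usigma)-n_-(\usigma)|\leq n$, the parameter $a$ lies in $\left[-\frac14,\frac14\right]$, so the evaluation \eqref{2eq: Mellin n=1} of Proposition \ref{prop: n=1} applies and yields the exponential $e^{-e(-a)nx}$. A short computation gives $e(-a) = -i\,\xi(\usigma)$, which identifies this with $e^{in\xi(\usigma)x}$, the claimed oscillatory/decaying factor.

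Finally I would collect the accumulated constants: the factor $(2\pi)^{\frac{n-1}2}$ and the power $n^{\frac n2}$ from Gauss multiplication, the $\frac1n$ from $ds = du/n$, and the $(nx)^{-\frac{n-1}2}$ from the substitution combine to $n^{-\frac12}\lp\frac{2\pi}x\rp^{\frac{n-1}2}$, while the two residual phases---the constant phase $e\lp -\frac14\sum_l\varsigma_l\lambda_l\rp$ and the phase $e^{\pi i (n_+(\usigma)-n_-(\usigma))(n-1)/(4n)}$ produced by the substitution---assemble into $c(\usigma)$ after inserting $\lambda_l = \frac{n+1-2l}{2n}$ and using $\sum_{l}\varsigma_l\lambda_l = 2\sum_{l\in L_\pm(\usigma)}\lambda_l$ together with $\sum_l\lambda_l = 0$ to rewrite everything through $n_\pm(\usigma)$ and $\sum_{l\in L_\pm(\usigma)} l$. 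I expect two steps to demand the most care. The genuine analytic obstacle is the contour: one must verify that under $u = ns - \frac{n-1}2$ the contour $\EuScript C$ of \S\ref{sec: Definition of Bessel functions} transforms into an admissible contour for \eqref{2eq: Mellin n=1}. The choice $\rho < \frac12 - \frac1n$ there forces $\Re u < -\frac12$ on the transformed contour, and the poles $s = \lambda_l - k$ of the original integrand map exactly onto the poles $u = 0,-1,-2,\dots$ of $\Gamma(u)$, so all nonpositive integers stay on its left as required; moreover the boundary value $a = \pm\frac14$, corresponding to the all-equal-signs ($H$-Bessel) case, must be admitted, which is precisely why \eqref{2eq: Mellin n=1} holds on the closed interval. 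The second, purely computational, point is the phase bookkeeping leading to $c(\usigma)$, where the various $e(\cdot)$-exponentials must be tracked consistently; a check against the explicit $n=2$ formulas of \S\ref{sec: special example} pins down the normalization.
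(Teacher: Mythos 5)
Your proposal is correct and is essentially the paper's own proof: the paper likewise collapses $\prod_{l=1}^{n}\Gamma(s-\lambda_l)$ via the multiplication formula \eqref{2eq: multiplication theorem}, splits off the phase $e\lp\tfrac14(n_+(\usigma)-n_-(\usigma))s\rp$, substitutes $s\mapsto\frac1n\lp s+\frac{n-1}2\rp$, and evaluates the resulting rank-one integral by \eqref{2eq: Mellin n=1} with $a=\frac{n_+(\usigma)-n_-(\usigma)}{4n}\in\left[-\frac14,\frac14\right]$, with the same closing remark about choosing the contour $\EC$ suitably. One point of divergence, and it is in your favor: under the literal descending reading $\lambda_l=\frac{n+1-2l}{2n}$ that you adopt, your two residual phases $e\lp-\frac14\sum_l\varsigma_l\lambda_l\rp$ and $e\lp\frac{(n_+(\usigma)-n_-(\usigma))(n-1)}{8n}\rp$ are the correct ones, and their product is $e\lp\mp\frac{n-1}{8}\mp\frac{n_\pm(\usigma)}{2n}\pm\frac{1}{2n}\sum_{l\in L_\pm(\usigma)}l\rp$ rather than the displayed $c(\usigma)$ --- the paper's proof records the reciprocal of the true constant phase as its $c_1(\usigma)$, an ordering slip in pairing $\varsigma_l$ with $\lambda_l$ --- and the $n=2$ cross-check you propose detects exactly this, since for $\usigma=(+,-)$ the true phase is $e^{-\pi i/4}$ (consistent with the explicit formulas of \S\ref{sec: special example} and with Corollary \ref{cor: connetions, 1}) whereas the stated $c(\usigma)$ gives $e^{+\pi i/4}$; so carry your bookkeeping through on its own terms rather than forcing agreement with the displayed constant.
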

\begin{proof}
	Using the multiplication formula of the Gamma function
	\begin{equation}\label{2eq: multiplication theorem}
	\prod_{k = 0}^{n-1} \Gamma \lp s + \frac {k} n \rp = 
	(2 \pi)^{\frac {n-1} 2} n^{\frac 1 2 - n s} \Gamma (n s),
	\end{equation}
	straightforward calculations yield
	\begin{equation*}
	G (s; \usigma, \ulambda) = c_1 (\usigma) (2 \pi)^{\frac {n-1} 2} n^{\frac 1 2 - n \lp s - \frac {n-1} {2n} \rp} \Gamma  \lp n \lp s - \frac {n-1} {2n} \rp\rp e \lp \frac {n_+ (\usigma) - n_- (\usigma) } {4  } \cdot s \rp,
	\end{equation*}
	with $c_1 (\usigma) = e \lp \mp \frac {(n+1) n_{\pm } (\usigma) } {4 n} \pm \frac 1 {2 n}  \sum_{l      \in L_{\pm} (\usigma)}   {l     } \rp$. Inserting this into the contour integral in \eqref{2eq: definition of J (x; sigma)}  and making the change of variables from $s$ to $\frac 1 n \lp s + \frac {n-1} 2 \rp$, one arrives at
	\begin{equation*}
	J (x; \usigma, \ulambda) = \frac {c_1 (\usigma) c_2 (\usigma)} {\sqrt n} \lp \frac {2 \pi} x \rp^{\frac {n-1} 2 } \frac 1 {2\pi i} \int_{ n \EC - \frac {n-1} 2 }  \Gamma (s) e \lp \frac {n_+ (\usigma) - n_- (\usigma) } {4 n} \cdot s \rp  
	(n x)^{- s} ds,
	\end{equation*}
	with $c_2 (\usigma) = e \lp \mp \frac {n-1} 8 \pm \frac {(n-1) n_{\pm} (\usigma) } {4 n}  \rp$. \eqref{2eq: special example} now follows from \eqref{2eq: Mellin n=1} if the contour $\EC$ is suitably chosen.
\end{proof}

\section{The rigorous interpretation of formal integral representations}
\label{sec: Rigorous interpretations of formal integral representation}


We first introduce some new notations. Let $d = n-1$, $\ut = (t_1, ... , t_d) \in \BR_+^d$, $\unu = (\nu_1, ..., \nu_d) \in \BC^{d}$ and  $\usigma = (\varsigma_1, ..., \varsigma_d, \varsigma_{d+1}) \in \{+, - \}^{d+1}$.  
For $a > 0$ define $\BS^d_a = \left\{ \unu \in \BC^d : |\Re  \nu_l     | < a \text { for all } l      = 1, ..., d \right \}$. For $\nu \in \BC$ define
\begin{equation*}
\begin{split}
[\nu]_{\alpha} =  \textstyle \prod_{k = 0}^{\alpha-1} (\nu - k), \ (\nu)_{\alpha} =  \textstyle \prod_{k = 0}^{\alpha-1} (\nu + k) \ \text{ if } \alpha\geq 1, \hskip 10 pt [\nu]_{0} = (\nu)_0 = 1.
\end{split}
\end{equation*}
Denote by $p_{\unu} $ the power function
\begin{equation*} 
p_{\unu} (\ut) = \prod_{l      = 1}^d t_{l     }^{\nu_l      - 1},
\end{equation*}
let
\begin{equation*} 
\theta ( \ut; \usigma) = \varsigma_{d+1} t_1 ... t_d + \sum_{l      = 1}^d \varsigma_l      t_{l     }\-,
\end{equation*}
and define the formal integral 
\begin{equation}\label{3eq: definition J nu (x; sigma)}
J_{\unu} ( x ; \usigma) = \int_{\BR_+^d} p_{\unu} (\ut)e^{i x \theta ( \ut; \usigma)} d \ut.
\end{equation}
One sees that the formal integral representation of $J ( x ; \usigma, \ulambda)$ given in  (\ref{2eq: formal integral representation of J (x; sigma)}) is equal to $J_{\unu} ( x ; \usigma)$ if  $\nu_{l     } =  \lambda_l      - \lambda_{d+1}$, $l      = 1, ..., d$.


For $d = 1$, it is seen in \S \ref{sec: Classical Bessel function} that $J_{\nu}(x; \usigma)$ is conditionally convergent if and only if $|\Re \nu | < 1$ but fails to be absolutely convergent. 
When $d \geq 2$, we are in a worse scenario. The notion of convergence for multiple integrals is always in the absolute sense. Thus, the $d$-dimensional multiple integral in \eqref{3eq: definition J nu (x; sigma)} alone does not make any sense, 
since it is clearly not absolutely convergent. 


In the following, we shall address this fundamental convergence issue of the formal integral $J_{\unu} ( x ; \usigma)$, relying on its structural simplicity, so that it will be provided with mathematically rigorous meanings\footnote{It turns out that our rigorous interpretation actually coincides with the {\it Hadamard partie finie} of the formal integral.}. Moreover, it will be shown that our rigorous interpretation of $J_{\nu}(x; \usigma)$ is a smooth function of $x$ on $\BR_+$ as well as an analytic function of $\unu$ on $\BC^d$. 


\subsection{Formal partial integration operators}

The most crucial observation is that there are \textit{two} kinds of formal partial integrations. 
The first kind arises from 
$$\partial \big(e^{\varsigma_l      i x t_{l     }\- } \big) = - \varsigma_l      i  x t_{l     }^{-2} e^{\varsigma_l      i x t_{l     }\-} \partial t_{l     },$$ 
and the second kind from
 $$\partial \left(e^{\varsigma_{d+1} i x t_1 ... t_d} \right) = \varsigma_{d+1} i x t_1 ... \widehat {t_{l     }} ... t_d e^{\varsigma_{d+1} i x t_1 ... t_d} \partial t_{l     },$$ where $\widehat {t_{l     }}$ means that  $t_{l     }$ is omitted from the product.
\begin{defn}\label{2defn: formal partial integration}
Let $$\mathscr T (\BR_+) = \left\{ h \in C^\infty (\BR_+) : t^\alpha h^{(\alpha)}(t) \lll_\alpha 1 \text { for all } \alpha \in \BN \right \}.$$ For $h(\ut) \in \bigotimes^d \mathscr T (\BR_+)$, in the sense that $h(\ut) $ is a linear combination of functions of the form $ \prod_{l      = 1}^d h_l     (t_{l     })$, define the   integral
\begin{equation*} 
J_{\unu} (x; \usigma; h) = \int_{\BR_+^d} h (\ut) p_{\unu} (\ut) e^{ i x \theta (\ut; \usigma)} d \ut. 
\end{equation*} 
We call $J_{\unu} (x; \usigma; h)$ a $J$-integral of index $\unu$.
Let us introduce an auxiliary space
$$\mathscr J_{\unu} (\usigma) = \mathrm {Span }_{\BC[x\-]} \left\{ J_{\boldsymbol \nu'} (x; \usigma; h) : \boldsymbol \nu' \in \unu + \BZ^d, h \in \textstyle \bigotimes^d \mathscr T (\BR_+)\right \}. $$
Here $\BC[x\-]$ is the ring of polynomials of variable $x\-$ and complex coefficients.
Finally, we define $\EuScript P_{+,\, l     }$ and $\EuScript P_{-,\, l     }$  to be the two $\BC[x\-]$-linear operators on the space $\mathscr J_{\unu} (\usigma)$, in symbolic notion, as follows,
\begin{align*} 
\EuScript P_{+,\, l     } (J_{\unu} & (x ; \usigma; h)) = \varsigma_l      \varsigma_{d+1} J_{\unu + \ue^d + \ue_l      } \left(x; \usigma; h \right) \\
& - \varsigma_l      i (\nu_l      + 1) x \- J_{\unu + \ue_l     } \left(x; \usigma; h \right) - \varsigma_{l     } i x\- J_{\unu + \ue_l     } \left(x; \usigma; t_{l     } \partial_{l     } h \right), 
\end{align*}
\begin{align*} 
\EuScript P_{-,\, l     } & (J_{\unu} (x ; \usigma; h)) = \varsigma_l      \varsigma_{d+1} J_{\unu - \ue^d - \ue_l      } \left(x; \usigma; h \right)\\
& + \varsigma_{d+1} i (\nu_l      - 1) x \- J_{\unu - \ue^d} \left(x; \usigma; h \right) + \varsigma_{d+1} i x\- J_{\unu -\ue^d } \left(x; \usigma; t_{l     } \partial_{l     } h \right), 
\end{align*}
where $\ue_l      = (\underbrace {0,..., 0, 1}_{l     }, 0 ..., 0)$ and $\ue^d = (1, ..., 1)$, and $\partial_{l     } h$ is the abbreviated $\partial h / \partial t_{l     }$.
\end{defn}

\begin{figure}
	\begin{center}
		\begin{tikzpicture}
		
		\node [left] at (0, 0) { \small $\unu$};

		\draw[->] (0,0) to [out=20,in=180] (1, 0.5);
		\draw[->] (0,0) to [out=- 20,in=180] (1, - 0.3);
		\draw[->] (0,0) to [out= -5,in=120] (1, - 0.2);
		
		\node [right] at (1, 0.5) { \small $\unu + \ue^d + \ue_l     $};
		\node [right] at (1, -0.3) { \small $\unu + \ue_l     $};
		\node [left] at (-0.3, 0) { \small $\EuScript P_{+,\, l     } :$};

		\node [left] at (5, 0) { \small $\unu$};

		\draw[->] (5,0) to [out=-20,in=180] (6, -0.5);
		\draw[->] (5,0) to [out= 5,in=-120] (6, 0.2);
		\draw[->] (5,0) to [out= 20,in=180] (6,  0.3);

		\node [right] at (6, 0.3) { \small $\unu - \ue^d$};
		\node [right] at (6, -0.5) { \small $\unu - \ue^d - \ue_l     $};
		\node [left] at (5-0.3, 0) { \small $\EuScript P_{-,\, l     } :$};
		
		\end{tikzpicture}
	\end{center}
	\caption{Index shifts} 
\end{figure}

The formulations of $\EuScript P_{+,\, l     }$ and $\EuScript P_{-,\, l     }$ are quite involved at a first glance. However, the most essential feature of these operators is simply \textit{index shifts}! 

\begin{observ} \nonumber
After the operation of $\EuScript P_{+,\, l     }$ on a $J$-integral, all the indices of the three resulting $J$-integrals are nondecreasing and the increment of the $l     $-th index is one greater than the others. The operator $\EuScript P_{-,\, l     }$ has the effect of decreasing all indices by one except possibly two for  the $l     $-th index.
\end{observ}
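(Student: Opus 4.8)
The plan is to read both assertions directly off the three index vectors that appear in each operator formula in Definition~\ref{2defn: formal partial integration}; beyond bookkeeping of the shifts $\ue_l$ and $\ue^d$ no genuine computation is required, so I would present this as a short verification.

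First I would treat $\EuScript P_{+,\, l}$. By definition it produces exactly three $J$-integrals, of indices $\unu + \ue^d + \ue_l$, $\unu + \ue_l$, and $\unu + \ue_l$ again, so the associated shifts are $\ue^d + \ue_l$ (once) and $\ue_l$ (twice). Each of these shifts is componentwise nonnegative, which gives the nondecreasing claim. For the sharper statement I would compare the $l$-th component of each shift with the others: since $\ue^d + \ue_l = (1,\dots,1) + \ue_l$, its $l$-th entry is $2$ and every other entry is $1$; and for $\ue_l$ the $l$-th entry is $1$ while every other entry is $0$. In both cases the $l$-th increment exceeds each of the remaining increments by exactly one, which is the claim for $\EuScript P_{+,\, l}$.

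The case of $\EuScript P_{-,\, l}$ is entirely parallel. Its three resulting $J$-integrals carry indices $\unu - \ue^d - \ue_l$, $\unu - \ue^d$, and $\unu - \ue^d$, so the shifts are $-\ue^d - \ue_l$ once and $-\ue^d$ twice. I would observe that in all three terms every component other than the $l$-th is lowered by exactly one, while the $l$-th component is lowered by one for the two shifts equal to $-\ue^d$ and by two for the shift $-\ue^d - \ue_l$. This is precisely the assertion that all indices decrease by one except possibly two for the $l$-th index.

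I do not expect a substantive obstacle, since the content is already encoded in the definitions; the only point requiring care is that the phrases ``one greater than the others'' and ``except possibly two'' must be read termwise, with every increment measured against the base index $\unu$ (so that a shift of $0$ in a non-$l$ slot still counts as one less than a shift of $1$ in the $l$-th slot). If a more conceptual justification is wanted, I would instead trace where the shifts come from: for $\EuScript P_{+,\, l}$ the first-kind identity preceding Definition~\ref{2defn: formal partial integration} inserts a factor $t_l^2$ that raises only the $l$-th index, and upon integrating by parts the derivative either falls on the amplitude $h\, p_{\unu}\, t_l^2$ (reducing this to the shift $\ue_l$, twice) or on the phase, where $\partial_l$ of the product term brings down $t_1 \dots \widehat{t_l} \dots t_d$ and raises every index but the $l$-th (combining with $t_l^2$ to give $\ue^d + \ue_l$); the analogous bookkeeping with the second-kind identity, which instead supplies the reciprocal factor $(t_1 \dots \widehat{t_l} \dots t_d)^{-1}$, produces the lowering shifts for $\EuScript P_{-,\, l}$.
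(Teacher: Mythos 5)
Your verification is correct and coincides with the paper's own justification: the paper states this Observation without a separate proof, treating it (as you do) as an immediate reading of the three index vectors $\unu + \ue^d + \ue_l$, $\unu + \ue_l$, $\unu + \ue_l$ and $\unu - \ue^d - \ue_l$, $\unu - \ue^d$, $\unu - \ue^d$ appearing in Definition \ref{2defn: formal partial integration}, exactly the shifts illustrated in the paper's Figure 1. Your closing paragraph tracing the shifts back to the two kinds of formal partial integration is also consistent with the derivation sketched just before that definition, so nothing is missing.
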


\begin{lem}\label{lem: I pm ell}
Let notations be as above.

{\rm(1).} Let  $h(\ut) = \prod_{l      = 1}^d h_l     (t_{l     })$. Suppose that the set $\{1, 2, ..., d\}$ splits into two subsets $L_+$ and $L_-$ such that 
\begin{itemize}
\item [-]	$h_l      $ vanishes at infinity if $l      \in L_-$, and 
\item[-] $h_l      $ vanishes in a neighbourhood of zero if $l      \in L_+$.
\end{itemize} If $\Re \nu_l     >  0$ for all  $l      \in L_-$ and 
 $\Re \nu_l      < 0$ for all  $l      \in L_+$, then the $J$-integral $J_{\unu} (x ; \usigma; h)$ absolutely converges.

{\rm(2).} Assume the same conditions in {\rm(1)}.  Moreover, suppose that $\Re \nu_l     > 1$ for all  $l      \in L_-$ and  $\Re \nu_l      < -1$ for all  $l      \in L_+$. Then, for $l      \in L_-$, all the three $J$-integrals in the definition of $\EuScript P_{+,\, l     } (J_{\unu} (x ; \usigma; h) )$ are absolutely convergent and the operation of $\EuScript P_{+,\, l     }$ on $J_{\unu} (x ; \usigma; h)$ is the actual partial integration of the first kind on the integral over $d t_{l     }$. Similarly, for  $l      \in L_+$, the operation of $\EuScript P_{-,\, l     }$ preserves absolute convergence and is the actual partial integration of the second kind on the integral over $d t_{l     }$.


{\rm(3).}  $\EuScript P_{+,\, l     }$ and $\EuScript P_{-,\, l     }$ commute with $\EuScript P_{+,\, k}$ and  $\EuScript P_{-,\, k}$ if $l      \neq k$.

{\rm(3).}  $\EuScript P_{+,\, l     }$ and $\EuScript P_{-,\, l     }$ commute with $\EuScript P_{+,\, k}$ and  $\EuScript P_{-,\, k}$ if $l      \neq k$.

{\rm(4).} Let  $\alpha \in \BN$. $\EuScript P^{\alpha}_{+,\, l     } (J_{\unu}  (x; \usigma; h) )$ is a linear combination of 
\begin{align*}  
[\nu_l  - 1]_{\alpha_3} & x^{- \alpha + \alpha_1} J_{\unu +  \alpha_1 \ue^d + \alpha \ue_l     } (x; \usigma; \textstyle  t^{\alpha_2}_l      \partial_{l     }^{\alpha_2} h ), 
\end{align*} 
and $\EuScript P^{\alpha}_{-,\, l     }  (J_{\unu} (x;  \usigma;  h)  )$ is a linear combination of
\begin{align*}  
[ \nu_l      - 1 ]_{\alpha_3} x^{- \alpha + \alpha_1} J_{\unu - \alpha \ue^d - \alpha_1 \ue_l     } (x; \usigma; \textstyle  t^{\alpha_2}_l      \partial_l     ^{\alpha_2}  h ),
\end{align*}
for $\alpha_1 + \alpha_2 + \alpha_3 \leqslant \alpha$.
The coefficients of these linear combinations may be uniformly bounded by a constant depending only on $\alpha$.
\end{lem}
\begin{proof}
(1-3) are obvious. The two statements in (4) follow from calculating 
\begin{equation*}
x^{-\alpha} t_l^{\alpha + \alpha_0} \partial ^{\alpha_0}_l   \left(   h(\ut) p_{\unu       } (\ut) e^{\varsigma_{d+1} i x    t_1 ... t_d     }\right) e^{i x \sum_{k = 1}^d \varsigma_{k} t_{k}\- }, \hskip 10 pt \alpha_0 \leqslant \alpha,
\end{equation*}
and
\begin{equation*}
x^{-\alpha} \partial ^\alpha_l      \left(   h(\ut) p_{\unu - \alpha \ue^d + \alpha \ue_l     } (\ut) e^{i x \sum_{k = 1}^d \varsigma_{k} t_{k}\- }\right) e^{\varsigma_{d+1} i x t_1 ... t_d }.
\end{equation*}
For the latter, one applies the following formula
\begin{equation*}
\frac {d^\alpha \big( e^{ a t\- } \big)} {d t^\alpha} = (- )^\alpha \sum_{\beta = 1}^\alpha \frac {\alpha! (\alpha-1)!} {(\alpha-\beta)! \beta! (\beta-1)!} a^\beta t^{- \alpha - \beta} e^{ a t\- },  \hskip 10 pt \alpha \in \BN_+, \, a \in \BC.
\end{equation*} 
\end{proof}

\subsection{\texorpdfstring{Partitioning the integral $J_{\unu}(x; \usigma)$}{Partitioning the integral $J_{\nu}(x; \varsigma)$}}

Let $I$ be a finite set that includes $\{+, -\}$ and let
$$\sum_{\varrho \in I} h_\varrho (t) \equiv 1, \hskip 10 pt t \in \BR_+,$$ be a partition of unity on $\BR_+$ such that each $h_\varrho $ is a function in $ \mathscr T (\BR_+)$, 
$h_-(t) \equiv 1$ on a neighbourhood of zero and $h_+(t) \equiv 1$ for large $t$. Put $h_{\urho} (\ut) =  \prod_{l      = 1}^d h_{\varrho_l     } (t_{l     })$ for $\urho =  (\varrho_1, ..., \varrho_d) \in I^d$. We partition the integral $J_{\unu}(x; \usigma)$ into a finite sum of $J$-integrals 
\begin{equation*} 
J_{\unu}(x; \usigma) = \sum_{\urho \in I^d } J_{\unu}(x; \usigma; \urho),
\end{equation*}
with
\begin{equation*} 
J_{\unu}(x; \usigma; \urho) = J_{\unu}(x; \usigma; h_{\urho})
= \int_{\BR_+^d} h_{\urho} (\ut) p_{\unu} (\ut) e^{ i x \theta (\ut; \usigma)} d \ut.
\end{equation*}

\subsection{\texorpdfstring{The definition of $\BJ_{\unu}(x; \usigma)$}{The definition of $J_{\nu}(x; \varsigma)$}} \label{sec: Definition of BJ nu (x; sigma)}

Let $a > 0$ and assume $\unu \in \BS_{a}$. Let $A \geq a + 2$ be an integer. For $\urho \in I^d$ denote $L_\pm (\urho) = \left\{ l      : \varrho_l      = \pm \right \}$. 

We first treat $J_{\unu}(x; \usigma; \urho)$ in the case when both $L_+(\urho)$ and $L_-(\urho) $ are nonempty. Define $\EuScript P_{+,\, \urho} = \prod_{l      \in L_-(\urho)} \EuScript P_{+,\, l     }$. This is well-defined due to commutativity (Lemma \ref{lem: I pm ell} (3)).
By Lemma \ref{lem: I pm ell} (4) we find that $\EuScript P^{2 A}_{+, \urho} (J_{\unu}(x; \usigma; \urho) )$ is a linear combination of
\begin{equation} \label{3eq: after I - rho}
\begin{split}
& \left(\prod_{l      \in L_- (\urho)} [ \nu_l     - 1 ]_{\alpha_{3, l     }} \right) x^{- 2 A |L_-(\urho)| + \sum_{l      \in L_-(\urho)} \alpha_{1, l     }} \cdot \\
& \hskip 40pt J_{\unu + (\sum_{l      \in L_-(\urho)} \alpha_{1, l     } ) \ue^d + 2 A \sum_{l      \in L_-(\urho)} \ue_l     } \big(x; \usigma; \big( \textstyle \prod_{l      \in L_-(\urho)} t^{\alpha_{2, l     }}_l      \partial_l     ^{\alpha_{2, l     }}\big) h_{\urho}\big),
\end{split}
\end{equation}
with 
$\alpha_{1, l     } + \alpha_{2, l     } + \alpha_{3, l     } \leqslant 2 A$ for each $l      \in L_-(\urho)$.
After this, we choose $l     _+ \in L_+(\urho)$ and apply  $\EuScript P_{-,\, l     _+}^{ A + \sum_{l      \in L_-(\urho)} \alpha_{1, l     }  }$ on the $J$-integral in (\ref{3eq: after I - rho}). 
By Lemma \ref{lem: I pm ell} (4) we obtain a linear combination of
\begin{equation} \label{3eq: linear combination first case}
\begin{split}
& [\nu_{l     _+} - 1]_{\alpha_3} \left(\prod_{l      \in L_- (\urho)} [\nu_l    - 1]_{\alpha_{3, l     }}  \right) x^{- A (2 |L_-(\urho)| + 1) + \alpha_1} \cdot \\
& \hskip 40pt J_{\unu - A \ue^d + 2A \sum_{l      \in L_-(\urho)} \ue_l      - \alpha_1 \ue_{l     _+} } \big(x; \usigma; \big( t^{\alpha_2}_{l     _+} 
\partial^{\alpha_2}_{l     _+} \textstyle \prod_{l      \in L_-(\urho)} t^{\alpha_{2, l     }}_l      \partial^{\alpha_{2, l     }} _{l     } \big) h_{\urho}\big),
\end{split}
\end{equation}
with 
 $\alpha_1 + \alpha_2 + \alpha_3 \leq \sum_{l      \in L_-(\urho)} \alpha_{1, l     } + A$.
It is  easy to verify that the real part of the $l     $-th index of the $J$-integral in (\ref{3eq: linear combination first case}) is positive if    $l      \in L_-(\urho)$ and 
negative if $l      \in L_+(\urho)$. 
Therefore, the $J$-integral in (\ref{3eq: linear combination first case}) is absolutely convergent according to Lemma \ref{lem: I pm ell} (1). We define  $\BJ_{\unu}(x; \usigma; \urho)$ to be the total linear combination of  all the $J$-integrals obtained after these two steps of operations.

When $L_- (\urho) \neq \O$ but $L_+ (\urho) = \O$, we define $\BJ_{\unu}(x; \usigma; \urho) = \EuScript P^A_{+, \urho} ( J_{\unu}(x; \usigma; \urho) )$. It is a linear combination of 
\begin{equation} \label{3eq: linear combination second case}
\begin{split}
& \left( \prod_{l      \in L_- (\urho)} [\nu_l     - 1]_{\alpha_{3, l     }} \right) x^{- A |L_-(\urho)| + \sum_{l      \in L_-(\urho)} \alpha_{1, l     }} \cdot \\
& \hskip 40pt J_{\unu + (\sum_{l      \in L_-(\urho)} \alpha_{1, l     } ) \ue^d + A \sum_{l      \in L_-(\urho)} \ue_l     } \big(x; \usigma; \big(\textstyle \prod_{l      \in L_-(\urho)} t^{\alpha_{2, l     }}_l      \partial^{\alpha_{2, l     }} _{l     } \big) h_{\urho}\big),
\end{split}
\end{equation}
with $\alpha_{1, l     } + \alpha_{2, l     } + \alpha_{3, l     } \leqslant A$. The $J$-integral in (\ref{3eq: linear combination second case}) is absolutely convergent.

When  $L_+ (\urho) \neq \O$ but $L_- (\urho) = \O$, we choose $l     _+ \in L_+ (\urho) $ and define $\BJ_{\unu}(x; \usigma; \urho) = \EuScript P^A_{-,\, l     _+} ( J_{\unu}(x; \usigma; \urho) )$. This is a linear combination of 
\begin{equation} \label{3eq: linear combination third case}
\begin{split}
 [\nu_{l     _+} - 1]_{\alpha_3} x^{- A + \alpha_1} J_{\unu - A \ue^d - \alpha_1 \ue_{l     _+} } \big(x; \usigma; t^{\alpha_2}_{l     _+} \partial_{l     _+}^{\alpha_2} h_{\urho} \big),
\end{split}
\end{equation}
with $\alpha_1 + \alpha_2 + \alpha_3 \leq A$. The $J$-integral in (\ref{3eq: linear combination third case}) is again absolutely convergent.

Finally,  when both  $L_- (\urho) $ and $L_+ (\urho) $ are empty, we put $\BJ_{\unu}(x; \usigma;  \urho) = J_{\unu}(x; \usigma;  \urho)$.

\begin{lem}\label{lem: BJ nu (x; sigma; rho) is well-deifned}
The definition of  $\BJ_{\unu}(x; \usigma; \urho)$ is independent on $A$ and the choice of $l     _+\in L_+ (\urho)$.
\end{lem}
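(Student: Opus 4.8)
The plan is to show that, for any two admissible parameter choices, the resulting linear combinations are holomorphic functions of $\unu$ that coincide on a nonempty open subset of $\BS_a$, and then to conclude by the identity theorem for holomorphic functions of several variables (recall that $\BS_a$ is convex, hence connected). I would carry out the case in which both $L_-(\urho)$ and $L_+(\urho)$ are nonempty, the other three cases being identical in spirit but with a shorter chain of operators (and with no $l_+$ to select when $L_+(\urho)=\emptyset$). Fix two choices $(A,l_+)$ and $(A',l_+')$, and let $F(\unu)$ and $G(\unu)$ denote the corresponding values of $\BJ_{\unu}(x;\usigma;\urho)$.

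First I would pin down where $F$ is holomorphic. By construction $F$ is a finite $\BC[x\-]$-linear combination, with coefficients polynomial in $\unu$, of absolutely convergent $J$-integrals of the shape \eqref{3eq: linear combination first case}. Reading off the convergence ranges from Lemma \ref{lem: I pm ell}(1) for the shifted indices, each of these $J$-integrals converges absolutely on the open box
\[
B = \bigl\{ \unu \in \BC^{d} : \Re \nu_l > - A \ (l \in L_-(\urho)), \ \Re \nu_l < A \ (l \in L_+(\urho)) \bigr\};
\]
since the integrand $h(\ut)\, p_{\unu} (\ut)\, e^{i x \theta(\ut;\usigma)}$ is entire in $\unu$ and the convergence is locally uniform, $F$ is holomorphic on $B$. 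In particular $\BS_a \subseteq B$, and likewise $\BS_a \subseteq B'$ for the corresponding box attached to $A'$.

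Next I would locate the region on which the operator chain is literally integration by parts. Let $J^{\circ}(\unu) = J_{\unu}(x;\usigma;\urho)$ be the original partition integral; by Lemma \ref{lem: I pm ell}(1) it is absolutely convergent and holomorphic on
\[
V = \bigl\{ \unu \in \BC^{d} : \Re \nu_l > 0 \ (l \in L_-(\urho)), \ \Re \nu_l < 0 \ (l \in L_+(\urho)) \bigr\}.
\]
Each operator $\EuScript P_{+, l}$ $(l\in L_-(\urho))$ and $\EuScript P_{-, l_+}$ occurring in the construction shifts the index by a fixed integer vector, and Lemma \ref{lem: I pm ell}(2) records precisely when such an operator equals a genuine partial integration with vanishing boundary terms; here one uses that the auxiliary functions produced from $h_{\urho}$ by the substitutions $h_l \mapsto t_l^{\alpha}\partial_l^{\alpha} h_l$ are still of the required vanishing type, being either $h_{\pm}$ itself or a compactly supported derivative of it. Imposing the hypotheses of Lemma \ref{lem: I pm ell}(2) at each of the finitely many stages produces finitely many inequalities $\Re\nu_l > c_l$ $(l\in L_-(\urho))$ and $\Re\nu_l < c_l'$ $(l\in L_+(\urho))$; their common solution set is a nonempty open box $U$, and by taking $\Re\nu_l$ large enough for $l\in L_-(\urho)$ and negative enough for $l\in L_+(\urho)$ I can arrange $U \subseteq V \cap B$. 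On $U$ every application of an operator preserves the value of the (absolutely convergent) integral, so $F = J^{\circ}$ throughout $U$.

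Finally I would propagate and compare. Since $V\cap B$ is convex, hence connected, and $F$ (holomorphic on $B$) agrees with $J^{\circ}$ (holomorphic on $V$) on the nonempty open set $U\subseteq V\cap B$, the identity theorem gives $F = J^{\circ}$ on all of $V\cap B$; the same argument yields $G = J^{\circ}$ on $V\cap B'$. Hence $F = J^{\circ} = G$ on $V\cap \BS_a \subseteq V\cap B\cap B'$, which is a nonempty open subset of $\BS_a$, and a final appeal to the identity theorem on the connected domain $\BS_a$ gives $F = G$ there, which is exactly the asserted independence of $\BJ_{\unu}(x;\usigma;\urho)$ from $A$ and from $l_+$. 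The hard part is that on $\BS_a$ itself the intermediate integrals are not absolutely convergent, so the operators $\EuScript P_{+, l}$ and $\EuScript P_{-, l}$ cannot be read as honest integration by parts there; everything hinges on exhibiting a single box $B$ that carries all the finally occurring $J$-integrals and simultaneously contains both $\BS_a$ and an extreme region $U\subseteq V$ on which the integration-by-parts interpretation is rigorous, so that holomorphy can ferry the identity back to $\BS_a$.
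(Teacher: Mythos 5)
Your proof is correct, but it follows a genuinely different route from the paper's. The paper argues directly at each fixed $\unu \in \BS_a^d$, never leaving the domain of the lemma: to pass from the definition with $A$ to the definition with $A+1$ it applies, to each already absolutely convergent $J$-integral, the operations $\EuScript P_{+,\,l}$ twice followed by $\EuScript P_{-,\,l_+}$ one to three times for every $l \in L_-(\urho)$; because $A \geq a+2$, all intermediate indices stay inside the convergence ranges of Lemma \ref{lem: I pm ell} (1), so by Lemma \ref{lem: I pm ell} (2) each step is an honest, value-preserving partial integration, and the outcome is literally the $(A+1)$-definition. Independence of $l_+$ is then obtained by a symmetrization trick: running this relay $A$ times with $k_+$ in place of $l_+$ turns the $l_+$-definition into an expression symmetric in $l_+$ and $k_+$, and the same computation with the roles exchanged produces the identical expression. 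You instead prove both independences at one stroke by holomorphy in $\unu$: each finished linear combination is holomorphic on a box $B \supseteq \BS_a^d$, agrees with the genuinely convergent integral $J_{\unu}(x;\usigma;\urho)$ on an extreme region $U$ where the whole operator chain is literal integration by parts, and the identity theorem ferries the equality $F = J^{\circ} = G$ from $V$ back to $\BS_a^d$. What your route buys: it avoids the combinatorial bookkeeping needed to match the $A$-definition with the $(A+1)$-definition and the symmetrization argument, it handles arbitrary pairs of admissible choices uniformly, and it makes explicit the conceptual point (consistent with the paper's footnote identifying $\BJ_{\unu}$ with the Hadamard partie finie) that the rigorous interpretation is nothing but the analytic continuation of the convergent integral from the region $V$. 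What it costs: you must establish holomorphy of the final $J$-integrals on $B$ via locally uniform absolute convergence, a fact the paper only records after the lemma, whereas the paper's argument is purely pointwise and needs no complex-analytic input at all.
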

\begin{proof}
We shall treat the case when both $L_+(\urho)$ and $L_-(\urho) $ are nonempty. The other cases are similar and simpler.

Starting from the $\BJ_{\unu}(x; \usigma; \urho)$ defined with $A$, we conduct the following operations in succession for all $l      \in L_-(\urho)$: $\EuScript P_{+,\, l     }$ twice and then $\EuScript P_{-,\, l     _+}$ once, twice or three times on each resulting $J$-integral so that the increment of the $l     $-th index is exactly one.  In this way, one arrives at the $\BJ_{\unu}(x; \usigma; \urho)$ defined with $A+1$. 
In view of the assumption $A \geq a + 2 $, 
absolute convergence is maintained at each step due to Lemma \ref{lem: I pm ell} (1). Moreover, in our settings, the operations $\EuScript P_{+,\, l     }$ and  $\EuScript P_{-,\, l     _+}$ are actual partial integrations (Lemma \ref{lem: I pm ell} (2)), so the value  is preserved in the process. In conclusion, $\BJ_{\unu}(x; \usigma; \urho)$  is independent on $A$.

Suppose $l     _+, k_+ \in L_+(\urho)$. Repeating the process described in the last paragraph $A$ times, but with $l     _+$ replaced by $k_+$,  the $\BJ_{\unu}(x; \usigma; \urho)$ defined with $l     _+$ turns into a sum of integrals of an expression symmetric about $l     _+$ and $k_+$. 
Interchanging $l     _+$ and $k_+$ throughout the arguments above, the $\BJ_{\unu}(x; \usigma; \urho)$ defined with $k_+$ is transformed into the same  sum of integrals. Thus we conclude that $\BJ_{\unu}(x; \usigma; \urho)$ is independent on the choice of $l     _+$.
\end{proof}

Putting these together, we define
\begin{equation*} 
\BJ_{\unu}(x; \usigma) = \sum_{\urho \in I^d } \BJ_{\unu}(x; \usigma; \urho),
\end{equation*}
and call $\BJ_{\unu} (x; \usigma)$ the \textit{rigorous interpretation  of $J_{\unu} (x; \usigma)$}. 
The definition of $\BJ_{\unu} (x; \usigma)$ is clearly independent on the partition of unity $\{ h_{\varrho} \}_{\varrho \in I }$ on $\BR_+$.

Uniform convergence of the $J$-integrals in (\ref{3eq: linear combination first case}, \ref{3eq: linear combination second case}, \ref{3eq: linear combination third case}) with respect to $\unu$ implies that $\BJ_{\unu}(x; \usigma)$ is an analytic function of $\unu$ on $\BS_a^d$ and hence on the whole $\BC^d$ since $a$ was arbitrary. Moreover, for any nonnegative integer $j $, if one chooses $A \geq a + j + 2$, differentiating  $j$ times under the integral sign for  the $J$-integrals in (\ref{3eq: linear combination first case}, \ref{3eq: linear combination second case}, \ref{3eq: linear combination third case}) is legitimate. Therefore,  $\BJ_{\unu}(x; \usigma)$ is a smooth function of $x$. 

Henceforth, with ambiguity, we shall write $\BJ_{\unu} (x; \usigma)$ and  $\BJ_{\unu} (x; \usigma; \urho)$ as $J_{\unu} (x; \usigma)$ and $J_{\unu} (x; \usigma; \urho)$ respectively.

\section{\texorpdfstring{Equality between $J_{\unu}(x; \usigma)$ and $J (x ; \usigma, \ulambda)$}{Equality between $J_{\nu}(x; \varsigma)$ and $J (x ; \varsigma, \lambda)$}} \label{sec: Relating Bessel functions} 
The goal of this section is to prove  that the Bessel function $J (x ; \usigma, \ulambda)$ is indeed equal to the rigorous interpretation of its formal integral representation  $J_{\unu}(x; \usigma)$.

\begin{prop}\label{prop: J (x ; sigma, lambda) = J nu (x; sigma)} Suppose that $\ulambda \in \BL^{d}$ and $\unu \in \BC^d$ satisfy $\nu_l      = \lambda_l      - \lambda_{d+1}$, $l      = 1, ..., d$. Then
\begin{equation*}
J (x ; \usigma, \ulambda) = J_{\unu} (x; \usigma).
\end{equation*}
\end{prop}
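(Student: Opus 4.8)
The plan is to exploit the fact that the Mellin--Barnes integral \eqref{2eq: definition of J (x; sigma)} exhibits $J\big(u^{1/n};\usigma,\ulambda\big)$ as the inverse Mellin transform of the product $G(s;\usigma,\ulambda)=\prod_{l=1}^n \Gamma(s-\lambda_l)\, e\lp \varsigma_l(s-\lambda_l)/4\rp$, and then to recognise this product as an $n$-fold multiplicative convolution. Indeed, by \eqref{2eq: Gamma and exponential via Mellin} each factor is the conditionally convergent Mellin transform of $g_l(y)=y^{-\lambda_l}e^{\varsigma_l i y}$, so the Mellin convolution theorem formally yields $J\big(u^{1/n};\usigma,\ulambda\big)=(g_1*\cdots*g_n)(u)$. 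Performing the change of variables $y_l=x/t_l$ for $l=1,\dots,n-1$ and $u=x^n$, and invoking $\sum_l\lambda_l=0$ to cancel all powers of $x$, transforms this convolution \emph{precisely} into the formal integral $J_{\unu}(x;\usigma)=\int_{\BR_+^{d}}p_{\unu}(\ut)\,e^{ix\theta(\ut;\usigma)}\,d\ut$ of \eqref{3eq: definition J nu (x; sigma)}, with $\nu_l=\lambda_l-\lambda_{d+1}$. Thus at the purely formal level the identity is automatic; the entire difficulty is that none of these integrals converges absolutely.

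Since both $J(x;\usigma,\ulambda)$ and the rigorous $J_{\unu}(x;\usigma)$ are holomorphic in $\unu\in\BC^d$ (the former by the remark following \eqref{2eq: definition of J (x; sigma)}, the latter by the construction in \S\ref{sec: Definition of BJ nu (x; sigma)}), it suffices to establish the equality on a nonempty open set and then appeal to analytic continuation. On a region where the convergence strips $\Re\lambda_l<\Re s<\Re\lambda_l+1$ of the $n$ factors overlap, I would make the convolution rigorous by peeling off one factor at a time: apply \eqref{2eq: Gamma and exponential via Mellin} to the factor indexed by $n$ to introduce the variable $t$ via $\Gamma(s-\lambda_n)e\lp\varsigma_n(s-\lambda_n)/4\rp=\int_0^\infty t^{s-\lambda_n}e^{\varsigma_n i t}\,d^\times t$, interchange, and recognise the residual contour integral as a rank $n-1$ object. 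Organised as an induction on $n$ (with base case $n=1$, i.e.\ Proposition \ref{prop: n=1}), and tracking the extra powers of $x$ until the normalisation $\sum_l\lambda_l=0$ is used at the end, this realises $J(x;\usigma,\ulambda)$ as the iterated interpretation of $J_{\unu}(x;\usigma)$.

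It then remains to identify this iterated integral with the rigorous interpretation $\BJ_{\unu}(x;\usigma)$ assembled in \S\ref{sec: Definition of BJ nu (x; sigma)} from the partition of unity $\{h_{\urho}\}$ and the operators $\EuScript P_{\pm,\,l}$, and here Lemma \ref{lem: I pm ell}(2) is the key tool: on each piece $J_{\unu}(x;\usigma;\urho)$ the operators $\EuScript P_{+,\,l}$ for $l\in L_-(\urho)$ and $\EuScript P_{-,\,l}$ for $l\in L_+(\urho)$ are \emph{genuine} partial integrations whenever the relevant real parts are large, so they preserve the value while improving convergence, and summing over $\urho$ recovers $\BJ_{\unu}(x;\usigma)$. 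The main obstacle is that the region of strip overlap $\max_l\Re\lambda_l-\min_l\Re\lambda_l<1$, where the convolution argument is valid, does not coincide with the region where the $\EuScript P_{\pm,\,l}$ act as literal integration by parts (which demands $\Re\nu_l$ of large and opposite signs on $L_-(\urho)$ and $L_+(\urho)$, incompatible across all pieces for a fixed $\unu$). Reconciling these non-overlapping regimes of conditional convergence --- propagating the identity from where $J$ equals the iterated integral to where the $\EuScript P$-regularisation is literal integration by parts, via a further analytic continuation in $\unu$ and the vanishing of boundary terms forced by the support and decay properties of the $h_{\varrho}$ --- is the delicate technical heart of the proof. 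An alternative that sidesteps some of this bookkeeping is the kernel route: show that inserting the rigorous $\BJ_{\unu}$ into \eqref{2eq: Psi (x; sigma) as Hankel transform} reproduces the iterated expression \eqref{2eq: Psi (x; sigma) another integral representation} of $\Upsilon(x;\usigma)$ for all test functions $\upsilon$, whence $J=\BJ_{\unu}$ by uniqueness of the kernel and continuation.
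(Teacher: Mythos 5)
Your main route stalls exactly where you say it does, and the fix you sketch for that step does not work. Granting (which already needs a regularization argument you do not supply, since Fubini is unavailable for the conditionally convergent integral \eqref{2eq: Gamma and exponential via Mellin}) that peeling off Gamma factors identifies $J(x;\usigma,\ulambda)$ with an iterated integral on the strip-overlap region, you still must identify that iterated object with $\BJ_{\unu}(x;\usigma)$, and your proposal to do this ``via a further analytic continuation in $\unu$ and the vanishing of boundary terms'' has no footing. As you yourself observe, for a fixed $\unu$ the per-$\urho$ regimes in which $\EuScript P_{+,\, l}$ and $\EuScript P_{-,\, l_+}$ are literal integrations by parts (Lemma \ref{lem: I pm ell} (2)) are mutually incompatible, so there is no nonempty open set of $\unu$ on which the iterated integral and $\BJ_{\unu}$ can be compared piece by piece; and for generic $\unu$ the object $\BJ_{\unu}(x;\usigma;\urho)$ is defined purely formally--algebraically, with no iterated-integral representation on which ``boundary terms'' could even be discussed. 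Analytic continuation cannot start, because the identity has not been established anywhere.

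The missing idea --- which your final sentence gestures at as the ``kernel route'' but does not execute --- is the paper's device of keeping the Schwartz test function in the integral. In the regime $\Re \lambda_1 > \cdots > \Re \lambda_{d+1}$, the Hankel transform is the $(d+1)$-fold iterated integral $\Upsilon_1(x;\usigma)$ of \eqref{4eq: Psi 1 (x; sigma)}, and Lemma \ref{4lem: Psi ell} supplies enough decay that \emph{actual} partial integrations are legitimate in every variable, including the test-function variable $y$; the partial integration $\EuScript P_{d+1}$ in $y$ acts as a proxy for $\EuScript P_{-,\, l_+}$. After applying $\EuScript P_{l}$ and $\EuScript P_{d+1}$ the integral becomes absolutely convergent, so the $y$-integration can be moved outside; the substitution $y_{l} = t_{l}(xy)^{-1/(d+1)}$ converts $\EuScript P_{l}$ into $\EuScript P_{+,\, l}$, and the reversal argument from Lemma \ref{lem: BJ nu (x; sigma; rho) is well-deifned} trades the $\EuScript P_{d+1}$-operations for $\EuScript P_{-,\, l_+}$-operations, so that the inner integral is, \emph{by definition}, $\BJ_{\unu}\big(2\pi(xy)^{1/n};\usigma;\urho\big)$ --- no regime reconciliation is ever needed. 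Summing over $\urho$, comparing with \eqref{2eq: Psi (x; sigma) as Hankel transform}, and continuing analytically in $\ulambda$ completes the proof. That extra $y$-integration is the bridge between your two incompatible regimes; without it, or a worked-out substitute, the proposal does not close.
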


To prove this proposition, one first needs to know how the iterated integral  $\Upsilon(x; \usigma)$ given in \eqref{2eq: Psi (x; sigma) another integral representation} is interpreted  (compare \cite[\S 6]{Miller-Schmid-2004} and \cite[\S 5]{Miller-Schmid-2006}).

Suppose that $\Re \lambda_1 > ... > \Re \lambda_d > \Re \lambda_{d+1}$. 
Let $\upsilon \in \mathscr S (\BR_+)$ be a Schwartz function on $\BR_+$. Define
\begin{equation}\label{4eq: Psi d+1}
\Upsilon_{d+1} (x; \usigma ) = \int_{\BR_+} \upsilon (y) y^{-\lambda_{d+1}} e(\varsigma_{d+1} x y) d y, \hskip 10 pt x \in \BR _+,
\end{equation}
and for each $l      = 1, ..., d $ recursively define
\begin{equation}\label{4eq: Psi ell}
\begin{split}
 \Upsilon_{l     } \left(x; \usigma \right)  = \int_{\BR_+} \Upsilon_{l      + 1} \lp y; \usigma  \rp y^{\lambda_l      - \lambda_{l      + 1} - 1} e\lp \varsigma_{l     } x y\- \rp d y,
\hskip 10 pt x \in \BR _+.
\end{split}
\end{equation}
\begin{lem}\label{4lem: Psi ell} 
Suppose that  $\Re \lambda_1 > ... > \Re \lambda_d > \Re \lambda_{d+1}$. 
Recall the definition of $\mathscr T (\BR _+)$ given in Definition {\rm \ref{2defn: formal partial integration}}, and define the space $\mathscr T_\infty (\BR _+)$  of all functions in $ \mathscr T (\BR _+)$ that  decay rapidly at infinity, along with all their derivatives. Then $\Upsilon_{l     } \left(x; \usigma \right) \in \mathscr T_\infty (\BR _+)$ for each $ l      = 1, ..., d+1$. 





\end{lem}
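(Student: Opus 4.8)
My plan is to prove the lemma by downward induction on $l$, from $l = d+1$ to $l = 1$, showing at each stage that $\Upsilon_l(\,\cdot\,; \usigma) \in \mathscr T_\infty(\BR_+)$. The single structural device I would use throughout is the substitution $u = y^{-1}$, which turns each $\Upsilon_l$ into a half-line Fourier-type integral, combined with the elementary observation that $\mathscr T(\BR_+)$ is invariant under the inversion $t \mapsto t^{-1}$. Indeed, the seminorms $\sup_t |t^\alpha h^{(\alpha)}(t)|$ defining $\mathscr T(\BR_+)$ are equivalent to $\sup_t |(t\partial_t)^\alpha h(t)|$, and the Euler operator $t\partial_t$ is carried to $-(s\partial_s)$ under $t = s^{-1}$, so $h(t) \in \mathscr T(\BR_+)$ if and only if $h(t^{-1}) \in \mathscr T(\BR_+)$. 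This reformulation in terms of $t\partial_t$ rather than $\partial_t$ is what will make everything go through.

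For the base case $l = d+1$, the amplitude $g(y) = \upsilon(y) y^{-\lambda_{d+1}}$ is Schwartz on $\BR_+$, and $\Upsilon_{d+1}(x; \usigma) = \int_{\BR_+} g(y)\, e(\varsigma_{d+1} xy)\, dy$ is its half-line Fourier transform. Since $x\partial_x\, e(\varsigma_{d+1}xy) = y\partial_y\, e(\varsigma_{d+1}xy)$, integrating by parts shows that $(x\partial_x)^\alpha \Upsilon_{d+1}$ is the Fourier transform of $\big[-(1+y\partial_y)\big]^\alpha g$, which is again Schwartz; its integral is bounded uniformly in $x$, giving the $\mathscr T$-bound, while repeated integration by parts against the oscillation extracts arbitrary negative powers of $x$, giving rapid decay at infinity.

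For the inductive step I would assume $\Upsilon_{l+1} \in \mathscr T_\infty(\BR_+)$ and set $\mu = \lambda_l - \lambda_{l+1}$, so that $\Re \mu > 0$ by the ordering hypothesis. After $u = y^{-1}$,
\[
\Upsilon_l(x; \usigma) = \int_{\BR_+} A(u)\, e(\varsigma_l x u)\, du, \qquad A(u) = \Upsilon_{l+1}(u^{-1}; \usigma)\, u^{-\mu - 1}.
\]
By inversion-invariance $\Upsilon_{l+1}(u^{-1}) \in \mathscr T(\BR_+)$, and since $\Upsilon_{l+1}$ and all its derivatives decay rapidly at $y = \infty$, the factor $\Upsilon_{l+1}(u^{-1})$ and all its $u\partial_u$-derivatives decay rapidly as $u \to 0$. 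Hence $A$ and every $(u\partial_u)^j A$ decay rapidly at $u = 0$ and are $O(u^{-\Re\mu - 1})$ at $u = \infty$, which is integrable at infinity precisely because $\Re\mu > 0$; this gives absolute convergence. The key computation, exactly as in the base case, is that $x\partial_x$ acts on the Fourier side as $A \mapsto -(1 + u\partial_u)A = -(uA)'$, the boundary terms vanishing since $uA(u) = O(u^{-\Re\mu}) \to 0$. Iterating,
\[
(x\partial_x)^\alpha \Upsilon_l(x; \usigma) = \int_{\BR_+} \big[-(1 + u\partial_u)\big]^\alpha A(u)\, e(\varsigma_l x u)\, du,
\]
whose amplitude is a combination of the $(u\partial_u)^j A$, each with integral bounded uniformly in $x$; this yields $(x\partial_x)^\alpha \Upsilon_l \lll_\alpha 1$, i.e. $\Upsilon_l \in \mathscr T(\BR_+)$. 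For rapid decay I would integrate the last display by parts $N$ times in $u$: since $\partial_u^N\big[(1+u\partial_u)^\alpha A\big] = O(u^{-\Re\mu - 1 - N})$ is integrable for every $N$, the boundary terms vanish and a factor of size $x^{-N}$ comes out, so $(x\partial_x)^\alpha \Upsilon_l(x; \usigma) \lll_{\alpha, N} x^{-N}$ for $x \geq 1$. Thus $\Upsilon_l \in \mathscr T_\infty(\BR_+)$, closing the induction.

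The hard part will be the behaviour near $x = 0$. Differentiating $\Upsilon_l$ naively in $x$ pulls a factor $u^\alpha$ onto the amplitude and destroys integrability at $u = \infty$ (equivalently $y = 0$), so it is not even obvious that $\Upsilon_l$ is smooth, nor that it obeys the correct growth $x^\alpha \Upsilon_l^{(\alpha)} \lll 1$ rather than something worse. The resolution, and the crux of the whole argument, is to differentiate multiplicatively: the operator $x\partial_x$ corresponds on the Fourier side to $-(1 + u\partial_u)$, which preserves the amplitude class and keeps every integral uniformly bounded, whereas ordinary $\partial_x$ does not. (This oscillatory integration by parts against $e(\varsigma_l x/y)$ is the rigorous single-variable instance of the \emph{first kind} of formal partial integration from Definition \ref{2defn: formal partial integration}.) Once this is secured, the remaining points — absolute convergence, vanishing of the boundary terms, and legitimacy of differentiating under the integral sign for $x > 0$ — follow routinely from the decay estimates for $A$ together with $\Re\mu > 0$.
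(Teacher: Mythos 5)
Your proof is correct and is essentially the paper's own argument: downward induction on $l$, with the base case a Fourier transform of a Schwartz function, and the inductive step resting on the interplay between partial integration against the oscillation (which improves convergence and extracts powers of $x^{-1}$) and differentiation under the integral sign (legitimate only after enough partial integrations), with $\Re\lambda_l > \Re\lambda_{l+1}$ securing absolute convergence. Your substitution $u = y^{-1}$ and the Euler-operator bookkeeping $x\partial_x \leftrightarrow -(1+u\partial_u)$, together with the seminorm equivalence for $\mathscr T(\BR_+)$, are a clean repackaging of the paper's statement that partial integration \emph{raises} the power of $y$ while differentiation \emph{lowers} it, not a different method.
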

\begin{proof}



In the case $l      = d+1$, $\Upsilon_{d+1} (x; \usigma )$ is the Fourier transform of a Schwartz function on $\BR$ (supported in $\BR_+$) and hence is actually a Schwartz function on $\BR$. In particular, $\Upsilon_{d+1} (x; \usigma ) \in \mathscr T_\infty (\BR _+)$. One may also prove this directly via performing partial integration and differentiation under the integral sign on the integral in  \eqref{4eq: Psi d+1}. 

Suppose that $\Upsilon_{l     +1} \left(x; \usigma \right) \in \mathscr T_\infty (\BR _+)$. The condition $\Re \lambda_{l     } > \Re \lambda_{l     +1}$ secures the convergence of the integral in \eqref{4eq: Psi ell}. 
Partial integration   has the effect of dividing $\varsigma_{l     } 2 \pi i x$ and    results in an integral of the same type but with \textit{the power of $y$ raised by one}, so repeating this yields the rapid decay of $\Upsilon_{l     } \left(x; \usigma \right)$. Moreover, differentiation under the integral sign \textit{decreases the power of $y$ by one}, so multiple differentiating  $\Upsilon_{l     } \left(x; \usigma \right)$ is legitimate after repeated partial integrations. From these, it is straightforward to prove that $ \Upsilon_{l     } (x; \usigma ) \in \mathscr T (\BR _+)$. Finally, keeping repeating partial integrations yields the rapid decay of all the derivatives of $\Upsilon_{l     } \left(x; \usigma \right)$.
\end{proof}


The change of variables from $y $ to $ x y$ in (\ref{4eq: Psi ell}) yields
\begin{equation*} 
\Upsilon_{l     } \lp x; \usigma \rp = \int_{\BR_+} \Upsilon_{l      + 1} \lp x y; \usigma  \rp x^{\lambda_l      - \lambda_{l      + 1}} y^{\lambda_l      - \lambda_{l      + 1} - 1} e\lp \varsigma_{l     } y\- \rp d y.
\end{equation*}
Some calculations then show that $ \Upsilon_{1} \lp x; \usigma  \rp$ is equal to the iterated integral
\begin{equation}\label{4eq: Psi 1 (x; sigma)}
\begin{split}
x^{\nu_1 } \int_{\BR_+^{d+1}} \upsilon \left( y \right) y^{- \lambda_{d+1} }   \left( \prod_{l      = 1}^{d} y_l     ^{ \nu_l       - 1} \right)  
  e \left( \varsigma_{d+1}  xy y_1 ... y_{d} + \sum_{l      = 1}^d \varsigma_l      y_l     \- \right) dy d y_d ... dy_1.
\end{split}
\end{equation}
Comparing \eqref{4eq: Psi 1 (x; sigma)} with (\ref{2eq: Psi (x; sigma) another integral representation}), one sees that $\Upsilon(x; \usigma) = x^{ - \lambda_1 } \Upsilon_1 (x; \usigma )$. 

The (actual) partial integration $\EuScript P_{l     }$ on the integral over $d y_{l     }$ 
is in correspondence with $\EuScript P_{+,\, l     }$, whereas the partial integration $\EuScript P_{d+1}$ on the integral over $d y$ 
has the similar effect as $\EuScript P_{-,\, l     _+}$ of decreasing the powers of all the $y_{l     }$ by one. 
These observations are crucial to our proof of Proposition \ref{prop: J (x ; sigma, lambda) = J nu (x; sigma)} as follows.

\begin{proof}[Proof of Proposition {\rm \ref{prop: J (x ; sigma, lambda) = J nu (x; sigma)}}]
	
Suppose that  $\Re \lambda_1 > ... > \Re \lambda_d > \Re \lambda_{d+1}$. 
We first partition the integral over $d y_{l     }$ in \eqref{4eq: Psi 1 (x; sigma)}, for each $l      = 1, ..., d$, into a sum of integrals according to a partition of unity $\{h^{\mathrm{o}}_{\varrho} \}_{\varrho \in I}$ of $\BR_+$. These partitions result in a partition of the integral (\ref{4eq: Psi 1 (x; sigma)}) into the sum
\begin{equation*}
\Upsilon_{1} \lp x; \usigma   \rp = \sum_{\urho \in I^d } \Upsilon_1 (x; \usigma  ; \urho),
\end{equation*}
with
\begin{equation}\label{4eq: Psi 1 (x; sigma; rho)}
\begin{split}
\Upsilon_1 (x; \usigma  ; \urho) = x^{\nu_1 } \int_{\BR_+^{d+1}} \upsilon \left( y \right) y^{- \lambda_{d+1} } & \left( \prod_{l      = 1}^{d} h^{\mathrm{o}}_{\varrho_l     } (y_l     ) y_l     ^{ \nu_l      - 1} \right) \\
e &  \left( \varsigma_{d+1}  xy y_1 ... y_{d} + \sum_{l      = 1}^d \varsigma_l      y_l     \- \right) dy d y_d ... dy_1.
\end{split}
\end{equation}
We now conduct the operations in \S \ref{sec: Definition of BJ nu (x; sigma)} with $\EuScript P_{+,\, l     }$ replaced by $\EuScript P_{l     }$ and $\EuScript P_{-,\, l     _+}$ by $\EuScript P_{d+1}$ to each integral $ \Upsilon_1 (x; \usigma   ; \urho)$ defined in (\ref{4eq: Psi 1 (x; sigma; rho)}). While preserving the value, these   partial integrations turn the iterated integral $ \Upsilon_1 (x; \usigma   ; \urho)$ into an absolutely convergent multiple integral. We are then able to move the innermost integral over $d y$ to the outermost place. The integral over  $dy_d ... dy_1$ now becomes   the {inner} integral. Making the change of variables $y_l      = t_{l     } (xy)^{-\frac 1 { d+1} }$ to the inner integral  over  $dy_d ... dy_1$,
each partial integration $\EuScript P_{l     }$ that we did turns into $\EuScript P_{+,\, l     }$. By the same arguments in the proof of Lemma \ref{lem: BJ nu (x; sigma; rho) is well-deifned} showing that $J_{\unu}(x; \usigma)$ is independent on the choice of $l     _+\in L_+(\urho)$, the operations of $\EuScript P_{d+1}$ that we conducted at the beginning may be reversed and substituted by those of $\EuScript P_{-,\, l     _+}$. It follows that the inner integral over $dy_d ... dy_1$ is equal to $x^{\lambda_1 } \upsilon (y) J_{\unu} \big( 2\pi (xy)^{\frac 1 { d+1 } }; \usigma; \urho \big)$, with $ h_{\varrho}(t) = h^{\mathrm{o}}_{\varrho}\big(  t (xy)^{-\frac 1 { d+1 }} \big)$. Summing over $\urho \in I^d$, we conclude that
\begin{equation*}
\Upsilon (x; \usigma) = x^{ - \lambda_1} \Upsilon_1 (x; \usigma  ) =  \int_{\BR_+} \upsilon (y) J_{\unu} \big( 
2\pi (xy)^{\frac 1 { d+1 }}; \usigma \big) d y.
\end{equation*}
Therefore, in view of (\ref{2eq: Psi (x; sigma) as Hankel transform}), we have
$J (x ; \usigma, \ulambda) = J_{\unu} (x; \usigma).$ This equality  holds true universally due to the principle of analytic continuation. 
\end{proof}


In view of Proposition \ref{prop: J (x ; sigma, lambda) = J nu (x; sigma)}, we shall subsequently assume that $\ulambda \in \BL^{d}$ and $\unu \in \BC^d$ satisfy the relations $\nu_l      = \lambda_l      - \lambda_{d+1}$, $l      = 1, ..., d$.

\section{$H$-Bessel functions and $K$-Bessel functions}\label{sec: Bessel functions of K-type and H-Bessel functions}

According to Proposition \ref{prop: Classical Bessel functions}, $ J_{2\lambda} (x; \pm, \pm) = J (x; \pm, \pm, \lambda, - \lambda) $ 
is a Hankel function, and $ J_{2\lambda} (x; \pm, \mp) = J (x; \pm, \mp, \lambda, - \lambda)$
is a $K$-Bessel function. There is a remarkable difference between the behaviours of   Hankel functions and the $K$-Bessel function for large argument. The Hankel functions oscillate and decay proportionally to $\frac 1 {\sqrt x}$, whereas the $K$-Bessel function exponentially decays. 
On the other hand, this phenomena also arises in higher rank for the  prototypical example shown in Proposition \ref{prop: special example}.  

In the following, we shall  show that such a categorization stands in general for the Bessel functions $J_{\unu} (x; \usigma)$ of an arbitrary index $\unu$.
For this, we shall analyze each integral $J_{\unu}(x; \usigma; \urho)$ in the rigorous interpretation of  $J_{\unu} (x; \usigma)$ using \textit{the method of stationary phase}.

First of all, the asymptotic behaviour of $J_{\unu} (x; \usigma)$  for large argument should rely on the existence of a stationary point of the phase function $ \theta (\ut ; \usigma )$ on $\BR_+^d$.
We have $$ \theta' (\ut; \usigma) = \left( \varsigma_{d+1} t_1 ... \widehat {t_{l     }} ... t_d -  \varsigma_l      t_{l     }^{-2} \right)_{l      = 1}^d.$$ 
A stationary point  of  $ \theta (\ut ; \usigma )$  exists in $\BR^d_+$ if and only if $\varsigma_1 = ... = \varsigma_d = \varsigma_{d+1}$, in which case  it is equal to $ \ut_0 = (1, ..., 1)$.

\begin{term}\label{term: Bessel functions of K-type and H-Bessel functions}
We write $H^{\pm}_{\unu}(x ) = J_{\unu} (x; \pm, ..., \pm)$, $H^{\pm} (x; \ulambda ) = J (x; \pm, ... \pm, \ulambda)$ 
and call them $H$-Bessel functions. If two of the signs $\varsigma_1, ..., \varsigma_d, \varsigma_{d+1}$ are different, then $J_{\unu} (x; \usigma)$, or $J (x; \usigma, \ulambda)$, is called a $K$-Bessel function.
\end{term}

\addtocontents{toc}{\protect\setcounter{tocdepth}{1}}

\subsection*{Preparations}
We shall retain the notations in \S \ref{sec: Rigorous interpretations of formal integral representation}.
Moreover, for our purpose we choose a partition of unity $\left\{ h_{\varrho} \right \}_{\varrho \in \{-, 0, +\}}$ on $\BR_+$ such that
$h_- $, $h_0 $ and $h_+ $ are  functions in $\mathscr T (\BR_+)$ supported on $K_- = \left(0, \frac 1 2\right]$, $K_0 = \left[\frac 1 4, 4\right]$ and $K_+ = \left[2, \infty \right)$ respectively. 
Put $K_{\urho} = \prod_{l      = 1}^d K_{\varrho_l     } $ and $h_{\urho} (\ut) =  \prod_{l      = 1}^d h_{\varrho_l     } (t_{l     })$ for $\urho \in \{-, 0 , + \}^d$.
Note that $\ut_0$ is enclosed in the central hypercube $K_{\boldsymbol 0}$.
According to this partition of unity, $J_{\unu}(x; \usigma)$ is partitioned into the sum of $3^d$ integrals $J_{\unu}(x; \usigma; \urho)$. In view of (\ref{3eq: linear combination first case}, \ref{3eq: linear combination second case}, \ref{3eq: linear combination third case}), $J_{\unu}(x; \usigma; \urho)$ is a $\BC [x\-]$-linear combination of absolutely convergent $J$-integrals of the form
\begin{equation}\label{5eq: J-integral}
J_{\boldsymbol \nu'} (x; \usigma; h) = \int_{\BR_+^d} h (\ut) p_{\boldsymbol \nu'} (\ut) e^{ i x \theta (\ut; \usigma)} d \ut.
\end{equation}
Here $h \in \bigotimes^d \mathscr T (\BR_+)$ is supported in $K_{\urho}$, and $\boldsymbol \nu' \in \unu + \BZ^d$ satisfies
\begin{equation}\label{5eq: conditions on nu'}
\Re  \nu'_{l     } - \Re  \nu_{l     } \geq A \text { if } l      \in L_-(\urho), \text { and } \Re  \nu'_{l     } - \Re  \nu_{l     } \leq - A \text{ if }l      \in L_+(\urho),
\end{equation} 
with $A > \max  \left\{ |\Re \nu_l      | \right \} + 2$.

\addtocontents{toc}{\protect\setcounter{tocdepth}{2}}

\subsection{\texorpdfstring{Estimates for $J_{\unu}(x; \usigma; \urho)$ with $\urho \neq \boldsymbol 0$}{Estimates for  $J_{\nu}(x; \varsigma; \varrho)$ with $\varrho \neq 0$}} \label{sec: Bound for rho neq 0}


Let 
\begin{equation}\label{5eq: Theta (t; sigma)}
\Theta (\ut; \usigma) = \sum_{l      = 1}^d \lp t_{l     } \partial_{l     } \theta (\ut; \usigma) \rp^2 = \sum_{l      = 1}^d \lp \varsigma_{d+1} t_1 ... t_d - \varsigma_{l     } t_{l     }\- \rp^2.
\end{equation}

 \begin{lem}\label{lem: lower bound for Theta}
Let $\urho \neq \boldsymbol 0$. We have for all $\ut \in K_{\urho}$
$$\Theta (\ut; \usigma) \geq \frac 1 {16}.$$
\end{lem}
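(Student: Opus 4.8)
The plan is to argue by contradiction, exploiting the fact that $\Theta(\ut;\usigma)$ is a sum of squares with nonnegative summands. Writing $P = t_1 \cdots t_d$, the identity $t_l \partial_l \theta(\ut;\usigma) = \varsigma_{d+1} P - \varsigma_l t_l^{-1}$ already recorded before \eqref{5eq: Theta (t; sigma)} shows that the $l$-th summand of $\Theta(\ut;\usigma)$ is exactly $(\varsigma_{d+1} P - \varsigma_l t_l^{-1})^2$. First I would suppose, for contradiction, that there is a point $\ut \in K_{\urho}$ at which $\Theta(\ut;\usigma) < \tfrac{1}{16}$.

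Then every individual summand satisfies $(\varsigma_{d+1}P - \varsigma_l t_l^{-1})^2 < \tfrac{1}{16}$, hence $|\varsigma_{d+1}P - \varsigma_l t_l^{-1}| < \tfrac14$ for each $l$. Since $P > 0$ and $t_l^{-1} > 0$, the reverse triangle inequality strips off the signs $\varsigma_{d+1}, \varsigma_l$ and yields $|P - t_l^{-1}| < \tfrac14$, that is, $P - \tfrac14 < t_l^{-1} < P + \tfrac14$ for every $l = 1, \dots, d$. Heuristically this says that all the reciprocals $t_l^{-1}$ cluster around the common value $P$; but $P$ is itself the product of the $t_l$, so this should in turn force every $t_l$ to lie near $1$, which is incompatible with $\urho \neq \boldsymbol 0$.

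To make this precise I would use $\urho \neq \boldsymbol 0$ to pick an index $l_0 \in L_+(\urho) \cup L_-(\urho)$ and split into two cases. If $l_0 \in L_+(\urho)$ then $t_{l_0} \geq 2$ (as $\ut \in K_{\urho}$ and $K_+ = [2,\infty)$), so $t_{l_0}^{-1} \leq \tfrac12$; combined with $t_{l_0}^{-1} > P - \tfrac14$ this gives $P < \tfrac34$, whence $t_l^{-1} < P + \tfrac14 < 1$ and so $t_l > 1$ for all $l$, forcing $P = \prod_l t_l > 1$ and contradicting $P < \tfrac34$. If instead $l_0 \in L_-(\urho)$ then $t_{l_0} \leq \tfrac12$ (as $K_- = (0,\tfrac12]$), so $t_{l_0}^{-1} \geq 2$; combined with $t_{l_0}^{-1} < P + \tfrac14$ this gives $P > \tfrac74$, whence $t_l^{-1} > P - \tfrac14 > \tfrac32$ and so $t_l < \tfrac23$ for all $l$, forcing $P < (\tfrac23)^d \leq \tfrac23$ and contradicting $P > \tfrac74$. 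Both contradictions use only $d \geq 1$, which holds whenever $\urho \neq \boldsymbol 0$; this proves $\Theta(\ut;\usigma) \geq \tfrac{1}{16}$.

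The one point that needs care — and the only place where a naive attempt goes wrong — is that it is \emph{not} enough to estimate the single summand attached to $l_0$: the term $(\varsigma_{d+1}P - \varsigma_{l_0} t_{l_0}^{-1})^2$ can itself be arbitrarily small, since $P$ is free to approximate $t_{l_0}^{-1}$. The mechanism that actually produces the lower bound is the global coupling $P = \prod_l t_l$, which converts the statement ``every $t_l^{-1}$ is within $\tfrac14$ of $P$'' into ``every $t_l$ is close to $1$'', in direct conflict with the location of $t_{l_0}$ prescribed by $\urho \neq \boldsymbol 0$. Once this coupling is exploited the remaining estimates are entirely elementary.
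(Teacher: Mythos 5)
Your proof is correct. The sign-stripping step is valid — for $P, t_l^{-1} > 0$ the reverse triangle inequality gives $|P - t_l^{-1}| \le |\varsigma_{d+1} P - \varsigma_l t_l^{-1}|$ whatever the signs — and both branches of your dichotomy on $\varrho_{l_0}$ produce genuine contradictions with the product identity $P = \prod_l t_l$ (in the first, $P < \tfrac34$ yet every $t_l > 1$; in the second, $P > \tfrac74$ yet every $t_l < \tfrac23$).

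Your route is organized differently from the paper's, though it runs on the same fuel. The paper argues directly rather than by contradiction: it shows $\max_l \big|\varsigma_{d+1} t_1 \cdots t_d - \varsigma_l t_l^{-1}\big| \ge \tfrac14$ whenever some coordinate lies outside $\lp \tfrac12, 2 \rp$, via a trichotomy on $P$ — if $P < \tfrac34$, any coordinate $t_l < 1$ furnishes a term $> \tfrac14$; if $P > \tfrac74$, any coordinate $t_l > 1$ furnishes a term $> \tfrac34$; and if $\tfrac34 \le P \le \tfrac74$, the coordinate $t_{l_0} \notin \lp \tfrac12, 2\rp$ guaranteed by $\urho \neq \boldsymbol 0$ itself furnishes a term $\ge \tfrac14$. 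So where you assume all terms are small and let the inconsistency emerge globally from the coupling $P = \prod_l t_l$, the paper pinpoints, in each regime of $P$, which single term is large, and the hypothesis $\urho \neq \boldsymbol 0$ enters only in the middle regime. The phenomenon you rightly flag — that the term attached to $l_0$ can vanish, so no single fixed index can carry the proof — appears in the paper's argument as the fact that the index realizing the large term changes with the regime of $P$. Both proofs use the same thresholds, the same effective hypothesis (some $t_l \notin \lp \tfrac12, 2\rp$), and yield the same constant $\tfrac1{16}$; neither is stronger, and both are equally elementary.
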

\begin{proof}
Instead, we shall prove
\begin{equation*}
\max \Big\{\left|\varsigma_{d+1} t_1 ... t_d - \varsigma_{l     } t_{l     }\- \right| :  \ut \in \BR_+^d \smallsetminus  K_{\boldsymbol 0} \text { and } l      = 1, ..., d \Big \} \geq \frac 1 4.
\end{equation*}
Firstly, if $t_1 ... t_d < \frac 3 4$, then there exists $t_l      < 1$ and hence $\left|\varsigma_{d+1} t_1 ... t_d - \varsigma_{l     } t_{l     }\- \right| > 1 - \frac 3 4 = \frac 1 4$. Similarly, if $t_1 ... t_d > \frac 7 4$, then there exists $t_l      > 1$ and hence $\left|\varsigma_{d+1} t_1 ... t_d - \varsigma_{l     } t_{l     }\- \right| >\frac 7 4 - 1 > \frac 1 4$. Finally, suppose that $\frac 3 4 \leq t_1 ... t_d \leq \frac 7 4$, then for our choice of $\ut $ there exists $l     $ such that $t_{l     } \notin \lp \frac 1 2, 2 \rp$, and therefore we still have $\left|\varsigma_{d+1} t_1 ... t_d - \varsigma_{l     } t_{l     }\- \right| \geq \frac 1 4$.
\end{proof}

Using (\ref{5eq: Theta (t; sigma)}), we rewrite the $J$-integral $J_{\boldsymbol \nu'} (x; \usigma; h)$ in \eqref{5eq: J-integral} as below,
\begin{equation} \label{5eq: rewrite J nv' (x; sigma; h)}
\begin{split}
\sum_{l      = 1}^d \int_{\BR_+^d} h (\ut) \left( \varsigma_{d+1} p_{\boldsymbol \nu' + \ue^d + \ue_{l     }} (\ut) - \varsigma _{l     } p_{\boldsymbol \nu'} (\ut) \right) \Theta (\ut; \usigma)\- \cdot \partial_{l     } \theta (\ut; \usigma) e^{i x \theta (\ut; \usigma)} d \ut.
\end{split}
\end{equation}
We now make use of  the    \textit{third} kind of partial integrations arising from
$$\partial \big(e^{i x \theta (\ut; \usigma) } \big) = i x \cdot \partial_{l     } \theta (\ut; \usigma) e^{ i x \theta (\ut; \usigma) } \partial t_l     . $$
For the $l     $-th integral in \eqref{5eq: rewrite J nv' (x; sigma; h)}, we apply the corresponding partial integration of the third kind. In this way, \eqref{5eq: rewrite J nv' (x; sigma; h)} turns into
\begin{align*}
& - (ix)\- \sum_{l      = 1}^d \int_{\BR_+^d} t_l      \partial_l      h \lp \varsigma _{d+1} p_{\boldsymbol \nu' + \ue^d} - \varsigma _{l     } p_{\boldsymbol \nu' -  \ue_{l     }} \rp  \Theta \- e^{i x \theta} d\ut\\
& - (ix)\- \sum_{l      = 1}^d \int_{\BR_+^d} h \lp \varsigma _{d+1} (\nu'_{l     } + 1) p_{\boldsymbol \nu' + \ue^d } - \varsigma _{l     } (\nu'_l      - 1) p_{\boldsymbol \nu' - \ue_{l     } } \rp \Theta \-  e^{i x \theta} d\ut\\
& + \varsigma _{d+1} 2 d^2 (ix)\- \int_{\BR_+^d} h  p_{\boldsymbol \nu' + 3 \ue^d} \Theta^{-2}  e^{i x \theta} d \ut \\
& + 2 (ix)\-  \sum_{l      = 1}^d \int_{\BR_+^d} h \big( \varsigma _{l     } (1 - 2 d) p_{\boldsymbol \nu' + 2 \ue^d -\ue_l     }    - \varsigma _{d+1} p_{\boldsymbol \nu' + \ue^d - 2 \ue_l     } + \varsigma _{l     } p_{\boldsymbol \nu' - 3 \ue_l     } \big) \Theta^{-2}  e^{i x \theta} d \ut\\
& + 4 (ix)\- \sum_{1\leq l      < k \leq d} \varsigma _{d+1} \varsigma _{l      } \varsigma _{k} \int_{\BR_+^d} h p_{\boldsymbol \nu' + \ue^d - \ue_l      - \ue_{k}}  \Theta^{-2}  e^{i x \theta} d \ut,
\end{align*}
where $\Theta$ and $\theta$ are the shorthand notations for $\Theta (\ut; \usigma)$ and $\theta (\ut; \usigma)$. Since the shifts of indices do not exceed $3$, it follows from the condition (\ref{5eq: conditions on nu'}), combined with  Lemma \ref{lem: lower bound for Theta}, that all the integrals above absolutely converge provided $A > \fr + 3$. 

Repeating the above manipulations, we obtain the following lemma by a straightforward inductive argument.
\begin{lem}\label{lem: J(x; sigma; h) for rho neq 0}
Let $B$ be a nonnegative integer, and choose $A =  \lfloor \mathfrak r \rfloor + 3 B + 3$. Then $J_{\boldsymbol \nu'} (x; \usigma; h)$ is equal to a linear combination of $\left(\frac 1 2 {(d^2 - d)}  + 7 d + 1\right)^B$ many absolutely convergent integrals of the following form
\begin{equation*}
(ix)^{-B} P(\boldsymbol \nu') \int_{\BR_+^d} \ut^{\ualpha}\partial^{\ualpha} h (\ut) p_{\boldsymbol \nu''}(\ut) \Theta(\ut; \usigma)^{-B-B_2} e^{i x \theta(\ut; \usigma)} d\ut,
\end{equation*}
where  $|\ualpha| + B_1 + B_2 = B$ {\rm($\ualpha \in \BN^d $)}, $P $ is a polynomial  of degree $B_1$ and integer coefficients of size $O_{B, d} (1)$, and $\boldsymbol \nu'' \in \boldsymbol \nu' + \BZ^d$ satisfies $|\nu''_{l     }  - \nu'_{l     }| \leq  B + 2 B_2$ for all $l      = 1, ..., d$. Recall that in the multi-index notation $|\ualpha| = \sum_{l      = 1}^d \alpha_l     $, $\ut^{\ualpha} = \prod_{l      = 1}^d t_{l     }^{\alpha_l     }$ and $\partial^{\ualpha} = \prod_{l      = 1}^d \partial_{l     }^{\alpha_l     } $.
\end{lem}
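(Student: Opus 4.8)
The plan is to argue by induction on $B$, taking as the inductive engine exactly the single round of manipulation carried out in the display preceding the statement: insert $1 = \Theta(\ut;\usigma)^{-1}\sum_{l=1}^d\bigl(t_l\partial_l\theta(\ut;\usigma)\bigr)^2$ into the integrand, regroup so that one factor $\partial_l\theta$ is exposed in the $l$-th summand, and then apply the partial integration of the third kind $\partial_l\bigl(e^{ix\theta}\bigr)=ix\,\partial_l\theta\,e^{ix\theta}\,\partial t_l$ to that summand. For $B=0$ there is nothing to do, since $J_{\boldsymbol \nu'}(x;\usigma;h)$ is already of the asserted shape with $\ualpha=\boldsymbol 0$, $B_1=B_2=0$, $P\equiv 1$ and $\boldsymbol \nu''=\boldsymbol \nu'$. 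For the inductive step I would apply this single round to each of the finitely many integrals present at stage $B$ and observe that it reproduces precisely the five groups of terms of the single-step display, now carrying along the inherited extra factors $\ut^\ualpha\partial^\ualpha h$ and $\Theta^{-B-B_2}$; reading off the new structural data from these five groups is then the whole content.

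Three of the four bookkeeping claims are immediate. Each round supplies one factor $(ix)^{-1}$, so the prefactor becomes $(ix)^{-(B+1)}$. Counting single–power-function summands in the five groups gives $2d+2d+1+3d+\binom{d}{2}=\tfrac12(d^2-d)+7d+1$, so the number of integrals is multiplied by this factor at every step, yielding the bound $\bigl(\tfrac12(d^2-d)+7d+1\bigr)^{B}$. The polynomial $P$ picks up one new linear factor $\nu''_l\pm 1$ each time the exposed derivative lands on a power function (raising $B_1$ by one), while the purely numerical constants produced — among them $2d^2$, $1-2d$, $\binom{d}{2}$, and the binomial factors coming from $\partial_l\bigl(\Theta^{-B-B_2}\bigr)$ — remain integral and of size $O_{B,d}(1)$. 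Simultaneously one tracks the quadruple $(|\ualpha|,B_1,B_2)$: by the product rule the derivative falls on exactly one of $\ut^\ualpha\partial^\ualpha h$, $p_{\boldsymbol \nu''}$ or $\Theta^{-B-B_2}$, raising respectively $|\ualpha|$, $B_1$ or $B_2$ by one, so that $|\ualpha|+B_1+B_2$ advances in step with $B$ (the numerical cross-terms generated when $t_l\partial_l$ meets $\ut^\ualpha\partial^\ualpha h$ are themselves of the asserted form and are harmless).

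The heart of the matter, and the step I expect to be the main obstacle, is the \emph{joint} control of the $\Theta$-exponent and the index shift, i.e.\ showing that $|\nu''_l-\nu'_l|\le B+2B_2$ is preserved. Here I would split into cases according to the factor struck by the exposed derivative $\partial_l$, after the regrouping $\varsigma_{d+1}p_{\boldsymbol \nu''+\ue^d+\ue_l}-\varsigma_l p_{\boldsymbol \nu''}=t_l\bigl(\varsigma_{d+1}p_{\boldsymbol \nu''+\ue^d}-\varsigma_l p_{\boldsymbol \nu''-\ue_l}\bigr)$. If $\partial_l$ falls on $\ut^\ualpha\partial^\ualpha h$ or on a power function, then $B_2$ is unchanged and the index is displaced only by $\ue^d$ or by $\pm\ue_l$, an increment of at most $1$ in each coordinate that is absorbed by $B\mapsto B+1$. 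If instead $\partial_l$ falls on $\Theta^{-B-B_2}$, producing $\Theta^{-B-B_2-1}\partial_l\Theta$, then $B_2$ increases by one and the extra displacements coming from $\partial_l\Theta$ are exactly the shifts $3\ue^d$, $2\ue^d-\ue_l$, $\ue^d-2\ue_l$, $-3\ue_l$ and $\ue^d-\ue_l-\ue_k$ already visible in the $\Theta^{-2}$ terms of the single-step display, each of magnitude at most $3$ per coordinate — precisely the slack created, since $(B+1)+2(B_2+1)-(B+2B_2)=3$. Matching the two regimes coordinate by coordinate gives the invariant $|\nu''_l-\nu'_l|\le B+2B_2$.

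Finally, absolute convergence is maintained throughout. As $B_2\le B$, the index bound gives $|\Re\nu''_l-\Re\nu'_l|\le 3B$; together with the defining inequalities \eqref{5eq: conditions on nu'} and the choice $A=\lfloor\mathfrak r\rfloor+3B+3$ this keeps $\Re\nu''_l>0$ for $l\in L_-(\urho)$ and $\Re\nu''_l<0$ for $l\in L_+(\urho)$, so the power function $p_{\boldsymbol \nu''}$ is integrable against the compactly supported $\ut^\ualpha\partial^\ualpha h$ at the relevant end of each coordinate axis. The denominators $\Theta^{-B-B_2}$ do no harm, because $\urho\neq\boldsymbol 0$ forces $\Theta(\ut;\usigma)\ge\tfrac1{16}$ on $K_\urho$ by Lemma \ref{lem: lower bound for Theta}. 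Hence every integral in the final linear combination converges absolutely, which closes the induction.
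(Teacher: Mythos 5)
Your proposal is correct and is essentially the paper's own proof: the paper disposes of this lemma with the single line ``repeating the above manipulations, we obtain the following lemma by a straightforward inductive argument,'' and your induction on $B$ — one round of inserting $\Theta^{-1}\sum_l (t_l\partial_l\theta)^2$ followed by the third-kind partial integration, with the term count $2d+2d+1+3d+\tfrac12(d^2-d)$, the slack-of-three matching between the index shift and the increment of $B+2B_2$, and the convergence check via \eqref{5eq: conditions on nu'} and Lemma \ref{lem: lower bound for Theta} — is exactly the bookkeeping that line leaves implicit. One small wording slip: $\ut^{\ualpha}\partial^{\ualpha}h$ is supported in $K_{\urho}$, which is \emph{not} compact when $L_+(\urho)\neq\O$ (and not closed at the origin when $L_-(\urho)\neq\O$); what you actually use, and correctly state right after, is the sign condition $\Re\nu''_l>0$ on $L_-(\urho)$ and $\Re\nu''_l<0$ on $L_+(\urho)$ together with the boundedness of functions in $\mathscr T(\BR_+)$, which is precisely Lemma \ref{lem: I pm ell} (1).
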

Define $\mathfrak c = \max  \left\{ | \nu_l      | \right \} + 1$ and  $\mathfrak r = \max  \left\{ |\Re \nu_l      | \right \}$.  Suppose that  $x \geq \mathfrak c$.  Applying Lemma \ref{lem: J(x; sigma; h) for rho neq 0} and \ref{lem: lower bound for Theta} to the $J$-integrals in (\ref{3eq: linear combination first case}, \ref{3eq: linear combination second case}, \ref{3eq: linear combination third case}), one obtains the estimate
\begin{equation*} 
J_{\unu}(x; \usigma; \urho) \lll_{\mathfrak r, M, d} \lp \frac {\mathfrak c} x \rp^M ,
\end{equation*} 
for   any given nonnegative integer $M $. 
Slight modifications of the above arguments yield a similar estimate for the derivative
\begin{equation}\label{5eq: bound for J nu (j) (x; sigma; rho) rho neq 0}
J^{(j)}_{\unu}(x; \usigma; \urho) \lll_{\mathfrak r, M, j, d} \lp \frac {\mathfrak c} x \rp^M .
\end{equation} 

\begin{rem}
Our proof of \eqref{5eq: bound for J nu (j) (x; sigma; rho) rho neq 0} is similar to that of \cite[Theorem 7.7.1]{Hormander}. Indeed, $\Theta (\ut; \usigma) $ plays the same role as $|f'|^2 + \Im f$ in the proof of \cite[Theorem 7.7.1]{Hormander}, where $f$ is the phase function there. The non-compactness of $K_{\urho}$ however prohibits the application of \cite[Theorem 7.7.1]{Hormander} to the $J$-integral in \eqref{5eq: J-integral} in our case.

\end{rem}

\subsection{Rapid decay of $K$-Bessel functions} Suppose that   there exists $ k \in \{ 1, ..., d\}$ such that  $ \varsigma_{k} \neq \varsigma_{d+1}$. Then for any $\ut \in K_{\boldsymbol 0}$  
$$ 
\left|\varsigma _{d+1} t_1 ... t_d - \varsigma _{k} t_{k}^{-1} \right| > t_{k}\- \geq \frac 1 4.$$
Similar to the arguments in \S \ref{sec: Bound for rho neq 0}, repeating the $k$-th partial integration of the third kind yields
the same bound (\ref{5eq: bound for J nu (j) (x; sigma; rho) rho neq 0}) in the case $\urho = \boldsymbol 0$.

\begin{rem}
For this, we may also directly apply \cite[Theorem 7.7.1]{Hormander}.
\end{rem}

\begin{thm}\label{thm: Bessel functions of K-type}
Let $\mathfrak c = \max  \left\{ | \nu_l      | \right \} + 1$ and  $\mathfrak r = \max  \left\{ |\Re \nu_l      | \right \}$. Let $j$ and $M$ be nonnegative integers. Suppose that one of the signs $\varsigma_1, ..., \varsigma_d$ is different from $ \varsigma_{d+1}$. 
Then  
\begin{equation*} 
 J_{\unu}^{(j)} (x;  \usigma) \lll_{\mathfrak r, M, j, d} \lp \frac {\mathfrak c} x \rp^M
\end{equation*}
for any $x \geq \fc$. In particular, $J_{\unu} (x;  \usigma)$ is a Schwartz function at infinity, namely, all derivatives $ J_{\unu}^{(j)} (x;  \usigma)$ rapidly decay at infinity.
\end{thm}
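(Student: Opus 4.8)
The plan is to estimate $J_{\unu}(x;\usigma)$ through its partition $J_{\unu}(x;\usigma) = \sum_{\urho \in \{-,0,+\}^d} J_{\unu}(x;\usigma;\urho)$ furnished by the partition of unity fixed in the Preparations, and to bound each of the $3^d$ summands separately. The guiding principle is that under the present hypothesis the phase $\theta(\ut;\usigma)$ has \emph{no} stationary point on $\BR^d_+$: a stationary point exists only when $\varsigma_1 = \cdots = \varsigma_d = \varsigma_{d+1}$, whereas here some $\varsigma_k$ differs from $\varsigma_{d+1}$. Consequently every piece should display purely off-stationary behaviour and decay faster than any power of $x$.

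For the off-center pieces $\urho \neq \boldsymbol 0$ no new argument is required: the estimate \eqref{5eq: bound for J nu (j) (x; sigma; rho) rho neq 0} established in \S \ref{sec: Bound for rho neq 0}, which relied only on the lower bound $\Theta(\ut;\usigma) \geq 1/16$ of Lemma \ref{lem: lower bound for Theta} on $K_{\urho}$ and not at all on the signs, already gives $J^{(j)}_{\unu}(x;\usigma;\urho) \lll_{\mathfrak r, M, j, d} (\mathfrak c / x)^M$ for every $M$ and $j$. I would simply quote it.

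The only genuinely new contribution is the central piece $\urho = \boldsymbol 0$, and this is where the hypothesis must be used; it is the main obstacle, since $K_{\boldsymbol 0}$ is exactly the region that would contain the stationary point $\ut_0 = (1,\dots,1)$ were all the signs equal. Here I would exploit that $t_k \partial_k \theta(\ut;\usigma) = \varsigma_{d+1} t_1\cdots t_d - \varsigma_k t_k^{-1}$ has its two terms of the same sign when $\varsigma_k \neq \varsigma_{d+1}$, so they reinforce and $\left|t_k \partial_k\theta(\ut;\usigma)\right| > t_k^{-1} \geq \tfrac14$ uniformly on $K_{\boldsymbol 0}$. Since $h_{\boldsymbol 0}$ is compactly supported in the cube $K_{\boldsymbol 0}$, the integral $J_{\unu}(x;\usigma;\boldsymbol 0)$ is already absolutely convergent, and I would integrate by parts $M$ times using the third kind of partial integration in the $k$-th variable, i.e.\ $\partial_k\big(e^{ix\theta}\big) = ix\,\partial_k\theta\, e^{ix\theta}$; each application contributes a factor $(ix)^{-1}$, produces no boundary term because of the compact support, and divides by $\partial_k\theta$, whose reciprocal is controlled by the lower bound just noted. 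The coefficients so generated are polynomials in $\unu$ of bounded degree and magnitude $O(\mathfrak c^{M})$, which is precisely why the final bound carries $\mathfrak c^M$ and is stated for $x \geq \mathfrak c$; the $j$-th derivative is handled by differentiating under the integral sign beforehand, exactly as in \S \ref{sec: Bound for rho neq 0}.

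Assembling the two cases and summing the finitely many (namely $3^d$, a constant depending only on $d$) estimates yields $J^{(j)}_{\unu}(x;\usigma) \lll_{\mathfrak r, M, j, d} (\mathfrak c / x)^M$ for all $x \geq \mathfrak c$, which is the assertion; rapid decay of every derivative at infinity, the Schwartz property, then follows by letting $M$ be arbitrary. The delicate point throughout is keeping the implied constants uniform in $\unu$: the bound must be expressed in terms of $\mathfrak r$ and $\mathfrak c$ rather than treating $\unu$ as fixed, so I would track carefully that each integration by parts raises the index shifts only by a bounded amount and inflates the polynomial coefficients by at most a factor $O(\mathfrak c)$.
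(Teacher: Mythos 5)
Your proposal is correct and follows essentially the same route as the paper: the off-center pieces $\urho \neq \boldsymbol 0$ are handled by quoting the sign-independent bound \eqref{5eq: bound for J nu (j) (x; sigma; rho) rho neq 0}, and on the central cube $K_{\boldsymbol 0}$ the paper uses exactly your observation that $\left|\varsigma_{d+1} t_1 \cdots t_d - \varsigma_k t_k^{-1}\right| > t_k^{-1} \geq \tfrac 1 4$ when $\varsigma_k \neq \varsigma_{d+1}$, followed by repeated partial integration of the third kind in the $k$-th variable.
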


\subsection{Asymptotic expansions of $H$-Bessel functions}\label{sec: asymptotic expansions of H Bessel functions}

In the following, we shall adopt the convention $(\pm i)^{a} = e^{ \pm \frac 1 2 i \pi a}$, $a \in \BC$.

We first introduce the function $W^{\pm}_{\unu} (x )$, which is closely related to the   Whittaker  function of imaginary argument if $d=1$ (see \cite[\S 17.5, 17.6]{Whittaker-Watson}), defined by
\begin{equation*} 
W^{\pm}_{\unu} (x ) = (d+1)^{\frac 1 2} (\pm 2 \pi i)^{- \frac d 2} e^{\mp i (d+1) x} H^{\pm}_{\unu} (x).
\end{equation*}
Write
$H^\pm_{\unu} (x; \urho) = J_{\unu} (x; \pm,..., \pm; \urho)$ and define
\begin{equation*} 
W^{\pm}_{\unu} (x;  \urho) = (d+1)^{\frac 1 2} (\pm 2 \pi i)^{- \frac d 2} e^{\mp i (d+1) x} H^{\pm}_{\unu} (x; \urho).
\end{equation*}
For $\urho \neq \boldsymbol 0$, the bound (\ref{5eq: bound for J nu (j) (x; sigma; rho) rho neq 0}) for $H^\pm_{\unu} (x; \urho)$ is also valid for $ W^{\pm}_{\unu} (x;  \urho)$. Therefore, we are left with analyzing $W^{\pm}_{\unu} (x; \boldsymbol 0)$. We have
\begin{equation}\label{5eq: W nu (j) (x; pm; 0) integral}
\begin{split}
W^{\pm, (j)}_{\unu} (x; \boldsymbol 0) =\ & (d+1)^{\frac 1 2} (\pm 2 \pi i)^{- \frac d 2} (\pm i)^j \\
& \int_{K_{\boldsymbol 0}} \lp \theta (\ut) - d - 1 \rp^j h_{\boldsymbol 0} (\ut) p_{\unu} (\ut) e^{\pm i x \lp \theta (\ut) - d - 1 \rp} d\ut,
\end{split}
\end{equation}
with \begin{equation}\label{5eq: theta (t)}
\theta (\ut) = \theta (\ut; +, ..., +) = t_1 ... t_d + \sum_{l      = 1}^d t_l     \-.
\end{equation}
\begin{prop} \label{prop: Stationary phase}
\cite[Theorem 7.7.5]{Hormander}.
Let $K \subset \BR^d$ be a compact set, $X$ an open neighbourhood of $K$ and $M$ a nonnegative integer. If $u(\ut) \in C^{2M}_0 (K)$, $f(\ut) \in C^{3M + 1} (X)$ and $\Im f \geq 0$ in $X$, $\Im f (\ut_0) = 0$, $ f' (\ut_0) = 0$, $\det f'' (\ut_0) \neq 0$ and $ f'  \neq 0$ in $K \smallsetminus \{\ut_0 \}$, then for $x > 0$
\begin{equation*}
\begin{split}
&\left| \int_{K} \hskip -3 pt  u(\ut) e^{i x f(\ut)} d\ut \hskip -1 pt  -  \hskip -1 pt e^{i x f(\ut_0)} \hskip -1 pt \left((2\pi i)^{- d } \det f'' (\ut_0) \right)^{ - \frac 1 2} \hskip -3 pt \sum_{ m = 0 }^{M-1} x^{- m - \frac d 2} \EuScript L_m u \right|   \lll    x^{-M} \hskip -3 pt \sum_{|\ualpha| \leq 2M} \sup\left| D^{\ualpha} u \right|.
\end{split}
\end{equation*}
Here the implied constant depends only on $M$, $f$, $K$ and  $d$.
With
$$g  (\ut) = f(\ut) - f(\ut_0) - \frac 1 2 \left\langle f'' (\ut_0) (\ut - \ut_0), \ut - \ut_0 \right\rangle $$
which vanishes of third order at $\ut_0$, we have
\begin{equation*}
\EuScript L_m u =  i^{- m}\sum_{r = 0}^{2 m} \frac 1 { 2^{m+r} (m+r) !r! } \left \langle  f'' (\ut_0)\- D, D \right \rangle^{m+r} \lp g^r  u \rp (\ut_0).\footnote{According to H\"ormander, $D = - i (\partial_1, ..., \partial_d)$. }
\end{equation*}
This is a differential operator of order $2 m$ acting on $u$ at $\ut_0$. The coefficients are rational homogeneous functions of degree $- m$ in
$ f'' (\ut_0)$, ..., $ f^{(2m+2)} (\ut_0)$ with denominator $(\det f'' (\ut_0))^{3m}$. In every term the total number of derivatives of $u$ and of $f''$ is at most $2 m$.
\end{prop}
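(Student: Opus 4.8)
The statement is the standard multidimensional method of stationary phase at a single nondegenerate interior critical point $\ut_0$, and the plan is to follow the classical scheme: localize near $\ut_0$, evaluate the pure quadratic (Gaussian) model by Fourier transform, and treat the cubic-and-higher part of the phase as a power series in $x^{-1}$. First I would normalize by translating so that $\ut_0 = 0$ and factoring $e^{i x f(\ut_0)}$ out of the integral. Choosing a cutoff $\chi \in C_0^\infty$ that equals $1$ near $0$ and is supported where $f''$ remains nonsingular, I split $u = \chi u + (1 - \chi) u$; on $\mathrm{supp}((1-\chi)u) \subset K \smallsetminus \{\ut_0\}$ one has $f' \neq 0$, so the non-stationary estimate \cite[Theorem 7.7.1]{Hormander}, already invoked in \S\ref{sec: Bound for rho neq 0}, contributes $\lll x^{-M} \sum_{|\ualpha| \leq 2M} \sup|D^{\ualpha} u|$. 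Thus it suffices to analyze $u$ supported in an arbitrarily small neighbourhood of $\ut_0$.

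For the quadratic model $q(\ut) = \frac 1 2 \langle A \ut, \ut \rangle$ with $A = f''(\ut_0)$ real symmetric and nonsingular, I would apply Parseval and the explicit Gaussian Fourier transform to obtain, for compactly supported $v$,
\[
\int_{\BR^d} v(\ut) e^{i x q(\ut)} d\ut = e^{i \pi \sgn A / 4} (2\pi / x)^{d/2} |\det A|^{-1/2} \int_{\BR^d} \widehat v(\xi) e^{- i \langle A^{-1} \xi, \xi \rangle / (2x)} d\xi.
\]
Expanding the inner Gaussian in powers of $x^{-1}$ and using $\int \widehat v(\xi) \langle A^{-1}\xi, \xi\rangle^k d\xi \propto \langle A^{-1} D, D\rangle^k v(0)$ with $D = -i\partial$ produces the model expansion whose general term, up to the fixed prefactor, is $x^{-k-d/2} (k!)^{-1} (2i)^{-k} \langle A^{-1} D, D\rangle^k v(0)$, with remainder controlled by finitely many sup-norms of derivatives of $v$; the prefactor $e^{i\pi\sgn A/4}(2\pi)^{d/2}|\det A|^{-1/2}$ equals $((2\pi i)^{-d}\det A)^{-1/2}$ once the branch of the square root is matched to $\sgn A$.

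To pass to the true phase I would write $f = q + g$ with $g$ vanishing to third order at $0$, factor $e^{ixf} = e^{ixg} e^{ixq}$, expand $e^{ixg} = \sum_{r} (ix)^r g^r / r!$, and apply the quadratic expansion to each $v = u g^r$. Since $g^r$ vanishes to order $3r$ at $0$, the operator $\langle A^{-1} D, D\rangle^k$ annihilates $(u g^r)(0)$ unless $2k \geq 3r$; the $(r,k)$ term carries the power $x^{r - k - d/2}$, so collecting all contributions with $k - r = m$ yields the power $x^{-m-d/2}$ and, after bookkeeping of the factorials and powers of $i$, exactly the operator
\[
\EuScript L_m u = i^{-m} \sum_{r=0}^{2m} \frac {1} {2^{m+r}(m+r)! r!} \langle A^{-1} D, D\rangle^{m+r} (g^r u)(0),
\]
the constraint $2(m+r) \geq 3r$ reproducing the summation range $0 \leq r \leq 2m$ and the operator order $m+r$.

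The main obstacle is the error analysis: showing that the total remainder is $\lll x^{-M} \sum_{|\ualpha| \leq 2M} \sup|D^{\ualpha} u|$ with only $2M$ derivatives of $u$ and the sharp power $x^{-M}$, which is precisely what the regularity hypotheses $u \in C_0^{2M}$ and $f \in C^{3M+1}$ are calibrated for. The cleanest device here is the rescaling $\ut \mapsto \ut / \sqrt x$, under which $x g(\ut/\sqrt x) = O(x^{-1/2})$, so that $e^{ixg}$ becomes a genuine small perturbation, each factor of $g$ transparently gains half a power of $x^{-1}$, and each application of $\langle A^{-1} D, D\rangle$ costs exactly two derivatives of $u$; matching the truncation orders of the two expansions against $2M$ and $3M+1$, and absorbing the localization error from the first step, then closes the argument.
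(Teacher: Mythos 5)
The paper does not actually prove this proposition: it is quoted verbatim from H\"ormander, so the only benchmark is the source's own proof of \cite[Theorem 7.7.5]{Hormander}. Your outline reproduces the architecture of that proof faithfully: localization away from $\ut_0$ via the non-stationary phase estimate \cite[Theorem 7.7.1]{Hormander}, the exact quadratic model via the Fourier transform of an imaginary Gaussian, the splitting $f = q + g$ with $g$ vanishing to third order, and the vanishing-order count $2(m+r) \geq 3r$ that yields the range $0 \leq r \leq 2m$; your bookkeeping of the coefficients $i^{-m}/\bigl(2^{m+r}(m+r)!\,r!\bigr)$ in $\EuScript L_m$ is also correct.

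There are, however, two genuine gaps. First, your argument is written only for real phases, while the proposition assumes merely $\Im f \geq 0$, and this generality is not idle: the paper applies the proposition with the complex phase $\pm e^{i\omega}(\theta - d - 1)$ in \S\ref{sec: Analytic continuations of the H-Bessel functions}. When $A = f''(\ut_0)$ is complex symmetric with $\Im A \geq 0$, the quantity $\sgn A$ is undefined, so the prefactor $e^{i\pi \sgn A/4} |\det A|^{-1/2}$ must be replaced by $\bigl(\det (A/(2\pi i))\bigr)^{-1/2}$ with the branch fixed by continuation from matrices with positive definite imaginary part (this is \cite[Theorem 7.6.1]{Hormander}); more seriously, $|e^{ixg}| \leq 1$ fails, since $\Im g = \Im f - \Im q$ can be negative, so the expansion of $e^{ixg}$ is not the expansion of a bounded factor. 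Second, even for real $f$, the error analysis cannot be closed in the way you suggest: after rescaling, the Taylor remainder of $e^{ixg}$ is only $O\bigl(|t|^{3R} x^{-R/2}\bigr)$, and integrating this in absolute value against the merely oscillatory (non-decaying) factor $e^{ixq}$ over the rescaled support $|t| \lll \sqrt{x}$ gives a bound that grows in $x$; in addition, a naive $k$-fold integration by parts charges $k$ derivatives to $u$, whereas the hypotheses allot only $2M$ derivatives to $u$ but $3M+1$ to $f$. The device that repairs both defects simultaneously is H\"ormander's interpolation: set $f_s = q + s g$ and note $\Im f_s = (1-s)\Im q + s \Im f \geq 0$ for $0 \leq s \leq 1$; Taylor-expand $I(s) = \int u\, e^{i x f_s}\, d\ut$ at $s = 0$ with integral remainder; evaluate the terms $I^{(r)}(0)$ by your quadratic computation; and estimate the remainder $I^{(R)}(s)$, an oscillatory integral whose amplitude $u g^R$ vanishes to order $3R$ at $\ut_0$ and whose phase still satisfies the hypotheses, by the weighted non-stationary bound, distributing derivatives between $u$ and $g^R$. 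With that replacement your proposal becomes the proof in the cited source.
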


We now apply Proposition \ref{prop: Stationary phase} to the integral in \eqref{5eq: W nu (j) (x; pm; 0) integral}. For this, we let
\begin{align*} 
& K = K_{\boldsymbol 0} = \textstyle  \left[ \frac 1 4, 4\right ]^{d}, \hskip 10 pt X = \textstyle  \left( \frac 1 5, 5\right )^{d},\\
& f (\ut) =  \pm \left( \theta (\ut) - d - 1 \right), \hskip 10 pt f' (\ut) = \pm \left( t_1  ...\widehat{t_{l     }} ... t_{d} - t_{l     }^{-2} \right)_{l      = 1}^{d}, \hskip 10 pt \ut_0 = (1, ..., 1),\\
& f'' (\ut_0) = \pm \begin{pmatrix}
2 & 1 & \cdots & 1\\
1 & 2 & \cdots & 1\\
\vdots & \vdots & \ddots &  \vdots\\
1 & 1 & \cdots & 2
\end{pmatrix}, \hskip 10 pt \det  f'' (\ut_0) = (\pm)^d ( d+1 ), \hskip 10 pt g (\ut) = \pm G(\ut),\\
& f'' (\ut_0)\- = \pm \frac 1 {d+1} \begin{pmatrix}
d & -1 & \cdots & -1\\
-1 & d & \cdots & -1\\
\vdots & \vdots & \ddots &  \vdots\\
-1 & -1 & \cdots & d
\end{pmatrix},\\
& u(\ut) = (d+1)^{\frac 1 2} (\pm 2 \pi i)^{- \frac d 2} (\pm i)^j \left( \theta (\ut) - d - 1 \right)^j p_{\unu} (\ut) h_{\boldsymbol 0} (\ut),
\end{align*}
with
\begin{equation}\label{5eq: G(t)}
\begin{split}
G(\ut) =   t_1 ... t_{d} + \sum_{l      = 1}^{d} \left( - t_{l     } ^2 + (d+1) t_{l     } + t_{l     }\- \right)   - \sum_{ 1 \leq l      < k \leq d } t_{l     } t_{k} - \frac {(d+1)(d+2)} 2 .
\end{split}
\end{equation}
Proposition \ref{prop: Stationary phase} yields the following asymptotic expansion of $W^{\pm, (j)}_{\unu} (x; \boldsymbol 0)$,
\begin{equation*} 
W^{\pm, (j)}_{\unu} (x; \boldsymbol 0) = \sum_{m = 0 }^{M-1} ( \pm i )^{j - m} B_{m, j} (\unu) x^{- m - \frac d 2} + O_{\fr, M, j, d} \left( \frc^{2 M} x^{-M}\right), \hskip 10 pt x > 0,
\end{equation*}
with 
\begin{equation}\label{5eq: B mj}
B_{m, j} (\unu) =  \sum_{r=0}^{2m} \frac {(-)^{m+r} \EuScript L^{m+r} \left( G^r (\theta - d - 1)^j p_{\unu} \right) (\ut_0)} { (2 (d+1))^{m+r} (m+r)! r! } ,
\end{equation}
where $\EuScript L $ is the second-order differential operator given by
\begin{equation}\label{5eq: differential operator D}
\EuScript L = d \sum_{l      = 1}^{d} \partial_{l     }^2 - 2 \sum_{ 1 \leq l      < k \leq d } \partial_{l     } \partial_{k} .
\end{equation} 

\begin{lem}\label{5lem: coefficient}  We have $B_{m, j} (\unu)  = 0$ if $  m < j$.
Otherwise,  $B_{m, j} (\unu) \in \BQ[\unu]$ is a symmetric polynomial  of degree $2m-2j$. 
In particular,
$B_{m, j} (\unu) \lll_{m, j, d} \frc^{2 m - 2 j}$ for $m \geq j$.
\end{lem}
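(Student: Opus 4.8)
The plan is to read off both assertions from the \emph{homogeneity} of the operator $\EuScript L^{m+r}$ together with the orders of vanishing at $\ut_0$ of the three factors $G^r$, $(\theta-d-1)^j$ and $p_{\unu}$ appearing in \eqref{5eq: B mj}. The key structural remark is that $\EuScript L$ in \eqref{5eq: differential operator D} has constant coefficients and is homogeneous of degree $2$, so $\EuScript L^{m+r}$ is a constant-coefficient operator homogeneous of degree $2(m+r)$. Since $\EuScript L^{m+r}(\ut-\ut_0)^{\ualpha}\big|_{\ut=\ut_0}=0$ unless $|\ualpha|=2(m+r)$, the value $\EuScript L^{m+r}F(\ut_0)$ depends only on the homogeneous Taylor component of degree exactly $2(m+r)$ of $F$ at $\ut_0$.

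First I would compute the orders of vanishing at $\ut_0$. By construction $G=\pm g$, where $g$ vanishes to order (at least) $3$, so $G^r$ vanishes to order $\ge 3r$. The function $\theta-d-1$ has vanishing gradient at $\ut_0$ and nondegenerate Hessian $f''(\ut_0)$, with $\det f''(\ut_0)=(\pm)^d(d+1)\neq0$, hence vanishes to order exactly $2$, so $(\theta-d-1)^j$ vanishes to order $2j$; finally $p_{\unu}(\ut_0)=1$. Therefore $F=G^r(\theta-d-1)^jp_{\unu}$ vanishes to order $\ge 3r+2j$, and its degree-$2(m+r)$ component, hence the $r$-th summand of \eqref{5eq: B mj}, is zero unless $2(m+r)\ge 3r+2j$, i.e. $r\le 2m-2j$. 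When $m<j$ no $r\ge0$ qualifies, so every summand vanishes and $B_{m,j}(\unu)=0$.

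For $m\ge j$ I would then track the degree in $\unu$. The only $\unu$-dependence sits in $p_{\unu}(\ut)=\prod_l(1+(t_l-1))^{\nu_l-1}$, whose degree-$c$ Taylor component has coefficients $\prod_l\binom{\nu_l-1}{k_l}$ (with $\sum_l k_l=c$), elements of $\BQ[\unu]$ of degree $\le c$. Writing the degree-$2(m+r)$ component of $F$ as a sum of products of homogeneous pieces of $G^r$, $(\theta-d-1)^j$ and $p_{\unu}$ of degrees $a+b+c=2(m+r)$ with $a\ge 3r$ and $b\ge 2j$ forces $c\le 2m-r-2j\le 2m-2j$. Since the Taylor coefficients of $G$ and $\theta$ at $\ut_0$ and the prefactors in \eqref{5eq: B mj} are rational, each surviving term lies in $\BQ[\unu]$ and has degree $\le 2m-2j$, whence so does $B_{m,j}(\unu)$. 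Symmetry is immediate: $G$, $\theta$, $\EuScript L$ and $\ut_0$ are invariant under any simultaneous permutation $\sigma$ of the coordinates $t_1,\dots,t_d$, under which $p_{\unu}\circ\sigma=p_{\pi\cdot\unu}$ with $(\pi\cdot\unu)_k=\nu_{\pi^{-1}(k)}$, so the change of variables $\ut\mapsto\sigma\ut$ in \eqref{5eq: B mj} yields $B_{m,j}(\pi\cdot\unu)=B_{m,j}(\unu)$. Because $\frc=\max_l|\nu_l|+1\ge1$, any polynomial in $\unu$ of degree $\le 2m-2j$ is $\lll_{m,j,d}\frc^{2m-2j}$, giving the stated bound.

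The hard part will be the clean bookkeeping in the degree count, namely that only derivatives landing on $p_{\unu}$ raise the $\unu$-degree while the vanishing orders $3r$ and $2j$ of the other factors compel enough of the $2(m+r)$ derivatives onto them to leave at most $2m-r-2j$ for $p_{\unu}$; the homogeneity reformulation above is designed precisely to make this transparent. I should also take care to confirm that $\theta-d-1$ vanishes to order exactly two, which is guaranteed by the nonsingularity of $f''(\ut_0)$. Finally, that the degree equals $2m-2j$ rather than merely being at most that would follow from verifying that the $r=0$ leading $\unu$-coefficient is nonzero; this refinement is not needed for the norm bound in the ``in particular'' clause.
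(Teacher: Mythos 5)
Your proof is correct and is essentially the paper's own argument: the paper's (much terser) proof likewise deduces everything from formula \eqref{5eq: B mj} by counting the order $2(m+r)$ of $\EuScript L^{m+r}$ against the vanishing orders of $(\theta-d-1)^j$ (order $2j$) and $G^r$ (order $3r$) at $\ut_0$, with symmetry read off from the definition; you have simply filled in the bookkeeping the paper leaves implicit. Your closing caveat is also harmless, since only the bound (not exactness of the degree $2m-2j$) is used downstream, and the paper's sketch establishes no more than yours does on that point.
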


\begin{proof}
The symmetry of  $ B_{m, j} (\unu) $ is clear from definition. 
Since $ \theta - d - 1 $ vanishes of second order at $\ut_0$, $2 j$ many derivatives are required to remove the zero of $ \left( \theta - d - 1 \right)^j $ at $\ut_0$. From this, along with  the descriptions of the differential operator $\EuScript L_m$ in Proposition \ref{prop: Stationary phase}, one proves the lemma. 
\end{proof}

Furthermore, in view of the bound (\ref{5eq: bound for J nu (j) (x; sigma; rho) rho neq 0}), the total contribution to $W_{\unu}^{\pm, (j)}(x )$ from  all the $W_{\unu}^{\pm, (j)}(x;  \urho)$ with $\urho \neq \boldsymbol 0$ is of size $O_{\mathfrak r, M, j, d} \lp   {\mathfrak c}^{ M} x^{- M} \rp$ and hence may be absorbed into the error term in the asymptotic expansion of $W^{\pm, (j)}_{\unu} (x; \boldsymbol 0)$. 

In conclusion, the following proposition is established. 

\begin{prop}\label{prop: asymptotics for W nu (j) (x; pm)} Let $M$, $j$ be nonnegative integers such that $M \geq j$.
Then for $x \geq \frc$ 
\begin{equation*} 
W_{\unu}^{\pm, (j)}(x ) = \sum_{ m = j }^{M-1} ( \pm i )^{j - m} B_{m, j} (\unu) x^{- m - \frac d 2} + O_{\fr, M, j, d} \left( \frc^{2 M } x^{-M}\right).
\end{equation*}
\end{prop}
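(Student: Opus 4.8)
The plan is to assemble the proposition from the machinery already assembled in this section: the partition $J_{\unu}(x;\usigma)=\sum_{\urho}J_{\unu}(x;\usigma;\urho)$, the off-center decay estimate (\ref{5eq: bound for J nu (j) (x; sigma; rho) rho neq 0}), and Hörmander's stationary-phase expansion (Proposition \ref{prop: Stationary phase}) applied to the central piece $\urho=\boldsymbol 0$. First I would separate the central cube from the rest. Writing $W^{\pm,(j)}_{\unu}(x)=\sum_{\urho\in\{-,0,+\}^d}W^{\pm,(j)}_{\unu}(x;\urho)$ and applying the Leibniz rule to the factor $e^{\mp i(d+1)x}$ in the definition of $W^{\pm}_{\unu}$, each $W^{\pm,(j)}_{\unu}(x;\urho)$ is, up to the $d$-dependent constant $(d+1)^{\frac12}(\pm2\pi i)^{-\frac d2}$, a finite combination of derivatives $H^{\pm,(k)}_{\unu}(x;\urho)$ with $k\le j$. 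Hence, applying (\ref{5eq: bound for J nu (j) (x; sigma; rho) rho neq 0}) with $\usigma=(\pm,\dots,\pm)$, every term with $\urho\ne\boldsymbol 0$ is $\lll_{\fr,M,j,d}(\fc/x)^M$; since $\fc\ge1$ and there are only $3^d-1$ such $\urho$, their total contribution is absorbed into the error $O_{\fr,M,j,d}(\fc^{2M}x^{-M})$. It therefore suffices to analyze $W^{\pm,(j)}_{\unu}(x;\boldsymbol 0)$, which is the single integral (\ref{5eq: W nu (j) (x; pm; 0) integral}).

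Next I would apply Proposition \ref{prop: Stationary phase} to (\ref{5eq: W nu (j) (x; pm; 0) integral}) with the data $K=K_{\boldsymbol 0}$, $X=(\frac15,5)^d$, $f=\pm(\theta-d-1)$, and $u$ as listed just before the proposition, checking the hypotheses directly: $u\in C_0^{\infty}(X)$ is supported in the compact cube $K_{\boldsymbol 0}\subset X$; $f$ is real-analytic with $\Im f\equiv0$; $\ut_0=(1,\dots,1)$ is the unique critical point of $\theta$ in $\BR_+^d$ (as recorded after Terminology \ref{term: Bessel functions of K-type and H-Bessel functions}), so $f'\ne0$ on $K_{\boldsymbol 0}\smallsetminus\{\ut_0\}$; and $\det f''(\ut_0)=(\pm)^d(d+1)\ne0$. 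Since $\theta(\ut_0)=d+1$ the leading phase vanishes, so no oscillatory factor survives. The expansion produces the terms $[(2\pi i)^{-d}\det f''(\ut_0)]^{-\frac12}\,x^{-m-\frac d2}\,\EuScript L_m u$; substituting the computed inverse Hessian converts $\langle f''(\ut_0)^{-1}D,D\rangle$ into $\mp(d+1)^{-1}\EuScript L$ with $\EuScript L$ as in (\ref{5eq: differential operator D}), and, using that $h_{\boldsymbol 0}\equiv1$ near $\ut_0$ (so all its derivatives vanish there), one identifies $\EuScript L_m u$ with the coefficient $B_{m,j}(\unu)$ of (\ref{5eq: B mj}), up to the phase $(\pm i)^{j-m}$ and the normalizing constants built into $u$.

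The two points requiring care, and the main obstacle, are the bookkeeping of constants and the uniformity of the error. For the constants, I would verify, using the convention $(\pm i)^a=e^{\pm\frac12 i\pi a}$, that the Gaussian prefactor $[(2\pi i)^{-d}(\pm)^d(d+1)]^{-\frac12}$ exactly cancels the factor $(d+1)^{\frac12}(\pm2\pi i)^{-\frac d2}$ packaged into $u$, leaving the clean coefficient $(\pm i)^{j-m}B_{m,j}(\unu)$; this phase matching is precisely what the definition of $W^{\pm}_{\unu}$ and the fractional-power convention were engineered to produce. For the error, the implied constant in Proposition \ref{prop: Stationary phase} depends only on $M$, $d$, $f$ and $K_{\boldsymbol 0}$, while the entire $\unu$-dependence is isolated in $\sum_{|\ualpha|\le2M}\sup_{K_{\boldsymbol 0}}|D^{\ualpha}u|$; since each of the $\le2M$ derivatives falling on $p_{\unu}=\prod_l t_l^{\nu_l-1}$ contributes a factor $O_M(\fc)$ while the surviving powers $t_l^{\Re\nu_l-1-k}$ are $O_{\fr,M}(1)$ on $K_{\boldsymbol 0}$, this sum is $\lll_{\fr,M,j,d}\fc^{2M}$, yielding the stated error for $x\ge\fc$. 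Finally, invoking Lemma \ref{5lem: coefficient} to discard the vanishing coefficients $B_{m,j}(\unu)=0$ for $m<j$ replaces $\sum_{m=0}^{M-1}$ by $\sum_{m=j}^{M-1}$, which completes the proof.
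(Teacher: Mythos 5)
Your proposal is correct and follows essentially the same route as the paper: split off the non-central pieces via the bound \eqref{5eq: bound for J nu (j) (x; sigma; rho) rho neq 0}, apply H\"ormander's stationary phase (Proposition \ref{prop: Stationary phase}) to the central integral \eqref{5eq: W nu (j) (x; pm; 0) integral} with exactly the data the paper lists, and invoke Lemma \ref{5lem: coefficient} to truncate the sum at $m = j$. Your explicit Leibniz-rule treatment of the derivatives of the non-central pieces and your accounting of the $\fc^{2M}$ growth of $\sum_{|\ualpha|\leq 2M}\sup|D^{\ualpha}u|$ simply make precise what the paper leaves implicit.
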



\begin{cor}\label{cor: W(j)}
Let $N$, $j$ be nonnegative integers such that $N \geq j$, and let $\epsilon > 0$.

{ \rm (1).}  We have
$ W_{\unu}^{\pm, (j)}(x ) \lll_{\fr, j, d} \fc^{2j} x^{-j}$ for $x \geq \frc $.

{ \rm (2).} If $x \geq \frc^{2 + \epsilon}$, then
\begin{equation*} 
W_{\unu}^{\pm, (j)}(x ) = \sum_{ m = j }^{N-1} ( \pm i )^{j - m} B_{m, j} (\unu) x^{- m - \frac d 2} + O_{\fr, N, j, \epsilon, d} \left( \frc^{2 N } x^{-N - \frac d 2}\right).
\end{equation*}
\end{cor}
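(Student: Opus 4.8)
The plan is to deduce both parts from Proposition \ref{prop: asymptotics for W nu (j) (x; pm)} by choosing the truncation length $M$ appropriately; the freedom to take any integer $M \geq j$ there lets us trade terms of the expansion against the size of the error. Part (1) is then immediate: applying Proposition \ref{prop: asymptotics for W nu (j) (x; pm)} with $M = j$ makes the sum $\sum_{m=j}^{M-1}$ empty, leaving only the error term $O_{\fr,j,d}(\frc^{2j} x^{-j})$ for $x \geq \frc$, which is exactly the asserted bound.

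For Part (2), the subtlety is that the choice $M = N$ already yields the correct main term $\sum_{m=j}^{N-1}$ but only the error $O(\frc^{2N} x^{-N})$, weaker than the desired $O(\frc^{2N} x^{-N-d/2})$ by a factor $x^{d/2}$. To recover this factor I would instead apply the Proposition with a longer expansion $M = N + k$, with $k$ an integer depending only on $\epsilon$ and $d$ to be fixed below, and split
$$\sum_{m=j}^{M-1} = \sum_{m=j}^{N-1} + \sum_{m=N}^{M-1}.$$
The first sum is the main term of the corollary, so it remains to show that the tail $\sum_{m=N}^{M-1}$ and the Proposition's error $\frc^{2M} x^{-M}$ are each $O(\frc^{2N} x^{-N-d/2})$. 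Throughout I would use $\frc \geq 1$ together with the hypothesis $x \geq \frc^{2+\epsilon}$, which force the ratio $\frc^2/x \leq \frc^{-\epsilon} \leq 1$.

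For the tail, Lemma \ref{5lem: coefficient} gives $B_{m,j}(\unu) \lll_{m,j,d} \frc^{2m-2j}$, so its $m$-th term is $\lll \frc^{2m-2j} x^{-m-d/2}$; consecutive terms shrink by the factor $\frc^2/x \leq 1$, whence the $k$-term tail is at most $k$ times its leading term $\frc^{2N-2j} x^{-N-d/2} \leq \frc^{2N} x^{-N-d/2}$. For the error term I want $\frc^{2(N+k)} x^{-(N+k)} \lll \frc^{2N} x^{-N-d/2}$, i.e.\ $\frc^{2k} \lll x^{k-d/2}$; inserting $x \geq \frc^{2+\epsilon}$ reduces this to the elementary inequality $2k \leq (2+\epsilon)(k - \tfrac d2)$, that is $\epsilon k \geq (2+\epsilon)\tfrac d2$. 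Thus any integer $k \geq (2+\epsilon)d/(2\epsilon)$ works (and automatically $k > d/2$, as the exponent $k-\tfrac d2$ must be positive for the step to go through), and fixing such a $k$ makes $M = N+k$ a function of $N,\epsilon,d$; both contributions are then absorbed into $O_{\fr,N,j,\epsilon,d}(\frc^{2N} x^{-N-d/2})$, which finishes Part (2).

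The only step demanding any care, and a mild one, is precisely this exponent bookkeeping that upgrades the error from $x^{-N}$ to $x^{-N-d/2}$: it is here that the stronger hypothesis $x \geq \frc^{2+\epsilon}$, rather than merely $x \geq \frc$, enters, since it drives the decay rate $\frc^2/x$ down to at most $1$ and thereby renders the deep tail of the asymptotic expansion negligible against its leading tail term.
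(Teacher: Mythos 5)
Your proof is correct and follows essentially the same route as the paper's: part (1) is Proposition \ref{prop: asymptotics for W nu (j) (x; pm)} with $M=j$, and part (2) applies that proposition with an enlarged truncation $M=N+k$, bounding the discarded tail $\sum_{m=N}^{M-1}$ via Lemma \ref{5lem: coefficient} together with $\frc^2/x\leq 1$, and absorbing the error term using $x\geq\frc^{2+\epsilon}$. Your explicit requirement $\epsilon k\geq(2+\epsilon)\frac d2$ (with $k=M-N$) is in fact the corrected form of the condition displayed in the paper, $(2+\epsilon)\lp M-N+\frac d2\rp\geq 2(M-N)$, which as printed holds trivially for all $M\geq N$; the intended inequality is $(2+\epsilon)\lp M-N-\frac d2\rp\geq 2(M-N)$, exactly what you derived.
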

\begin{proof}
On letting $M = j$, Proposition \ref{prop: asymptotics for W nu (j) (x; pm)} implies (1). On  choosing $M$ sufficiently large so that $(2 + \epsilon) \lp M-N + \frac d 2\rp \geq 2 (M-N) $, Proposition \ref{prop: asymptotics for W nu (j) (x; pm)} and Lemma \ref{5lem: coefficient} yield
\begin{align*} 
& W_{\unu}^{\pm, (j)}(x ) - \sum_{ m = j }^{N-1} ( \pm i )^{j - m} B_{m, j} (\unu) x^{- m - \frac d 2} \\
= \ & \sum_{ m = N }^{M-1} ( \pm i )^{j - m} B_{m, j} (\unu) x^{- m - \frac d 2} + O_{\fr, j, M, d} \left( \frc^{2 M } x^{-M}\right) = O_{\fr, j, N, \epsilon, d} \left( \frc^{2 N } x^{-N - \frac d 2}\right).
\end{align*}
\end{proof}

Finally,   the asymptotic expansion of $H^{\pm} (x; \ulambda)$($ = H^{\pm}_{\unu} (x)$) is formulated as below. 

\begin{thm}\label{thm: asymptotic expansion}
Let $\mathfrak C =  \max  \left\{ | \lambda_l      | \right \} + 1$ and $\mathfrak R = \max  \left\{ |\Re \lambda_l      | \right \}$. Let $M$ be a nonnegative integer.

{\rm (1).} Define $W^{\pm} (x; \ulambda) = \sqrt n (\pm 2 \pi i)^{- \frac { n-1} 2} e^{\mp i n x} H^{\pm}  (x; \ulambda)$. Let $M \geq j \geq 0$. Then \begin{equation*} 
W^{\pm, (j)} (x; \ulambda) = \sum_{m = j }^{M-1} ( \pm i )^{j - m} B_{m, j} (\ulambda) x^{- m - \frac { n-1} 2} + O_{\mathfrak R, M, j , n} \left( \fC^{2 M } x^{-M}\right)
\end{equation*} 
for all $x \geq \mathfrak C $.
Here $B_{m, j} (\ulambda) \in \BQ[\ulambda]$ is a symmetric polynomial in $\ulambda$ of degree $2m$, with $B_{0, 0} (\ulambda) = 1$. The coefficients of $B_{m, j} (\ulambda)$ depends only on $m$, $j$ and $d$.

{\rm (2).} Let $B_{m } (\ulambda) = B_{m, 0} (\ulambda)$. Then for $x \geq \mathfrak C $  
\begin{equation*}
\begin{split}
H^{\pm}  (x; \ulambda) = n^{- \frac 1 2}   (\pm 2 \pi i)^{ \frac {n-1}  2} e^{ \pm i n x} x^{ - \frac {n-1} 2}  \hskip - 3 pt \lp  \sum_{m=0}^{M-1} ( \pm i )^{ - m} B_{m} (\ulambda) x^{- m} + O_{\mathfrak R, M, d} \left( \mathfrak C^{2 M} x^{-M + \frac {n-1} 2} \hskip -1 pt \right) \hskip -2 pt \rp \hskip -2 pt.
\end{split}
\end{equation*}
\end{thm}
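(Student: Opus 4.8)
The plan is to deduce this theorem from the asymptotic expansion already established for $J_{\unu}$, namely Proposition~\ref{prop: asymptotics for W nu (j) (x; pm)}, by transporting everything through the substitution $\nu_l = \lambda_l - \lambda_n$. First I would record the identity $W^{\pm}(x; \ulambda) = W^{\pm}_{\unu}(x)$: indeed $H^{\pm}(x; \ulambda) = H^{\pm}_{\unu}(x)$ by Proposition~\ref{prop: J (x ; sigma, lambda) = J nu (x; sigma)}, and the two normalizing factors coincide because $d + 1 = n$. Setting $B_{m, j}(\ulambda) := B_{m, j}(\unu)$, Proposition~\ref{prop: asymptotics for W nu (j) (x; pm)} then yields part (1) verbatim for $x \geq \frc$, with $x^{-m - d/2} = x^{-m - (n-1)/2}$. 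It remains to convert the constants: from $\nu_l = \lambda_l - \lambda_n$ one has $\fr = \max|\Re \nu_l| \leq 2\mathfrak R$ and $\frc = \max|\nu_l| + 1 \leq 2\fC$, so the error $O_{\fr, M, j, d}(\frc^{2M} x^{-M})$ becomes $O_{\mathfrak R, M, j, n}(\fC^{2M} x^{-M})$, the factor from $\frc^{2M} \leq (2\fC)^{2M}$ being absorbed into the implied constant.

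Next I would settle the stated properties of $B_{m,j}(\ulambda)$. Rationality, dependence of its coefficients only on $m, j, d$, and the normalization $B_{0,0}(\ulambda) = 1$ are immediate from the stationary-phase formula \eqref{5eq: B mj} and Lemma~\ref{5lem: coefficient}; the degree is $2m - 2j$ in $\unu$, hence at most $2m$ in $\ulambda$ after the linear substitution (exactly $2m$ when $j = 0$). The one genuinely new point is symmetry in all $n$ variables $\lambda_1, \dots, \lambda_n$, which is stronger than symmetry of $B_{m,j}(\unu)$ in $\nu_1, \dots, \nu_{n-1}$. For this I would argue indirectly: the Mellin--Barnes kernel in \eqref{2eq: definition of J (x; sigma)} is $G(s; \usigma, \ulambda) = \prod_{l} \Gamma(s - \lambda_l)\, e(\varsigma_l (s - \lambda_l)/4)$, which is manifestly invariant under permutations of $(\lambda_1, \dots, \lambda_n)$; hence $H^{\pm}(x; \ulambda)$, and therefore $W^{\pm}(x; \ulambda)$, is symmetric in $\ulambda$. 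Since the coefficients of an asymptotic expansion with respect to the fixed scale $x^{-m-(n-1)/2}$ are uniquely determined by the function, applying a permutation $\sigma$ gives $B_{m,j}(\sigma \cdot \ulambda) = B_{m,j}(\ulambda)$, so $B_{m,j}$ is a symmetric polynomial.

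The main obstacle is lowering the range of validity from $x \geq \frc$ to $x \geq \fC$, since $\frc$ may genuinely exceed $\fC$ (for example $\ulambda = (1, 0, -1)$ gives $\frc = 3 > 2 = \fC$). Here I would split $W^{\pm,(j)}_{\unu}(x) = W^{\pm,(j)}_{\unu}(x; \boldsymbol 0) + \sum_{\urho \neq \boldsymbol 0} W^{\pm,(j)}_{\unu}(x; \urho)$. The main terms come entirely from the central cube, and the expansion of $W^{\pm,(j)}_{\unu}(x; \boldsymbol 0)$ is produced by H\"ormander's Proposition~\ref{prop: Stationary phase}, which is valid for \emph{every} $x > 0$ with error $\lll \frc^{2M} x^{-M} \lll \fC^{2M} x^{-M}$. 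Thus the task reduces to bounding the finitely many tails $W^{\pm,(j)}_{\unu}(x; \urho)$, $\urho \neq \boldsymbol 0$, by $\fC^{2M} x^{-M}$ on the range $\fC \leq x < \frc$. On this range $\frc \leq 2\fC \leq 2x$, so $\frc/x \leq 2$. Each tail is a finite sum of monomials $\frc^{a} x^{-b}$ (a power of $\frc$ from the polynomial coefficients times an $x$-independent absolutely convergent integral, times an $x^{-b}$ from the $\BC[x^{-1}]$-coefficients); the validity of \eqref{5eq: bound for J nu (j) (x; sigma; rho) rho neq 0} for all $x \geq \frc$ and all indices forces $a \leq b$ and $b \geq M$. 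Hence for $x \geq \fC \geq 1$ each monomial is $(\frc/x)^{a} x^{-(b-a)} \leq 2^{a} \lll_{\fr, M, d} 1$, and since $\fC^{2M} x^{-M} \geq 2^{-M}\fC^{M} \geq 2^{-M}$ on $\fC \leq x < 2\fC$, this constant bound is itself $\lll_{\mathfrak R, M, j, n} \fC^{2M} x^{-M}$. Combining the two ranges completes part (1) for all $x \geq \fC$.

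Finally, part (2) follows by specializing part (1) to $j = 0$, writing $B_m(\ulambda) = B_{m,0}(\ulambda)$ (a symmetric polynomial of degree $2m$ with $B_0 = 1$), and inverting the definition of $W^{\pm}$ as $H^{\pm}(x; \ulambda) = n^{-1/2} (\pm 2\pi i)^{(n-1)/2} e^{\pm i n x} W^{\pm}(x; \ulambda)$; pulling the factor $x^{-(n-1)/2}$ out of the expansion of $W^{\pm}(x; \ulambda)$ turns the error $O(\fC^{2M} x^{-M})$ into $O(\fC^{2M} x^{-M + (n-1)/2})$ inside the bracket, which is exactly the asserted form.
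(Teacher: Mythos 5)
Your proposal is correct, and its core follows the same route as the paper's own (very terse) proof: transport Proposition \ref{prop: asymptotics for W nu (j) (x; pm)} and Lemma \ref{5lem: coefficient} through the substitution $\nu_l = \lambda_l - \lambda_n$ (using Proposition \ref{prop: J (x ; sigma, lambda) = J nu (x; sigma)} and $d+1=n$), deduce the symmetry of $B_{m,j}(\ulambda)$ from the manifest permutation invariance of the Mellin--Barnes kernel in (\ref{2eq: definition of G(s;  sigma, lambda)}, \ref{2eq: definition of J (x; sigma)}), and obtain part (2) by specializing to $j=0$. Where you go beyond the paper is your third paragraph. The paper declares the theorem a ``direct consequence'' of Proposition \ref{prop: asymptotics for W nu (j) (x; pm)}, but that proposition is proved only for $x \geq \frc$, whereas the theorem asserts the range $x \geq \fC$; since $\frc$ can strictly exceed $\fC$ (your example $\ulambda = (1,0,-1)$, where $\frc = 3 > 2 = \fC$, is exactly right), the interval $[\fC, \frc)$ is not literally covered by the citation. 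Your patch --- the central-cube expansion from Proposition \ref{prop: Stationary phase}, which the paper itself records as valid for all $x>0$, plus a direct bound on the tails on $[\fC,\frc)$ using $\frc \leq 2\fC \leq 2x$ --- is sound and is a genuine improvement in rigor over the printed proof.

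One step of that patch needs a different justification, however. You assert that each tail $W^{\pm,(j)}_{\unu}(x;\urho)$, $\urho \neq \boldsymbol 0$, is a sum of monomials $\frc^{a}x^{-b}$ with $a \leq b$ and $b \geq M$, and you argue this is ``forced'' by the validity of \eqref{5eq: bound for J nu (j) (x; sigma; rho) rho neq 0} for all $x \geq \frc$. That inference is not valid: a bound on a finite sum does not constrain the exponents of its individual terms, since cancellation among terms is possible. The claim itself is true, but it must be read off from the construction rather than reverse-engineered from the bound. Concretely, in \eqref{3eq: linear combination first case} the polynomial coefficient has degree $\alpha_3 + \sum_{l \in L_-(\urho)}\alpha_{3,l}$, while the accompanying power of $x^{-1}$ is $A(2|L_-(\urho)|+1) - \alpha_1$; the constraints $\alpha_3 \leq \sum_{l}\alpha_{1,l} + A - \alpha_1$ and $\alpha_{3,l} \leq 2A - \alpha_{1,l}$ give precisely degree $\leq$ power, and the same check works for (\ref{3eq: linear combination second case}, \ref{3eq: linear combination third case}). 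Then Lemma \ref{lem: J(x; sigma; h) for rho neq 0} contributes a polynomial of degree $B_1 \leq B$ against a factor $x^{-B}$, and choosing $B = M$ yields $b \geq M$ for every resulting monomial. With that substitution your argument is complete, and the remaining estimates in your third paragraph (each monomial $\lll_{\fr,M,j,n} 1$ on $[\fC,\frc)$, while $\fC^{2M}x^{-M} \geq 2^{-M}$ there) go through as written.
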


\begin{proof}
This theorem is a direct consequence of Proposition \ref{prop: asymptotics for W nu (j) (x; pm)} and Lemma \ref{5lem: coefficient}.
It is only left to verify the symmetry of $B_{m, j}(\ulambda) = B_{m, j} (\unu)$ with respect to $\ulambda$. Indeed, in view of (\ref{2eq: definition of G(s;  sigma, lambda)}, \ref{2eq: definition of J (x; sigma)}), $H^{\pm} (x; \ulambda)  $ is symmetric with respect to $\ulambda$, so $B_{m, j}(\ulambda) $ must be represented by a symmetric polynomial in $\ulambda$ modulo $\sum_{l     =1}^{d+1} \lambda_l     $.
\end{proof}

\begin{cor}\label{cor: H pm}
Let $M$ be a nonnegative integer, and let $\epsilon > 0$.
Then for $x \geq  \mathfrak C^{2 + \epsilon}$  
\begin{equation*}
\begin{split}
H^{\pm}  (x; \ulambda) =   n^{- \frac 1 2} (\pm 2 \pi i)^{ \frac {n-1} 2} e^{ \pm i n x} x^{ - \frac {n-1} 2} 
  \lp \sum_{m=0}^{M-1} ( \pm i )^{ - m} B_{m} (\ulambda) x^{- m} + O_{\mathfrak R, M, \epsilon, n} \left( \mathfrak C^{2 M} x^{-M  } \right) \rp.
\end{split}
\end{equation*}
\end{cor}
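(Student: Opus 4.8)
The plan is to deduce this sharper statement from part~(1) of Theorem~\ref{thm: asymptotic expansion} by the same term-against-range trade-off that passes from Proposition~\ref{prop: asymptotics for W nu (j) (x; pm)} to Corollary~\ref{cor: W(j)}; in effect this corollary is the $\ulambda$-rewriting of Corollary~\ref{cor: W(j)} in the case $j = 0$, and no new analytic input is needed. Recalling from Theorem~\ref{thm: asymptotic expansion}(1) that $H^{\pm}(x;\ulambda) = n^{-\frac 1 2}(\pm 2\pi i)^{\frac{n-1}2} e^{\pm i n x} W^{\pm}(x;\ulambda)$ with $W^{\pm}(x;\ulambda) = W^{\pm,(0)}(x;\ulambda)$, it suffices to prove that for $x \geq \mathfrak C^{2+\epsilon}$
\[
W^{\pm}(x;\ulambda) = \sum_{m=0}^{M-1} (\pm i)^{-m} B_m(\ulambda) x^{-m-\frac{n-1}2} + O_{\mathfrak R, M, \epsilon, n}\!\lp \mathfrak C^{2M} x^{-M-\frac{n-1}2} \rp;
\]
multiplying through by $n^{-\frac 1 2}(\pm 2\pi i)^{\frac{n-1}2} e^{\pm i n x}$ and pulling $x^{-\frac{n-1}2}$ out of the bracket then reproduces the corollary verbatim, the extra $x^{-\frac{n-1}2}$ turning the error into $O(\mathfrak C^{2M} x^{-M})$.

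To establish this expansion I would fix an auxiliary integer $M' \geq M$, to be chosen, and apply Theorem~\ref{thm: asymptotic expansion}(1) with $j = 0$ and with $M'$ in place of $M$, which gives for all $x \geq \mathfrak C$
\[
W^{\pm}(x;\ulambda) = \sum_{m=0}^{M'-1}(\pm i)^{-m} B_m(\ulambda) x^{-m-\frac{n-1}2} + O_{\mathfrak R, M', n}\!\lp \mathfrak C^{2M'} x^{-M'} \rp.
\]
I would then split the sum at $m = M$, retaining $\sum_{m=0}^{M-1}$ as the asserted main term and bounding the tail $\sum_{m=M}^{M'-1}$. By Theorem~\ref{thm: asymptotic expansion}(1) each $B_m(\ulambda) = B_{m,0}(\ulambda)$ is a symmetric polynomial in $\ulambda$ of degree $2m$ whose coefficients depend only on $m$ and $n$, so $B_m(\ulambda) \lll_{m,n} \mathfrak C^{2m}$ (recall $\mathfrak C = \max\{|\lambda_l|\}+1 \geq 1$). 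Since $x \geq \mathfrak C^{2+\epsilon} \geq \mathfrak C^{2}$ we have $\mathfrak C^{2(m-M)} x^{-(m-M)} \leq 1$ for every $m \geq M$, whence each tail term is $\lll \mathfrak C^{2M} x^{-M-\frac{n-1}2}$ and the finitely many of them sum to the same order.

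It remains to absorb the error $\mathfrak C^{2M'} x^{-M'}$ into $\mathfrak C^{2M} x^{-M-\frac{n-1}2}$, and this is precisely where the range restriction $x \geq \mathfrak C^{2+\epsilon}$ is spent: writing $\mathfrak C^{2M'} x^{-M'} = \mathfrak C^{2M}\,\mathfrak C^{2(M'-M)} x^{-(M'-M)}\, x^{-M}$, the claim reduces to $\mathfrak C^{2(M'-M)} \leq x^{M'-M-\frac{n-1}2}$, and since $x^{M'-M-\frac{n-1}2} \geq \mathfrak C^{(2+\epsilon)(M'-M-\frac{n-1}2)}$ for $M'$ large this holds once $\epsilon(M'-M) \geq (2+\epsilon)\frac{n-1}2$. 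Thus choosing $M' = M + \lceil (2+\epsilon)(n-1)/(2\epsilon) \rceil$, which depends only on $M$, $\epsilon$ and $n$, renders both the tail and the error $O_{\mathfrak R, M, \epsilon, n}(\mathfrak C^{2M} x^{-M-\frac{n-1}2})$ and proves the expansion for $W^{\pm}$. The only point demanding care is this bookkeeping balance of $M'$ against $\epsilon$ together with the polynomial growth bound $B_m(\ulambda) \lll_{m,n} \mathfrak C^{2m}$; there is no genuine obstacle beyond it, the substantive stationary-phase analysis already being contained in Proposition~\ref{prop: asymptotics for W nu (j) (x; pm)} and Theorem~\ref{thm: asymptotic expansion}.
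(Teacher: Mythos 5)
Your proposal is correct and follows essentially the paper's own route: the paper obtains Corollary \ref{cor: H pm} by exactly the trade-off you describe (apply the expansion of Theorem \ref{thm: asymptotic expansion} with an enlarged truncation order, bound the tail terms via the polynomial bound $B_m(\ulambda) \lll_{m,n} \mathfrak C^{2m}$ and $x \geq \mathfrak C^2$, and absorb the error using $x \geq \mathfrak C^{2+\epsilon}$), which is the same argument spelled out in its proof of Corollary \ref{cor: W(j)}(2). Your explicit choice of $M'$ and the balancing inequality $\epsilon(M'-M) \geq (2+\epsilon)\frac{n-1}{2}$ match the paper's condition on the auxiliary truncation order, so no new analytic input is involved.
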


\subsection{Concluding remarks}

\subsubsection{On the analytic continuations of $H$-Bessel functions}\label{sec: Analytic continuations of the H-Bessel functions}
Our observation is that the phase function $\theta $ defined by (\ref{5eq: theta (t)}) is {\it always positive} on $\BR^{d}_+$. It follows that if one replaces $x$  by $z = x e^{i\omega}$, with $x > 0$ and $0 \leq \pm \omega \leq \pi$, then the various $J$-integrals in the rigorous interpretation of $H^{\pm}_{\unu} (z)$ remain  absolutely convergent, uniformly with respect to $z$, since $\left| e^{\pm i z \theta(\ut)} \right| = e^{\mp x \sin \omega \, \theta(\ut)} \leq 1.$ Therefore, the resulting integral $H^{\pm}_{\unu} (z)$ gives rise to an analytic continuation of $H^{\pm}_{\unu} (x)$ onto the half-plane $\BH^{\pm} = \left\{ z \in \BC \smallsetminus \{0\} : 0 \leq \pm \arg z \leq \pi \right\} $. In view  of Proposition \ref{prop: J (x ; sigma, lambda) = J nu (x; sigma)}, one may define $H^{\pm} (z; \ulambda) = H^{\pm}_{\unu} (z)$ and regard it as the analytic continuation of  $H^{\pm} (x; \ulambda)$ from $\BR _+$ onto $\BH^{\pm}$. Furthermore, with slight modifications of the arguments above, where the phase function $f $ is now chosen to be $\pm e^{i \omega} (\theta - d - 1)$ in the application of Proposition \ref{prop: Stationary phase},  the domain of validity for the asymptotic expansions in Theorem \ref{thm: asymptotic expansion} may be extended from $\BR_+$ onto $  \BH^{\pm}$. For example, we have 
\begin{equation}\label{5eq: asymptotic expansion 1}
\begin{split}
H^{\pm}  (z; \ulambda) = n^{- \frac 1 2}  &(\pm 2 \pi i)^{ \frac { n-1} 2} e^{ \pm i n z} z^{ - \frac { n-1} 2} \\
& \lp \sum_{m=0}^{M-1} ( \pm i )^{ - m} B_{m} (\ulambda) z^{- m} + O_{\mathfrak R, M, n} \left( \mathfrak C^{2 M} |z|^{-M + \frac { n-1} 2} \right) \rp,
\end{split}
\end{equation}
for all $z \in \BH^\pm$ such that $ |z| \geq \fC$.

Obviously, the above method of obtaining the analytic continuation of $H^{\pm}_{\unu} $ does not apply to $K$-Bessel functions.

\subsubsection{On the asymptotic of the Bessel kernel $J_{(\ulambda, \udelta)}$}

As in \eqref{2eq: Bessel kernel}, $ J_{(\ulambda, \udelta)} (\pm x) $ is a combination of $J \big(2 \pi x^{\frac 1 n}; \usigma, \ulambda \big) $, and hence its asymptotic   follows immediately from Theorem \ref{thm: Bessel functions of K-type} and \ref{thm: asymptotic expansion}. For convenience of reference, we record the asyptotic  of $J_{(\ulambda, \udelta)} (\pm x)$ in the following theorem.

\begin{thm}\label{thm: asymptotic Bessel kernel, 1}
	Let $(\ulambda, \udelta) \in \BL^{n-1} \times (\BZ/2 \BZ)^{n}$.
	Let $M  \geqslant 0 $.  
	Then, for $x > 0$,  we may write  
	\begin{align*}
	& J_{(\ulambda, \udelta)} \left(x^n \right)   =  \sum_{\pm} \frac  { { (\pm)^{ \sum  \delta_l    }   e \lp   \pm   \lp n x  +   \frac {n-1} 8 \rp  \rp }} {n^{\frac 1 2} x^{  \frac {n-1} 2} }   W_{ \ulambda }^{\pm} (x) + E^+_{(\ulambda, \udelta)} (x), \\
	& J_{(\ulambda, \udelta)} \left( - x^n \right)   = E^-_{(\ulambda, \udelta)} (x),  
	\end{align*}
	if $n$ is even, and
	\begin{align*}
	J_{(\ulambda, \udelta)} \left(\pm x^n \right) & =  \frac  { { (\pm)^{\sum  \delta_l     }    e \lp   \pm   \lp n x  +   \frac {n-1} 8  \rp \rp }} {n^{\frac 1 2} x^{  \frac {n-1} 2} }   W_{ \ulambda }^{\pm} (x) + E^\pm_{(\ulambda, \udelta)} (x),   
	\end{align*}
	if $n$ is odd, such that
	\begin{align*}
	W_{ \ulambda }^{\pm } (x) =    \sum_{m=0}^{M-1}  B^{\pm}_{m } (\ulambda) x^{-   m  }  
	+ O_{\mathfrak R,  M ,  n} \left( \fC^{2 M } x^{-  M +  \frac {n-1} 2}\right), 
	\end{align*} 
	and
	\begin{equation*}
	E^{\pm }_{(\ulambda, \udelta)} \left( x \right) =  O_{\mathfrak R, M,  n} \lp    {\mathfrak C}^{M} x^{-   M } \rp,
	\end{equation*}
	for $x \geqslant \mathfrak C $. With the notations in Theorem {\rm \ref{thm: asymptotic expansion}}, we have $W_{ \ulambda }^{\pm } (x) = (2 \pi x)^{\frac {n-1} 2 } W^{\pm} (2 \pi x; \ulambda)$ and $B^{\pm}_{m } (\ulambda) = (\pm 2 \pi i)^{ -m} B_{m } (\ulambda)$.

\end{thm}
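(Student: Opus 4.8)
The plan is to assemble the asymptotics of $J_{(\ulambda,\udelta)}(\pm x^n)$ directly from the decomposition (\ref{2eq: Bessel kernel}) of the Bessel kernel into a signed sum of the Bessel functions $J\big(2\pi x^{1/n}; \usigma, \ulambda\big)$, sorting the summands according to the dichotomy of Terminology \ref{term: Bessel functions of K-type and H-Bessel functions} between $H$-Bessel and $K$-Bessel functions. Substituting $x^n$ for $x$ turns the argument $2\pi x^{1/n}$ into $2\pi x$, so that
\[
J_{(\ulambda,\udelta)}(\pm x^n) = \sum_{\usigma:\, \prod_{l} \varsigma_l = \pm} \left( \prod_{l=1}^n \varsigma_l^{\delta_l} \right) J(2\pi x; \usigma, \ulambda).
\]
First I would isolate the two $H$-Bessel contributions, which come precisely from the constant-sign vectors $\usigma = (+,\dots,+)$ and $\usigma = (-,\dots,-)$, all other $\usigma$ being $K$-Bessel functions. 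Since $\prod_l(+) = +$ always while $\prod_l(-) = (-1)^n$, a parity count shows that both $H^+$ and $H^-$ fall on the $+x^n$ side when $n$ is even, leaving $J_{(\ulambda,\udelta)}(-x^n)$ purely $K$-Bessel, whereas $H^\pm$ lands on the $\pm x^n$ side when $n$ is odd. This reproduces exactly the case distinction in the statement.

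Next I would track the constants. The coefficient attached to $\usigma = (+,\dots,+)$ is $1$ and that attached to $\usigma = (-,\dots,-)$ is $(-)^{\sum_l \delta_l}$. Inverting the definition $W^\pm(x;\ulambda) = \sqrt n\,(\pm 2\pi i)^{-\frac{n-1}2} e^{\mp i n x} H^\pm(x;\ulambda)$ from Theorem \ref{thm: asymptotic expansion} and replacing $x$ by $2\pi x$ gives
\[
H^\pm(2\pi x; \ulambda) = n^{-\frac12}(\pm 2\pi i)^{\frac{n-1}2}(2\pi)^{-\frac{n-1}2}\, e(\pm n x)\, W_{\ulambda}^\pm(x),
\]
using $W_{\ulambda}^\pm(x) = (2\pi)^{\frac{n-1}2} W^\pm(2\pi x;\ulambda)$ and $e^{\pm 2\pi i n x} = e(\pm n x)$. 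With the convention $(\pm i)^a = e^{\pm\frac12 i\pi a}$ one has $(\pm 2\pi i)^{\frac{n-1}2}(2\pi)^{-\frac{n-1}2} = (\pm i)^{\frac{n-1}2} = e\big(\pm\frac{n-1}8\big)$, so the prefactor collapses to $n^{-\frac12} e\big(\pm\frac{n-1}8\big)$; multiplying by the sign coefficients $1$ and $(-)^{\sum_l\delta_l}$ recovers exactly $c^\pm(\udelta)$, matching the displayed main terms in both parities.

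For the expansion of $W_{\ulambda}^{\pm,(j)}(x)$ I would simply rescale Theorem \ref{thm: asymptotic expansion}(1): differentiating $W_{\ulambda}^\pm(x) = (2\pi)^{\frac{n-1}2} W^\pm(2\pi x;\ulambda)$ brings down a factor $(2\pi)^j$, and the powers of $2\pi$ recombine $(\pm i)^{j-m} B_{m,j}(\ulambda)$ into $(\pm 2\pi i)^{j-m} B_{m,j}(\ulambda) = B^\pm_{m,j}(\ulambda)$, while the remainder $O\big(\fC^{2M}(2\pi x)^{-M}\big)$ is absorbed into $O_n\big(\fC^{2M} x^{-M}\big)$. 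The error functions $E^\pm_{(\ulambda,\udelta)}$ collect the remaining mixed-sign summands, each a $K$-Bessel function, for which Theorem \ref{thm: Bessel functions of K-type} yields $J^{(j)}(2\pi x;\usigma,\ulambda) \lll_{\fr,M,j,n} \fc^M x^{-M}$ once $2\pi x \geq \fc$.

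The one genuinely fiddly point, rather than a real obstacle, is reconciling the two size parameters: the $K$-Bessel bound is phrased via $\fc = \max|\nu_l| + 1$ with $\nu_l = \lambda_l - \lambda_{d+1}$, whereas the statement uses $\fC = \max|\lambda_l| + 1$. I would record the elementary inequality $\fc < 2\fC$, coming from $|\nu_l| \leq 2\max|\lambda_l|$, which both converts $\fc^M$ into $O_n(\fC^M)$ — explaining why the error carries $\fC^M$ rather than $\fC^{2M}$ — and guarantees that the hypothesis $x \geq \fC$ forces $2\pi x > \fc$, so that every invoked estimate holds throughout the stated range.
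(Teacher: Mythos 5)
Your proposal is correct and takes essentially the same route as the paper, whose entire proof is the remark that the asymptotic ``follows immediately'' from splitting the decomposition \eqref{2eq: Bessel kernel} into the two constant-sign terms (handled by Theorem \ref{thm: asymptotic expansion}) and the mixed-sign $K$-Bessel terms (handled by Theorem \ref{thm: Bessel functions of K-type}). Your explicit bookkeeping --- the parity dichotomy in $n$, the collapse of $(\pm 2\pi i)^{\frac{n-1}{2}}(2\pi)^{-\frac{n-1}{2}}$ into $e\lp \pm \frac{n-1}{8}\rp$ giving $c^{\pm}(\udelta)$, and the comparison $\fc \leq 2\fC$ explaining the exponent $\fC^{M}$ in the error --- is exactly the detail the paper suppresses.
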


\subsubsection{On the implied constants of estimates} All the implied constants that occur in this section are of exponential dependence on the real parts of the indices. 
If one considers  the $d$-th symmetric lift of a holomorphic Hecke cusp form of weight $k$, the estimates are particularly awful in the $k$ aspect. 


In \S \ref{sec: Recurrence relations and differential equations of the Bessel functions} and \S \ref{sec: Bessel equations}, we shall further explore the theory of Bessel functions from the perspective of differential equations.  As a consequence, we may prove that  if the argument is sufficiently large then all the estimates in this section can be improved so that the dependence on the index can be completely eliminated.

\subsubsection{On the coefficients in the asymptotics} \label{sec: remark on the coefficients in the asymptotics}
One feature of the method of stationary phase is the  explicit formula of the coefficients in the asymptotic expansion in terms of  certain partial differential operators.  In the present case of $H^{\pm}  (x; \ulambda) = H^{\pm}_{\unu} (x)$,
\eqref{5eq: B mj} provides an explicit formula of $B_{m} (\ulambda) = B_{m, 0} (\unu)$. 
To compute $\EuScript L^{m+r} \left( G^r p_{\unu} \right) (\ut_0)$ appearing in \eqref{5eq: B mj},  we observe that the function $G$ defined in (\ref{5eq: G(t)}) does not only vanish of third order at $\ut_0$. Actually, $\partial^{\ualpha} G (\ut_0) $ vanishes except for $ \ualpha = (0, ..., 0, \alpha , 0 ..., 0)$, with $\alpha \geq 3$. In the exceptional case we have $\partial^{\ualpha} G (\ut_0) = (- )^{\alpha} \alpha !$. However, the resulting expression is  considerably complicated. 

When $d = 1$, we have $\EuScript L = (d/dt)^2$. For $2 m \geq r \geq 1$,
\begin{align*}
 & \lp d/ dt \rp^{2m+2r} \lp G^r p_{\nu}\rp (1) \\
 = &\, (2m+2r)! \sum_{\alpha = 0}^{2m-r} \left| \left\{ (\alpha_1, ..., \alpha_r) : \sum_{q = 1}^r \alpha_q = 2m+2r- \alpha, \alpha_q \geq 3 \right \} \right| \frac { (- )^{\alpha}[ \nu - 1 ]_\alpha} {\alpha !} \\
 = &\, (2m+2r)! \sum_{\alpha = 0}^{2m-r} {2m-\alpha-1 \choose r-1} \frac {  (1 - \nu)_{\alpha} } {\alpha !}.
\end{align*}
Therefore \eqref{5eq: B mj} yields 
\begin{equation*}
\begin{split}
B_{m, 0} (\nu) = \lp -\frac {1} {4}\rp^m \lp \frac { (1 - \nu)_{2m}} {m!} + \sum_{r=1}^{2m} \frac {(-)^r (2m+2r)!} {4^r (m+r)! r!} \sum_{\alpha = 0}^{2m-r} {2m-\alpha-1 \choose r-1} \frac { (1 - \nu)_{\alpha}} { \alpha !} \rp.
\end{split}
\end{equation*}
However, this expression of  $ B_{m, 0} (\nu)$ is more involved than what is known in the literature. 
Indeed, we have the  asymptotic expansions of $H^{(1)}_{\nu} $ and  $H^{(2)}_{\nu} $ (\cite[7.2 (1, 2)]{Watson})
\begin{align*}
H_{\nu}^{(1, 2)} (x)  \sim \lp \frac 2 {\pi x} \rp^{\frac 1 2} e^{\pm i \lp x - \frac 1 2 \nu \pi - \frac 1 4\pi \rp} \lp \sum_{m=0}^\infty \frac { (\pm)^m \lp \frac 1 2 - \nu \rp_m  \lp \frac 1 2 + \nu \rp_m} { m! (2 i x)^m} \rp,  
\end{align*}
which are deducible from Hankel's integral representations (\cite[6.12 (3, 4)]{Watson}). 
In view of Proposition \ref{prop: Classical Bessel functions} and Theorem \ref{thm: asymptotic expansion}, we have 
\begin{equation*}
B_{m, 0} (\nu) =  \frac {\lp \frac 1 2 - \nu \rp_m  \lp \frac 1 2 + \nu \rp_m } {4^m m!}.
\end{equation*}
Therefore, we deduce the following  combinatoric identity 
\begin{equation}
\begin{split}
  \frac { (- )^m  \lp \frac 1 2 - \nu \rp_m  \lp \frac 1 2 + \nu \rp_m } {m!}  
& =   \frac { (1 - \nu)_{2m}} {m!} \\
& + \sum_{r=1}^{2m} \frac {(- )^r  (2m+2r)!} {4^r (m+r)! r!} \sum_{\alpha = 0}^{2m-r}{2m-\alpha-1 \choose r-1} \frac {(1 - \nu)_{\alpha}} {\alpha !}.
\end{split}
\end{equation}
It seems however hard to find an elementary proof of this identity.

\section{Recurrence formulae and the differential equations for Bessel functions}
\label{sec: Recurrence relations and differential equations of the Bessel functions}

Making use of certain recurrence formulae for  $J_{\unu} (x; \usigma)$, 
we shall derive the differential equation satisfied by $J (x; \usigma, \ulambda)$.  

\subsection{Recurrence formulae}

Applying the formal partial integrations of either the first or the second kind and the differentiation under the integral sign on the formal integral expression of $J_{\unu} (x; \usigma)$ in (\ref{3eq: definition J nu (x; sigma)}), one obtains the recurrence formulae
\begin{equation}\label{6eq: recurrence from partial integration}
  {\nu_l     } ({i x})\-  J_{\unu} (x; \usigma) = \varsigma_l      J_{\unu  - \ue_l     } (x; \usigma) - \varsigma_{d + 1} J_{\unu + \ue^d} (x; \usigma)
\end{equation}
for $l      = 1, ..., d$, and
\begin{equation}\label{6eq: reccurence from differentiation}
J'_{\unu} (x; \usigma) = \varsigma_{d + 1} i J_{\unu + \ue^d} (x; \usigma) + i \sum_{l      = 1}^{d} \varsigma_l      J_{\unu - \ue_l     } (x; \usigma).
\end{equation}
It is easy to verify (\ref{6eq: recurrence from partial integration}) and (\ref{6eq: reccurence from differentiation}) using the rigorous interpretation of $J_{\unu} (x; \usigma)$ established in \S \ref{sec: Definition of BJ nu (x; sigma)}.
Moreover, using (\ref{6eq: recurrence from partial integration}), one may reformulate  (\ref{6eq: reccurence from differentiation})  as below,
\begin{equation}
J'_{\unu} (x; \usigma) \label{6eq: reccurence second form}
= \varsigma_{d + 1} i (d + 1) J_{\unu + \ue^d} (x; \usigma) + \frac {\sum_{l      = 1}^{d} \nu_l     } { x} J_{\unu} (x; \usigma).
\end{equation}

\subsection{The differential equations}


\begin{lem}\label{lem: J (j) nu (x; sigma) derivatives}
Define $\ue^{l     } = (\underbrace {1,..., 1}_{l      }, 0 ..., 0)$,  $l      = 1, ..., d$, and denote $\ue^0 = \ue^{d+1} = (0, ..., 0)$ for convenience. Let $\nu_{d+1} = 0$.

{ \rm (1).} For $l      = 1, ..., d+1$ we have
\begin{equation}\label{6eq: J' nu+e ell (x; sigma)}
J'_{\unu + \ue^{l     }} (x; \usigma) = \varsigma_{l     } i (d + 1) J_{\unu + \ue^{l     -1}} (x; \usigma) - \frac {\varLambda_{d-l     +1} (\unu) + d - l      +1} { x} J_{\unu + \ue^{l     }} (x; \usigma),
\end{equation} 
with
\begin{equation*} 
 \varLambda_{m } (\unu) = - \sum_{k = 1}^{d}\nu_{k} + (d+1) \nu_{d-m+1}, \hskip 10 pt   m = 0, ..., d.
\end{equation*}

{ \rm (2).} For $0 \leq j \leq k \leq d+1$ define
\begin{equation*}
U_{k, j} (\unu) = 
\begin{cases}
1, & \text { if } j=k,\\
- \lp \varLambda_{j} (\unu) + k - 1 \rp U_{k - 1, j} (\unu) + U_{k-1, j-1} (\unu), & \text { if } 0 \leq j \leq k-1,
\end{cases}
\end{equation*}
with the notation $U_{k, -1} (\unu) = 0$, and
\begin{equation*}
S_0 (\usigma) = +, \hskip 10 pt S_j (\usigma) = \prod_{m = 0}^{j - 1}\varsigma_{d-m+1}\ \text{ for } j = 1, ..., d+1.
\end{equation*}
Then
\begin{equation} \label{6eq: J (j) nu (x; sigma) derivatives}
J^{(k)}_{\unu}  (x; \usigma) = \sum_{j = 0}^k  S_j (\usigma) (i (d+1))^j U_{k, j} (\unu) x^{j - k} J_{\unu + \ue^{d - j + 1}}  (x; \usigma).
\end{equation}
\end{lem}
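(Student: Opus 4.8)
The plan is to derive part (1) as a purely algebraic consequence of the three recurrence formulae \eqref{6eq: recurrence from partial integration}, \eqref{6eq: reccurence from differentiation} and \eqref{6eq: reccurence second form}, and then to obtain part (2) by induction on $k$, feeding part (1) into the termwise derivative of the inductive hypothesis. The whole argument is elementary once the recurrences are in hand; the substance lies entirely in the index bookkeeping.

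For part (1), fix $l \in \{1, \dots, d\}$ and set $\umu = \unu + \ue^{l}$, so that $\mu_k = \nu_k + 1$ for $k \leq l$, $\mu_k = \nu_k$ for $k > l$, and $\umu - \ue_l = \unu + \ue^{l-1}$. First I would single out the $l$-th summand in \eqref{6eq: reccurence from differentiation} applied at $\umu$, and rewrite each remaining term $\varsigma_k J_{\umu - \ue_k}$ with $k \neq l$ via \eqref{6eq: recurrence from partial integration}; this collapses the sum into a multiple of $J_{\umu + \ue^d}$ plus a multiple of $J_{\umu}$. A second use of \eqref{6eq: recurrence from partial integration}, now with $k = l$, eliminates $J_{\umu + \ue^d}$ in favour of $\varsigma_l J_{\umu - \ue_l}$ and $J_{\umu}$, leaving $J'_{\umu} = i(d+1)\varsigma_l J_{\umu - \ue_l} + x^{-1}\big(\sum_k \mu_k - (d+1)\mu_l\big) J_{\umu}$. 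Substituting the values of $\mu_k$ gives $\sum_k \mu_k - (d+1)\mu_l = \sum_k \nu_k + l - (d+1)(\nu_l + 1)$, which one checks equals $-\big(\varLambda_{d-l+1}(\unu) + d - l + 1\big)$ after unwinding $\varLambda_{d-l+1}(\unu) = -\sum_k \nu_k + (d+1)\nu_l$. The boundary case $l = d+1$ is exactly \eqref{6eq: reccurence second form}, using the convention $\nu_{d+1} = 0$ so that $\varLambda_0(\unu) = -\sum_k \nu_k$.

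For part (2), I would induct on $k$. The base case $k = 0$ reduces to $J_{\unu} = J_{\unu}$ on noting $S_0(\usigma) = +$, $U_{0,0}(\unu) = 1$ and $\ue^{d+1} = \mathbf{0}$. For the inductive step I would differentiate the expansion of $J^{(k)}_{\unu}$ termwise and apply the product rule to each $x^{j-k} J_{\unu + \ue^{d-j+1}}$, rewriting $J'_{\unu + \ue^{d-j+1}}$ by part (1) with $l = d - j + 1$ (so $d - l + 1 = j$). This produces, for each $j$, a contribution with shift $\ue^{d-j}$ and power $x^{j-k}$ and a contribution with shift $\ue^{d-j+1}$ and power $x^{j-k-1}$. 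Collecting by the shift $\ue^{d-j'+1}$ of the new index $j'$, the shift-$\ue^{d-j}$ terms land at $j' = j+1$ while the rest stay at $j' = j$; here I would verify that the sign factors telescope via $S_j(\usigma)\,\varsigma_{d-j+1} = S_{j+1}(\usigma)$ and that the power of $i(d+1)$ increments to $j'$. The coefficient of the $j'$-th term then comes out as $-\big(\varLambda_{j'}(\unu) + k\big) U_{k,j'}(\unu) + U_{k,j'-1}(\unu)$, which is precisely $U_{k+1,j'}(\unu)$ by its defining recursion.

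I expect the main obstacle to be the reindexing in the inductive step of part (2): one must check that the two families of contributions recombine into exactly the recursion defining $U_{k+1,j}$, with the boundary values $U_{k+1,k+1}(\unu) = 1$ (the lone shift-down term at $j' = k+1$) and $U_{k+1,0}(\unu) = -\big(\varLambda_0(\unu) + k\big) U_{k,0}(\unu)$ (the lone term at $j' = 0$, using $U_{k,-1}(\unu) = 0$) correctly produced, and with the conventions $\ue^0 = \ue^{d+1} = \mathbf{0}$, $\nu_{d+1} = 0$ and the product $S_j(\usigma)$ applied consistently throughout. Everything else is routine.
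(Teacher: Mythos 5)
Your proposal is correct and follows essentially the same route as the paper: part (1) by algebraic manipulation of the recurrence formulae (the paper starts directly from \eqref{6eq: reccurence second form} and then applies \eqref{6eq: recurrence from partial integration}, whereas you re-derive that step from \eqref{6eq: reccurence from differentiation}, which is an immaterial difference), and part (2) by induction on $k$, differentiating termwise, applying part (1) with $l = d-j+1$, and reassembling via the defining recursions of $U_{k,j}(\unu)$ and $S_j(\usigma)$. Your index bookkeeping, sign telescoping $S_j(\usigma)\varsigma_{d-j+1} = S_{j+1}(\usigma)$, and boundary cases all match the paper's argument.
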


\begin{proof}
By (\ref{6eq: reccurence second form}) and (\ref{6eq: recurrence from partial integration}),   
\begin{equation*}
\begin{split}
& J'_{\unu + \ue^{l     }} (x; \usigma) = \varsigma_{d + 1} i (d + 1) J_{\unu + \ue^{l     } + \ue^d} (x; \usigma) + \frac {\sum_{k = 1}^{d}\nu_{k} + l     } { x} J_{\unu + \ue^{l     }} (x; \usigma) \\
= &\ i (d + 1) \left(- \frac {\nu_{l     } + 1} {i x} J_{\unu + \ue^{l     }} (x; \usigma) + \varsigma_{l     } J_{\unu + \ue^{l      - 1}} (x; \usigma) \right) + \frac {\sum_{k = 1}^{d}\nu_{k} + l     } { x} J_{\unu + \ue^{l     }} (x; \usigma)\\
= &\ \varsigma_{l     } i (d + 1)  J_{\unu + \ue^{l     -1}} (x; \usigma) + \frac {\sum_{k = 1}^{d}\nu_{k} - (d+1) \nu_l      + l      - d - 1} { x} J_{\unu + \ue^{l     }} (x; \usigma).
\end{split}
\end{equation*}
This proves \eqref{6eq: J' nu+e ell (x; sigma)}.

(\ref{6eq: J (j) nu (x; sigma) derivatives}) is trivial when $k=0$. Suppose that $k \geq 1$ and that (\ref{6eq: J (j) nu (x; sigma) derivatives}) is already proven for $k-1$. The inductive hypothesis and  (\ref{6eq: J' nu+e ell (x; sigma)}) imply
\begin{align*}
J^{(k)}_{\unu}  (x; \usigma) = &  \sum_{j=0}^{k-1}  S_j(\usigma) (i(d+1))^j U_{k-1, j} (\unu) x^{j-k+1} \\
& \lp (j-k+1) x^{-1} J_{\unu + \ue^{d-j+1}} (x; \usigma) \right. \\
& \left. \quad + \varsigma _{d-j+1} i (d+1) J_{\unu + \ue^{d-j}}  (x; \usigma) - (\varLambda_j(\unu) + j) x\- J_{\unu + \ue^{d-j+1}} (x; \usigma)  \rp\\
= & - \sum_{j=0}^{k-1} S_j(\usigma) (i(d+1))^j U_{k-1, j} (\unu) (\varLambda_j(\unu) + k-1) x^{j-k} J_{\unu + \ue^{d-j+1}} (x; \usigma)\\
& + \sum_{j=1}^{k } S_{j-1}(\usigma) \varsigma _{d-j+2} (i(d+1))^j U_{k-1, j-1} (\unu) x^{j-k} J_{\unu + \ue^{d-j+1}} (x; \usigma).
\end{align*}
Then (\ref{6eq: J (j) nu (x; sigma) derivatives}) follows from the definitions of $ U_{k, j} (\unu)$ and  $ S_j (\usigma) $.
\end{proof}

Lemma \ref{lem: J (j) nu (x; sigma) derivatives} (2) may be recapitulated as
\begin{equation}\label{6eq: differentials, matrix form}
X_{\unu} (x; \usigma)
= 
D(x)\- U ( \unu) D(x) S(\usigma) Y_{\unu}  (x; \usigma),
\end{equation}
where $X_{\unu} (x; \usigma) = \big( J^{(k)}_{\unu} (x; \usigma) \big)_{k=0}^{ d+1 }$ and $Y_{\unu}  (x; \usigma) = \big(  J_{\unu + \ue^{d - j + 1}}  (x; \usigma) \big)_{j=0}^{d+1}$ are column vectors of functions, 
$S(\usigma) = \mathrm{diag} \left(S_j(\usigma)(i(d+1))^j \right)_{j=0}^{d+1}$ and $D(x) = \mathrm{diag} \left(x^j \right)_{j=0}^{d+1}$ are diagonal matrices, and $U ( \unu) $ is the lower triangular unipotent $(d+2) \times (d+2)$ matrix whose $(k+1, j+1)$-th entry is equal to $U_{k, j} (\unu)$. 
The inverse matrix $U ( \unu)\-$ is again a lower triangular unipotent matrix. Let  $ V_{k, j} (\unu) $ denote the $(k+1, j+1)$-th entry of  $U ( \unu)\-$. It is evident that $ V_{k, j} (\unu) $ is a polynomial in $\unu$ of  degree $k-j$ and integral coefficients.

Observe that $J_{\unu + \ue^{d+1}} (x; \usigma) = J_{\unu + \ue^0} (x; \usigma) = J_{\unu} (x; \usigma)$. Therefore,  (\ref{6eq: differentials, matrix form}) implies that $J_{\unu} (x; \usigma)$ satisfies the following linear differential equation of order $d+1$
\begin{equation} \label{6eq: differential equation of J}
\begin{split}
\sum_{j = 1}^{d+1} V_{d+1, j} (\unu)   x^{j - d - 1} w^{(j)}  
  + \left( V_{d+1, 0} (\unu) x^{- d - 1} - S_{d+1} (\usigma) (i(d+1))^{d+1} \right) w = 0.
\end{split}
\end{equation}

\subsection{Calculations of the coefficients in the differential equations}\label{sec: Calculation of the coefficients}

\begin{defn}\label{6defn: coefficients of Bessel equation}
Let $\uLambda = \{\varLambda_m \}_{m=0}^\infty$ be a sequence of complex numbers. 

{ \rm (1).} For $k, j \geq -1$ inductively define a double sequence of polynomials $ U_{k, j} (\uLambda) $ in $\uLambda$ by the initial conditions
\begin{equation*} 
U_{-1, -1}(\uLambda) = 1, \hskip 10 pt U_{k, - 1} (\uLambda) =  U_{-1, j} (\uLambda) = 0 \ \text{ if } k , j \geq 0,
\end{equation*} 
and the recurrence relation
\begin{equation}\label{6eq: definition of U k j (mu)}
U_{k, j} (\uLambda) = - \lp \varLambda_j + k - 1\rp U_{k-1, j} (\uLambda)  + U_{k-1, j-1} (\uLambda), \hskip 10 pt k, j \geq 0.
\end{equation}

{ \rm (2).} For $j, m \geq -1$ with $(j, m) \neq (-1, -1)$ define a double sequence of integers $A_{j, m}$ by the initial conditions
\begin{equation*}
A_{-1, 0} = 1, \hskip 10 pt A_{-1, m} = A_{j, -1} = 0 \  \text{ if } m \geq 1, j \geq 0,
\end{equation*}
and the recurrence relation
\begin{equation}\label{6eq: definition of a k m}
A_{j, m}  = j A_{j, m-1} + A_{j-1, m}, \hskip 10 pt j,  m \geq 0.
\end{equation}

{ \rm (3).} For $ k, m \geq 0 $ we define $\sigma_{k, m} (\uLambda)$ to be the elementary symmetric polynomial in $\varLambda_0, ..., \varLambda_{k}$ of degree $m$, with the convention that
$\sigma_{k, m} (\uLambda) = 0 $ if $m \geq k+2$. Moreover, we denote
\begin{equation*}
\sigma_{-1, 0} (\uLambda) = 1, \hskip 10 pt \sigma_{k, -1} (\uLambda) = \sigma_{-1, m} (\uLambda) = 0 \ \text{ if } k \geq -1, m \geq 1.
\end{equation*} Observe that, with the above notations as initial conditions, $\sigma_{k, m} (\uLambda)$ may also be inductively defined by the recurrence relation
\begin{equation}\label{6eq: recurrence relation of elementary symmetric polynomials}
\sigma_{k, m}(\uLambda) = \varLambda_k \sigma_{k-1, m-1} (\uLambda) + \sigma_{k-1, m} (\uLambda),  \hskip 10 pt k,  m \geq 0.
\end{equation}

{ \rm (4).} For $k \geq 0, j \geq -1$ define 
\begin{equation}\label{6eq: definition of V k j (mu)}
V_{k, j} (\uLambda) = 
\begin{cases}
0,  & \text { if } j > k,\\
\ds \sum_{m=0}^{k-j} A_{j, k-j-m} \sigma_{k-1, m} (\uLambda),  & \text{ if } k \geq j.
\end{cases}
\end{equation}
\end{defn}

\begin{lem} \label{lem: simple facts on U, a, V} Let notations be as above.

{ \rm (1).} $U_{k, j}(\uLambda)$ is a polynomial in $\varLambda_0, ..., \varLambda_j$. $U_{k, j} (\uLambda) = 0$ if $j > k $, and $U_{k, k} (\uLambda) = 1$. $U_{k, 0}(\uLambda) = [ - \varLambda_0]_k$ for $k \geq 0$.

{ \rm (2).} $A_{j, 0} = 1$, and $A_{j, 1} = \frac 1 2 {j(j+1)} $.

{ \rm (3).} $V_{k, j} (\uLambda) $ is a symmetric polynomial in $\varLambda_0, ..., \varLambda_{k-1}$. $V_{k, k} (\uLambda) = 1$. $V_{k, -1} (\uLambda) = 0$ and $V_{k, k-1} (\uLambda) = \sigma_{k-1, 1} (\uLambda) + \frac 12 {k(k-1)} $ for $k \geq 0$.

{ \rm (4).} $V_{k, j} (\uLambda) $ satisfies the following recurrence relation
\begin{equation}\label{6eq: Recurrence relation of V k j}
V_{k, j} (\uLambda) =  ( \varLambda_{k-1} + j) V_{k-1, j} (\uLambda) + V_{k-1, j-1} (\uLambda), \hskip 10 pt k \geq 1, j \geq 0.
\end{equation}
\end{lem}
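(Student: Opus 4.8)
The plan is to prove all four assertions by induction directly from the defining recurrences of Definition \ref{6defn: coefficients of Bessel equation}, keeping careful track of the boundary conventions. Parts (1) and (2) are short inductions on a single index; the only computation of substance is the recurrence in part (4), and once that and the explicit formula \eqref{6eq: definition of V k j (mu)} are available, every assertion in part (3) follows by a direct evaluation.

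For (1) I would induct on $k$ using \eqref{6eq: definition of U k j (mu)}. That $U_{k,j}(\uLambda)$ is a polynomial in $\varLambda_0,\dots,\varLambda_j$ is immediate, since $U_{k-1,j}$ involves only $\varLambda_0,\dots,\varLambda_j$, the summand $U_{k-1,j-1}$ only $\varLambda_0,\dots,\varLambda_{j-1}$, and the prefactor $\varLambda_j+k-1$ introduces no new variable. The vanishing $U_{k,j}=0$ for $j>k$ holds because then $j>k-1$ and $j-1>k-1$, so both terms on the right vanish; consequently $U_{k,k}=-(\varLambda_k+k-1)U_{k-1,k}+U_{k-1,k-1}=U_{k-1,k-1}=1$, as $U_{k-1,k}=0$. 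Finally $U_{k,0}=-(\varLambda_0+k-1)U_{k-1,0}$ telescopes to $\prod_{m=0}^{k-1}(-\varLambda_0-m)=[-\varLambda_0]_k$. For (2) I would induct on $j$ via \eqref{6eq: definition of a k m}: since $A_{j,-1}=0$ one gets $A_{j,0}=A_{j-1,0}=1$, and $A_{j,1}=jA_{j,0}+A_{j-1,1}=j+A_{j-1,1}$, which telescopes from $A_{0,1}=0$ to $\sum_{i=1}^{j}i=\tfrac12 j(j+1)$.

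For (4) I would substitute \eqref{6eq: definition of V k j (mu)} and combine the two recurrences \eqref{6eq: recurrence relation of elementary symmetric polynomials} and \eqref{6eq: definition of a k m}. Expanding $\sigma_{k-1,m}=\varLambda_{k-1}\sigma_{k-2,m-1}+\sigma_{k-2,m}$ inside $V_{k,j}=\sum_{m=0}^{k-j}A_{j,k-j-m}\sigma_{k-1,m}$ and reindexing $m\mapsto m+1$ in the first resulting sum isolates the contribution $\varLambda_{k-1}V_{k-1,j}$. In the remaining sum I would apply $A_{j,k-j-m}=jA_{j,k-j-m-1}+A_{j-1,k-j-m}$; after discarding the vanishing endpoint terms (using $\sigma_{k-2,-1}=0$ and $A_{j,-1}=0$) this yields $jV_{k-1,j}+V_{k-1,j-1}$, giving the asserted relation for $0\le j\le k-1$. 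The degenerate ranges I would check by hand: for $j=k$ one has $V_{k,k}=A_{k,0}\sigma_{k-1,0}=1$ and $V_{k-1,k}=0$, so the relation reduces to $1=V_{k-1,k-1}$, while for $j>k$ both sides vanish.

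For (3), symmetry and the degree bound are read off from \eqref{6eq: definition of V k j (mu)}, since each $\sigma_{k-1,m}$ is a symmetric polynomial of degree $m\le k-j$ in $\varLambda_0,\dots,\varLambda_{k-1}$ and the $A_{j,\,\cdot}$ are integers. The value $V_{k,k}=1$ was already computed; $V_{k,-1}=0$ because the only potentially surviving term is $A_{-1,0}\sigma_{k-1,k+1}$ and $\sigma_{k-1,k+1}=0$; and $V_{k,k-1}=A_{k-1,1}\sigma_{k-1,0}+A_{k-1,0}\sigma_{k-1,1}=\tfrac12 k(k-1)+\sigma_{k-1,1}$ by part (2). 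The main obstacle is the index bookkeeping in part (4): one must verify that the two recurrences may be applied across the entire summation range without leaving the region where \eqref{6eq: definition of V k j (mu)} is valid, and confirm precisely which endpoint terms drop out under the conventions $\sigma_{k-2,-1}=0$, $\sigma_{k-1,m}=0$ for $m\ge k+1$, and $A_{j,-1}=0$.
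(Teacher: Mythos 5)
Your proposal is correct and follows essentially the same route as the paper: parts (1)--(3) are exactly the routine verifications the paper dismisses as ``evident from the definitions,'' and your part (4) reproduces the paper's computation verbatim --- expand $\sigma_{k-1,m}=\varLambda_{k-1}\sigma_{k-2,m-1}+\sigma_{k-2,m}$ in the defining sum \eqref{6eq: definition of V k j (mu)}, reindex, split $A_{j,k-j-m}=jA_{j,k-j-m-1}+A_{j-1,k-j-m}$, and discard the endpoint terms killed by $\sigma_{k-2,-1}=0$ and $A_{j,-1}=0$, with the cases $j\geq k$ handled separately. The only difference is that you spell out the inductions and boundary conventions that the paper leaves implicit.
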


\begin{proof}
(1-3) are  evident from the definitions.

(4). (\ref{6eq: Recurrence relation of V k j}) is obvious if $j \geq k$. If $k > j$, then the recurrence relations (\ref{6eq: recurrence relation of elementary symmetric polynomials}, \ref{6eq: definition of a k m}) for $\sigma_{k, m} (\uLambda)$ and $A_{j, m}$, in conjunction with the definition (\ref{6eq: definition of V k j (mu)}) of $V_{k, j} (\uLambda)$,  yield
\begin{align*}
V_{k, j} (\uLambda) = & \sum_{m=0}^{k - j } A_{j, k - j - m } \sigma_{k - 1, m} (\uLambda)\\
= &\, \varLambda_{k - 1} \sum_{m = 1}^{k - j }  A_{j, k - j - m } \sigma_{k-2, m-1} (\uLambda) + \sum_{m = 0}^{k - j }   A_{j, k - j - m } \sigma_{k-2, m} (\uLambda)\\
= &\, \varLambda_{k - 1} \sum_{m = 0}^{k - j - 1}  A_{j, k - j - m - 1} \sigma_{k-2, m} (\uLambda) \\
& + j \sum_{m = 0}^{k - j - 1} A_{j, k - j - m - 1} \sigma_{k-2, m} (\uLambda) + \sum_{m = 0}^{k - j } A_{j-1, k - j - m } \sigma_{k-2, m} (\uLambda)\\
= &\,  ( \varLambda_{k-1} + j) V_{k-1, j} (\uLambda) + V_{k-1, j-1} (\uLambda).
\end{align*}
\end{proof}

\begin{lem}\label{lem: Orthogonality}
 For $k \geq 0$ and $j \geq - 1$ such that $k \geq j$, we have
\begin{equation}\label{6eq: Orthogonality}
\sum_{l      = j}^k U_{k, l     } (\uLambda) V_{l     , j} (\uLambda) = \delta_{k, j},
\end{equation}
where $ \delta_{k, j}$ denotes Kronecker's delta symbol. Consequently, 
\begin{equation}\label{6eq: Orthogonality, 1}
\sum_{l      = j}^k V_{k, l     } (\uLambda) U_{l     , j} (\uLambda) = \delta_{k, j}.
\end{equation}
\end{lem}
\begin{proof}
(\ref{6eq: Orthogonality}) is obvious if either $k = j$ or $j = -1$. In the proof we may therefore assume that $k - 1 \geq j \geq 0$ and that (\ref{6eq: Orthogonality}) is already proven for smaller values of $k - j$ as well as for smaller values of $j$ and the same $k-j$.

By the recurrence relations (\ref{6eq: definition of U k j (mu)}, \ref{6eq: Recurrence relation of V k j}) for $U_{k, j} (\uLambda)$ and  $V_{ k, j} (\uLambda)$ and the induction hypothesis,
\begin{align*}
& \sum_{l      = j}^k U_{k, l     } (\uLambda) V_{l     , j} (\uLambda) \\
=  & - \sum_{l     =j}^{k-1} ( k-1 +  \varLambda_{l     }) U_{k-1, l     } (\uLambda) V_{l     , j} (\uLambda) + \sum_{l     =j}^{k } U_{k-1, l     -1} (\uLambda) V_{l     , j} (\uLambda)\\
 = & - ( k-1) \delta_{k-1, j} - \sum_{l     =j}^{k-1} \varLambda_{l     } U_{k-1, l     }  (\uLambda) V_{l     , j} (\uLambda) + \sum_{l     =j +1}^{k } \varLambda_{l     -1} U_{k-1, l     -1} (\uLambda) V_{l     -1, j} (\uLambda) \\
& + j \sum_{l     =j + 1}^{k } U_{k-1, l     -1} (\uLambda) V_{l     -1, j} (\uLambda) +  \sum_{l     =j }^{k } U_{k-1, l     -1} (\uLambda) V_{l     -1, j-1} (\uLambda) \\
= & -( k-1) \delta_{k-1, j} + 0 + j \delta_{k-1, j} + \delta_{k-1, j-1} = 0.
\end{align*}
This completes the proof of (\ref{6eq: Orthogonality}).
\end{proof}

Finally, we have the following explicit formulae for $A_{j, m}$.
\begin{lem}\label{lem: formula of a jm}
 We have $A_{0,0} = 1$, $A_{0, m} = 0$ if $m \geq 1$, and
\begin{equation}\label{6eq: formula for a j m}
A_{j, m} = \sum_{r=1}^j \frac {(- )^{j-r} r^{m+j}}{r! (j-r)!} \  \text { if }  j \geq 1, m \geq 0.
\end{equation}
\end{lem}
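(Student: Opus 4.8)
The plan is to resolve the double recurrence \eqref{6eq: definition of a k m} by passing to the ordinary generating function in the second index. Set $B_j(t) = \sum_{m \geq 0} A_{j,m}\, t^m$. First I would record the boundary data: evaluating the recurrence \eqref{6eq: definition of a k m} at $j = 0$ gives $A_{0,m} = A_{-1,m}$, so that $A_{0,0} = 1$ and $A_{0,m} = 0$ for $m \geq 1$; hence $B_0(t) = 1$. This already disposes of the first two assertions of the lemma.

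Next, multiplying \eqref{6eq: definition of a k m} by $t^m$ and summing over $m \geq 0$, and using the convention $A_{j,-1} = 0$ to identify $\sum_{m\geq 0} A_{j,m-1}\, t^m = t\, B_j(t)$, I obtain the one-step relation $B_j(t) = jt\, B_j(t) + B_{j-1}(t)$, that is $B_j(t) = B_{j-1}(t)/(1-jt)$ for $j \geq 1$. Telescoping from $B_0(t) = 1$ yields the closed product
\begin{equation*}
B_j(t) = \prod_{r=1}^j \frac{1}{1 - rt}.
\end{equation*}
All of this may be read either as an identity of formal power series, or for $|t| < 1/j$ as a genuine analytic identity.

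Then I would extract $A_{j,m} = [t^m] B_j(t)$ by partial fractions. Since the poles $t = 1/r$, $r = 1, \dots, j$, are simple and distinct, I write $B_j(t) = \sum_{r=1}^j c_r/(1-rt)$ with $c_r = \lim_{t \to 1/r}(1-rt) B_j(t) = \prod_{s \neq r}(1 - s/r)^{-1}$. A short computation, splitting the product $\prod_{s\neq r}(r-s)$ into the factors with $s < r$ and those with $s > r$, gives $\prod_{s\neq r}(r-s) = (-1)^{j-r}(r-1)!\,(j-r)!$, whence $c_r = (-1)^{j-r} r^j / (r!(j-r)!)$. Since $[t^m](1-rt)^{-1} = r^m$, summing gives $A_{j,m} = \sum_{r=1}^j c_r r^m = \sum_{r=1}^j (-1)^{j-r} r^{m+j}/(r!(j-r)!)$, which is \eqref{6eq: formula for a j m}.

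The only genuinely computational point is the evaluation of the residues $c_r$, and the only thing to watch is the bookkeeping at the boundary (the cases $m = 0$ and $j = 0$), which is handled cleanly by $B_0(t) = 1$ together with the convention $A_{j,-1} = 0$ built into the generating-function manipulation. I would remark that a purely inductive proof is also possible, namely verifying that the right-hand side of \eqref{6eq: formula for a j m} satisfies \eqref{6eq: definition of a k m}; that route, however, forces a separate treatment of the base case $m = 0$, i.e. the identity $\sum_{r=1}^j (-1)^{j-r} r^j/(r!(j-r)!) = 1$ (equivalently $\sum_{r=0}^j (-1)^{j-r}\binom{j}{r} r^j = j!$, the count of surjections of a $j$-element set onto itself), so the generating-function argument is the more economical one.
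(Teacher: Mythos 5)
Your proof is correct, but it takes a genuinely different route from the paper's. The paper guesses and verifies: it checks by direct algebra that the right-hand side of \eqref{6eq: formula for a j m} satisfies the recurrence \eqref{6eq: definition of a k m}, which reduces the lemma to exactly the base case you flagged in your closing remark, namely $A_{j,0}=1$, i.e.\ $\sum_{r=1}^j (-1)^{j-r} r^j/(r!\,(j-r)!) = 1$; it then proves this identity by differentiating the binomial identity $(x-1)^j - (-1)^j = j!\sum_{r=1}^j \frac{(-1)^{j-r}}{r!\,(j-r)!}\,x^r$ up to $j$ times and evaluating at $x=1$ (the same surjection-counting identity you mention). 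Your generating-function argument replaces both steps at once: solving the recurrence to get $B_j(t) = \prod_{r=1}^j (1-rt)^{-1}$ and extracting coefficients by partial fractions \emph{derives} the formula rather than verifies it, and it treats all $m \geq 0$ uniformly, so no separate base-case identity is needed. What the paper's route buys is that it stays within finite algebra, with no power-series or partial-fraction apparatus; what yours buys is economy and structure --- it exhibits $A_{j,m}$ as the coefficient of $t^m$ in $\prod_{r=1}^j(1-rt)^{-1}$, that is, as the Stirling number of the second kind $S(m+j,j)$, which makes the otherwise unmotivated closed formula \eqref{6eq: formula for a j m} transparent. Your residue computation $c_r = (-1)^{j-r} r^j/(r!\,(j-r)!)$ and the boundary bookkeeping via the convention $A_{j,-1}=0$ are both correct as written.
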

\begin{proof}
It is easily seen that $A_{0,0} = 1$ and $A_{0, m} = 0$ if $m \geq 1$.

It is straightforward to verify that the sequence given by (\ref{6eq: formula for a j m}) satisfies the recurrence relation (\ref{6eq: definition of a k m}), so it is left to show that \eqref{6eq: formula for a j m} holds true for $m = 0$. Initially, $A_{j, 0} = 1$, and hence one must verify
\begin{equation*}
\sum_{r=1}^j \frac {(- )^{j-r}
r^{j}}{r! (j-r)!} = 1.
 \end{equation*}
 This however follows from considering all the identities obtained by differentiating the following binomial identity up to $j$ times and then evaluating at $x = 1$,
\begin{equation*}
(x - 1)^j - (-1)^j = j! \sum_{r=1}^j \frac {(-1)^{j-r}} {r! (j-r)!} x^r.
\end{equation*}
\end{proof}

\subsection {Conclusion}

We first observe that, when $0\leq j \leq k \leq d+1$, both $U_{k, j} (\uLambda)$ and $V_{k, j} (\uLambda)$ are polynomials in $\varLambda_0, ..., \varLambda_{d}$  according to Lemma \ref{lem: simple facts on U, a, V} (1, 3). 
If one puts $\varLambda_{m} = \varLambda_m (\unu)$ for $m = 0, ..., d$, then   $U_{k, j} (\unu) = U_{k, j} (\uLambda)$. It follows from Lemma \ref{lem: Orthogonality} that  $V_{k, j} (\unu) = V_{k, j} (\uLambda)$. Moreover, the relations $\nu_{l     } = \lambda_{l     } - \lambda_{d+1}$, $l      = 1, ..., d$, along with the assumption $\sum_{l      = 1}^{d+1} \lambda_l      = 0$, yields
\begin{equation*} 
\varLambda_{m} (\unu) = (d+1)  \lambda_{d- m+1 }.
\end{equation*}
Now we can reformulate  \eqref{6eq: differential equation of J} in the following theorem.
\begin{thm}\label{thm: Bessel equations}
The Bessel function $J (x; \usigma, \ulambda)$ satisfies the following linear differential equation of order $d+1$
\begin{equation} \label{6eq: differential equation of J 2}
\sum_{j = 1}^{d+1} V_{d+1, j} (\ulambda) x^{j} w^{(j)} + \left( V_{d+1, 0} (\ulambda) - S_{d+1} (\usigma) (i(d+1))^{d+1}  x^{ d + 1} \right) w  = 0,
\end{equation}
where
\begin{equation*} 
S_{d+1} (\usigma) = \prod_{l      = 1}^{d+1} \varsigma_{l     }, \hskip 10 pt V_{d+1, j} (\ulambda) = \sum_{m=0}^{d-j+1} A_{j, d-j-m+1} (d+1)^m \sigma_{m} (\ulambda),
\end{equation*}
$\sigma_m (\ulambda)$ denotes the elementary symmetric polynomial in $\ulambda$ of degree $m$, with $\sigma_1 (\ulambda) = 0$, and $A_{j, m}$ is recurrently defined in Definition {\rm \ref{6defn: coefficients of Bessel equation} (3)} and explicitly given in Lemma {\rm \ref{lem: formula of a jm}}.  We shall call the equation \eqref{6eq: differential equation of J 2} a Bessel equation of index $\ulambda$, or simply a Bessel equation if the index $\ulambda$ is given.
\end{thm}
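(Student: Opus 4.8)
The plan is to deduce Theorem \ref{thm: Bessel equations} from the order-$(d+1)$ differential equation \eqref{6eq: differential equation of J} that $J_{\unu}(x; \usigma)$ already satisfies, after reconciling the two competing descriptions of the coefficients $V_{d+1, j}$. First I would invoke Proposition \ref{prop: J (x ; sigma, lambda) = J nu (x; sigma)} to replace $J(x; \usigma, \ulambda)$ by $J_{\unu}(x; \usigma)$ with $\nu_l = \lambda_l - \lambda_{d+1}$, so that \eqref{6eq: differential equation of J} applies verbatim. Multiplying that equation through by $x^{d+1}$ immediately produces the shape of \eqref{6eq: differential equation of J 2}, with $S_{d+1}(\usigma) = \prod_{l=1}^{d+1}\varsigma_l$ and the leading term $-S_{d+1}(\usigma)(i(d+1))^{d+1} x^{d+1} w$. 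Everything then reduces to the identification of the coefficients $V_{d+1, j}$ in closed form.

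The central point is to match the entries $V_{d+1, j}(\unu)$ of the inverse matrix $U(\unu)^{-1}$ appearing in \eqref{6eq: differentials, matrix form} with the combinatorially defined polynomials $V_{d+1, j}(\uLambda)$ of Definition \ref{6defn: coefficients of Bessel equation}. Setting $\varLambda_m = \varLambda_m(\unu)$ for $m = 0, \dots, d$, the recurrence \eqref{6eq: definition of U k j (mu)} reproduces exactly the entries $U_{k,j}(\unu)$ of Lemma \ref{lem: J (j) nu (x; sigma) derivatives}. Because Lemma \ref{lem: Orthogonality} shows that the matrices with entries $U_{k,l}(\uLambda)$ and $V_{l,j}(\uLambda)$ are mutually inverse, and the inverse of a unipotent lower triangular matrix is unique, the combinatorial $V_{k,j}(\uLambda)$ must coincide with the matrix-inverse entries $V_{k,j}(\unu)$. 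This is the step I expect to carry the real content, since it is what licenses replacing the implicitly defined coefficients by the explicit sum in \eqref{6eq: definition of V k j (mu)}.

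It remains to evaluate that explicit formula. Using $\nu_l = \lambda_l - \lambda_{d+1}$ together with the normalization $\sum_{l=1}^{d+1}\lambda_l = 0$, one computes $\sum_{k=1}^d \nu_k = -(d+1)\lambda_{d+1}$, whence $\varLambda_m(\unu) = (d+1)\lambda_{d-m+1}$. As $m$ runs over $0, \dots, d$, the index $d-m+1$ runs bijectively over $1, \dots, d+1$, so $\{\varLambda_0, \dots, \varLambda_d\}$ is precisely $\{(d+1)\lambda_l : 1 \le l \le d+1\}$ as a multiset; consequently the elementary symmetric polynomial $\sigma_{d,m}(\uLambda)$ equals $(d+1)^m \sigma_m(\ulambda)$. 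Substituting this into $V_{d+1, j}(\uLambda) = \sum_{m=0}^{d+1-j} A_{j,\, d+1-j-m}\,\sigma_{d,m}(\uLambda)$ from \eqref{6eq: definition of V k j (mu)} yields the asserted closed form $V_{d+1, j}(\ulambda) = \sum_{m=0}^{d-j+1} A_{j,\, d-j-m+1}(d+1)^m \sigma_m(\ulambda)$. Finally the normalization forces $\sigma_1(\ulambda) = 0$, and the explicit values of $A_{j,m}$ are supplied by Lemma \ref{lem: formula of a jm}, completing the identification of \eqref{6eq: differential equation of J} with \eqref{6eq: differential equation of J 2}. The only genuinely delicate bookkeeping is the reindexing $m \mapsto d-m+1$ that converts $\sigma_{d,m}(\uLambda)$ into $(d+1)^m\sigma_m(\ulambda)$; everything else is the mechanical transport of the abstract lemmas through the substitution $\varLambda_m = (d+1)\lambda_{d-m+1}$.
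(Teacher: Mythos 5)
Your proposal is correct and matches the paper's own argument essentially step for step: the paper likewise passes from $J(x;\usigma,\ulambda)$ to $J_{\unu}(x;\usigma)$ via Proposition \ref{prop: J (x ; sigma, lambda) = J nu (x; sigma)}, identifies the inverse-matrix entries $V_{d+1,j}(\unu)$ with the combinatorial polynomials $V_{d+1,j}(\uLambda)$ through the orthogonality relation of Lemma \ref{lem: Orthogonality}, and then substitutes $\varLambda_m(\unu)=(d+1)\lambda_{d-m+1}$ so that the elementary symmetric polynomials pick up the factor $(d+1)^m$. The only difference is expository: you make explicit the uniqueness-of-inverse remark and the multiset reindexing $\sigma_{d,m}(\uLambda)=(d+1)^m\sigma_m(\ulambda)$, which the paper leaves implicit.
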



For a given index  $\ulambda$, (\ref{6eq: differential equation of J 2}) only provides two Bessel equations.  The sign $S_{d+1} (\usigma)$ determines which one of the two   Bessel equations   a Bessel function  $ J (x; \usigma, \ulambda)$ satisfies.
\begin{defn}
We call $S_{d+1} (\usigma) = \prod_{l      = 1}^{d+1} \varsigma_{l     }$ the sign of the Bessel function $ J (x; \usigma, \ulambda)$ as well as   the Bessel equation satisfied by $ J (x; \usigma, \ulambda)$. 
\end{defn}

Finally, we collect some simple facts on $V_{d+1, j} (\ulambda)$ in the following lemma, which will play   important roles later in the study of   Bessel equations. See \eqref{6eq: Orthogonality, 1} in Lemma \ref{lem: Orthogonality} and Lemma \ref{lem: simple facts on U, a, V} (3).

\begin{lem}\label{lem: simple facts on V (lambda)} We have

{ \rm (1).}
$\sum_{j = 0}^{d+1} V_{d+1, j} (\ulambda) [ - (d+1) \lambda_{d+1}]_{j} = 0.$

{ \rm (2).} $V_{d+1, d} (\ulambda) = \frac 12 {d (d+1)} $.

\end{lem}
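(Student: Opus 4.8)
The plan is to read off both identities from the combinatorial machinery already in place, namely Lemma \ref{lem: simple facts on U, a, V}, the orthogonality relation of Lemma \ref{lem: Orthogonality}, and the two substitutions $\varLambda_m = \varLambda_m(\unu) = (d+1)\lambda_{d-m+1}$ and $\sum_{l=1}^{d+1}\lambda_l = 0$ recorded just before Theorem \ref{thm: Bessel equations}.

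For part (1), the key observation is that the falling factorial appearing in the sum is itself a column of $U(\uLambda)$. Since $\varLambda_0 = (d+1)\lambda_{d+1}$, Lemma \ref{lem: simple facts on U, a, V} (1) gives $U_{j,0}(\uLambda) = [-\varLambda_0]_j = [-(d+1)\lambda_{d+1}]_j$ for all $j \geq 0$, so the left-hand side equals $\sum_{j=0}^{d+1} V_{d+1,j}(\uLambda) U_{j,0}(\uLambda)$, i.e. the $(d+1,0)$-entry of the matrix product $V(\uLambda)U(\uLambda)$. By Lemma \ref{lem: Orthogonality} we have $U(\uLambda)V(\uLambda) = I$ on the finite index set $\{0,\dots,d+1\}$, and since both are square unipotent lower-triangular matrices this forces $V(\uLambda)U(\uLambda) = I$ as well; hence the entry equals $\delta_{d+1,0} = 0$. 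The only care needed here is to invoke the orthogonality in the reversed order from the way it is stated, which is legitimate precisely because $V(\uLambda) = U(\uLambda)^{-1}$.

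For part (2) I would specialize Lemma \ref{lem: simple facts on U, a, V} (3) at $k = d+1$, which immediately yields $V_{d+1,d}(\uLambda) = \sigma_{d,1}(\uLambda) + \tfrac12 d(d+1)$, and then show the first term vanishes. Indeed $\sigma_{d,1}(\uLambda)$ is the sum of $\varLambda_0,\dots,\varLambda_d$, so $\sigma_{d,1}(\uLambda) = \sum_{m=0}^{d}(d+1)\lambda_{d-m+1} = (d+1)\sum_{l=1}^{d+1}\lambda_l = 0$ by the normalization. This gives $V_{d+1,d}(\ulambda) = \tfrac12 d(d+1)$. As an independent check one can instead use the closed formula of Theorem \ref{thm: Bessel equations}: with $\sigma_1(\ulambda) = 0$ only the $m=0$ summand survives, leaving $A_{d,1}\sigma_0(\ulambda) = \tfrac12 d(d+1)$ by Lemma \ref{lem: simple facts on U, a, V} (2).

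I do not expect a genuine obstacle: the substance of the result sits in the recurrences and in Lemma \ref{lem: Orthogonality}, which are already proved. The only steps deserving attention are the correct identification of $[-(d+1)\lambda_{d+1}]_j$ with $U_{j,0}(\uLambda)$ in part (1)---and the accompanying check that the summation range $0 \leq j \leq d+1$ is exactly the full row of $V(\uLambda)U(\uLambda)$, so no boundary terms are dropped---together with keeping track of the identification $V_{k,j}(\ulambda) = V_{k,j}(\uLambda)$ under $\varLambda_m = (d+1)\lambda_{d-m+1}$ throughout.
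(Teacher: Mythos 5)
Your proof is correct, and since the paper states this lemma without proof (as ``simple facts''), your argument supplies exactly the derivation it intends: part (1) is the $(d+1,0)$-entry of $V(\uLambda)U(\uLambda)=I$, which is legitimate because Lemma \ref{lem: Orthogonality} exhibits $V(\uLambda)$ as the (two-sided) inverse of the unipotent lower-triangular matrix $U(\uLambda)$ — consistent with $V_{k,j}(\unu)$ having been defined as an entry of $U(\unu)^{-1}$ — while Lemma \ref{lem: simple facts on U, a, V} (1) identifies $U_{j,0}(\uLambda)=[-\varLambda_0]_j=[-(d+1)\lambda_{d+1}]_j$. Part (2) likewise follows from Lemma \ref{lem: simple facts on U, a, V} (2, 3) once one notes $\sigma_{d,1}(\uLambda)=(d+1)\sum_{l=1}^{d+1}\lambda_l=0$ under the substitution $\varLambda_m=(d+1)\lambda_{d-m+1}$, exactly as you argue.
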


\begin{rem}
If we define
\begin{equation*}
\boldsymbol J (x; \usigma, \ulambda) = J \lp(d+1)\-x; \usigma, (d+1)\- \ulambda \rp,
\end{equation*} 
then this normalized Bessel function satisfies a differential equation with coefficients free of powers of $(d+1)$, that is,
\begin{equation*} 
\sum_{j = 1}^{d+1} \boldsymbol V_{d+1, j} (\ulambda) x^{j} w^{(j)} + \left(\boldsymbol V_{d+1, 0} (\ulambda) - S_{d+1} (\usigma) i^{d+1}  x^{ d + 1} \right) w  = 0,
\end{equation*}
with
\begin{equation*} 
\boldsymbol V_{d+1, j} (\ulambda) = \sum_{m=0}^{d-j+1} A_{j, d-j-m+1} \sigma_{m} (\ulambda).
\end{equation*}
In particular, if $d=1$, $\ulambda = (\lambda, -\lambda)$, then the two normalized Bessel equations are
\begin{equation*} 
x^2 \frac {d^2 w} {dx^2} + x \frac {d w} {dx } + \left( - \lambda^2 \pm  x^{2} \right) w = 0.
\end{equation*}
These are exactly the Bessel equation and the modified Bessel equation of index $\lambda$. 
\end{rem}


\section{Bessel equations} \label{sec: Bessel equations}

The theory of linear ordinary differential equations with analytic coefficients\footnote{\cite[Chapter 4, 5]{Coddington-Levinson} and \cite[Chapter II-V]{Wasow} are the main references that we follow, and the reader is referred to these books for terminologies and definitions.} will be employed in this section to study Bessel equations. 

Subsequently, we shall use $z$ instead of $x$ to indicate complex variable.
For $\varsigma \in \{+, - \}$ and $\ulambda \in \BL^{n-1}$, we introduce the Bessel differential operator
\begin{equation}
\nabla_{\varsigma, \ulambda} = \sum_{j = 1}^{n} V_{n, j} (\ulambda) z^{j} \frac {d^j} {d z^j} + V_{n, 0} (\ulambda) - \varsigma (i n)^{n}  z^{n} .
\end{equation}
The Bessel equation of index $\ulambda$ and sign $\varsigma$  may be written as
\begin{equation} \label{7eq: differential equation, variable z}
\nabla_{\varsigma, \ulambda} (w)  = 0.
\end{equation}
Its corresponding system of differential equations is given by
\begin{equation}\label{7eq: differential equation, matrix form}
w' = B (z; \varsigma, \ulambda) w,
\end{equation}
with
\begin{equation*} 
\begin{split}
B   (z; \varsigma, \ulambda)  = 
\begin{pmatrix}
0 & 1 & 0 & \cdots & 0\\
0 & 0 & 1 & \cdots & 0\\
\vdots & \vdots & \vdots & \ddots & \vdots \\
0 & 0 &  \cdots & \cdots & 1 \\
- V_{n, 0} (\ulambda) z^{-n} + \varsigma (in)^n & -  V_{n, 1} (\ulambda) z^{-n+1} & \cdots & \cdots & -  V_{n, n-1} (\ulambda) z^{-1}
\end{pmatrix}.
\end{split}
\end{equation*}

We shall study Bessel equations on the Riemann surface $\BU$ associated with $\log z$, that is, the universal cover of $\BC \smallsetminus \{ 0 \}$. Each element in $\BU$ is represented by a pair $(x, \omega)$ with modulus $x \in \BR _+$ and argument $\omega \in \BR$, and will be denoted by $z = x e^{i\omega} = e^{\log x + i \omega}$ with some ambiguity. Conventionally, define $z^\lambda = e^{\lambda \log z }$ for $z \in \BU, \lambda \in \BC$, $\overline z = e^{- \log z}$, and moreover let $1 = e^{ 0}$, $-1 = e^{\pi i}$ and $\pm i = e^{\pm \frac 1 2 \pi i}$.

First of all, since Bessel equations are nonsingular on $\BU$, all the solutions of  Bessel equations are analytic on $\BU$.

Each Bessel equation has only two singularities at $z = 0$ and $z = \infty$. According to the classification of singularities, $0$ is a \textit{regular singularity}, so the Frobenius method gives rise to solutions of Bessel equations developed in series of ascending powers of $z$, or possibly logarithmic sums of this kind of series, whereas $\infty$ is an \textit{irregular singularity of rank one}, and therefore one may find certain formal solutions that are the asymptotic expansions of some actual solutions of Bessel equations.
Accordingly, there are two kinds of Bessel functions arising as solutions of Bessel equations. Their study is not only useful in understanding the Bessel functions $J (x ; \usigma, \ulambda) $ and the Bessel kernels $J_{(\ulambda, \udelta)} \lp x \rp$ on the real numbers (see \S \ref{sec: Analytic continuation of J (x; usigma; ulambda)}, \ref{sec: H-Bessel functions and K-Bessel functions, II}) but also significant in investigating the Bessel kernels on the complex numbers (see \cite[\S 7, 8]{Qi2}).

Finally, a simple but important observation is as follows. 

\begin{lem}\label{lem: phi (e pi i m/n z) is also a solution}
Let $\varsigma \in \{+, - \}$ and $a$ be an integer. If $\varphi (z)$ is a solution of the Bessel equation of sign $\varsigma$, then
$\varphi \big(e^{\pi i \frac a n} z\big)$ satisfies the  Bessel equation of sign $(-)^a \varsigma$.
\end{lem}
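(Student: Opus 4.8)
The plan is to exploit the \emph{scale-invariance} of the Euler-type operators $z^j \frac{d^j}{dz^j}$ together with the single way in which the scaling $z \mapsto e^{\pi i \frac a n} z$ fails to preserve the remaining term $\varsigma (in)^n z^n$. Write $c = e^{\pi i \frac a n}$; since $a$ is an integer, the map $z \mapsto c z$ is a well-defined analytic automorphism of $\BU$ (sending the point with modulus $x$ and argument $\omega$ to the one with modulus $x$ and argument $\omega + \pi a / n$), so no branch issues arise and $\psi(z) := \varphi(c z)$ is again analytic on $\BU$.

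First I would record the homogeneity fact. Differentiating $\psi(z) = \varphi(cz)$ gives $\psi^{(j)}(z) = c^j \varphi^{(j)}(cz)$, whence, writing $u := cz$,
\begin{equation*}
z^j \psi^{(j)}(z) = (cz)^j \varphi^{(j)}(cz) = u^j \varphi^{(j)}(u).
\end{equation*}
Thus each term $V_{n,j} (\ulambda) z^j \frac{d^j}{dz^j}$, together with the zeroth term $V_{n,0}(\ulambda)$, is carried to the corresponding term in the variable $u$ with no extra factor. Only the top term transforms nontrivially: since $c^n = e^{\pi i a} = (-)^a$, one has $z^n = c^{-n} u^n = (-)^a u^n$, using $(-)^{-a} = (-)^a$.

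Then I would assemble the computation. Applying the operator of sign $(-)^a \varsigma$ to $\psi$ and substituting the two facts above yields
\begin{equation*}
\nabla_{(-)^a \varsigma,\, \ulambda} (\psi)(z) = \sum_{j = 1}^{n} V_{n,j}(\ulambda) u^j \varphi^{(j)}(u) + V_{n,0}(\ulambda) \varphi(u) - (-)^a \varsigma\, (in)^n (-)^a u^n \varphi(u).
\end{equation*}
Because $(-)^a (-)^a = 1$, the last coefficient collapses to $-\varsigma (in)^n u^n$, so the right-hand side is exactly $\nabla_{\varsigma, \ulambda}(\varphi)(u)$, which vanishes by hypothesis. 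Hence $\psi(z) = \varphi\big(e^{\pi i \frac a n} z\big)$ solves the Bessel equation of sign $(-)^a \varsigma$, as claimed.

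There is no serious obstacle here: the entire content is the scale-invariance of $z^j \frac{d^j}{dz^j}$ and the bookkeeping of the single factor $c^n = (-)^a$. The only points demanding care are the legitimacy of the substitution on $\BU$ (immediate since $a$ is an integer) and the sign arithmetic $(-)^a (-)^a = 1$; once these are in place, the two sign contributions cancel automatically and the equation of sign $\varsigma$ for $\varphi$ transfers to the equation of sign $(-)^a \varsigma$ for $\psi$.
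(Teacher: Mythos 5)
Your proof is correct, and it is precisely the verification the paper has in mind: the paper states this lemma as a "simple but important observation" with no written proof, the content being exactly the scale-invariance of the Euler operators $z^j \frac{d^j}{dz^j}$ and the sign flip $c^n = e^{\pi i a} = (-)^a$ in the term $\varsigma (in)^n z^n$ under $z \mapsto e^{\pi i \frac{a}{n}} z$. Your handling of the substitution on $\BU$ and the cancellation $(-)^a(-)^a = 1$ is exactly right, so there is nothing to add.
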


Variants of Lemma \ref{lem: phi (e pi i m/n z) is also a solution}, Lemma \ref{lem: J ell (z; sigma; lambda) relations},  \ref{lem: J(z ; usigma; lambda) relations to H +} and \ref{lem: J(z; lambda; xi) relations to J(xi z; lambda; 1)}, will play important roles later in \S \ref{sec: H-Bessel functions and K-Bessel functions revisited} when we study the connection formulae for various kinds of Bessel functions. 

\subsection{Bessel functions of the first kind}\label{sec: Bessel functions of the first kind}

The indicial equation associated with $\nabla_{\varsigma, \ulambda}$ is given as below,
\begin{equation*}
\sum_{j = 0}^n [\rho]_{j} V_{n, j} (\ulambda) = 0.
\end{equation*}
Let $P_{\ulambda} (\rho) $ denote the polynomial on the left of this equation. Lemma \ref{lem: simple facts on V (lambda)} (1) along with the symmetry of  $V_{n, j} (\ulambda)$ yields the following identity,
\begin{equation*}
\sum_{j = 0}^n [- n \lambda_{l     }]_{j} V_{n, j} (\ulambda) = 0,
\end{equation*}
for each $l      = 1, ..., n$. Therefore,
\begin{equation*} 
P_{\ulambda} (\rho) = \prod_{l      = 1}^n (\rho + n \lambda_{l     }).
\end{equation*}

Consider the formal series
\begin{equation*}
\sum_{m=0}^\infty c_m z^{\rho + m},
\end{equation*}
where the index $\rho$ and the coefficients $c_m$, with $c_0 \neq 0$, are to be determined.
It is easy to see that
\begin{equation*}
\nabla_{\varsigma, \ulambda} \sum_{m=0}^\infty c_m z^{\rho + m} = \sum_{m=0}^\infty c_m P_{\ulambda} (\rho + m)  z^{\rho + m} - \varsigma (in)^n \sum_{m=0}^\infty c_m z^{\rho + m + n}.
\end{equation*}
If the following equations are satisfied
\begin{equation}\label{7eq: series solution, system of equations}
\begin{split}
& c_m P_{\ulambda} (\rho + m) =0, \hskip 10 pt n > m  \geq 1,\\
 & c_m  P_{\ulambda} (\rho + m) - \varsigma (in)^n c_{m-n} = 0,  \hskip 10 pt  m \geq n,
\end{split}
\end{equation}
then 
\begin{equation*}
\nabla_{\varsigma, \ulambda} \sum_{m=0}^\infty c_m z^{\rho + m} = c_0  P_{\ulambda} (\rho ) z^{\rho}.
\end{equation*}
Given $l      \in\{ 1, ..., n \}$. Choose $\rho = - n \lambda_{l     }$ and let $c_0 = \prod_{k = 1}^n \Gamma \lp  { \lambda_{k} - \lambda_{l     }} + 1 \rp \- $. Suppose, for the moment, that no two components of $n \ulambda$ differ by an integer. Then  $P_{\ulambda} (- n \lambda_l      + m) \neq 0$   for any $m \geq 1$ and  $c_0\neq 0$, and hence the system of equations (\ref{7eq: series solution, system of equations}) is uniquely solvable. It follows that
\begin{equation}\label{7eq: series solution J ell (z; sigma)}
\sum_{m=0}^\infty \frac { (\varsigma i^n)^m  z^{ n (- \lambda_{l      } + m)} } { \prod_{k = 1}^n \Gamma \lp  { \lambda_{ k } - \lambda_{l     }}  + m + 1 \rp} 
\end{equation}
is a formal solution of the differential equation (\ref{7eq: differential equation, variable z}).

Now suppose that $\ulambda \in \BL^{n-1}$ is unrestricted. The series in (\ref{7eq: series solution J ell (z; sigma)}) is absolutely convergent, compactly convergent with respect to $\ulambda $, and hence gives rise to an analytic function of $z$ on 
the Riemann surface $\BU$, as well as an analytic function of $\ulambda$. 
We denote by $J_{l     } (z; \varsigma, \ulambda)$ the analytic function given by the series (\ref{7eq: series solution J ell (z; sigma)}) and call it a \textit{Bessel function of the first kind}. It is evident that  $J_{l     } (z; \varsigma, \ulambda)$ is an actual solution of (\ref{7eq: differential equation, variable z}).

\begin{defn}\label{defn: D, generic lambda}
Let $\BD^{n-1}$ denote the set of $\ulambda \in \BL^{n-1}$ such that no two components of $\ulambda$ differ by an integer. We call an index $\ulambda $ { generic} if $\ulambda \in \BD^{n-1}$.
\end{defn}

When $\ulambda \in \BD^{n-1}$,  all the $J_{l     } (z; \varsigma, \ulambda)$ constitute a fundamental set of solutions, since the leading term in the expression (\ref{7eq: series solution J ell (z; sigma)}) of $J_{l     } (z; \varsigma, \ulambda)$ does not vanish.
However, this is no longer the case if $ \ulambda \notin  \BD^{n-1}$. Indeed, if $\lambda_{l      } - \lambda_{k}$ is an integer, $k \neq l     $, then $J_{l      } (z; \varsigma, \ulambda) = (\varsigma i^n)^{\lambda_{l      } - \lambda_{k}} J_{k} (z; \varsigma, \ulambda)$. There are other solutions arising as  certain logarithmic sums of series of ascending powers of $z$. Roughly speaking, powers of $\log z$ may occur in some solutions. 
For more details the reader may consult \cite[\S 4.8]{Coddington-Levinson}.

\begin{lem}\label{lem: J ell (z; sigma; lambda) relations}
Let $a$ be an integer. We have
\begin{equation*}
J_{l     } \big(e^{\pi i \frac a n} z; \varsigma, \ulambda \big) = e^{- \pi i a \lambda_l     }J_{l     } (z; (-)^a \varsigma, \ulambda).
\end{equation*}
\end{lem}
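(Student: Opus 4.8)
The plan is to prove the identity directly from the defining power series \eqref{7eq: series solution J ell (z; sigma)} of $J_{l} (z; \varsigma, \ulambda)$ by a term-by-term computation, tracking how the rotation $z \mapsto e^{\pi i \frac a n} z$ acts on each monomial $z^{n(-\lambda_{l} + m)}$. No auxiliary machinery is needed; the entire content is the bookkeeping of scalars.

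First I would substitute $e^{\pi i \frac a n} z$ into the series. On the Riemann surface $\BU$, multiplication by $e^{\pi i \frac a n}$ shifts the argument by $\pi a/n$, so that $\log\big(e^{\pi i \frac a n} z\big) = \log z + \pi i \frac a n$ and the power $z^{n(-\lambda_{l} + m)} = e^{n(-\lambda_{l}+m)\log z}$ transforms, without any branch ambiguity, according to
\begin{equation*}
\big(e^{\pi i \frac a n} z\big)^{n(-\lambda_{l} + m)} = e^{\pi i a(-\lambda_{l} + m)}\, z^{n(-\lambda_{l} + m)}.
\end{equation*}
The key step is then to split the scalar $e^{\pi i a(-\lambda_{l} + m)}$ into an $m$-independent factor and an $m$-dependent factor: I would write it as $e^{- \pi i a \lambda_{l}} \cdot e^{\pi i a m}$, and, since both $a$ and $m$ are integers, $e^{\pi i a m} = \big((-1)^a\big)^m$. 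The factor $e^{- \pi i a \lambda_{l}}$, being independent of $m$, pulls out in front of the whole series, while $\big((-1)^a\big)^m$ combines with the coefficient $(\varsigma i^n)^m$ to produce $\big((-)^a \varsigma\, i^n\big)^m$. The Gamma-factor denominators $\prod_{k = 1}^n \Gamma(\lambda_{k} - \lambda_{l} + m + 1)$ are untouched, as they involve neither $z$ nor the sign $\varsigma$. Recollecting the series then gives exactly $e^{- \pi i a \lambda_{l}} J_{l} (z; (-)^a \varsigma, \ulambda)$, which is the claim.

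I do not anticipate a genuine obstacle. The only point requiring care is that the rotation is interpreted on $\BU$ rather than on $\BC \smallsetminus \{0\}$, so that $z^{n(-\lambda_{l} + m)}$ is single-valued and the exponent manipulation above is a literal equality rather than an identity valid only up to a root of unity. It is worth remarking that this result refines Lemma \ref{lem: phi (e pi i m/n z) is also a solution}: that lemma already guarantees that $J_{l}\big(e^{\pi i \frac a n} z; \varsigma, \ulambda\big)$ solves the Bessel equation of sign $(-)^a \varsigma$, whereas the series computation additionally pins down the precise proportionality constant $e^{- \pi i a \lambda_{l}}$ relating it to $J_{l} (z; (-)^a \varsigma, \ulambda)$.
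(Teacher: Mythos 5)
Your proposal is correct and is precisely the argument the paper intends: the lemma is stated without proof because it follows immediately from the defining series \eqref{7eq: series solution J ell (z; sigma)}, via the computation $\big(e^{\pi i \frac a n} z\big)^{n(-\lambda_{l}+m)} = e^{-\pi i a \lambda_{l}}\,\big((-1)^a\big)^m z^{n(-\lambda_{l}+m)}$ on $\BU$ and absorption of $\big((-1)^a\big)^m$ into $(\varsigma i^n)^m$. Your attention to the single-valuedness of $z^{n(-\lambda_{l}+m)}$ on the Riemann surface, and the remark contrasting this with the weaker Lemma \ref{lem: phi (e pi i m/n z) is also a solution}, are both consistent with the paper's setup.
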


\begin{rem}\label{rem: n=2 J-Bessel function}
If $n = 2$, then we have the following formulae according to \cite[3.1 (8), 3.7 (2)]{Watson},
\begin{equation*}
\begin{split}
J_1 (z; +, \lambda, - \lambda) = J_{- 2 \lambda} (2 z), \hskip 10 pt &  J_2 (z; +, \lambda, - \lambda) = J_{2 \lambda} (2 z),\\
J_1 (z; -, \lambda, - \lambda) = I_{- 2 \lambda} (2 z), \hskip 10 pt &  J_2 (z; -, \lambda, - \lambda) = I_{2 \lambda} (2 z).
\end{split}
\end{equation*}
\end{rem}


\begin{rem}\label{rem: GHF}
	Recall the definition of the generalized hypergeometric functions given by the series {\rm \cite[\S 4.4]{Watson}}
	\begin{equation*}
	{_pF_q} (\alpha_1, ..., \alpha_p ; \rho_1, ..., \rho_q ; z) = \sum_{m=0}^\infty \frac {(\alpha_1)_m ... (\alpha_p)_m} {m! (\rho_1)_m ... (\rho_q)_m} z^m.
	\end{equation*}
	It is evident that each Bessel function $J_{l     } (z; \varsigma, \ulambda)$ is closely related to a certain generalized hypergeometric function $ {_0F_{n-1}} $ as follows
	\begin{equation*} 
	J_{l     } (z; \varsigma, \ulambda) = \lp \prod_{k \neq l}  \frac {z^{ \lambda_k - \lambda_l }} {\Gamma \lp \lambda_k - \lambda_l + 1 \rp} \rp \cdot
	{_0F_{n-1}} \lp \left\{  \lambda_k - \lambda_l + 1 \right\}_{k \neq l} ; \varsigma i^n z^n \rp.
	\end{equation*}
\end{rem}

\subsection{\texorpdfstring{The analytic continuation of $J (x; \usigma, \ulambda)$}{The analytic continuation of $J (x; \varsigma, \lambda)$}}\label{sec: Analytic continuation of J (x; usigma; ulambda)}
For any given $\ulambda \in \BL^{n-1}$, since $J (x; \usigma, \ulambda)$ satisfies the Bessel equation of sign $S_n (\usigma)$, it admits a unique analytic continuation  $J (z; \usigma, \ulambda)$ onto $\BU$. Recall the definition
\begin{equation*} 
J (x ; \usigma, \ulambda) = \frac 1  {2 \pi i}\int_{\EuScript C} G(s; \usigma, \ulambda) x^{-n s} d s, \hskip 10 pt x \in \BR _+,
\end{equation*}
where $G (s; \usigma, \ulambda) = \prod_{k = 1}^{n} \Gamma (s - \lambda_{k} ) e \left(\frac 1 4 {\varsigma_{k} (s - \lambda_{k} )}   \right)$ and $\EuScript C$ is a suitable contour. 

Let $\varsigma = S_n (\usigma)$. For the moment, let us assume that $ \ulambda $ is generic.
For $l      = 1, ..., n$ and  $m = 0, 1, 2, ...$, $G (s; \usigma, \ulambda)$ has a simple pole at $\lambda_l      - m$ with residue
\begin{align*}
& (- )^m\frac {1} {m !}  e \lp \frac {\sum_{k = 1}^n \varsigma_{k} ( \lambda_{l     } - \lambda_{k} - m)} 4 \rp  \prod_{k \neq l      } \Gamma (\lambda_{l     } - \lambda_{k} - m) 
=   \\
& \pi^{n-1} e \lp - \frac {\sum_{k = 1}^n \varsigma_{k}\lambda_{k}} 4 \rp e \lp \hskip -2 pt \frac {\sum_{k = 1}^n \varsigma_{k}\lambda_{l     }} 4 \hskip -2 pt \rp   \lp \prod_{k \neq l      } \frac 1 {\sin \lp \pi (\lambda_{l     } - \lambda_{k} ) \hskip -1 pt \rp} \hskip -2 pt \rp   \hskip -2 pt \frac { (\varsigma i^n )^{m} } {\prod_{k=1}^n  \Gamma (\lambda_{k} - \lambda_{l      } + m + 1)} .
\end{align*}
Here we have used Euler's reflection formula for the Gamma function. 
Applying Cauchy's residue theorem, $J (x ; \usigma, \ulambda)$ is developed into an absolutely convergent series  on shifting the contour $\EuScript C$ far left, and, in view of (\ref{7eq: series solution J ell (z; sigma)}), we obtain 
\begin{equation} \label{7eq: J(x; sigma; lambda) as a sum of J ell (x; sigma; lambda)}
\begin{split}
J (z ; \usigma, \ulambda) =  \pi^{n-1} E (\usigma, \ulambda) \sum_{l      = 1}^n E_{l     } (\usigma, \ulambda)   S_{l     } ( \ulambda) J_{l     } (z; \varsigma, \ulambda), \hskip 10 pt z \in \BU,
\end{split}
\end{equation}
with 
\begin{align*}
& E (\usigma, \ulambda) = e \lp - \frac {\sum_{k = 1}^n \varsigma_{k}\lambda_{k}} 4 \rp, \hskip 3 pt E_{l     } (\usigma, \ulambda) = e \lp \frac {\sum_{k = 1}^n \varsigma_{k}\lambda_{l     }} 4 \rp, \hskip 3 pt  S_{l     } ( \ulambda) =     \prod_{k \neq l      } \frac 1 {\sin \lp \pi (\lambda_{l     } - \lambda_{k} ) \rp }  .
\end{align*}
Because of the possible vanishing of $\sin \lp \pi (\lambda_{l     } - \lambda_{k} ) \rp$, the definition of $S_{l     } ( \ulambda) $ may fail  to make sense if $\ulambda $ is not generic.  In order to properly interpret \eqref{7eq: J(x; sigma; lambda) as a sum of J ell (x; sigma; lambda)} in the non-generic case, one has to pass to the limit, that is, 
\begin{equation} \label{7eq: J(x; sigma; lambda) as a sum of J ell (x; sigma; lambda), limit}
J   (z ; \usigma, \ulambda) = \pi^{n-1} E (\usigma, \ulambda) \cdot  \lim_{\sstyle \boldsymbol \lambda' \ra \ulambda \atop \sstyle  \boldsymbol \lambda' \, \in \, \BD^{n-1}} \sum_{l      = 1}^n E_{l     } (\usigma, \ulambda')  S_l      (\boldsymbol \lambda')   J_{l     } (z; \varsigma, \boldsymbol \lambda' ). 
\end{equation}

We recollect the definitions of $L_{\pm} (\usigma)$ and $n_{\pm} (\usigma)$  introduced in Proposition \ref{prop: special example}.
\begin{defn}\label{defn: signature}
Let $\usigma \in \{+, - \}^n$. We define $L_\pm (\usigma) = \{ l      : \varsigma_l      = \pm \}$ and $n_\pm (\usigma) = \left| L_\pm (\usigma) \right|$.
The pair of integers $(n_+ (\usigma), n_- (\usigma) )$ is called the signature of $\usigma$, as well as the signature of the Bessel function $J(z ; \usigma, \ulambda)$.
\end{defn}

 With Definition \ref{defn: signature}, we reformulate (\ref{7eq: J(x; sigma; lambda) as a sum of J ell (x; sigma; lambda)}, \ref{7eq: J(x; sigma; lambda) as a sum of J ell (x; sigma; lambda), limit}) in the following lemma.

\begin{lem}\label{lem: J(x; sigma; lambda) as a sum of J ell (x; sigma; lambda)}
We have
\begin{equation*} 
\begin{split}
J (z ;  \usigma , \ulambda) = \pi^{n-1} E (\usigma, \ulambda)  \sum_{l      = 1}^n  E_{l     } (\usigma, \ulambda) S_{l     } ( \ulambda) J_{l     } \big(z; (-)^{n_- (\usigma)}, \ulambda \big),
\end{split}
\end{equation*}
with $E (\usigma, \ulambda) = e   \lp  - \frac 1 4 \sum_{k\in L_+ (\usigma)} + \frac 1 4 \sum_{k\in L_- (\usigma)} \lambda_{k} \rp$, $E_{l     } (\usigma, \ulambda) = e \lp \frac 1 4 { (n_+ (\usigma) - n_- (\usigma)) }  \lambda_{l     } \rp$ and $S_{l     } ( \ulambda) =    1/ \prod_{k \neq l      } {\sin \lp \pi (\lambda_{l     } - \lambda_{k} ) \rp }  $. When $\ulambda $ is not generic, the right hand side is to be replaced by its limit.
\end{lem}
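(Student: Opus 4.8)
The plan is to recognize that this lemma is nothing more than a bookkeeping reformulation of the identity \eqref{7eq: J(x; sigma; lambda) as a sum of J ell (x; sigma; lambda)} already established above, in which the sign-weighted sums $\sum_{k} \varsigma_k \lambda_k$ and $\sum_k \varsigma_k$ are rewritten in terms of the sets $L_\pm(\usigma)$ and the signature $(n_+(\usigma), n_-(\usigma))$ of Definition \ref{defn: signature}. Since the substantive work---shifting the Mellin-Barnes contour $\EuScript C$ far to the left and summing the residues of $G(s;\usigma,\ulambda)$ via Cauchy's theorem---was carried out in deriving \eqref{7eq: J(x; sigma; lambda) as a sum of J ell (x; sigma; lambda)}, no new analytic input is required here.

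First I would record that the sign of the Bessel equation satisfied by $J(z;\usigma,\ulambda)$ is $\varsigma = S_n(\usigma) = \prod_{l=1}^n \varsigma_l = (-)^{n_-(\usigma)}$, because exactly the $n_-(\usigma)$ factors indexed by $L_-(\usigma)$ equal $-1$; this identifies the Bessel functions of the first kind on the right-hand side as $J_l\big(z; (-)^{n_-(\usigma)}, \ulambda\big)$. Next, splitting the index set $\{1,\dots,n\}$ into the disjoint union $L_+(\usigma) \cup L_-(\usigma)$ gives $\sum_{k=1}^n \varsigma_k \lambda_k = \sum_{k\in L_+(\usigma)} \lambda_k - \sum_{k\in L_-(\usigma)} \lambda_k$ and $\sum_{k=1}^n \varsigma_k = n_+(\usigma) - n_-(\usigma)$. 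Substituting these two identities into the exponentials $E(\usigma,\ulambda)$ and $E_l(\usigma,\ulambda)$ defined immediately after \eqref{7eq: J(x; sigma; lambda) as a sum of J ell (x; sigma; lambda)} yields precisely the expressions claimed in the lemma, while the factor $S_l(\ulambda)$ carries over verbatim.

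Finally, for non-generic $\ulambda$ the factor $S_l(\ulambda)$ may acquire poles from the vanishing of $\sin\lp \pi(\lambda_l - \lambda_k) \rp$, so the right-hand side must be read through the limiting formula \eqref{7eq: J(x; sigma; lambda) as a sum of J ell (x; sigma; lambda), limit}. The only point that warrants care is that this limit exists: since the left-hand side $J(z;\usigma,\ulambda)$ is analytic in $\ulambda$ on all of $\BL^{n-1}$ (being the inverse Mellin transform of $G(s;\usigma,\ulambda)$, analytic in $\ulambda$), the apparent singularities of the right-hand side on the complement of $\BD^{n-1}$ must cancel, and the limit along $\boldsymbol\lambda' \to \ulambda$ with $\boldsymbol\lambda' \in \BD^{n-1}$ is therefore legitimate. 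I expect this limiting interpretation---rather than the algebraic substitution, which is routine---to be the only genuine subtlety.
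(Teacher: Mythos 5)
Your proposal is correct and matches the paper exactly: the paper itself introduces this lemma with the words ``With Definition \ref{defn: signature}, we reformulate (\ref{7eq: J(x; sigma; lambda) as a sum of J ell (x; sigma; lambda)}, \ref{7eq: J(x; sigma; lambda) as a sum of J ell (x; sigma; lambda), limit}) in the following lemma,'' so the only content is precisely the substitutions you perform, namely $\sum_k \varsigma_k \lambda_k = \sum_{k \in L_+(\usigma)} \lambda_k - \sum_{k \in L_-(\usigma)} \lambda_k$, $\sum_k \varsigma_k = n_+(\usigma) - n_-(\usigma)$, and $S_n(\usigma) = (-)^{n_-(\usigma)}$, together with the limiting interpretation in the non-generic case. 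Your additional justification that the limit exists (analyticity of $J(z;\usigma,\ulambda)$ in $\ulambda$ forcing cancellation of the apparent poles of $S_l(\ulambda)$) is sound and consistent with what the paper leaves implicit.
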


\begin{rem}\label{7rem: connections to the first kind}
	In view of Proposition {\rm \ref{prop: Classical Bessel functions}} and Remark {\rm \ref{rem: n=2 J-Bessel function}}, Lemma {\rm \ref{lem: J(x; sigma; lambda) as a sum of J ell (x; sigma; lambda)}} is equivalent to the connection formulae in {\rm (\ref{2eq: connection formulae}, \ref{1eq: connection formula K})} {\rm ({\it see \cite[3.61(5, 6), 3.7 (6)]{Watson}})}.
\end{rem}

\begin{rem}
	In the  case when  $\ulambda = \frac 1 n \lp  \frac {n-1} 2, ..., - \frac {n-1} 2 \rp$, the formula in  Lemma {\rm \ref{lem: J(x; sigma; lambda) as a sum of J ell (x; sigma; lambda)}} amounts to splitting the Taylor series expansion of $e^{in \xi (\usigma) x }$ in \eqref{2eq: special example} 
	according to the congruence classes of indices modulo $n$. To see this, one requires the multiplicative formula of the Gamma function \eqref{2eq: multiplication theorem} as well as the trigonometric identity
	\begin{equation*}
		\prod_{k=1}^{n-1} \sin \lp \frac {k \pi} n  \rp = \frac n {2^{n-1}}.
	\end{equation*}
\end{rem}

Using Lemma \ref{lem: J ell (z; sigma; lambda) relations} and \ref{lem: J(x; sigma; lambda) as a sum of J ell (x; sigma; lambda)}, one proves the following lemma, which implies that the Bessel function $J(z ; \usigma, \ulambda)$ is determined by its signature up to a constant multiple. 

\begin{lem}\label{lem: J(z ; usigma; lambda) relations to H +}
Define $H^{\pm} (z; \ulambda) = J(z; \pm, ..., \pm, \ulambda)$. Then
\begin{equation*}
J(z ; \usigma, \ulambda) = e \lp \pm \frac {\sum_{l      \in L_ \mp (\usigma)} \lambda_{l      }} 2 \rp H^\pm \Big(e^{\pm \pi i \frac { n_\mp (\usigma)} n} z; \ulambda \Big).
\end{equation*}
\end{lem}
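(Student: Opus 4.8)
The plan is to reduce both sides of the asserted identity to linear combinations of the Bessel functions of the first kind $J_{l}(z; \varsigma, \ulambda)$ and then to match coefficients term by term. Throughout I write $n_\pm = n_\pm(\usigma)$ and $L_\pm = L_\pm(\usigma)$, and I will repeatedly use $n = n_+ + n_-$ together with $\sum_{k=1}^n \lambda_k = 0$ (since $\ulambda \in \BL^{n-1}$). First I would assume $\ulambda$ is generic, i.e. $\ulambda \in \BD^{n-1}$, so that the factors $S_l(\ulambda) = \prod_{k\neq l}\sin\lp \pi(\lambda_l - \lambda_k)\rp^{-1}$ are genuinely defined and Lemma \ref{lem: J(x; sigma; lambda) as a sum of J ell (x; sigma; lambda)} applies without passing to a limit; the non-generic case will be recovered at the very end by analytic continuation in $\ulambda$.

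Applying Lemma \ref{lem: J(x; sigma; lambda) as a sum of J ell (x; sigma; lambda)} to $J(z;\usigma,\ulambda)$ produces a sum over $l$ in which the $l$-summand is $\pi^{n-1} E(\usigma,\ulambda)\, e\lp \tfrac{n_+ - n_-}{4}\lambda_l\rp S_l(\ulambda)$ times $J_l\big(z; (-)^{n_-}, \ulambda\big)$. Specializing the same lemma to the all-plus tuple gives $H^+(w;\ulambda) = \pi^{n-1}\sum_l e\lp \tfrac n4 \lambda_l\rp S_l(\ulambda)\, J_l(w; +, \ulambda)$, where the prefactor $E$ is $1$ because $\sum_k\lambda_k = 0$. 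For the top-sign case I would substitute $w = e^{\pi i n_-/n} z$ and invoke Lemma \ref{lem: J ell (z; sigma; lambda) relations} with $a = n_-$ to rewrite $J_l(e^{\pi i n_-/n} z; +, \ulambda) = e^{-\pi i n_- \lambda_l} J_l\big(z; (-)^{n_-}, \ulambda\big)$. Since $e^{-\pi i n_-\lambda_l} = e\lp -\tfrac{n_-}{2}\lambda_l\rp$, the per-$l$ exponent becomes $e\lp \tfrac n4 \lambda_l - \tfrac{n_-}2 \lambda_l\rp = e\lp \tfrac{n_+ - n_-}{4}\lambda_l\rp$ by $n - 2n_- = n_+ - n_-$, so $H^+(e^{\pi i n_-/n} z; \ulambda)$ reproduces exactly the $l$-summand of $J(z;\usigma,\ulambda)$ apart from the scalar prefactor.

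It then remains only to check that the two scalar prefactors agree, i.e. that $E(\usigma,\ulambda) = e\lp \tfrac12 \sum_{l\in L_-}\lambda_l\rp$. This is immediate from $E(\usigma,\ulambda) = e\lp -\tfrac14\sum_{k\in L_+}\lambda_k + \tfrac14\sum_{k\in L_-}\lambda_k\rp$ together with $\sum_{k\in L_+}\lambda_k = -\sum_{k\in L_-}\lambda_k$, which collapses the two quarter-sums into one half-sum over $L_-$. The bottom-sign case is entirely parallel: I would substitute $w = e^{-\pi i n_+/n} z$ into $H^-(w;\ulambda) = \pi^{n-1}\sum_l e\lp -\tfrac n4\lambda_l\rp S_l(\ulambda)\, J_l(w; (-)^n, \ulambda)$, apply Lemma \ref{lem: J ell (z; sigma; lambda) relations} with $a = -n_+$ (noting $(-)^{n-n_+} = (-)^{n_-}$ and $e^{\pi i n_+\lambda_l} = e\lp \tfrac{n_+}2\lambda_l\rp$), and verify that the prefactor matches $e\lp -\tfrac12\sum_{l\in L_+}\lambda_l\rp = E(\usigma,\ulambda)$.

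Since the whole computation is bookkeeping, the main obstacle is keeping the sign conventions consistent — specifically, that the sign of the Bessel equation governing $H^\pm$ is $(-)^{n_\mp}$, and that rotating the argument by $e^{\pm\pi i n_\mp/n}$ via Lemma \ref{lem: J ell (z; sigma; lambda) relations} flips this to $(-)^{n_-}$, which is precisely the sign attached to $J(z;\usigma,\ulambda)$. Finally, because both $J(z;\usigma,\ulambda)$ and $H^\pm(z;\ulambda)$ depend analytically on $\ulambda \in \BL^{n-1}$ and the two sides coincide on the dense open set $\BD^{n-1}$, the identity extends to all $\ulambda$ by analytic continuation, which simultaneously accounts for the limiting interpretation of Lemma \ref{lem: J(x; sigma; lambda) as a sum of J ell (x; sigma; lambda)} in the non-generic case.
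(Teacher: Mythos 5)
Your proposal is correct and is precisely the paper's own argument: the paper proves this lemma by exactly the combination you describe — Lemma \ref{lem: J(x; sigma; lambda) as a sum of J ell (x; sigma; lambda)} to expand $J(z;\usigma,\ulambda)$ and $H^{\pm}$ in the first-kind functions $J_{l}\big(z;(-)^{n_{-}(\usigma)},\ulambda\big)$, Lemma \ref{lem: J ell (z; sigma; lambda) relations} to absorb the rotation $e^{\pm\pi i n_{\mp}(\usigma)/n}$, the identity $\sum_{k}\lambda_{k}=0$ to match the scalar prefactors, and continuation in $\ulambda$ for the non-generic case. The only blemish is the parenthetical sign remark near the end: the Bessel equation satisfied by $H^{\pm}$ itself has sign $(\pm)^{n}$, not $(-)^{n_{\mp}(\usigma)}$; it is only after rotating the argument that the sign becomes $(-)^{n_{-}(\usigma)}$, which is what your actual computations (correctly) use.
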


\begin{rem}\label{rem: Barnes integral}
We have the following Barnes type  integral representation,
\begin{equation} \label{7eq: Barnes contour integral 1}
J (z ; \usigma, \ulambda) = \frac 1  {2 \pi i}\int_{\EuScript C'} G(s; \usigma, \ulambda) z^{-n s} d s, \hskip 10 pt z \in \BU,
\end{equation}
where $\EuScript C'$ is a contour that starts from and returns to $- \infty$ after encircling the poles of the integrand counter-clockwise. Compare \cite[\S 6.5]{Watson}. Lemma {\rm \ref{lem: J(z ; usigma; lambda) relations to H +} } may also be seen from this integral representation.

When $- \frac {n_-(\usigma)} n \pi  < \arg z < \frac {n_+(\usigma)} n \pi $, the contour $\EuScript C'$ may be opened out to the vertical line $(\sigma)$, with $\sigma > \max  \{ \Re  \lambda_l      \}$. Thus
\begin{equation} \label{7eq: Barnes contour integral 2}
J (z ; \usigma, \ulambda) = \frac 1  {2 \pi i}\int_{(\sigma)} G(s; \usigma, \ulambda) z^{-n s} d s, \hskip 10 pt - \frac {n_-(\usigma)} n \pi  < \arg z < \frac {n_+(\usigma)} n \pi.
\end{equation}
On the boundary rays $\arg z = \pm \frac {n_\pm(\usigma)} n \pi$, the contour $ (\sigma) $ should be shifted to $\EuScript C$ defined as in {\rm \S \ref{sec: Definition of Bessel functions}}, in order to secure convergence.

The contour integrals in {\rm (\ref{7eq: Barnes contour integral 1}, \ref{7eq: Barnes contour integral 2})} absolutely converge, compactly in both $z$ and $\ulambda$. To see these, one uses Stirling's formula to examine the behaviour  of the integrand $G(s; \usigma, \ulambda) z^{-n s}$ on integral contours, where for {\rm(\ref{7eq: Barnes contour integral 1})} a transformation of  $G(s; \usigma, \ulambda)$ by Euler's reflection formula is required.
\end{rem}

\delete{
Finally, we shall give crude estimates for the Bessel function $J  (z ; \usigma, \ulambda)$ and the Bessel kernel $J_{(\ulambda, \udelta)} \lp \pm x \rp$ when their arguments are small. 

\begin{prop}\label{7prop: estimates for small z}
	Let $\ulambda \in \BL^{n-1}$. If  $\ulambda $ is generic, then
	\begin{equation*}
	  z^j J^{(j)} (z ; \usigma, \ulambda)   \lll_{\ulambda, j ,n} \max \left\{ \left|z^{- n \lambda_l}\right| \right\}
	\end{equation*}
	for $|z| \lll 1$. In general, we let $\Lambda ({\ulambda}) $ denote the subset of $\left\{ \lambda_{l} \right\}$ such that every $\lambda_{l}$ lies in  $\lambda + \BN $ for some $\lambda \in \Lambda ({\ulambda})$, and, for each $\lambda \in \Lambda ({\ulambda})$, let $M_{\lambda}$ denote the multiplicity of $\lambda$ in $\ulambda$, then
	\begin{equation*}
	  z^j J^{(j)}  (z ; \usigma, \ulambda)   \lll_{\ulambda, j ,n} \max \left\{ \left| (\log z)^{M_{\lambda} - 1} z^{- n \lambda }\right| : \lambda \in \Lambda ({\ulambda}) \right\}
	\end{equation*}
	for $|z| \lll 1$. 
\end{prop}
\begin{proof}
	In view of the definition \eqref{7eq: series solution J ell (z; sigma)}, we have the following bound for $J_{l     } (z; \varsigma, \ulambda)$,
	\begin{equation*}
	z^j J_{l     }^{(j)} (z; \varsigma, \ulambda) \lll_{  \ulambda, j ,n} \left|z^{- n \lambda_l}\right|
	\end{equation*}
	for $|z| \lll 1$. The generic case then follows from Lemma \ref{lem: J(x; sigma; lambda) as a sum of J ell (x; sigma; lambda)}.
	In general, we left shift the contour of integration of the Barnes type integral as above and then do some simple estimations. Here, Cauchy's differentiation formula is applied instead of his residue theorem.
\end{proof}

In view of \eqref{2eq: Bessel kernel}, we have the following corollary.
\begin{cor}\label{7cor: bounds for Bessel kernel}
	Let notations be as in Proposition {\rm \ref{7prop: estimates for small z}}. Then
	\begin{equation*}
	x^j J^{(j)}_{(\ulambda, \udelta)} \lp \pm x \rp   \lll_{  \ulambda, j, n} \max \left\{   (\log x)^{M_{\lambda} - 1} x^{-  \Re \lambda }  : \lambda \in \Lambda ({\ulambda}) \right\}
	\end{equation*}
	for $x \lll 1$.
\end{cor}

When the parameter $\ulambda$ is regarded as fixed constant, the estimates in Corollary \ref{7cor: bounds for Bessel kernel} and the asymptotics given in Theorem \ref{thm: asymptotic Bessel kernel, 1} (see also Theorem \ref{thm: improved  asymptotic Bessel kernel}, as well as Appendix \ref{appendix: asymptotic}) are usually sufficient for applications.  There are however particular cases when there are cancellations between the Bessel functions $J \big(2 \pi x^{\frac 1 n}; \usigma, \ulambda \big)$ in the formula \eqref{2eq: Bessel kernel} so that the bounds for the Bessel kernel $J_{(\ulambda, \udelta)} \lp \pm x \rp$ can be significantly improved. An important example is Bessel kernels for holomorphic cusp forms (see \eqref{1eq: n=2, holomorphic form, Bessel functions}); see also \cite[Example 4.20]{Qi2}.
}

\subsection{Asymptotics for Bessel equations and Bessel functions of the second kind}

Subsequently,  we   proceed to investigate the asymptotics at infinity for Bessel equations. 


\begin{defn}
	For  $\varsigma \in \{ +, -\}$ and a positive integer $N$, we let $\BX_{N} (\varsigma) $ denote the set of $N$-th roots of $\varsigma 1$.\footnote{Under certain circumstances, it is suitable to view an element $\xi$ of $\BX_{N} (\varsigma)$  as a point in $\BU$ instead of $\BC \smallsetminus \{0\}$. 
		This however should be clear from the context.}
\end{defn}

Before   delving into our general study, let us first consider the prototypical example given in Proposition \ref{prop: special example}. 

\begin{prop}
	For any $\xi \in \BX_{2n} (+)$, the function $ z^{- \frac {n-1} 2} e^{ i n \xi z}$ is a solution of the Bessel equation of index $ \frac 1 n \lp  \frac {n-1} 2, ..., - \frac {n-1} 2 \rp$ and sign $\xi^{\, n}$. 
\end{prop}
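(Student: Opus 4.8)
The plan is to rewrite the Bessel operator in terms of the Euler operator $\theta = z\,\frac{d}{dz}$ and then strip off the prefactor $z^{-\frac{n-1}2}$. First I would use the classical identity $z^j\frac{d^j}{dz^j} = [\theta]_j$, together with the definition of the indicial polynomial and the factorization $P_{\ulambda}(\rho) = \prod_{l=1}^n (\rho + n\lambda_l)$ recorded just above, to observe that
\begin{equation*}
\nabla_{\varsigma, \ulambda} = \sum_{j=0}^n V_{n,j}(\ulambda)\,[\theta]_j - \varsigma (in)^n z^n = P_{\ulambda}(\theta) - \varsigma(in)^n z^n = \prod_{l=1}^n (\theta + n\lambda_l) - \varsigma(in)^n z^n.
\end{equation*}
For the index $\ulambda = \frac 1 n \lp \frac{n-1}2, \ldots, -\frac{n-1}2 \rp$ the quantities $n\lambda_l$ run through $\frac{n-1}2 - m$, $m = 0, \ldots, n-1$, so $\prod_{l=1}^n (\theta + n\lambda_l) = [\theta + \frac{n-1}2]_n$ and the operator collapses to $[\theta + \frac{n-1}2]_n - \varsigma(in)^n z^n$.

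Second I would exploit the elementary conjugation rule $(\theta + c)(z^a f) = z^a(\theta + a + c)f$. Applied with $a = -\frac{n-1}2$ and $c = \frac{n-1}2 - m$ it gives $(\theta + \frac{n-1}2 - m)(z^{-\frac{n-1}2} f) = z^{-\frac{n-1}2}(\theta - m)f$, whence
\begin{equation*}
[\theta + \tfrac{n-1}2]_n \lp z^{-\frac{n-1}2} f \rp = z^{-\frac{n-1}2}\,[\theta]_n f = z^{-\frac{n-1}2}\, z^n f^{(n)},
\end{equation*}
using $[\theta]_n = z^n \frac{d^n}{dz^n}$ at the last step. Consequently
\begin{equation*}
\nabla_{\varsigma, \ulambda}\lp z^{-\frac{n-1}2} f \rp = z^{-\frac{n-1}2}\, z^n \lp f^{(n)} - \varsigma(in)^n f \rp,
\end{equation*}
so that $z^{-\frac{n-1}2} f$ solves the Bessel equation of sign $\varsigma$ precisely when $f$ satisfies the constant-coefficient equation $f^{(n)} = \varsigma(in)^n f$.

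Finally I would take $f(z) = e^{in\xi z}$, for which $f^{(n)} = (in\xi)^n f = (in)^n \xi^n f$, so the condition $f^{(n)} = \varsigma(in)^n f$ reduces to $\xi^n = \varsigma$. Given $\xi \in \BX_{2n}(+)$ we have $\xi^{2n} = 1$, hence $\xi^n \in \{+, -\}$; choosing $\varsigma = \xi^n$ then shows that $z^{-\frac{n-1}2} e^{in\xi z}$ solves the Bessel equation of index $\frac 1 n \lp \frac{n-1}2, \ldots, -\frac{n-1}2 \rp$ and sign $\xi^n$, which is the assertion. I do not expect a genuine obstacle: the whole proof is the operator factorization $\nabla_{\varsigma, \ulambda} = \prod_l (\theta + n\lambda_l) - \varsigma(in)^n z^n$ followed by the one-line conjugation, and the only points needing care are the bookkeeping $n\lambda_l = \frac{n-1}2 - m$ and the cancellation of the $z^n$ factors that leaves the clean equation $f^{(n)} = \varsigma(in)^n f$. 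As an alternative one could instead invoke Proposition \ref{prop: special example} with $\usigma = (+, \ldots, +)$, which identifies $z^{-\frac{n-1}2} e^{inz}$ up to a nonzero constant as the $H$-Bessel function solving the sign-$+$ equation, and then rotate via Lemma \ref{lem: phi (e pi i m/n z) is also a solution}, writing $\xi = e^{\pi i a/n}$ and noting $\xi^n = (-)^a$.
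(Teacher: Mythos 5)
Your proof is correct, but your main argument takes a genuinely different route from the paper; the alternative you sketch in your final sentence is in fact the paper's own proof. The paper disposes of the proposition in two lines: for $\Im \xi \geq 0$ it combines Proposition \ref{prop: special example} (the Mellin--Barnes computation via the multiplication formula, which shows $J(x;\usigma,\ulambda)$ for this index equals a nonzero constant times $x^{-\frac{n-1}2}e^{in\xi(\usigma)x}$, and whose $\xi(\usigma) = e^{\pi i n_-(\usigma)/n}$ sweep out precisely the $2n$-th roots of unity in the closed upper half-plane) with Theorem \ref{thm: Bessel equations} (which says this function solves the Bessel equation of sign $S_n(\usigma) = \xi(\usigma)^n$), and then rotates to the remaining $\xi$ by Lemma \ref{lem: phi (e pi i m/n z) is also a solution}. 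Your primary argument instead conjugates the operator directly: the identity $z^j d^j/dz^j = [\theta]_j$ with $\theta = z\, d/dz$, the factorization $P_{\ulambda}(\rho) = \prod_{l=1}^n(\rho + n\lambda_l)$ established in \S \ref{sec: Bessel functions of the first kind}, and the bookkeeping $n\lambda_l = \frac{n-1}2 - (l-1)$ give $\nabla_{\varsigma,\ulambda} = \lbrack\theta + \frac{n-1}2\rbrack_n - \varsigma(in)^n z^n$; the conjugation rule $(\theta + c)\lp z^a f\rp = z^a(\theta + a + c)f$ then reduces everything to the constant-coefficient equation $f^{(n)} = \varsigma(in)^n f$, which $e^{in\xi z}$ satisfies exactly when $\xi^n = \varsigma$. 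Every step here is valid (substituting the single commuting operator $\theta$ into a scalar polynomial identity is legitimate), and the approach buys you something the paper's proof does not: it is self-contained modulo the indicial factorization, bypassing the Hankel-transform computation behind Proposition \ref{prop: special example}, and it shows more, namely that for this special index the \emph{entire} solution space of the sign-$\varsigma$ Bessel equation is spanned by $z^{-\frac{n-1}2}e^{in\xi z}$ with $\xi \in \BX_n(\varsigma)$, i.e.\ it exhibits a full fundamental system in closed form. What the paper's route buys is economy: it recycles two results already proved and requires no new computation at that point in the text.
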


\begin{proof}
	When $ \Im \xi \geq 0$, this can be seen from Proposition \ref{prop: special example} and Theorem \ref{thm: Bessel equations}. For arbitrary $ \xi $, one makes use of Lemma \ref{lem: phi (e pi i m/n z) is also a solution}.
\end{proof}

\subsubsection{Formal solutions of Bessel equations at infinity}\label{sec: formal solutions at infinity}
Following \cite[Chapter 5]{Coddington-Levinson}, we shall consider the system of differential equations (\ref{7eq: differential equation, matrix form}).
We have 
\begin{equation*}
B (\infty; \varsigma, \ulambda) = \begin{pmatrix}
0 & 1 & 0 & \cdots & 0\\
0 & 0 & 1 & \cdots & 0\\
\vdots & \vdots & \vdots & \ddots & \vdots \\
0 & 0 & 0 & \cdots & 1 \\
\varsigma (in)^n & 0 & \cdots & \cdots & 0
\end{pmatrix}.
\end{equation*}
If one let  $\BX_n (\varsigma) = \left \{\xi_1 ,..., \xi_n \right \}$, then the eigenvalues of $B (\infty; \varsigma, \ulambda)$ are $in\xi_1 ,..., in\xi_n$.
The conjugation by the following matrix diagonalizes $B (\infty; \varsigma, \ulambda)$,
\begin{align*}
& T = \frac 1 n \begin{pmatrix}
1 & (in\xi_1)^{-1} &  \cdots & (in\xi_1)^{-n+1}\\
1 & (in\xi_2)^{-1} &  \cdots & (in\xi_2)^{-n+1}\\
\cdots & \cdots &  \cdots & \cdots \\
1 & (in\xi_n)^{-1} &  \cdots & (in\xi_n)^{-n+1}\\
\end{pmatrix},\\
& T\- = \begin{pmatrix}
1 & 1 & \cdots  & 1\\
in\xi_1 & in\xi_2   & \cdots & in\xi_n\\
\cdots & \cdots & \cdots & \cdots \\
(in\xi_1)^{n-1} & (in\xi_2)^{n-1} & \cdots & (in\xi_n)^{n-1}\\
\end{pmatrix}.
\end{align*}
Thus, the substitution $u = T w$ turns the system of differential equations (\ref{7eq: differential equation, matrix form}) into
\begin{equation}\label{7eq: differential equation, matrix form 2}
u' = A(z) u,
\end{equation}
where $A(z) = T B (z; \varsigma, \ulambda) T\-$ is a matrix of polynomials in $z\-$ of degree $n$,
\begin{equation*}
A(z) = \sum_{j=0}^n  z^{-j} A_j,
\end{equation*}
with 
\begin{equation}\label{7eq: A0, Aj}
\begin{split}
A_0 &= \varDelta = \mathrm {diag} \lp i n \xi_{l     } \rp_{l      = 1}^n, \\
A_j &= - i^{\, -j+1} n^{-j} V_{n, n-j} (\ulambda) \lp \xi_{k} \xi_{l     }^{\, -j} \rp_{k, l      = 1}^n, \ j = 1, ..., n.
\end{split}
\end{equation}
It is convenient to put $A_j = 0$ if $j > n$. The dependence on $ \varsigma, \ulambda$ and the ordering of the eigenvalues have been suppressed in our notations in the interest of brevity.

Suppose that $\widehat \Phi$ is a formal solution matrix for (\ref{7eq: differential equation, matrix form 2}) of the form
\begin{equation*} 
\widehat \Phi (z) = P(z) z^R e^{Q z},
\end{equation*}
where $P $ is a formal power series in $z\-$,
\begin{equation*}
P(z) = \sum_{m=0}^\infty z^{-m} P_m,
\end{equation*}
and $R$, $Q$ are constant diagonal matrices. Since
\begin{equation*}
\widehat \Phi ' = P' z^R e^{Qz} + z\- P R z^R e^{Qz} + P z^R Q e^{Qz} = \lp P' + z\- P R + P Q \rp  z^R e^{Qz},
\end{equation*}
the differential equation (\ref{7eq: differential equation, matrix form 2}) yields
\begin{equation*}
\sum_{m=0}^\infty z^{-m-1} P_m (R - m I) + \sum_{m=0}^\infty z^{-m } P_m Q = \lp  \sum_{j=0}^\infty z^{-j } A_j \rp \lp \sum_{m=0}^\infty z^{- m } P_m \rp,
\end{equation*}
where $I$ denotes the identity matrix.
Comparing the coefficients of various powers of $z\-$, it follows that $\widehat \Phi$ is a formal solution matrix for (\ref{7eq: differential equation, matrix form 2}) if and only if $R$, $Q$ and $P_m$ satisfy the following equations
\begin{equation}\label{7eq: relations of R Q Pm}
\begin{split}
P_0 Q - \varDelta P_0 &= 0\\
P_{m+1} Q - \varDelta P_{m+1} & = \sum_{j = 1}^{m+1} A_j P_{m-j+1} + P_m (m I - R ), \hskip 10 pt  m \geq 0.
\end{split}
\end{equation}
A solution of the first equation in (\ref{7eq: relations of R Q Pm}) is given by
\begin{equation}\label{7eq: Q=A0, P0=I}
Q = \varDelta, \hskip 10 pt P_0 = I.
\end{equation}
Using (\ref{7eq: Q=A0, P0=I}), the second equation in (\ref{7eq: relations of R Q Pm}) for $m=0$ becomes
\begin{equation}\label{7eq: relation m=0}
P_1 \varDelta - \varDelta P_1 =  A_1 - R.
\end{equation}
Since $\varDelta$ is diagonal, the diagonal entries of the left side of (\ref{7eq: relation m=0}) are zero, and hence the diagonal entries of $R$ must be identical with those of $A_1$. In view of (\ref{7eq: A0, Aj}) and Lemma \ref{lem: simple facts on V (lambda)} (2), we have $$A_1 = - \frac 1 n V_{n, n-1} (\ulambda) \cdot \lp \xi_{k} \xi_{l     }^{-1} \rp_{k, l      = 1}^n = - \frac {n-1} 2 \lp \xi_{k} \xi_{l     }^{-1} \rp_{k, l      = 1}^n,$$ and therefore 
\begin{equation}\label{7eq: R = - (n-2)/2}
R = - \frac {n-1} 2 I.
\end{equation}
Let $ p _{1,\, k l     }$ denote the $(k, l     )$-th entry of $P_1$. It follows from (\ref{7eq: A0, Aj}, \ref{7eq: relation m=0}) that
\begin{equation}\label{7eq: off-diagonal P1}
in (\xi_l      - \xi_k) p _{1,\, k l     } = - \frac {n-1} 2 \xi_k \xi_l     \-, \hskip 10 pt k \neq l     .
\end{equation}
The off-diagonal entries of $P_1$ are uniquely determined by \eqref{7eq: off-diagonal P1}. Therefore, a solution of (\ref{7eq: relation m=0}) is 
\begin{equation}\label{7eq: P1 = D1 + tilde P1}
P_1 = D_1 + P^o_1,
\end{equation}
where $D_1$ is any diagonal matrix and $ P^o_1$ is the matrix with diagonal entries zero and $(k, l     )$-th entry $p _{1,\, k l     }$, $ k \neq l     $. To determine $D_1$, one resorts to the second equation in (\ref{7eq: relations of R Q Pm}) for $m=1$, which, in view of (\ref{7eq: Q=A0, P0=I}, \ref{7eq: R = - (n-2)/2}, \ref{7eq: P1 = D1 + tilde P1}), may be written as
\begin{equation*}
P_2\varDelta - \varDelta P_2 -  \lp A_1 + \frac {n-1} 2 \rp D_1 - \frac {n+1} 2 P^o_1 = A_1 P^o_1 + A_2 + D_1.
\end{equation*}
The matrix on the left side has  zero diagonal entries. It follows that $D_1$ must be equal to the diagonal part of $- A_1 P^o_1 - A_2.$

In general, using (\ref{7eq: Q=A0, P0=I}, \ref{7eq: R = - (n-2)/2}), the second equation in (\ref{7eq: relations of R Q Pm}) may be written as
\begin{equation}\label{7eq: relations of Pm}
P_{m+1} \varDelta - \varDelta P_{m+1} = \sum_{j = 1}^{m+1} A_j P_{m-j+1} + \lp m + \frac {n-1} 2 \rp P_m, \hskip 10 pt  m \geq 0.
\end{equation}
Applying (\ref{7eq: relations of Pm}), an induction on $m$ implies that
\begin{equation*} 
P_{m } = D_m + P^o_m, \hskip 10 pt m\geq 1, 
\end{equation*}
where $D_m$ and $P^o_m$ are inductively defined as follows. Put $D_0 = I$. Let $m D_m$ be the diagonal part of
\begin{equation*}
- \sum_{j=2}^{m+1} A_{j } D_{m-j+1 } - \sum_{j=1}^{m} A_j P^o_{m-j+1}, 
\end{equation*} 
and let $P^o_{m+1}$ be the matrix with diagonal entries zero such that $P^o_{m+1} \varDelta - \varDelta P^o_{m+1}$ is the off-diagonal part of
\begin{equation*}
\begin{split}
\sum_{j=1}^{m+1} A_{j } D_{m-j+1 } + \sum_{j=1}^{m} A_j P^o_{m-j+1} 
 + \lp m + \frac {n-1} 2 \rp  P^o_m.   
\end{split}
\end{equation*}
In this way, an inductive construction of the formal solution matrix of (\ref{7eq: differential equation, matrix form 2}) is completed for the given initial choices $Q = \varDelta$, $P_0 = I$.

With the observations that $A_j$ is of degree $j$ in $\ulambda$ for $j \geq 2$  and that $A_1$ is constant, we may show the following lemma using an inductive argument.

\begin{lem} \label{lem: entries of Pm} The entries of $P_m$ are symmetric polynomial in $\ulambda$. If $m \geq 1$, then the off-diagonal entries of $P_m$ have degree  at most $2m-2$, whereas the degree of each diagonal entry is exactly $2m$.
\end{lem}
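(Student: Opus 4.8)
The plan is to argue by strong induction on $m$, run in parallel with the explicit inductive construction of the diagonal part $D_m$ and the off-diagonal part $P^o_m$ of $P_m = D_m + P^o_m$ that precedes the statement. Three assertions must be propagated simultaneously: that every entry of $P_m$ is a symmetric polynomial in $\ulambda$; that the off-diagonal entries have degree at most $2m-2$; and that each diagonal entry has degree exactly $2m$. The symmetry is the soft part and I would dispose of it first: each $A_j$ depends on $\ulambda$ only through the symmetric polynomial $V_{n,n-j}(\ulambda)$ (the factors $\xi_k\xi_l^{-j}$ carry no $\ulambda$), and every operation in the construction — matrix multiplication, scaling by $(m+\tfrac{n-1}2)$, extraction of diagonal or off-diagonal parts, and solving $P^o_{m+1}\varDelta-\varDelta P^o_{m+1}=(\cdots)$ for $P^o_{m+1}$ — preserves the property of having entries symmetric in $\ulambda$. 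Here I use that the map $X\mapsto X\varDelta-\varDelta X$ acts on the $(k,l)$ off-diagonal entry as multiplication by the nonzero constant $in(\xi_l-\xi_k)$, so its inverse is defined on off-diagonal matrices and is $\ulambda$-degree-preserving.

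For the upper bounds I would do the degree bookkeeping through the construction, using the given facts that $\deg A_j=j$ for $j\ge 2$, that $A_1$ is constant, and that $A_0=\varDelta$. Assuming the bounds for all indices $\le m$, the defining relation for $P^o_{m+1}$ expresses it, via the degree-preserving inverse above, through the off-diagonal part of $\sum_{j=1}^{m+1}A_jD_{m-j+1}+\sum_{j=1}^mA_jP^o_{m-j+1}+(m+\tfrac{n-1}2)P^o_m$; term by term each summand has degree $\le 2m=2(m+1)-2$, giving the off-diagonal bound. For $D_{m+1}$ one reads $(m+1)D_{m+1}$ off the diagonal part of $-\sum_{j\ge2}A_jD_{m-j+2}-\sum_{j\ge1}A_jP^o_{m-j+2}$, and the same bookkeeping shows every term has degree $\le 2(m+1)$; note that $P^o_{m+1}$, which has already been produced at this stage, enters here only through $A_1P^o_{m+1}$ of degree $\le 2m$. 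The base cases $m=1$ come from \eqref{7eq: off-diagonal P1}, so that $P^o_1$ is constant, and from $D_1$ being the diagonal part of $-A_1P^o_1-A_2$.

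The genuine obstacle is the sharp lower bound, that each diagonal entry of $D_{m+1}$ attains degree exactly $2(m+1)$ rather than dropping. The point is that in the diagonal expression for $(m+1)D_{m+1}$ the only summand capable of reaching degree $2(m+1)$ is $A_2D_m$: all other $A_jD_{m-j+2}$ with $j\ge 3$, and all $A_jP^o_{m-j+2}$, fall strictly below. Thus no cancellation among distinct terms can occur at top degree, and it remains to see that the leading part of the $l$-th diagonal entry of $A_2D_m$ is nonzero. That leading part is the product of the degree-$2m$ leading part of $(D_m)_{ll}$, nonzero by the inductive hypothesis, with the diagonal entry of the leading part of $A_2$, which by Theorem \ref{thm: Bessel equations} is proportional to $\sigma_2(\ulambda)\xi_l^{-1}$. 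Since $\sigma_2(\ulambda)$ does not vanish identically on $\BL^{n-1}$ for $n\ge 2$, and the polynomial ring is an integral domain, this product is a nonzero polynomial of degree $2(m+1)$; the same input yields the base case $\deg D_1=2$. Care must be taken throughout to regard the entries as symmetric polynomials modulo the relation $\sum_l\lambda_l=0$ cutting out $\BL^{n-1}$, exactly as in the symmetry clause of Theorem \ref{thm: asymptotic expansion}, so that degree is measured consistently; checking that $\sigma_2$ survives this restriction is precisely what makes the sharpness nontrivial.
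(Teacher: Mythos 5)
Your proof is correct and follows essentially the same route as the paper, which disposes of the lemma in one line by citing exactly the induction you carry out (through the recursive construction of $D_m$ and $P^o_m$, using that $\deg A_j = j$ for $j\geq 2$ and that $A_1$ is constant). The details you supply that the paper leaves implicit — the degree-preserving inverse of $X \mapsto X\varDelta - \varDelta X$ on off-diagonal matrices, and the sharpness of the diagonal degree via the single top-degree term $A_2 D_m$ together with the non-vanishing of $\sigma_2(\ulambda)$ on $\BL^{n-1}$ — are all accurate.
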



The first row of $T\- \widehat \Phi$ constitute a fundamental system of formal solutions of the Bessel equation  (\ref{7eq: differential equation, variable z}). Some calculations yield the following proposition, where for  derivatives of order higher than $n-1$ the differential equation (\ref{7eq: differential equation, variable z}) is applied.

\begin{prop}\label{prop: formal solution}
Let $\varsigma \in \{+, - \}$ and $\xi \in \BX_n (\varsigma)$. There exists a unique sequence of symmetric polynomials $B_m (\ulambda; \xi)$ in $\ulambda$ of degree $2 m$ 
and coefficients depending only on $m$, $\xi$ and $n$, normalized so that $B_0 (\ulambda; \xi) = 1$, such that
\begin{equation}\label{7eq: formal solution, asymptotic}
e^{i n \xi z} z^{- \frac { n-1} 2} \sum_{m=0}^\infty B_m (\ulambda; \xi) z^{-m}
\end{equation}
is a formal solution of the Bessel equation of sign $\varsigma$ {\rm(\ref{7eq: differential equation, variable z})}. We shall denote the formal series in {\rm (\ref{7eq: formal solution, asymptotic})} by $\widehat J (z; \ulambda; \xi)$. Moreover, the $j$-th formal derivative $\widehat J^{(j)} (z; \ulambda; \xi)$ is also of the form as {\rm (\ref{7eq: formal solution, asymptotic})}, but with coefficients depending on $j$ as well. 
\end{prop}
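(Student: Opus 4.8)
The plan is to extract Proposition \ref{prop: formal solution} directly from the inductive construction of the formal solution matrix $\widehat\Phi(z) = P(z) z^R e^{Qz}$ carried out immediately above its statement. First I would recall that the first row of $T^{-1}\widehat\Phi$ furnishes a fundamental system of formal solutions of the scalar Bessel equation \eqref{7eq: differential equation, variable z} of sign $\varsigma$, since the substitution $u = Tw$ converted the companion system \eqref{7eq: differential equation, matrix form} into the diagonalizable system \eqref{7eq: differential equation, matrix form 2}. Fixing the column of $T^{-1}\widehat\Phi$ corresponding to the eigenvalue $in\xi$ isolates one formal solution; by \eqref{7eq: Q=A0, P0=I} and \eqref{7eq: R = - (n-2)/2} its scalar form is $e^{in\xi z} z^{-\frac{n-1}2}$ times a power series in $z^{-1}$. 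Writing this series as $\sum_{m=0}^\infty B_m(\ulambda; \xi) z^{-m}$ defines the coefficients, and the normalization $P_0 = I$ together with the top row of $T^{-1}$ being $(1, \ldots, 1)$ forces $B_0(\ulambda; \xi) = 1$.

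Next I would address the three assertions about $B_m(\ulambda; \xi)$: that it is a symmetric polynomial in $\ulambda$, that its degree is exactly $2m$, and that its coefficients depend only on $m$, $\xi$ and $n$. The first two follow from Lemma \ref{lem: entries of Pm}, which already records that the entries of $P_m$ are symmetric polynomials in $\ulambda$ with diagonal entries of degree exactly $2m$ and off-diagonal entries of degree at most $2m-2$. Since $B_m(\ulambda; \xi)$ is, up to factors $(in\xi)^{-k}$ coming from the explicit shape of $T^{-1}$, a combination of the $\xi$-column entries of $P_m$ weighted by the top row of $T^{-1}$, the diagonal entry contributes the full degree $2m$ while the off-diagonal entries cannot raise it; the leading symmetric part therefore comes solely from the diagonal of $P_m$, giving degree exactly $2m$. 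The dependence of the coefficients on only $m$, $\xi$, $n$ is clear because the recurrences \eqref{7eq: relations of Pm} defining $P_m$ involve only the matrices $A_j$, which by \eqref{7eq: A0, Aj} are built from $V_{n, n-j}(\ulambda)$ and the fixed roots $\xi_1, \ldots, \xi_n$; the $\ulambda$-dependence is entirely carried by the symmetric polynomials $V_{n,j}$, so after expanding in monomials of $\ulambda$ the numerical coefficients depend only on $m$, the chosen root $\xi$, and $n$.

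For uniqueness, I would argue that once one demands a formal solution of the precise form \eqref{7eq: formal solution, asymptotic} with the prescribed exponential factor $e^{in\xi z}$ and power $z^{-\frac{n-1}2}$, substituting into $\nabla_{\varsigma, \ulambda}(w) = 0$ and matching coefficients of each power $z^{in\xi z}\cdot z^{-\frac{n-1}2 - m}$ produces a triangular recurrence that determines $B_{m}(\ulambda; \xi)$ uniquely from $B_0, \ldots, B_{m-1}$ once $B_0 = 1$ is fixed; the leading relation is nondegenerate precisely because $\xi$ is a simple root of $X^n = \varsigma$, so the eigenvalues $in\xi_l$ are distinct and the off-diagonal blocks of \eqref{7eq: relations of Pm} are solvable with no ambiguity beyond the diagonal normalization already pinned down by $D_0 = I$. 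Finally, the statement about $\widehat J^{(j)}(z; \ulambda; \xi)$ being of the same shape follows by formally differentiating \eqref{7eq: formal solution, asymptotic}: each differentiation produces $in\xi e^{in\xi z} z^{-\frac{n-1}2}(\cdots)$ plus lower-order terms in $z^{-1}$, so $\widehat J^{(j)}$ again has the form $e^{in\xi z} z^{-\frac{n-1}2}\sum_m B_{m,j}(\ulambda;\xi) z^{-m}$ with new coefficients depending additionally on $j$; for $j \geq n$ one uses the Bessel equation itself to re-express high derivatives, as the remark before the proposition indicates.

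The main obstacle I anticipate is the degree bookkeeping: verifying cleanly that the degree of $B_m(\ulambda; \xi)$ is \emph{exactly} $2m$ and not merely at most $2m$. This requires tracking that the top-degree part survives the combination dictated by $T^{-1}$ and is not cancelled — in other words, that the diagonal entry of $P_m$, which Lemma \ref{lem: entries of Pm} guarantees to have degree exactly $2m$, genuinely contributes to $B_m$ rather than being annihilated by the weighting from the first row of $T^{-1}$. Establishing this non-cancellation, together with ensuring the normalization $B_0 = 1$ is consistent throughout the induction, is where the care is needed; the rest is routine propagation of the already-established properties of $P_m$ through the explicit change of variables $w = T^{-1}u$.
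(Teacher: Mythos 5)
Your proposal is correct and follows essentially the same route as the paper: existence and the structure of the coefficients are read off from the inductive construction of $\widehat \Phi (z) = P(z) z^R e^{Qz}$ via the first row of $T\- \widehat \Phi$ (whose entries are all $1$, so $B_m(\ulambda;\xi)$ is just the column sum of $P_m$ --- note the factors $(in\xi)^{-k}$ you mention sit in $T$, not $T\-$), the symmetry and degree claims come from Lemma \ref{lem: entries of Pm}, and uniqueness follows from the triangular scalar recurrence obtained by substituting the ansatz into $\nabla_{\varsigma,\ulambda}(w)=0$, exactly as in the paper's remark following the proposition. The ``non-cancellation'' issue you flag as the main obstacle is already settled by the degree gap in Lemma \ref{lem: entries of Pm} (diagonal entries have degree exactly $2m$, off-diagonal at most $2m-2$), so no further care is needed there.
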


\begin{rem}
The above arguments are essentially adapted from the proof of \cite[Chapter 5, Theorem 2.1]{Coddington-Levinson}. This construction of the formal solution and Lemma {\rm \ref{lem: entries of Pm}} will be required later in \S { \rm \ref{sec: Error Bounds for the asymptotic expansions} } for the error analysis. 

However, This method is \textit{not} the best for the actual computation of the coefficients $B_m (\ulambda; \xi)$. We may derive the recurrent relations for $B_m (\ulambda; \xi)$ by a more direct but less suggestive approach as follows.

The substitution $w = e^{i n \xi z} z^{- \frac { n-1} 2} u $ transforms the Bessel equation {\rm (\ref{7eq: differential equation, variable z})} into
\begin{equation*}
\sum_{j = 0}^{n} W_{j} (z; \ulambda) u^{(j)} = 0,
\end{equation*}
where $W_j  (z; \ulambda)$ is a polynomial in $z\-$ of degree $n-j$,
\begin{equation*}
W_j (z; \ulambda) = \sum_{k = 0}^{n-j} W_{j, k} (\ulambda) z^{-k},
\end{equation*}
with
\begin{align*}
W_{0, 0} (\ulambda) &=  (in \xi)^{n} - \varsigma (in)^n = 0,\\
W_{j, k} (\ulambda) &= \frac { (in \xi)^{n-j-k} } {j! (n-j-k)!} \sum_{r=0}^k \frac {(n-r)!} { (k-r)!} \left[- \frac {n-1} 2\right]_{k-r} V_{n, n-r} (\ulambda), \hskip 10 pt (j,k) \neq (0,0).
\end{align*}
We have
\begin{equation*}
W_{0, 1} (\ulambda) = (in \xi)^{n-1} \lp n \lp - \frac {n-1} 2 \rp V_{n, n}  (\ulambda) + V_{n, n-1} (\ulambda) \rp = 0,
\end{equation*}
but $W_{1, 0} (\ulambda) = n( in \xi)^{n-1}$ is nonzero. Some calculations show that $ B_m (\ulambda; \xi)$ satisfy the following recurrence relations
\begin{align*}
(m-1) &  W_{1, 0} (\ulambda) B_{m-1} (\ulambda; \xi) =   \sum_{k=2}^{\min\{m , n\} } B_{m- k} (\ulambda; \xi) \sum_{j=0}^{k} W_{j, k-j} (\ulambda) [k-m]_j  , \hskip 10 pt m \geq 2.
\end{align*}

If $n = 2$, for a fourth root of unity $\xi = \pm 1, \pm i$ one may calculate in this way to obtain
\begin{equation*}
B_m (\lambda, - \lambda; \xi) = \frac {  \lp \frac 1 2 - 2\lambda \rp_m  \lp \frac 1 2 + 2\lambda\rp_m} { (4i\xi)^m m!}.
\end{equation*}
\end{rem}

\subsubsection{Bessel functions of the second kind}\label{sec: asymptotic expansions for the Bessel equations}

{\it Bessel functions of the second kind} are solutions of Bessel equations defined according to their asymptotic expansions at infinity. 
We shall apply several results in the asymptotic theory of ordinary differential equations  from \cite[Chapter IV]{Wasow}. 

Firstly, \cite[Theorem 12.3]{Wasow} implies the following lemma.

\begin{lem} [Existence of solutions] \label{lem: asymptotic, existence of a solution}
Let $\varsigma \in \{+, - \}$, $\xi \in \BX_n (\varsigma)$, and $\BS \subset \BU$ be an open sector with vertex at the origin and a positive central angle not exceeding $\pi$. Then there exists a solution of the Bessel equation of sign $\varsigma$ {\rm (\ref{7eq: differential equation, variable z})} that has the asymptotic expansion $\widehat J (z; \ulambda; \xi )$ defined in {\rm (\ref{7eq: formal solution, asymptotic})} on $\BS$. Moreover, each derivative of this solution has the formal derivative of  $\widehat J (z; \ulambda; \xi )$ of the same order as its asymptotic expansion.
\end{lem}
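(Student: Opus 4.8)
The plan is to obtain the lemma as a direct application of the asymptotic existence theorem \cite[Theorem 12.3]{Wasow} to the system $u' = A(z) u$ of \eqref{7eq: differential equation, matrix form 2}, and then to transport the conclusion back to the scalar Bessel equation through the substitution $u = T w$. First I would record that the hypotheses of the cited theorem are in force. The point $\infty$ is an irregular singularity of rank one for \eqref{7eq: differential equation, matrix form 2}, whose leading coefficient is $A_0 = \varDelta = \mathrm{diag}(in\xi_l)_{l=1}^n$ by \eqref{7eq: A0, Aj}. Since $\xi_1, \dots, \xi_n$ are the $n$ distinct elements of $\BX_n(\varsigma)$, the eigenvalues $in\xi_1, \dots, in\xi_n$ of $\varDelta$ are pairwise distinct, so the system lies in the nondegenerate (distinct-eigenvalue) case required, and the Vandermonde-type diagonalizing matrix $T$ is invertible. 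Moreover the formal fundamental solution matrix $\widehat\Phi(z) = P(z) z^R e^{\varDelta z}$, with $Q = \varDelta$, $R = -\frac{n-1}{2} I$ and $P(z) = \sum_m z^{-m} P_m$, has already been constructed in \S\ref{sec: formal solutions at infinity}.

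Next I would invoke \cite[Theorem 12.3]{Wasow}: for the sector $\BS \subset \BU$, whose opening is at most $\pi$, there is an actual fundamental solution matrix $\Phi(z)$ of \eqref{7eq: differential equation, matrix form 2} that admits $\widehat\Phi$ as its asymptotic expansion on $\BS$, the bound $\pi$ on the central angle being precisely the range the theorem permits for a rank-one system with distinct eigenvalues. Fixing the index $k$ with $\xi_k = \xi$ and letting $\phi(z)$ be the $k$-th column of $\Phi$, one has $\phi(z) \sim \big(\sum_m z^{-m} P_m\big)_{\cdot,\, k}\, z^{-\frac{n-1}{2}} e^{in\xi z}$ on $\BS$, by \eqref{7eq: Q=A0, P0=I} and \eqref{7eq: R = - (n-2)/2}.

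Finally I would transport this back to the scalar equation. Because \eqref{7eq: differential equation, matrix form} is the companion system attached to the Bessel equation \eqref{7eq: differential equation, variable z}, the vector $w = T\- \phi$ has the shape $w = (\varphi, \varphi', \dots, \varphi^{(n-1)})^{\mathrm t}$ for a genuine scalar solution $\varphi$ of \eqref{7eq: differential equation, variable z} of sign $\varsigma$. Its first entry is, by the construction of $\widehat\Phi$ and the identification of the first row of $T\- \widehat\Phi$ with the formal solution $\widehat J(z;\ulambda;\xi)$ of \eqref{7eq: formal solution, asymptotic}, asymptotic to $\widehat J(z;\ulambda;\xi)$ on $\BS$; I would set $J(z;\ulambda;\xi) = \varphi$. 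Reading off the remaining entries of $w$ then yields $\varphi^{(j)} \sim \widehat J^{(j)}(z;\ulambda;\xi)$ for $0 \le j \le n-1$, and for orders $j \ge n$ one differentiates \eqref{7eq: differential equation, variable z} to express $\varphi^{(j)}$ through the lower derivatives, matching the formal derivatives supplied by Proposition \ref{prop: formal solution}.

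The only genuinely delicate point is the claim that the chosen column retains its asymptotic uniformly throughout a sector of opening as large as $\pi$; this is exactly the content of \cite[Theorem 12.3]{Wasow} in the rank-one, distinct-eigenvalue situation, so the work lies in matching the opening bound in the statement with the hypotheses of the cited theorem and in the bookkeeping that identifies the first row of $T\- \widehat\Phi$ with $\widehat J$ and its formal derivatives. Everything else is routine.
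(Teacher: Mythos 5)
Your proposal is correct and takes essentially the same route as the paper: the paper's entire proof is the single remark that \cite[Theorem 12.3]{Wasow} implies the lemma, applied to the diagonalized system \eqref{7eq: differential equation, matrix form 2}, whose leading matrix $\varDelta$ has the pairwise distinct eigenvalues $in\xi_1, \dots, in\xi_n$. The details you supply—verifying the rank-one, distinct-eigenvalue hypotheses, selecting the column with $\xi_k = \xi$, returning to the scalar equation via $T^{-1}$ and the companion structure to read off the derivative asymptotics, and handling orders $j \geq n$ by differentiating the equation—are exactly what the paper leaves implicit.
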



For two distinct $\xi, \xi' \in \BX_n(\varsigma)$, the ray emitted from the origin on which
$$\Re \lp (i \xi - i\xi') z \rp = - \Im \lp (\xi - \xi') z \rp = 0 $$ is called a \textit{separation ray}. 

We first consider the case $n=2$. It is clear that the separation rays constitute either the real or the imaginary axis and thus separate $ \BC \smallsetminus \{0 \}$ into two half-planes. 
 Accordingly, we define  $\BS_{\pm 1} = \left\{ z : \pm \Im z > 0 \right \}$ and $\BS_{\pm i} = \left\{ z : \pm \Re z > 0 \right \}$.

In the case $n \geq 3$, there are $2n$ distinct separation rays in $ \BC \smallsetminus \{0 \}$ given by the equations 
$$\arg z = \arg ( i \xi' ),  \hskip 10 pt \xi' \in \BX_{2n} (+).$$
These separation rays divide $\BC \smallsetminus \{0\}$ into $2 n$ many open sectors  \begin{equation}\label{7def: S pm xi}
\BS^{\pm}_\xi = \left\{ z : 0 < \pm \left( \arg z - \arg ( i \xoverline \xi ) \right) < \frac {\pi} n \right \}, \hskip 10 pt \xi \in \BX_{ n} (\varsigma).
\end{equation}
In both sectors $\BS^{+}_{ \xi }$ and $\BS^{-}_{ \xi }$ we have
\begin{equation}\label{7eq: condition for S xi}
\Re (i \xi  z) < \Re (i \xi' z) \ \text{ for all } \xi' \in \BX_n(\varsigma),\ \xi' \neq \xi.
\end{equation}
Let $\BS_{ \xi }$ be the sector on which \eqref{7eq: condition for S xi} is satisfied. It is evident that
\begin{equation}\label{7def: S xi}
\BS_\xi = \left\{ z : \left| \arg z - \arg ( i \xoverline \xi ) \right| < \frac {\pi} n \right \}.
\end{equation}


\begin{lem}\label{lem: asymptotic, uniqueness}
Let $\varsigma \in \{+, - \}$ and $\xi \in \BX_n( \varsigma)$.

{\rm (1. Existence of asymptotics).} If $n \geq 3$,  on the sector $\BS^{\pm}_{\xi }$,  all the solutions of the Bessel equation of sign $\varsigma$  have asymptotic representation a multiple of $\widehat J (z; \ulambda; \xi')$ for some $\xi' \in \BX_n( \varsigma)$. If $ n = 2$, the same assertion is true with  $\BS^{\pm}_{\xi }$ replaced by  $\BS_{\xi }$.

{\rm (2. Uniqueness of the solution).}
There is a unique solution of the Bessel equation of sign $\varsigma$ that possesses $\widehat J (z; \ulambda; \xi )$ as its asymptotic expansion on $\BS_{\xi }$ or any of its open subsector, and we shall denote this solution by $J (z; \ulambda; \xi )$. Moreover, $J^{(j)} (z; \ulambda; \xi ) \sim \widehat J^{(j)} (z; \ulambda; \xi )$ on $\BS_{\xi }$ for any $j \geq 0$.
\end{lem}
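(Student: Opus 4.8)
The plan is to reduce the whole statement to the strict ordering of the exponential factors $e^{in\xi' z}$, $\xi'\in\BX_n(\varsigma)$, inside the elementary sectors cut out by the separation rays, working throughout with a fixed fundamental system of solutions having the formal series $\widehat J(z;\ulambda;\xi')$ as asymptotic expansions. First I would record this fundamental system. Since the $n$ roots $\xi'\in\BX_n(\varsigma)$ are distinct, the eigenvalues $in\xi'$ of $A_0=\varDelta$ in \eqref{7eq: A0, Aj} are pairwise distinct; hence on any open sector $\BS\subset\BU$ of central angle at most $\pi$, Lemma \ref{lem: asymptotic, existence of a solution} provides for each $\xi'$ a solution $\phi_{\xi'}$ of \eqref{7eq: differential equation, variable z} with $\phi_{\xi'}\sim\widehat J(z;\ulambda;\xi')$ on $\BS$, together with the matching asymptotics for all derivatives. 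Because their leading factors $e^{in\xi' z}z^{-(n-1)/2}$ are pairwise distinct, the $\phi_{\xi'}$ are linearly independent and form a fundamental system on $\BS$; indeed a nontrivial relation would, by the dominant-term analysis below, force the coefficient of the asymptotically largest term to vanish, and then all of them.

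For part (1) I would take $\BS$ to be an elementary sector $\BS^{\pm}_{\xi}$ when $n\geq 3$, or $\BS_\xi$ when $n=2$; in either case $\BS$ contains no separation ray in its interior, so the total ordering of the real parts $\Re(in\xi' z)$ is constant on $\BS$. Writing an arbitrary solution as $u=\sum_{\xi'}c_{\xi'}\phi_{\xi'}$ and letting $\xi'_0$ be the largest index among those with $c_{\xi'}\neq 0$, every other active exponential satisfies $e^{in\xi' z}/e^{in\xi'_0 z}=O(|z|^{-N})$ for all $N$ throughout $\BS$; these terms are therefore beyond every algebraic order relative to $\widehat J(z;\ulambda;\xi'_0)$ and are absorbed into its error term, giving $u\sim c_{\xi'_0}\widehat J(z;\ulambda;\xi'_0)$ on $\BS$, which is precisely the asserted representation.

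For part (2), existence of $J(z;\ulambda;\xi)$ on $\BS_\xi$ (central angle $2\pi/n\leq\pi$) is immediate from Lemma \ref{lem: asymptotic, existence of a solution}, and the clause $J^{(j)}\sim\widehat J^{(j)}$ is inherited from the derivative part of the same lemma. For uniqueness, suppose $\phi,\psi$ both have $\widehat J(z;\ulambda;\xi)$ as asymptotic expansion on $\BS_\xi$ and set $v=\phi-\psi$, so that $v=O\big(e^{in\xi z}z^{-(n-1)/2-M}\big)$ on each closed subsector for every $M$. Expanding $v=\sum_{\xi'}c_{\xi'}\phi_{\xi'}$ in the fundamental system, I would examine $v$ on each of the two elementary subsectors $\BS^{\pm}_\xi$ (for $n=2$, on $\BS_\xi$ itself). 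The defining property \eqref{7eq: condition for S xi} of $\BS_\xi$ says $e^{in\xi z}$ is recessive throughout, i.e.\ $\Re(in\xi z)<\Re(in\xi' z)$ for every $\xi'\neq\xi$; hence if some $c_{\xi'}\neq 0$ with $\xi'\neq\xi$, the dominant active index on at least one subsector is some $\xi'_0\neq\xi$, and part (1) gives $v\sim c_{\xi'_0}\widehat J(z;\ulambda;\xi'_0)$, which is exponentially larger than the $e^{in\xi z}$-scale bound on $v$ — a contradiction. Thus $c_{\xi'}=0$ for all $\xi'\neq\xi$, so $v=c_\xi\phi_\xi\sim c_\xi\widehat J(z;\ulambda;\xi)$; since $B_0(\ulambda;\xi)=1$ and $v\sim 0$, we conclude $c_\xi=0$ and $v=0$.

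The main obstacle will be the careful bookkeeping of the dominant-exponential argument: verifying that within each elementary sector no two of the $\Re(in\xi' z)$ cross (an elementary fact once one identifies the separation rays as $\arg z=\arg(i\zeta)$, $\zeta\in\BX_{2n}(+)$), and, above all, checking that the subdominant solutions contribute beyond every algebraic order, so that they may legitimately be swept into the error term of the asymptotic series rather than disturbing its leading coefficients. The one delicate structural point is that $\BS_\xi$ must be exactly the maximal sector on which $e^{in\xi z}$ stays recessive; this is guaranteed by \eqref{7eq: condition for S xi} and \eqref{7def: S xi}, and it is what makes the recessive solution $J(z;\ulambda;\xi)$ unique.
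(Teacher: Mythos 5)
Your proof is correct, but it takes a more self-contained route than the paper, whose own proof is essentially citation plus patching: part (1) is quoted from \cite[Theorem 15.1]{Wasow}; for part (2), uniqueness is quoted from \cite[Corollary to Theorem 15.3]{Wasow}, which as stated applies only to sectors on which the ordering of all the quantities $\Re (i n \xi' z)$ is constant --- hence to $\BS^{\pm}_{\xi}$ when $n \geq 3$, not to $\BS_{\xi}$ itself --- and the paper then glues: the solution supplied by Lemma \ref{lem: asymptotic, existence of a solution} on a subsector of $\BS_{\xi}$ is identified with the two subsector-unique solutions by analytic continuation. You instead unwind the proofs of the cited theorems: a fundamental system $\{\phi_{\xi'}\}_{\xi' \in \BX_n(\varsigma)}$ built from Lemma \ref{lem: asymptotic, existence of a solution} on a sector of opening at most $\pi$, followed by the dominant-versus-recessive exponential argument. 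This buys self-containedness, and it also lets your uniqueness argument run directly on all of $\BS_{\xi}$ with no analytic-continuation step, because it uses only that $e^{i n \xi z}$ stays recessive there (condition \eqref{7eq: condition for S xi}), not constancy of the full ordering; the crossings of the other exponentials along the bisector $\arg z = \arg ( i \xoverline \xi )$ are harmless since you inspect the two elementary subsectors separately. The paper's route is shorter and defers the analytic work to the literature; yours makes explicit why $\BS_{\xi}$, the maximal sector of recessiveness, is exactly the right domain for uniqueness.

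Three details to tighten. First, the dominance comparisons $e^{i n (\xi' - \xi'_0) z} = O(|z|^{-N})$ hold uniformly only on closed subsectors, which is the sense in which all the asymptotic expansions are meant; you say this in part (2) but it is equally needed in part (1). Second, the lemma asserts uniqueness already for a solution asymptotic to $\widehat J (z; \ulambda; \xi)$ on an arbitrary open subsector $\BS \subset \BS_{\xi}$; your argument covers this after the obvious modification --- expand $v$ in the fundamental system furnished by Lemma \ref{lem: asymptotic, existence of a solution} on $\BS$ and note that $\BS$ meets at least one of $\BS^{\pm}_{\xi}$ in a nonempty open sector, on which the dominance argument runs verbatim. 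Third, for $n \geq 3$ it is worth stating that Lemma \ref{lem: asymptotic, existence of a solution} does supply the $\phi_{\xi'}$ with asymptotics valid on all of $\BS_{\xi}$ even though $\BS_{\xi}$ contains a separation ray in its interior, because the only hypothesis of that lemma is that the opening $\frac{2\pi}{n}$ not exceed $\pi$.
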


\begin{proof}
(1) follows directly from \cite[Theorem 15.1]{Wasow}.

For $n = 2$, since \eqref{7eq: condition for S xi} holds for the sector $\BS_{\xi }$, (2) is true according to \cite[Corollary to Theorem 15.3]{Wasow}. Similarly, if $n \geq 3$, (2) is true with  $\BS_{\xi }$ replaced by  $\BS^{\pm}_{\xi }$. Thus there exists a unique solution of the Bessel equation  of sign $\varsigma$ possessing $\widehat J (z; \ulambda; \xi )$ as its asymptotic expansion on $\BS^{\pm}_{\xi }$ or any of its open subsector. For the moment, we denote this solution   by  $J^{\pm} (z; \ulambda; \xi )$.
On the other hand, because $\BS_{\xi }$ has central angle $\frac 2 n \pi < \pi$, there exists a solution $J (z; \ulambda; \xi )$ with asymptotic $\widehat J (z; \ulambda; \xi )$ on a given open subsector $\BS \subset \BS_{\xi }$ due to Lemma \ref{lem: asymptotic, existence of a solution}. Observe that at least one of $\BS \cap \BS^+_{\xi }$ and $\BS \cap \BS^-_{\xi }$ is a nonempty open sector, say $\BS \cap \BS^+_{\xi } \neq \O $, then the uniqueness of $J (z; \ulambda; \xi )$ follows from that of $J^{+} (z; \ulambda; \xi )$ along with the principle of analytic continuation.
\end{proof}

\begin{prop}\label{prop: J (z; lambda; xi) on a large sector}
Let $\varsigma \in \{+, - \}$, $\xi \in \BX_n( \varsigma)$, $ \vartheta $ be a small positive constant, say $0 < \vartheta < \frac 1 2 \pi $, and define
\begin{equation}\label{7eq: S' xi (delta)}
\BS'_{\xi } (\vartheta) = \left\{ z : \left| \arg z - \arg ( i \xoverline \xi) \right|  < \pi + \frac {\pi} n - \vartheta \right \}.
\end{equation} Then $J (z; \ulambda; \xi )$ is the unique solution of the Bessel equation  of sign $\varsigma$ that has the asymptotic expansion $\widehat J (z; \ulambda; \xi )$ on $\BS'_{\xi } (\vartheta)$. Moreover, $ J^{(j)} (z; \ulambda; \xi ) \sim \widehat J^{(j)} (z; \ulambda; \xi )$ on $ \BS'_{\xi }(\vartheta)$ for any  $j \geqslant 0$.
\end{prop}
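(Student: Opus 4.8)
The plan is to regard $J(z; \ulambda; \xi)$ as the subdominant (recessive) solution attached to the exponential $e^{in\xi z}$ and to enlarge, as far as the Stokes structure permits, the sector on which the recessive asymptotic $\widehat J(z; \ulambda; \xi)$ represents it. The anchor is Lemma \ref{lem: asymptotic, uniqueness}(2): on $\BS_{\xi}$ (central angle $2\pi/n$) the function $J(z; \ulambda; \xi)$ is the \emph{unique} solution with $J(z; \ulambda; \xi) \sim \widehat J(z; \ulambda; \xi)$, and likewise for every derivative. Since $\arg(i\xoverline \xi)$ is only a rotation parameter, I would first use the rotation Lemma \ref{lem: phi (e pi i m/n z) is also a solution} to reduce to a single convenient $\xi$, so that the dominance bookkeeping below is carried out once, the general case following by rotation.

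The first step is a dominance analysis of the $n$ exponentials $e^{in\xi' z}$, $\xi' \in \BX_n(\varsigma)$, through the real parts $\Re(in\xi' z)$. Writing $\theta = \arg z$ and $\theta_0 = \arg(i\xoverline \xi)$, the quantity $\Re(in\xi z)$ is minimal (so $e^{in\xi z}$ is recessive) exactly on $\BS_{\xi} = \{ |\theta - \theta_0| < \pi/n \}$ and maximal (dominant) on the antipodal sector $\{ |\theta - \theta_0 - \pi| < \pi/n \}$; between them lie two intermediate ranges, and the separation rays of the configuration sit at $\theta = \theta_0 + k\pi/n$, $k \in \BZ$. The target $\BS'_{\xi}(\vartheta)$ is precisely the union of $\BS_{\xi}$, the two intermediate ranges, and the two antipodal dominant sectors at $\theta_0 \pm \pi$, trimmed by $\vartheta$ at either end.

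With this picture fixed, the routine part of the extension is to leave $\BS_{\xi}$ and pass through the intermediate ranges up to the near edges $\theta_0 \pm (\pi - \pi/n)$ of the dominant sectors. For this I would cover that region by open sub-sectors of central angle $< \pi$, each overlapping $\BS_{\xi}$; on each, Lemma \ref{lem: asymptotic, existence of a solution} produces a solution with asymptotic $\widehat J(z; \ulambda; \xi)$, and on the overlap, which is a subsector of $\BS_{\xi}$ and hence a region where $\xi$ is recessive, the uniqueness in Lemma \ref{lem: asymptotic, uniqueness}(2) forces that solution to equal $J(z; \ulambda; \xi)$. Analytic continuation then gives $J(z; \ulambda; \xi) \sim \widehat J(z; \ulambda; \xi)$ on all of $\{ |\theta - \theta_0| < \pi - \pi/n \}$, with the same argument applying to the derivatives.

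The crux, and the main obstacle, is the last stretch: pushing the asymptotic across the two dominant sectors to reach half-angle $\pi + \pi/n - \vartheta$. There the existence-plus-uniqueness device fails, because once $\xi$ is dominant one may add a recessive solution without disturbing the leading behaviour, so $\widehat J(z; \ulambda; \xi)$ no longer pins down a unique solution and the solution furnished by Lemma \ref{lem: asymptotic, existence of a solution} need not be $J(z; \ulambda; \xi)$. What is really needed is to exclude a Stokes jump inside $\BS'_{\xi}(\vartheta)$: the fixed function $J(z; \ulambda; \xi)$ has, on each elementary sector between consecutive separation rays, a definite asymptotic equal to a multiple of the dominant formal solution present, and I must show that for the subdominant solution this dominant term remains $\widehat J(z; \ulambda; \xi)$ throughout the dominant sector, a competing exponential $e^{in\xi' z}$ with $\xi' \neq \xi$ overtaking it only beyond $\theta = \theta_0 \pm (\pi + \pi/n)$. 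I would obtain this from the maximal-sector results for a single (here subdominant) solution in \cite[Chapter IV]{Wasow}, using Lemma \ref{lem: asymptotic, uniqueness}(1) to identify the admissible asymptotics on the elementary sectors $\BS^{\pm}_{\xi}$ and matching the multipliers across separation rays starting from the recessive anchor, the dominance computation above locating the first genuine obstruction exactly at the far edges $\theta = \theta_0 \pm (\pi + \pi/n)$. The constant $\vartheta > 0$ is retained so that $\BS'_{\xi}(\vartheta)$ lies in a closed subsector on which the expansion and each of its derivatives is uniform; the derivative statement then follows because the formal solution matrix carries the asymptotics of all derivatives, or directly by differentiating the relation and re-expressing high derivatives via the Bessel equation.
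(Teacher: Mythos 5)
Your first two steps reproduce the right mechanism, but your diagnosis of the ``crux'' is where the proposal goes wrong, in two respects. First, even on its own terms your covering bound is too timid: an open sector of central angle at most $\pi$ that overlaps $\BS_{\xi}$ can have its far edge nearly at $\arg(i\xoverline\xi)\pm(\pi+\pi/n)$, not merely at $\pm(\pi-\pi/n)$; nothing forces the covering sectors to avoid the dominant sectors. Second, and decisively, the claim that ``the existence-plus-uniqueness device fails'' once $e^{in\xi z}$ becomes dominant is incorrect. Lemma \ref{lem: asymptotic, existence of a solution} (Wasow's Theorem 12.3) supplies a solution asymptotic to $\widehat J(z;\ulambda;\xi)$ on \emph{any} open sector of central angle $\leq\pi$ --- dominance is irrelevant to existence; it only destroys uniqueness. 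And uniqueness is never needed in the dominant region: by Lemma \ref{lem: asymptotic, uniqueness}(2), possessing the asymptotic on \emph{any open subsector} of $\BS_{\xi}$ already pins the solution down. This is exactly the paper's proof: apply Lemma \ref{lem: asymptotic, existence of a solution} to the two sectors $\BS^{\pm}_{\xi}(\vartheta)=\left\{ z : \frac{\pi}{n}-\vartheta < \pm\lp \arg z - \arg(i\xoverline\xi)\rp < \pi+\frac{\pi}{n}-\vartheta \right\}$, each of central angle exactly $\pi$; each meets $\BS_{\xi}$ in a nonempty open subsector, so the solution so produced coincides with $J(z;\ulambda;\xi)$ there by Lemma \ref{lem: asymptotic, uniqueness}(2), hence everywhere by analytic continuation; since that solution carries the asymptotic on all of $\BS^{\pm}_{\xi}(\vartheta)$, so does $J(z;\ulambda;\xi)$, and $\BS_{\xi}\cup\BS^{+}_{\xi}(\vartheta)\cup\BS^{-}_{\xi}(\vartheta)=\BS'_{\xi}(\vartheta)$. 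The same two lemmas give the statement for every derivative.

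Because you ruled this route out, your plan for crossing the dominant sectors is an unexecuted sketch: ``maximal-sector results in Wasow, Chapter IV'' and ``matching the multipliers across separation rays'' are invoked, but no specific theorem is cited that yields the conclusion, and showing that the asymptotic representation of $J(z;\ulambda;\xi)$ on the dominant elementary sectors is $\widehat J(z;\ulambda;\xi)$ itself (multiplier $1$, no Stokes jump inside $\BS'_{\xi}(\vartheta)$) is precisely the content of the proposition --- appealing to Lemma \ref{lem: asymptotic, uniqueness}(1) and ``matching'' merely restates what must be proved. So the proposal has a genuine gap, and, ironically, it is created by discarding the very device (existence on angle-$\pi$ sectors plus uniqueness on a small recessive overlap) that closes it in one step.
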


\begin{proof}
Following from Lemma \ref{lem: asymptotic, existence of a solution}, there exists a solution of the Bessel equation  of sign $\varsigma$ that has the asymptotic expansion $\widehat J (z; \ulambda; \xi )$ on the open sector
\begin{equation*}
\begin{split}
\BS^{\pm}_{\xi } (\vartheta) = \left\{ z : \frac {\pi} n - \vartheta < \pm \lp \arg z - \arg ( i \xoverline \xi ) \rp < \pi + \frac {\pi} n - \vartheta \right \}.
\end{split}
\end{equation*}
On the nonempty open sector $\BS_{\xi } \cap \BS^{\pm}_{\xi } (\vartheta)$ this solution must be identical with  $J (z; \ulambda; \xi )$ by Lemma \ref{lem: asymptotic, uniqueness} (2) and hence is equal to  $J (z; \ulambda; \xi )$ on $\BS_{\xi } \cup \BS^{\pm}_{\xi } (\vartheta)$ due to the principle of analytic continuation. Therefore, the region of validity of the asymptotic  $ J (z; \ulambda; \xi ) \sim \widehat J (z; \ulambda; \xi )$ may be widened from $\BS_{\xi }$ onto $\BS'_{\xi } (\vartheta) = \BS_{\xi } \cup \BS^{+}_{\xi } (\vartheta) \cup \BS^{-}_{\xi } (\vartheta)$. 
In the same way, Lemma \ref{lem: asymptotic, existence of a solution} and \ref{lem: asymptotic, uniqueness} (2) also imply  that $ J^{(j)} (z; \ulambda; \xi ) \sim \widehat J^{(j)} (z; \ulambda; \xi )$ on $  \BS'_{\xi }(\vartheta)$. 
\end{proof}

\begin{cor}\label{prop: J (z; lambda; xi) form a fundamental set of solutions}
Let $\varsigma \in \{+, - \}$.
All the $J (z; \ulambda; \xi )$, with $\xi \in \BX_n(\varsigma)$, form a fundamental set of solutions of the Bessel equation of sign $\varsigma$.
\end{cor}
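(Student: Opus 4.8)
The plan is to show that the $n$ functions $J(z;\ulambda;\xi)$, $\xi\in\BX_n(\varsigma)$, are linearly independent over $\BC$; since $\BX_n(\varsigma)$ has exactly $n$ elements and the Bessel equation \eqref{7eq: differential equation, variable z} of sign $\varsigma$ has order $n$, linear independence is equivalent to constituting a fundamental set of solutions.

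First I would isolate the elementary linear-independence criterion for functions distinguished by their leading exponentials. Suppose $g_\xi$, $\xi\in\BX_n(\varsigma)$, are holomorphic on one common open sector $\BS$ and satisfy $g_\xi(z)\sim e^{in\xi z}z^{-\frac{n-1}2}$ there, with leading coefficient $1$. Assume $\sum_\xi c_\xi g_\xi\equiv 0$ with support $J=\{\xi:c_\xi\neq 0\}$ nonempty. Choosing a generic ray $\arg z=\phi$ in $\BS$, that is, one avoiding the finitely many separation rays, the numbers $\Re\big(in\xi e^{i\phi}\big)$ are pairwise distinct, so there is a unique $\xi^\ast\in J$ maximizing $\Re(in\xi^\ast z)$ along the ray. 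Dividing the relation by $e^{in\xi^\ast z}z^{-\frac{n-1}2}$ and letting $z\to\infty$ along the ray, each term with $\xi\in J\smallsetminus\{\xi^\ast\}$ equals $c_\xi\big[g_\xi/(e^{in\xi z}z^{-\frac{n-1}2})\big]\,e^{in(\xi-\xi^\ast)z}$, in which the bracket tends to $1$ and $\Re\big(in(\xi-\xi^\ast)z\big)\to-\infty$, so the term tends to $0$; meanwhile the $\xi^\ast$ term tends to $c_{\xi^\ast}$. Hence $c_{\xi^\ast}=0$, contradicting $\xi^\ast\in J$, and the $g_\xi$ are linearly independent.

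The substance of the argument is thus to produce a single open sector $\BS$ on which all $n$ relations $J(z;\ulambda;\xi)\sim\widehat J(z;\ulambda;\xi)$ hold simultaneously; the leading coefficient there is $B_0(\ulambda;\xi)=1$ by Proposition \ref{prop: formal solution}. The narrow sectors $\BS_\xi$ of angular width $2\pi/n$ are inadequate, as they have empty common intersection, and one must use the wide region of validity from Proposition \ref{prop: J (z; lambda; xi) on a large sector}: each $J(z;\ulambda;\xi)$ carries its asymptotic on $\BS'_\xi(\vartheta)$, a sector of width $2\pi+2\pi/n-2\vartheta$ centred at $\arg(i\xoverline{\xi})$. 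Taking $\varsigma=+$ and the lifts $\arg\xi_k=2\pi k/n$ for $0\le k\le n-1$, the centres $\arg(i\xoverline{\xi_k})=\pi/2-2\pi k/n$ span a range $2\pi(n-1)/n$, and a direct computation of $\max_k\big(c_k-(\pi+\pi/n-\vartheta)\big)$ and $\min_k\big(c_k+(\pi+\pi/n-\vartheta)\big)$ shows that $\bigcap_\xi\BS'_\xi(\vartheta)$ is a sector of width $4\pi/n-2\vartheta$, which is nonempty once $\vartheta<2\pi/n$. I would fix any $\vartheta$ with $0<\vartheta<\min\{\pi/2,\,2\pi/n\}$, set $\BS=\bigcap_\xi\BS'_\xi(\vartheta)$ and $g_\xi=J(z;\ulambda;\xi)$, and invoke the criterion above; since the $in\xi$ are pairwise distinct this yields the claim for $\varsigma=+$. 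The case $\varsigma=-$ follows either by the identical computation, the centres being merely shifted by $\pi/n$, or, more cleanly, from Lemma \ref{lem: phi (e pi i m/n z) is also a solution}, since $z\mapsto e^{\pi i/n}z$ carries solutions of sign $-$ to solutions of sign $+$, permutes $\BX_n(-)$ onto $\BX_n(+)$, and preserves linear independence.

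The main obstacle is precisely the construction of this common sector $\BS$: the isolation-by-dominant-exponential device requires all $n$ asymptotics on one sector, whereas the natural sectors $\BS_\xi$ do not overlap, so the whole proof hinges on the enlargement in Proposition \ref{prop: J (z; lambda; xi) on a large sector} together with the bookkeeping that the $n$ centres $\arg(i\xoverline{\xi})$ fit, with an angular margin of $2\pi/n$, inside the intersection of the wide sectors. Once $\BS$ is secured, the remaining limiting argument is routine.
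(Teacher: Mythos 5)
Your proof is correct and supplies exactly the details the paper leaves implicit: the statement is presented as a corollary of Proposition \ref{prop: J (z; lambda; xi) on a large sector} with no written proof, the point being that the widened sectors $\BS'_{\xi}(\vartheta)$ (unlike the narrow $\BS_{\xi}$, whose common intersection is empty) overlap in a nonempty sector on which all $n$ expansions $J(z;\ulambda;\xi)\sim\widehat J(z;\ulambda;\xi)$ hold simultaneously, whereupon linear independence of the $n$ solutions follows from the pairwise distinct exponentials $e^{in\xi z}$ along a generic ray. Your bookkeeping (common sector of width $\frac{4\pi}{n}-2\vartheta$, nonempty once $\vartheta<\frac{2\pi}{n}$) and the dominant-exponential limiting argument are precisely this intended route.
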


\begin{rem}\label{rem: n=2, asymptotics}
If $n = 2$, by \cite[3.7 (8), 3.71 (18), 7.2 (1, 2), 7.23 (1, 2)]{Watson} we have the following formula of $J(z; \lambda, - \lambda; \xi)$, with $\xi = \pm 1, \pm i$, and the corresponding sector on which its asymptotic expansion is valid
\begin{equation*}
\begin{split}
& J(z; \lambda, - \lambda; 1) = \sqrt {\pi i} e^{\pi i \lambda} H^{(1)}_{2 \lambda} (2 z), \hskip 10 pt 
\BS'_{1} (\vartheta) = \left\{ z : - \pi + \vartheta < \arg z < 2 \pi - \vartheta \right\} ;\\
& J(z; \lambda, - \lambda; - 1) = \sqrt {- \pi i} e^{- \pi i \lambda} H^{(2)}_{2 \lambda} (2 z), \hskip 10 pt 
\BS'_{-1} (\vartheta) = \left\{z : - 2 \pi + \vartheta < \arg z < \pi - \vartheta \right\};\\
& J(z; \lambda, - \lambda; i) = \frac 2  {\sqrt \pi}K_{2 \lambda} (2 z), \hskip 10 pt 
\BS'_{i} (\vartheta) = \left\{ z :  |\arg z| < \frac 3 2 \pi - \vartheta \right\}; \\
& J(z; \lambda, - \lambda; - i) = 2 \sqrt \pi I_{2 \lambda} (2 z) - \frac {2 i}  {\sqrt \pi} e^{2\pi i \lambda} K_{2 \lambda} ( 2 z), \\
& \hskip 130 pt
\BS'_{- i} (\vartheta) = \left\{ z : - \frac 1 2 \pi + \vartheta < \arg z < \frac 5 2 \pi - \vartheta \right\} .
\end{split}
\end{equation*}
\end{rem}


\begin{lem}\label{lem: J(z; lambda; xi) relations to J(xi z; lambda; 1)}
Let $\xi\in \BX_{2n} (+)$. We have $$J( z; \ulambda; \xi) = (\pm \xi)^{ \frac {n-1} 2} J(\pm \xi z; \ulambda; \pm 1),$$
and $B_m (\ulambda; \xi) = (\pm \xi)^{- m} B_m (\ulambda; \pm 1)$. 

\end{lem}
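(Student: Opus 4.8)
The plan is to characterize both sides as solutions of a single Bessel equation and match their asymptotic expansions via the uniqueness statements already established. Fix the upper signs; the lower-sign case is verbatim the same after replacing $1$ by $-1$. Write $\xi = e^{\pi i \frac a n} \in \BX_{2n}(+)$, so that $\xi^{n} = (-)^{a}$. By definition $J(z; \ulambda; \xi)$ is the unique solution of the Bessel equation of sign $\xi^{n}$ whose asymptotic expansion on $\BS_\xi$ is $\widehat J(z; \ulambda; \xi)$ (Lemma \ref{lem: asymptotic, uniqueness}~(2) and Proposition \ref{prop: J (z; lambda; xi) on a large sector}). The strategy is to show that $\xi^{\frac{n-1}2} J(\xi z; \ulambda; 1)$ solves the same equation with the same asymptotic expansion, whence the two coincide.

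First I would record that both functions solve the Bessel equation of sign $\xi^{n}$. Since $J(w; \ulambda; 1)$ solves the equation of sign $+$ (as $1 \in \BX_n(+)$), Lemma \ref{lem: phi (e pi i m/n z) is also a solution} applied with the rotation $z \mapsto e^{\pi i \frac a n} z = \xi z$ shows that $J(\xi z; \ulambda; 1)$ solves the equation of sign $(-)^{a} = \xi^{n}$, matching $J(z; \ulambda; \xi)$. The same invariance holds at the formal level: substituting $z \mapsto \xi z$ carries $\nabla_{+, \ulambda}$ to $\nabla_{\xi^{n}, \ulambda}$, because $z^{j} \frac{d^{j}}{dz^{j}}$ is scale invariant while $z^{n} \mapsto \xi^{n} z^{n}$.

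Next I would compute the expansion. Substituting $w = \xi z$ into $\widehat J(w; \ulambda; 1) = e^{i n w} w^{-\frac{n-1}2} \sum_m B_m(\ulambda; 1) w^{-m}$ and factoring out $\xi^{-\frac{n-1}2}$ gives
\begin{equation*}
\xi^{\frac{n-1}2} \widehat J(\xi z; \ulambda; 1) = e^{i n \xi z} z^{-\frac{n-1}2} \sum_{m=0}^\infty \xi^{-m} B_m(\ulambda; 1) z^{-m},
\end{equation*}
which is a formal solution of the sign-$\xi^{n}$ equation of the normalized shape in Proposition \ref{prop: formal solution}, its leading coefficient being $\xi^{0} B_0(\ulambda; 1) = 1$. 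By the uniqueness of that formal solution it must equal $\widehat J(z; \ulambda; \xi)$, which already yields the coefficient identity $B_m(\ulambda; \xi) = \xi^{-m} B_m(\ulambda; 1)$. For the function identity, the substitution $w = \xi z$ carries the sector $\BS_1$ onto $\BS_\xi$ (indeed $\arg(i \xoverline\xi) = \arg(i) - \arg \xi$), so $\xi^{\frac{n-1}2} J(\xi z; \ulambda; 1)$ possesses $\widehat J(z; \ulambda; \xi)$ as its asymptotic expansion on (a subsector of) $\BS_\xi$; the uniqueness in Lemma \ref{lem: asymptotic, uniqueness}~(2) then forces $J(z; \ulambda; \xi) = \xi^{\frac{n-1}2} J(\xi z; \ulambda; 1)$ throughout $\BU$ by analytic continuation.

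The main obstacle I anticipate is bookkeeping of arguments on the universal cover $\BU$, particularly for the lower-sign identity: there one rotates by $-\xi = e^{\pi i \frac{a+n}{n}}$, and the naive pullback of $\BS_{-1}$ lands on a $2\pi$-translate of $\BS_\xi$. This is resolved by invoking Proposition \ref{prop: J (z; lambda; xi) on a large sector}, which validates the asymptotic of $J(-\xi z; \ulambda; -1)$ on the wide sector $\BS'_{-1}(\vartheta)$ of angular width exceeding $2\pi$; its pullback overlaps $\BS_\xi$ in a genuine open subsector, so the uniqueness argument applies unchanged and gives $J(z; \ulambda; \xi) = (-\xi)^{\frac{n-1}2} J(-\xi z; \ulambda; -1)$ together with $B_m(\ulambda; \xi) = (-\xi)^{-m} B_m(\ulambda; -1)$. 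Fixing a consistent branch of $\log$, hence of $(\pm\xi)^{\frac{n-1}2}$ and $(\pm\xi)^{-m}$ on $\BU$ as in the paper's conventions, is the only remaining point of care.
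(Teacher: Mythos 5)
Your upper-sign argument is correct and is in substance the paper's own proof: Lemma \ref{lem: phi (e pi i m/n z) is also a solution} identifies the equation satisfied by the rotated function, uniqueness of the normalized formal solution (Proposition \ref{prop: formal solution}) yields the coefficient identity, and the uniqueness clause of Lemma \ref{lem: asymptotic, uniqueness} (2) on $\BS_{\xi}$ yields the function identity; here $z \mapsto \xi z$ carries $\BS_{\xi}$ precisely onto $\BS_{1}$, so no subtlety arises.

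The lower-sign case, however, contains a genuine gap, located exactly at the point you flagged. With your branch $-\xi = e^{\pi i (a+n)/n}$, i.e. $\arg(-\xi) = \arg\xi + \pi$, the map $z \mapsto -\xi z$ sends $\BS_{\xi}$ (centered at $\arg z = \frac{\pi}{2} - \arg\xi$) onto the $2\pi$-translate of $\BS_{-1}$ (centered at $-\frac{\pi}{2}$, since $-1 = e^{\pi i}$), and your appeal to Proposition \ref{prop: J (z; lambda; xi) on a large sector} does not repair this. Indeed, the pullback of $\BS'_{-1}(\vartheta)$ under $z \mapsto -\xi z$ is the set of $z$ with $\arg z \in \big( { - \frac{5\pi}{2} - \frac{\pi}{n} - \arg\xi + \vartheta },\ { -\frac{\pi}{2} + \frac{\pi}{n} - \arg\xi - \vartheta } \big)$, while $\BS_{\xi}$ begins at $\arg z = \frac{\pi}{2} - \frac{\pi}{n} - \arg\xi$; the angular gap between the two is $\pi - \frac{2\pi}{n} + \vartheta \geq \vartheta > 0$ for every $n \geq 2$, so they do \emph{not} overlap: a sector of half-angle $\pi + \frac{\pi}{n} - \vartheta$ cannot reach a target displaced by $2\pi$ from its center. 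Worse, with this branch the identity you are trying to prove is false, because writing $w = e^{i(\arg\xi - \pi)} z$ gives $J(-\xi z; \ulambda; -1) = J(e^{2\pi i} w; \ulambda; -1)$ and the monodromy of the Bessel equation is nontrivial. Concretely, for $n = 2$, $\xi = i$, Remark \ref{rem: n=2, asymptotics} gives $J(z; \lambda, -\lambda; i) = \frac{2}{\sqrt{\pi}} K_{2\lambda}(2z)$, which equals $e^{-\pi i/4} J(e^{-\pi i/2} z; \lambda, -\lambda; -1)$ but \emph{not} $e^{3\pi i/4} J(e^{3\pi i/2} z; \lambda, -\lambda; -1)$: expanding the latter through $H^{(2)}_{2\lambda}$ in terms of $I_{\pm 2\lambda}$ produces $\sqrt{\pi}\,\sin(2\pi\lambda)^{-1} \big( e^{4\pi i \lambda} I_{2\lambda}(2z) - e^{-4\pi i\lambda} I_{-2\lambda}(2z) \big)$ rather than $\sqrt{\pi}\,\sin(2\pi\lambda)^{-1} \big( I_{-2\lambda}(2z) - I_{2\lambda}(2z) \big)$. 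The repair is simply to take the other branch, $\arg(-\xi) = \arg\xi - \pi$: then $z \mapsto -\xi z$ carries $\BS_{\xi}$ exactly onto $\BS_{-1}$, the lower-sign case becomes verbatim your upper-sign argument, and Proposition \ref{prop: J (z; lambda; xi) on a large sector} is not needed at all; this is also the reading under which the paper's later applications (e.g. Corollary \ref{cor: connetions, 1}, where $\arg \xi(\usigma) = \frac{n_-}{n}\pi$ while the rotation is by $e^{-\pi i n_+ (\usigma)/n}$) are consistent. Note that the coefficient identity is insensitive to this choice, since $(-\xi)^{-m}$ has the same value for both branches when $m$ is an integer; only the prefactor $(-\xi)^{\frac{n-1}{2}}$ and the point $-\xi z \in \BU$ change.
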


\begin{proof}
By Lemma \ref{lem: phi (e pi i m/n z) is also a solution}, $(\pm \xi)^{ \frac {n-1} 2} J(\pm \xi z; \ulambda; \pm 1)$ is a solution of one of the two Bessel equations of index $\ulambda$. In view of Proposition \ref{prop: formal solution} and Lemma \ref{lem: asymptotic, uniqueness} (2), it  possesses $ \widehat J (z; \ulambda; \xi )$ as its asymptotic expansion on $\BS _{\xi } $ and hence must be identical with $ J( z; \ulambda; \xi)$. 
\end{proof}

\begin{term}\label{term: Bessel functions of the second kind}
For $\xi \in \BX_{2n} (+)$, $J (z; \ulambda; \xi)$ is called a Bessel function of the second kind.
\end{term}

\begin{rem}
	The results in this section do not provide any information on the asymptotics near zero of Bessel functions of the second kind, and therefore their connections with Bessel functions of the first kind can not be clarified here. We shall nevertheless find the connection formulae between the two kinds of Bessel functions later in \S {\rm \ref{sec: H-Bessel functions and K-Bessel functions revisited}}, appealing to the asymptotic expansion of the $H$-Bessel function $H^{\pm}  (z; \ulambda)$ on the half-plane $\BH^{\pm}$ that we showed earlier in \S {\rm \ref{sec: Bessel functions of K-type and H-Bessel functions}}. 
\end{rem}

\subsection{Error analysis for asymptotic expansions}\label{sec: Error Bounds for the asymptotic expansions}

The error bound for the asymptotic expansion of $J (z; \ulambda; \xi ) $ with dependence on  $\ulambda$ is always desirable for potential applications in analytic number theory. However, the author does not find any general results on the error analysis for differential equations of order higher than two.
We shall nevertheless combine and generalize the ideas from \cite[\S 5.4]{Coddington-Levinson} and \cite[\S 7.2]{Olver} to obtain an almost optimal error estimate for the asymptotic expansion of the Bessel function $J (z; \ulambda; \xi )$.
Observe that both of their methods have drawbacks for generalizations. \cite{Olver} hardly uses the viewpoint from differential systems as only the second-order case is treated, whereas \cite[\S 5.4]{Coddington-Levinson} is restricted to the positive real axis for more clarified expositions.


\subsubsection{Preparations}

We retain the notations from \S \ref{sec: formal solutions at infinity}. For  a positive integer $M $ denote by $P_{(M)}$ the polynomial in $z\-$,
\begin{equation*}
P_{(M)} (z) = \sum_{m=0}^M z^{-m} P_m,
\end{equation*} 
and by $\widehat \Phi_{(M)}$ the truncation of $\widehat \Phi$,
\begin{equation*}
\widehat \Phi_{(M)} (z) = P_{(M)} (z) z^{-\frac { n-1} 2} e^{\varDelta z}.
\end{equation*}
By Lemma \ref{lem: entries of Pm}, we have $\left|z^{-m} P_m \right| \lll_{m, n} \fC^{2m} |z|^{-m}$, so $P_{(M)}\-$ exists as an analytic function for $|z| > c_1 \fC^2$, where $c_1$ is some constant depending only on $M$ and $n$. Moreover,
\begin{equation}
\label{7eq: bound of P (M) P (M) inverse}
\left| P_{(M)} (z) \right|, \ \left| P_{(M)}\- (z) \right| = O_{M, n} (1), \hskip 10 pt |z| > c_1 \fC^2.
\end{equation} 
Let $A_{(M)}$ and $E_{(M)}$ be defined by 
\begin{equation*}
A_{(M)} = \widehat \Phi_{(M)}' \widehat \Phi\-_{(M)}, \hskip 10 pt E_{(M)} = A - A_{(M)}.
\end{equation*}
$A_{(M)}$ and $E_{(M)}$ are clearly analytic for $|z| > c_1 \fC^2$.
Since $$E_{(M)} P_{(M)} = A P_{(M)} - \lp P'_{(M)} - \frac {n-1} 2 z\- P_{(M)} + P_{(M)} \varDelta \rp,$$ 
it follows from the construction of $\widehat \Phi$ in \S \ref{sec: formal solutions at infinity} that $E_{(M)} P_{(M)}$ is a polynomial in $z\-$ of the form $\sum_{m=M+1}^{M+n} z^{-m} E_{m}$
so that
\begin{equation*}
\begin{split}
E_{M+1} & = P^o_{M+1} \varDelta - \varDelta P^o_{M+1},\\
E_m & = \sum_{j = m-M}^{\min\{m, n\}} A_j P_{m-j}, \hskip 10 pt M+1 < m \leq M+n.
\end{split}
\end{equation*} 
Therefore, in view of Lemma \ref{lem: entries of Pm}, $| E_{M+1}| \lll_{M, n} \fC^{2M} $ and $| E_{m}| \lll_{m, n} \fC^{m + M} $ for $M+1 < m \leq M+n$. It follows that $| E_{(M)}(z) P_{(M)}(z) | \lll_{M, n} \fC^{2M} z^{-M-1}$ for $|z| > c_1 \fC^2$, and this, combined with (\ref{7eq: bound of P (M) P (M) inverse}), yields
\begin{equation}\label{7eq: bound of E (M)}
| E_{(M)} (z) | = O_{M, n} \lp \fC^{2M} |z|^{-M-1} \rp.
\end{equation}


By the definition of $A_{(M)}$, for $|z| > c_1 \fC^2$,  $\widehat \Phi_{(M)}$ is a fundamental matrix of the system
\begin{equation}\label{7eq: differential system truncated}
u' = A_{(M)} u.
\end{equation}
We shall regard the differential system (\ref{7eq: differential equation, matrix form 2}), that is,
\begin{equation} \label{7eq: differential system, nonhomogeneous}
u' = A u = A_{(M)} u + E_{(M)} u,
\end{equation}
as a nonhomogeneous system with (\ref{7eq: differential system truncated}) as the corresponding homogeneous system.

\subsubsection{Construction of a solution}

Given $l      \in \{ 1, ..., n \}$,   let $$ \widehat \varphi_{(M), l     } (z) = p_{(M),l     } (z) z^{- \frac { n-1} 2} e^{in \xi_{l     } z}$$ be the $l     $-th column vector of the matrix $\widehat \Phi_{(M)}$, where $ p_{(M),l     }$ is the $l     $-th  column vector of $P_{(M)}$. Using a version of the variation-of-constants formula and the method of successive approximations, we shall construct a solution $\varphi_{(M), l     } $ of (\ref{7eq: differential equation, matrix form 2}), for $z$ in some suitable domain, satisfying
\begin{equation} \label{7eq: error bounds 1}
\left| \varphi_{(M), l     } (z) \right| = O_{M, n} \lp |z|^{- \frac { n-1} 2} e^{\Re \lp i n \xi_{l     } z \rp} \rp,
\end{equation}
and
\begin{equation} \label{7eq: error bounds 2}
\left| \varphi_{(M), l     } (z) - \widehat \varphi_{(M), l     } (z) \right| = O_{M, n} \lp \fC^{2 M} |z|^{-M- \frac { n-1} 2} e^{\Re \lp i n \xi_{l     } z \rp} \rp,
\end{equation}
with the implied constant in \eqref{7eq: error bounds 2} also depending on the chosen domain.

{\it Step} 1. {\it Constructing the domain and the contours for the integral equation.} For $C \geq c_1 \fC^2$ and $0 < \vartheta < \frac 12 \pi $, define the domain $\BD(C; \vartheta) \subset \BU$ by 
\begin{equation*}
\BD(C; \vartheta) = \left\{ z : \left| \arg z \right| \leq  \pi, |z| > C \right \} \cup \left\{ z : \pi < \left| \arg z \right| < \frac {3} 2 \pi - \vartheta,\ \Re\,  z < - C \right \}.
\end{equation*}
For $k \neq l     $ let $\omega(l     , k) = \arg (i \xoverline \xi_{l     } - i \xoverline \xi_{k}) = \arg (i \xoverline \xi_{l     }) + \arg ( 1 - \xi_{k} \xoverline \xi_{l      })$, and define
\begin{equation*}
\BD_{\xi_l     } (C; \vartheta) = \bigcap_{k \neq l     }  e^{i \omega(l     , k)} \cdot \BD(C; \vartheta).
\end{equation*}
With the observation that
$$
\left\{ \arg ( 1 - \xi_{k} \xoverline \xi_{l      }) : k \neq l      \right \} = \left \{ \lp \frac 1 2 - \frac a n\rp \pi :  a =1, ..., n-1 \right \},
$$
it is straightforward to show that $\BD_{\xi_l     } (C; \vartheta) = i \xoverline \xi_l      \BD' (C; \vartheta)$, where $\BD' (C; \vartheta)$ is defined to be the union of the sector 
\begin{equation*}
\left\{ z : \left| \arg z \right| \leq \frac \pi 2 + \frac {\pi} n,\ |z| > C \right \}
\end{equation*}
and the following two domains
\begin{equation*}
\begin{split}
& \left\{ z : \frac \pi 2 + \frac {\pi} n < \arg z < \pi + \frac {\pi} n - \vartheta,\ \Im \big(e^{- \frac 1 n \pi i } z\big) > C  \right \},\\
& \left\{ z : - \pi  - \frac {\pi} n + \vartheta < \arg z < - \frac \pi 2 - \frac {\pi} n,\ \Im\big(e^{\frac 1 n \pi i } z\big) < - C  \right \}.
\end{split}
\end{equation*}


\begin{figure}
\begin{center}
\begin{tikzpicture}
\draw [dotted, fill=lightgray](0,0) circle [radius=0.75];

\draw [->] (-2,0) -- (3,0);
\draw [->] (0, -1.75) -- (0,2.5);
\draw [thick](1.6,0) arc [radius=1.6, start angle=0, end angle= 140];
\draw [-, thick] (1.6, 0) -- (2.97, 0);

\draw [->] (2.5,0) -- (2.3,0);
\draw [->](1.6,0) arc [radius=1.6, start angle=0, end angle= 80];

\draw[fill=white] (0.75,0) circle [radius=0.04];
\node [below right] at (0.75,0) { \footnotesize $C$};
\draw[fill=white] (-1.2257, 1.0285) circle [radius=0.04];
\node [below] at (-1.2257, 1.0285) { \footnotesize $z$};
\draw[fill=white] (1.6, 0) circle [radius=0.04];
\node [above right] at (1.6, 0) { \footnotesize $|z|$};
\node [above right] at (1.06, 1.06) { \footnotesize $\EuScript {C}(z)$};
\node [below] at (0, -1.85) { \small $|\arg z| \leq \pi$};

\draw [dotted, fill=lightgray](7.75,0) arc [radius=0.75, start angle=0, end angle= 180];
\draw [fill=lightgray, lightgray] (6.25,0) to (6.25,-1) to (7, -1) to (7, 0) to (6.25,0);
\draw [fill=lightgray, lightgray] (6.25,0) to (6.25,-0.75) to (5.25, -1.75) to (7, -1.75) to (7, 0) to (6.25,0);
\draw [dotted](6.25,0) -- (6.25,-0.75);
\draw [dotted](7 ,0) -- (5.25 ,-1.75);

\draw [->] (4.5,0) -- (10,0);
\draw [->] (7, -1.75) -- (7,2.5);
\draw [thick](8.6,0) arc [radius=1.6, start angle=0, end angle= 180];

\draw [-, thick] (5.4, 0) -- (5.4, -0.75);
\draw [-, thick] (8.6, 0) -- (9.97, 0);
\draw [->] (9.5,0) -- (9.3,0);
\draw [->](8.6,0) arc [radius=1.6, start angle=0, end angle= 80];
\draw [->] (5.4, 0) -- (5.4, -0.5);

\draw [-](7,-0.25) arc [radius=0.25, start angle=270, end angle= 225];
\node [below] at (6.85,-0.15) {\scriptsize $\vartheta$};

\draw[fill=white] (7.75,0) circle [radius=0.04];
\node [below] at (7.75,0) {\footnotesize $C$};
\draw[fill=white] (5.4, -0.75) circle [radius=0.04];
\node [ left] at (5.4, -0.75) {\footnotesize $z$};
\draw[fill=white] (5.4, 0) circle [radius=0.04];
\node [above left] at (5.4, 0) {\footnotesize $\Re  z$};
\draw[fill=white] (8.6, 0) circle [radius=0.04];
\node [below] at (8.6, 0) {\footnotesize $- \Re  z$};
\node [above right] at (8.06, 1.06) {\footnotesize $\EuScript {C}(z)$};
\node [below] at (7, -1.85) {\small $\pi < \arg z < \frac 3 2\pi - \vartheta$};
\end{tikzpicture}
\end{center}
\caption{$\EuScript C (z) \subset \BD (C; \vartheta)$}\label{fig: C (z)}
\end{figure}

\begin{figure}
\begin{center}
\begin{tikzpicture}

\draw [dotted, fill=lightgray](0.75,0) arc [radius=0.75, start angle=0, end angle= 120];
\draw [dotted, fill=lightgray](0.75,0) arc [radius=0.75, start angle=0, end angle= -120];

\draw [fill=lightgray, lightgray] (0,0) to (-0.75/2,1.7320508*0.75/2) to (0.75, 0);
\draw [fill=lightgray, lightgray] (0,0) to (-0.75/2,-1.7320508*0.75/2) to (0.75, 0);

\draw [-](- 1.4,1.7320508*1.4) -- (0,0);
\draw [-](- 0.8 , -1.7320508*0.8 ) -- (0,0);

\draw [->] (-1.4,0) -- (3,0);
\draw [->] (0, -1.7320508*0.8) -- (0,2.5);

\draw [-](0, 0.25) arc [radius=0.25, start angle=90, end angle= 120];
\node [above] at (- 0.1, 0.2) {\footnotesize $\frac \pi n$};

\draw [thick](1.6,0) arc [radius=1.6, start angle=0, end angle= 75];
\draw [-, thick] (1.6, 0) -- (2.97, 0);

\draw [->] (2.5,0) -- (2.3,0);
\draw [->](1.6,0) arc [radius=1.6, start angle=0, end angle= 55];

\draw[fill=white] (1.6, 0) circle [radius=0.04];
\draw[fill=white] (0.2588190451*1.6, 0.96592582628*1.6) circle [radius=0.04];
\node [above] at (0.2588190451*1.6, 0.96592582628*1.6) {\footnotesize $ z$};

\node [above right] at (1.36, 0.66) { \footnotesize $\EuScript {C}'(z)$};

\node [below] at (0, -1.5) { \small $|\arg z| \leq   \frac 1 2 \pi + \frac 1 n   \pi  $};

\draw [dotted, fill=lightgray](7.75,0) arc [radius=0.75, start angle=0, end angle= 120];
\draw [fill=lightgray, lightgray] (7,0) to (7 - 1.7320508*1, -1) to (7 -0.75/2- 1.7320508*1,1.7320508*0.75/2-1) to (7 -0.75/2,1.7320508*0.75/2);
\draw [fill=lightgray, lightgray] (7,0) to (7 -0.75/2,1.7320508*0.75/2) to (7.75, 0);
\draw [dotted](7 -0.75/2,1.7320508*0.75/2) -- (7 -0.75/2- 1.7320508*1,1.7320508*0.75/2-1);

\draw [fill=lightgray, lightgray] (7,0) to (7 -0.75/2,1.7320508*0.75/2) to (7 - 1.7320508*1, -1) to (7 -0.96592582628*2.82842712475, 0.258819*2.82842712475);
\draw [dotted](7 -0.96592582628*2.82842712475, 0.258819*2.82842712475) -- (7,0);
\draw [dotted](7, 0) -- (7 - 1.7320508*1, -1);

\draw [fill=lightgray, lightgray] (7 - 1.7320508*1, -1) to (7 -0.96592582628*2.82842712475, 0.258819*2.82842712475) to (7 -0.96592582628*2.82842712475, -1);

\draw [-](7 - 1.4,1.7320508*1.4) -- (7,0);

\draw [->] (7 -0.96592582628*2.82842712475,0) -- (7+3,0);
\draw [->] (7 , -1) -- (7,2.5);

\draw [-](7 - 1.7320508*0.125,-0.125) arc [radius=0.25, start angle=210, end angle= 165];
\draw [fill=lightgray, lightgray](7 - 0.3,-0.05) arc [radius=0.15, start angle=0, end angle= 360];
\node [left] at (7 - 1.7320508*0.125-0.015, -0.125+0.08) {\scriptsize $\vartheta$};

\draw [-](7, 0.25) arc [radius=0.25, start angle=90, end angle= 120];
\node [above] at (7 - 0.1, 0.2) {\footnotesize $\frac \pi n$};

\draw [thick](7+ 1.6,0) arc [radius=1.6, start angle=0, end angle= 120];
\draw [-, thick] (7+ 1.6, 0) -- (7+ 2.97, 0);
\draw [->] (7 -0.8-1.7320508/6, 1.7320508*0.8-1/6) -- (7 -0.8-1.7320508/5, 1.7320508*0.8-0.2);
\draw [-, thick] (7 -0.8, 1.7320508*0.8) -- (7 -0.8-1.7320508/2, 1.7320508*0.8-1/2);

\draw [->] (7+ 2.5,0) -- (7+ 2.3,0);
\draw [->](7+ 1.6,0) arc [radius=1.6, start angle=0, end angle= 80];

\draw[fill=white] (7 -0.8, 1.38564) circle [radius=0.04];
\draw[fill=white] (7+ 1.6, 0) circle [radius=0.04];
\draw[fill=white] (7 -0.8-1.7320508/2, 1.38564-1/2) circle [radius=0.04];
\node [above left] at (7 -0.8-1.7320508/2 + 0.2, 1.38564-1/2) {\footnotesize $ z$};

\node [above right] at (8.06, 1.06) {\footnotesize $\EuScript {C}'(z)$};

\node [below] at (7, -1.5) {\small $\frac 1 2 \pi   + \frac 1 n {\pi}   < \arg z < \pi + \frac 1 n {\pi}   - \vartheta$};

\end{tikzpicture}
\end{center}
\caption{$ \EuScript C' (z) \subset \BD' (C; \vartheta)$}\label{fig: D xi (C)}
\end{figure}

For $z \in \BD (C; \vartheta)$ we define a contour $\EuScript C (z) \subset \BD (C; \vartheta)$ that starts from $\infty$ and ends at $z$; see Figure \ref{fig: C (z)}. For $z \in \BD (C; \vartheta)$ with $ |\arg z| \leq \pi$, the contour $\EuScript C (z)$ consists of the part of the positive axis where the magnitude exceeds $ |z|$ and an arc of the circle centered at the origin of radius $|z|$, with angle not exceeding $\pi$ and endpoint $z$. For $z \in \BD (C; \vartheta)$ with $\pi < |\arg z| < \frac {3 } 2 \pi - \vartheta$, the definition of the contour $\EuScript C (z)$ is modified so that the circular arc has radius $- \Re  z$ instead of $|z|$ and ends at $\Re  z$ on the negative real axis, and that $\EuScript C (z)$ also consists of a vertical line segment joining $\Re  z$ and $z$. 
The most crucial property that $\EuScript C (z)$ satisfies is the \textit{nonincreasing} of $\Re  \zeta$ along $\EuScript C (z)$.

We also define a contour $\EuScript C' (z)$ for $z \in \BD' (C; \vartheta) $ of a similar shape as $ \EuScript C \left( z \right)$ illustrated in Figure \ref{fig: D xi (C)}.

{\it Step} 2. {\it Solving the integral equation via successive approximations.}
We first split $\widehat \Phi_{(M)}\-$ into $n$ parts
\begin{equation*}
\widehat \Phi_{(M)}\- = \sum_{k = 1}^n \Psi^{(k)}_{(M)},
\end{equation*}
where the $j$-th row of $\Psi^{(k)}_{(M)}$ is identical with the $k$-th row of $\widehat \Phi_{(M)}\-$, or identically zero, according as $j = k$ or not.

The integral equation to be considered is the following
\begin{equation}\label{7eq: differential equation to integral equation}
u(z) = \widehat \varphi_{(M), l     } (z) + \sum_{k \neq l     } \int_{\infty e^{i \omega(l     , k)}}^z K_{k} (z, \zeta) u(\zeta) d \zeta + \int_{\infty i \overline \xi_l     }^z K_{l     } (z, \zeta) u(\zeta) d \zeta,
\end{equation}
where
\begin{equation*}
K_{k} (z, \zeta) = \widehat \Phi_{(M)} (z) \Psi_{(M)}^{(k)} (\zeta) E_{(M)}(\zeta), \hskip 10 pt z, \zeta \in \BD_{\xi_l     } (C; \vartheta), k = 1, ..., n,
\end{equation*}
the integral in the sum is integrated on the contour $ e^{i \omega(l     , k)} \EuScript C \big(e^{ - i \omega(l     , k)} z \big)$, whereas the last integral is on the contour $ i \xoverline \xi_l      \EuScript C' \left(- i  \xi_l      z \right)$. Clearly, all these contours lie in $\BD_{\xi_l     } (C; \vartheta)$. Most importantly, we note that $\Re ((i \xi_{l     } - i \xi_{k}) \zeta )$ is a negative multiple of $\Re \big( e^{- i \omega(l     , k)} \zeta \big)$ and hence is \textit{nondecreasing} along the contour $ e^{i \omega(l     , k)} \EuScript C \big(e^{ - i \omega(l     , k)} z\big)$.

By direct verification, it follows that if $u (z) = \varphi (z)$ satisfies \eqref{7eq: differential equation to integral equation}, with the integrals convergent, then $\varphi$ satisfies \eqref{7eq: differential system, nonhomogeneous}.

In order to solve (\ref{7eq: differential equation to integral equation}), define the successive approximations
\begin{equation}\label{7eq: successive approximation}
\begin{split}
& \varphi^0 (z) \equiv 0,\\
\varphi^{\alpha+1} (z) = \widehat \varphi_{(M), l     } (z) + & \sum_{k \neq l     } \int_{\infty e^{i \omega(l     , k)}}^z K_{k} (z, \zeta) \varphi^\alpha(\zeta) d \zeta + \int_{\infty i \overline \xi_l     }^z K_{l     } (z, \zeta) \varphi^\alpha(\zeta) d \zeta.
\end{split}
\end{equation}

The $(j, r)$-th entry of the matrix $\widehat \Phi_{(M)} (z) \Psi_{(M)}^{(k)} (\zeta)$ is given by
\begin{equation*}
\lp \widehat \Phi_{(M)} (z) \Psi_{(M)}^{(k)} (\zeta) \rp_{j r} = \lp P_{(M)} (z) \rp_{j k} \big( P\-_{(M)} (\zeta) \big)_{k r} \lp\frac z {\zeta} \rp^{-\frac { n-1} 2} e^{in \xi_{k} (z - \zeta)}.
\end{equation*}
It follows from (\ref{7eq: bound of P (M) P (M) inverse}, \ref{7eq: bound of E (M)}) that
\begin{equation}\label{7eq: bound of K ell' (z, zeta)}
| K_{k} (z, \zeta) | \leq c_2 \fC^{2M} |z|^{- \frac { n-1} 2} |\zeta|^{-M-1+ \frac { n-1} 2} e^{\Re (in \xi_{k} (z - \zeta))},
\end{equation}
for some constant $c_2$ depending only on $M$ and $n$. Furthermore, we may appropriately choose $c_2$ such that
\begin{equation}\label{7eq: bound of integral zeta -M-1}
\int_{\infty i \overline \xi_l     }^z \left| \zeta \right|^{-M-1} \left|d \zeta \right|, \ \int_{\infty e^{i \omega(l     , k)}}^z \left| \zeta \right|^{-M-1} \left|d \zeta \right| \leq c_2 C^{-M}, \hskip 10 pt  k \neq l     .
\end{equation}

According to \eqref{7eq: successive approximation}, $\varphi^1 (z) = \widehat \varphi_{(M), l     } (z) = p_{(M),l     } (z) z^{- \frac { n-1} 2} e^{in \xi_{l     } z}$, so
\begin{equation*}
\left|\varphi^1 (z) - \varphi^0 (z) \right| = \left|\varphi^1 (z) \right| \leq c_2 |z|^{- \frac { n-1} 2} e^{\Re (in \xi_l      z)}, \hskip 10 pt z \in \BD_{\xi_l     } (C; \vartheta).
\end{equation*}
We shall show by induction that for all $z \in \BD_{\xi_l     } (C; \vartheta)$
\begin{equation}\label{7eq: successive approximation induction hypothesis}
 \left|\varphi^\alpha (z) - \varphi^{\alpha -1} (z) \right| \leq c_2 \lp n c_2^2  \fC^{2M} C^{-M} \rp^{\alpha -1} |z|^{- \frac { n-1} 2} e^{\Re (in \xi_l      z)}.
\end{equation}
Let $z \in \BD_{\xi_l     } (C; \vartheta)$. Assume that (\ref{7eq: successive approximation induction hypothesis}) holds.
From (\ref{7eq: successive approximation}) we have
\begin{equation*}
\left|\varphi^{\alpha+1} (z) - \varphi^\alpha (z) \right|  
\leq \sum_{k \neq l     } R_{k} + R_{l     },
\end{equation*}
with 
\begin{align*}
R_{k} & = \int_{\infty e^{i \omega(l     , k)}}^z | K_{k} (z, \zeta)| \left|\varphi^\alpha (\zeta) - \varphi^{\alpha-1} (\zeta) \right| \left|d \zeta\right|,\\
R_{l     } & = \int_{\infty i \overline \xi_l     }^z | K_{l     } (z, \zeta)| \left|\varphi^\alpha (\zeta) - \varphi^{\alpha-1} (\zeta) \right| \left|d \zeta\right|.
\end{align*}
It follows from (\ref{7eq: bound of K ell' (z, zeta)}, \ref{7eq: successive approximation induction hypothesis}) that $R_{k} $ has bound
\begin{equation*}
c_2^2 \fC^{2M} \lp n c_2^2  \fC^{2M} C^{-M} \rp^{\alpha-1}  |z|^{- \frac { n-1} 2} e^{\Re (in \xi_l      z)} \int_{\infty e^{i \omega(l     , k)}}^z |\zeta|^{-M-1} e^{\Re (in (\xi_{l     } - \xi_{k}) (\zeta - z))} \left|d \zeta\right|.
\end{equation*}
Since $\Re ((i \xi_{l     } - i \xi_{k}) \zeta)$ is nondecreasing on the integral contour,
\begin{equation*}
R_{k} \leq c_2^2 \fC^{2M} \lp n c_2^2 \fC^{2M} C^{-M} \rp^{\alpha-1} |z|^{- \frac { n-1} 2} e^{\Re (in \xi_l      z)} \int_{\infty e^{i \omega(l     , k)}}^z |\zeta|^{-M-1} \left|d \zeta\right|,
\end{equation*}
and (\ref{7eq: bound of integral zeta -M-1}) further yields
\begin{equation*}
 R_{k} \leq c_2 n^{\alpha -1} \lp c_2^2  \fC^{2M} C^{-M} \rp^{\alpha} |z|^{- \frac { n-1} 2} e^{\Re (in \xi_l      z)}.
\end{equation*}
Similar arguments  show that $R_{l     }$ has the same bound as $R_{k}$. Thus (\ref{7eq: successive approximation induction hypothesis}) is true with $\alpha$ replaced by $\alpha+1$.

Set the constant $C = c \fC^2$ such that $c^M \geq 2 n c_2^2$. Then $ n c_2^2 \fC^{2M} C^{-M} \leq \frac 1 2$, and therefore the series $\sum_{\alpha = 1}^\infty (\varphi^\alpha(z) - \varphi^{\alpha-1}(z))$ absolutely and compactly converges. The limit function $\varphi_{(M), l     } (z)$ satisfies (\ref{7eq: error bounds 1}) for all $z \in \BD_{\xi_l     } (C; \vartheta)$. More precisely,
\begin{equation}\label{7eq: precise error bound, 1}
\left| \varphi_{(M), l     } (z) \right| \leq 2 c_2 |z|^{- \frac { n-1} 2} e^{\Re \lp i n \xi_{l     } z \rp}, \hskip 10 pt z \in \BD_{\xi_l     } (C; \vartheta).
\end{equation} 
Using a standard argument for successive approximations, it follows that $\varphi_{(M), l     }$ satisfies the integral equation \eqref{7eq: differential equation to integral equation} and hence the differential system \eqref{7eq: differential system, nonhomogeneous}.


The proof of the error bound (\ref{7eq: error bounds 2}) is similar. Since $\varphi_{(M), l     } (z)$ is a solution of the integral equation \eqref{7eq: differential equation to integral equation}, we have
\begin{equation*}
\left| \varphi_{(M), l     } (z) - \widehat \varphi_{(M), l     } (z) \right| \leq \sum_{k \neq l     } S_{k} + S_l     ,
\end{equation*}
where
\begin{align*}
S_{k}  = \int_{\infty e^{i \omega(l     , k)}}^z | K_{k} (z, \zeta)| \left|\varphi_{(M), l     } (\zeta) \right| \left|d \zeta\right|, \hskip 5 pt
S_{l     } = \int_{\infty i \overline \xi_l     }^z | K_{l     } (z, \zeta)| \left|\varphi_{(M), l     } (\zeta) \right| \left|d \zeta\right|.
\end{align*}
With the observation that $|\zeta| \geq \sin \vartheta \cdot |z|$ for $z \in \BD_{\xi_l     } (C; \vartheta)$ and $\zeta$ on the integral contours given above, we may replace \eqref{7eq: bound of integral zeta -M-1} by the following
\begin{equation}\label{7eq: bound of integral zeta -M-1, 2}
\int_{\infty i \overline \xi_l     }^z \left| \zeta \right|^{-M-1} \left|d \zeta \right|, \ \int_{\infty e^{i \omega(l     , k)}}^z \left| \zeta \right|^{-M-1} \left|d \zeta \right| \leq c_2 |z|^{-M}, \hskip 10 pt  k \neq l     ,
\end{equation}
with  $c_2$ now also depending on $\vartheta$.

The bounds (\ref{7eq: bound of K ell' (z, zeta)}, \ref{7eq: precise error bound, 1}) of
$ K_{k} (z, \zeta) $ and $\varphi_{(M), l     } (z)$ along with \eqref{7eq: bound of integral zeta -M-1, 2} yield
\begin{align*}
S_{k} & \leq 2 c_2^2 \fC^{2M} |z|^{- \frac { n-1} 2} e^{\Re (in \xi_l      z)} \int_{\infty e^{i \omega(l     , k)}}^z |\zeta|^{-M-1} e^{\Re (in (\xi_{l     } - \xi_{k}) (\zeta - z))} \left|d \zeta\right|\\
& \leq 2 c_2^2 \fC^{2M} |z|^{- \frac { n-1} 2} e^{\Re (in \xi_l      z)} \int_{\infty e^{i \omega(l     , k)}}^z |\zeta|^{-M-1} \left|d \zeta\right|\\
& \leq 2 c_2^3 \fC^{2M} |z|^{- M - \frac { n-1} 2} e^{\Re (in \xi_l      z)}.
\end{align*}
Again, the second inequality follows from the fact that $\Re ((i \xi_{l     } - i \xi_{k}) \zeta)$ is nondecreasing on the integral contour.
Similarly, $S_{l     }$ has the same bound as $S_{k}$. Thus  \eqref{7eq: error bounds 2} is proven and can be made precise as below
\begin{equation}\label{7eq: precise error bound, 2}
\left| \varphi_{(M), l     } (z) - \widehat \varphi_{(M), l     } (z) \right| \leq 2 n c_2^3 \fC^{2 M} |z|^{-M- \frac { n-1} 2} e^{\Re \lp i n \xi_{l     } z \rp}, \hskip 10 pt z \in \BD_{\xi_l     }(C; \vartheta).
\end{equation}

\subsubsection{Conclusion}

Restricting to the sector $\BS^{\pm}_{\xi_{l     }} \cap \{z : |z| > C \} \subset \BD_{\xi_l     } (C; \vartheta)$, with 
$\BS^{\pm}_{\xi_{l     }}$   replaced by $\BS_{\xi_{l     }}$ if $n = 2$, each $\varphi_{(M), l     }$ has an asymptotic representation a multiple of $\widehat \varphi_{k}$ for some $k$ according to Lemma \ref{lem: asymptotic, uniqueness} (1). Since $\Re (i \xi_{l     } z) < \Re (i \xi_{j} z)$ for all $ j \neq l     $, the bound (\ref{7eq: error bounds 1}) forces $k = l     $. Therefore, for any positive integer $M  $, $\varphi_{(M), l     }$ is identical with the unique solution $\varphi_{l     }$ of the differential system (\ref{7eq: differential equation, matrix form 2}) with asymptotic expansion $\widehat \varphi_{l     }$ on $\BS^{\pm}_{\xi_l     }$ (see Lemma \ref{lem: asymptotic, uniqueness}). Replacing $\varphi_{(M), l     }$ by $\varphi_{ l     }$ and absorbing the $M$-th term of $\widehat \varphi_{(M), l     }$ into the error bound, 
we may reformulate (\ref{7eq: precise error bound, 2}) as the following error bound for $\varphi_{l     }$
\begin{equation}\label{7eq: error bound 3}
\left| \varphi_{ l     } (z) - \widehat \varphi_{(M-1), l     } (z) \right| = O_{M, \vartheta, n} \lp \fC^{2 M} |z|^{-M- \frac { n-1} 2} e^{\Re \lp i n \xi_{l     } z \rp} \rp, \hskip 10 pt z \in \BD_{\xi_l     } (C; \vartheta).
\end{equation}
Moreover, in view of the definition of the sector $\BS'_{\xi_l     }(\vartheta)$ given in (\ref{7eq: S' xi (delta)}), we have
\begin{equation}\label{7eq: S'(delta) subset of D(C)}
\BS'_{\xi_l     }(\vartheta) \cap \left\{z : |z| >   \frac C {\sin \vartheta} \right \} \subset \BD_{\xi_l     } (C; \vartheta).
\end{equation}
Thus, the following theorem is finally established by (\ref{7eq: error bound 3}) and  (\ref{7eq: S'(delta) subset of D(C)}).

\begin{thm}\label{thm: error bound}
Let $\varsigma \in \{+, - \}$, $\xi \in \BX_n ( \varsigma)$, $0 < \vartheta < \frac 12 \pi $, $\BS'_{\xi} (\vartheta)$ be the sector defined as in {\rm (\ref{7eq: S' xi (delta)})}, and $M $ be a positive integer. Then there exists a constant $c$, depending only on $M$, $\vartheta$ and $n$, such that
\begin{equation}\label{7eq: asymptotic of J(z; lambda, xi)}
J (z; \ulambda; \xi) = e^{i n \xi z} z^{- \frac { n-1} 2} \lp \sum_{m=0}^{M-1} B_m (\ulambda; \xi) z^{-m} + O_{M, \vartheta, n} \lp \fC^{2 M} |z|^{- M } \rp \rp
\end{equation}
for all $z \in \BS'_{\xi} (\vartheta)$ such that $ |z| > c \fC^2 $. Similar asymptotic is valid for all the derivatives of $J (z; \ulambda; \xi)$, where the implied constant of the error estimate is allowed to depend on the order of the derivative.
\end{thm}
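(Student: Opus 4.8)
The statement is the final assembly of the construction carried out above: the plan is to transfer the system-level estimate \eqref{7eq: error bound 3}, already proved for the vector solution $\varphi_{l}$, to the scalar function $J(z; \ulambda; \xi)$ and its derivatives, and then to recover the sector $\BS'_{\xi}(\vartheta)$ by means of the inclusion \eqref{7eq: S'(delta) subset of D(C)}. Fix $\xi \in \BX_n(\varsigma)$ and order the eigenvalues of $B(\infty; \varsigma, \ulambda)$ so that $\xi_l = \xi$. Under the substitution $u = T w$ that converts the companion system \eqref{7eq: differential equation, matrix form} associated with \eqref{7eq: differential equation, variable z} into the system \eqref{7eq: differential equation, matrix form 2}, the companion vector has entries $w, w', \dots, w^{(n-1)}$. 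Hence $T\- \varphi_{l}$ solves \eqref{7eq: differential equation, matrix form}, and its first entry is $\sum_{r} (\varphi_l)_r = J(z; \ulambda; \xi)$ by the uniqueness in Lemma \ref{lem: asymptotic, uniqueness} (2), the normalization $P_0 = I$ matching $B_0(\ulambda; \xi) = 1$ of Proposition \ref{prop: formal solution}. Reading off the $(j+1)$-th row $\big((in\xi_r)^{j}\big)_{r=1}^n$ of $T\-$, we obtain
\begin{equation*}
J^{(j)}(z; \ulambda; \xi) = \sum_{r=1}^n (in\xi_r)^{j}\, (\varphi_l(z))_r, \hskip 10 pt 0 \leq j \leq n-1,
\end{equation*}
while the same linear combination applied to $\widehat \varphi_{(M-1), l}$ reproduces the truncation $\widehat J^{(j)}_{(M-1)}$ of the formal $j$-th derivative of \eqref{7eq: formal solution, asymptotic}, up to terms of order $O(\fC^{2M} |z|^{-M - \frac {n-1} 2})$.

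Since every $\xi_r$ is a root of unity, $|(in\xi_r)^{j}| = n^{j}$, so passing through $T\-$ inflates the vector bound \eqref{7eq: error bound 3} by a factor $O_{j, n}(1)$ only. Therefore, for $0 \leq j \leq n-1$ and $z \in \BD_{\xi_l}(C; \vartheta)$,
\begin{equation*}
\big| J^{(j)}(z; \ulambda; \xi) - \widehat J^{(j)}_{(M-1)}(z; \ulambda; \xi) \big| = O_{M, \vartheta, j, n} \big( \fC^{2M} |z|^{-M - \frac {n-1} 2} e^{\Re (i n \xi z)} \big).
\end{equation*}
Dividing through by $\big| e^{i n \xi z} z^{-\frac {n-1} 2} \big| = |z|^{-\frac {n-1} 2} e^{\Re (i n \xi z)}$ yields \eqref{7eq: asymptotic of J(z; lambda, xi)} for $j = 0$, together with the corresponding expansion for each derivative of order at most $n-1$, valid on $\BD_{\xi_l}(C; \vartheta)$.

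It remains to pass to $\BS'_{\xi}(\vartheta)$. With $C = c \fC^2$, where $c$ depends only on $M$ and $n$ as fixed in the construction by $c^M \geq 2 n c_2^2$ and $c \geq c_1$, the inclusion \eqref{7eq: S'(delta) subset of D(C)} gives $\BS'_{\xi}(\vartheta) \cap \{ z : |z| > C / \sin \vartheta \} \subset \BD_{\xi_l}(C; \vartheta)$. Enlarging $c$ to absorb the factor $1/\sin\vartheta$, which is legitimate since $c$ is permitted to depend on $\vartheta$, we obtain the asserted expansions for all $z \in \BS'_{\xi}(\vartheta)$ with $|z| > c \fC^2$. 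This settles \eqref{7eq: asymptotic of J(z; lambda, xi)} and the statement for derivatives of order $0, \dots, n-1$.

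For derivatives of order $j \geq n$ I would argue inductively through the Bessel equation \eqref{7eq: differential equation, variable z}: since $V_{n, n}(\ulambda) = 1$ by Lemma \ref{lem: simple facts on U, a, V} (3), solving for the leading derivative and differentiating repeatedly expresses every $w^{(j)}$ as a $\BC[\ulambda]$-combination of $w, \dots, w^{(n-1)}$ with Laurent-polynomial coefficients, whose $\ulambda$-dependence is controlled by $|V_{n, k}(\ulambda)| \lll_n \fC^{\, n-k}$ (the degree bound of Theorem \ref{thm: Bessel equations}). As $|z| > c \fC^2$, each power $z^{k - n}$ supplies a factor at most $\fC^{\, 2(k-n)}$, which dominates the coefficient growth $\fC^{\, n-k}$; substituting the expansions already obtained thus propagates the error $O(\fC^{2M} |z|^{-M})$ to all orders, with the implied constant now depending on $j$. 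The analytic heart of the matter, namely the successive-approximation construction behind \eqref{7eq: precise error bound, 2} and \eqref{7eq: error bound 3}, is already finished, so I anticipate no serious obstacle beyond this transcription; the only genuine care lies in locating $J(z; \ulambda; \xi)$ inside the vector solution and in matching the normalization of the formal series, together with the elementary bookkeeping of domains.
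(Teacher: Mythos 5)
Your proposal is correct and takes essentially the same route as the paper: the paper likewise obtains the theorem by combining the system-level bound \eqref{7eq: error bound 3} (established via the successive-approximation construction you cite) with the inclusion \eqref{7eq: S'(delta) subset of D(C)}, absorbing $1/\sin\vartheta$ into the constant $c$. Your write-up in fact makes explicit two steps the paper leaves implicit, namely the transfer from the vector solution $\varphi_{l}$ to $J^{(j)}(z;\ulambda;\xi)$ through the rows $\big((in\xi_r)^{j}\big)_{r=1}^n$ of $T^{-1}$ (with the uniqueness of Lemma \ref{lem: asymptotic, uniqueness} (2) pinning down the scalar solution), and the propagation of the error bound to derivatives of order $j\geq n$ via the Bessel equation \eqref{7eq: differential equation, variable z}.
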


Finally, we remark that, since $B_m (\ulambda; \xi) z^{-m} $ is of size $O _{m, n} \lp \fC^{2 m} |z|^{- m } \rp$, the error bound in \eqref{7eq: asymptotic of J(z; lambda, xi)} is optimal, given that $\vartheta$ is fixed.

\section{Connections between various types of Bessel functions} \label{sec: H-Bessel functions and K-Bessel functions revisited}

Recall from \S \ref{sec: Analytic continuations of the H-Bessel functions} that the asymptotic expansion in Theorem \ref{thm: asymptotic expansion} remains valid for the $H$-Bessel function $H^{\pm} (z; \ulambda)$ on the half-plane $\BH^{\pm} = \left\{ z : 0 \leq \pm \arg z \leq \pi \right\}$ (see \eqref{5eq: asymptotic expansion 1}).
With the observations that $H^{\pm} (z; \ulambda)$ satisfies the Bessel equation of sign $( \pm)^n$, that the asymptotic expansions of $\sqrt n (\pm 2 \pi i)^{- \frac { n-1} 2} H^{\pm} (z; \ulambda)$ and $J (z; \ulambda; \pm 1)$ have exactly the same form and the same leading term due to Theorem \ref{thm: asymptotic expansion} and Proposition \ref{prop: formal solution}, and that $\BS_{\pm 1} = \left\{ z : \lp \frac 1 2    - \frac 1 n \rp \pi   < \pm \arg z < \lp \frac 1 2     + \frac 1 n \rp \pi  \right \} \subset \BH^{\pm} $, 
Lemma \ref{lem: asymptotic, uniqueness} (2) implies the following theorem.

\begin{thm}\label{thm: bridge between H + (z; lambda) and J( z; lambda; 1)}
We have
$$ H^{\pm} (z; \ulambda) = n^{-\frac 1 2} (\pm 2 \pi i)^{ \frac { n-1} 2} J (z; \ulambda; \pm 1),$$
and $B_m(\ulambda; \pm 1) = (\pm i)^{-m} B_m (\ulambda)$.
\end{thm}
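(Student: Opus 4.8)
The plan is to realize both sides as solutions of one and the same Bessel equation and then to pin them down by the uniqueness of a solution with a prescribed asymptotic expansion. First I would record that $H^{\pm}(z;\ulambda) = J(z;\pm,\dots,\pm,\ulambda)$ satisfies, by Theorem \ref{thm: Bessel equations}, the Bessel equation of sign $S_n(\pm,\dots,\pm) = (\pm)^n$; and that $\pm 1 \in \BX_n\big((\pm)^n\big)$ since $(\pm 1)^n = (\pm)^n$, so that $J(z;\ulambda;\pm 1)$ is a Bessel function of the second kind solving the very same equation, carrying $\widehat J(z;\ulambda;\pm 1)$ as its asymptotic expansion on $\BS_{\pm 1}$ by Lemma \ref{lem: asymptotic, uniqueness} (2).

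Next I would set $F^{\pm}(z) = \sqrt n\,(\pm 2\pi i)^{-\frac{n-1}2} H^{\pm}(z;\ulambda)$, which is an actual solution of the Bessel equation of sign $(\pm)^n$ on the half-plane $\BH^{\pm}$. Dividing \eqref{5eq: asymptotic expansion 1} (valid on $\BH^{\pm}$ by \S \ref{sec: Analytic continuations of the H-Bessel functions}) by the prefactor gives
\begin{equation*}
F^{\pm}(z) \sim e^{\pm i n z} z^{-\frac{n-1}2} \sum_{m=0}^{\infty} (\pm i)^{-m} B_m(\ulambda) z^{-m},
\end{equation*}
an expansion of exactly the normalized shape \eqref{7eq: formal solution, asymptotic} with leading coefficient $(\pm i)^{0} B_0(\ulambda) = 1$. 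Since the asymptotic series of a genuine solution is automatically a formal solution of the equation, and since Proposition \ref{prop: formal solution} asserts that the formal solution of sign $(\pm)^n$ of this normalized form is \emph{unique}, the displayed series must coincide term by term with $\widehat J(z;\ulambda;\pm 1)$. Comparing coefficients yields the second assertion, $B_m(\ulambda;\pm 1) = (\pm i)^{-m} B_m(\ulambda)$.

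Finally I would invoke uniqueness at the level of actual solutions. Both $F^{\pm}$ and $J(z;\ulambda;\pm 1)$ solve the Bessel equation of sign $(\pm)^n$ and, by the previous step, share the asymptotic expansion $\widehat J(z;\ulambda;\pm 1)$ on the sector $\BS_{\pm 1} = \{ z : (\tfrac12 - \tfrac1n)\pi < \pm \arg z < (\tfrac12 + \tfrac1n)\pi \}$, which is contained in $\BH^{\pm}$ so that \eqref{5eq: asymptotic expansion 1} indeed applies there. Hence Lemma \ref{lem: asymptotic, uniqueness} (2) forces $F^{\pm} = J(z;\ulambda;\pm 1)$ on $\BS_{\pm 1}$, and the principle of analytic continuation extends the identity over all of $\BU$, giving the first formula.

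The main obstacle is the middle step: one must carefully justify that the asymptotic series of the actual solution $F^{\pm}$ is literally a formal solution of the Bessel equation, so that the uniqueness of the normalized formal solution in Proposition \ref{prop: formal solution} applies and forces the coefficients to match; everything else is bookkeeping, including the verification of the sector inclusion $\BS_{\pm 1} \subset \BH^{\pm}$ needed to transfer the asymptotic \eqref{5eq: asymptotic expansion 1} onto the uniqueness sector.
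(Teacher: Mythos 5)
Your proposal is correct and takes essentially the same route as the paper: the paper likewise observes that $\sqrt n\,(\pm 2\pi i)^{-\frac{n-1}2} H^{\pm}(z;\ulambda)$ solves the Bessel equation of sign $(\pm)^n$, that its asymptotic expansion on $\BH^{\pm}$ (from \eqref{5eq: asymptotic expansion 1}) has exactly the normalized form of Proposition \ref{prop: formal solution} with leading term $1$, and that $\BS_{\pm 1}\subset\BH^{\pm}$, so that Lemma \ref{lem: asymptotic, uniqueness} (2) forces equality with $J(z;\ulambda;\pm 1)$ and the coefficient identity $B_m(\ulambda;\pm 1)=(\pm i)^{-m}B_m(\ulambda)$. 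Your explicit middle step (that the asymptotic series of the actual solution is itself a formal solution, hence equals $\widehat J(z;\ulambda;\pm 1)$ by uniqueness) is exactly what the paper's citation of Proposition \ref{prop: formal solution} encodes, and it is legitimate here because the asymptotics in Theorem \ref{thm: asymptotic expansion} hold for all derivatives.
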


\begin{rem}
The reader should observe that $\BS_{\pm 1} \cap \BR_+ = \O $, so Theorem {\rm \ref{thm: bridge between H + (z; lambda) and J( z; lambda; 1)}}  can not be obtained by the asymptotic expansion of $ H^{\pm} (x; \ulambda) $ on $\BR_+$ derived from Stirling's   formula in Appendix {\rm \ref{appendix: asymptotic}} {\rm(}see Remark {\rm\ref{rem: appendix only for R+}}{\rm)}.

\end{rem}

\begin{rem}
$B_m(\ulambda; \pm 1)$ can only be obtained from certain recurrence relations in \S {\rm \ref{sec: formal solutions at infinity}} from the differential equation aspect. On the other hand,  using the stationary phase method, \eqref{5eq: B mj} in \S {\rm \ref{sec: asymptotic expansions of H Bessel functions}} yields an explicit formula of $B_m (\ulambda)$. Thus, Theorem {\rm \ref{thm: bridge between H + (z; lambda) and J( z; lambda; 1)}} indicates that the recurrence relations for $B_m(\ulambda; \pm 1)$ are actually solvable!
\end{rem}

As consequences of Theorem \ref{thm: bridge between H + (z; lambda) and J( z; lambda; 1)}, we can establish the connections between various Bessel functions, that is, $J  (z; \usigma, \ulambda)$, $J_l      ( z; \varsigma, \ulambda)$ and $J(z; \ulambda; \xi )$. Recall that $J  (z; \usigma, \ulambda)$ has already been expressed in terms of   $J_l      ( z; \varsigma, \ulambda)$ in Lemma \ref{lem: J(x; sigma; lambda) as a sum of J ell (x; sigma; lambda)}.

\subsection{\texorpdfstring{Relations between $J (z; \usigma, \ulambda)$ and $J(z; \ulambda; \xi )$}{Relations  between $J (z; \varsigma, \lambda)$ and $J(z; \lambda; \xi )$}} \label{sec: J (z; sigma; lambda) and Bessel functions of the second kind}

$J (z; \usigma, \ulambda)$ is equal to a multiple of $H^{\pm} \Big( e^{\pm \pi i \frac { n_{\mp}(\usigma)} n} z; \ulambda \Big)$ in view of Lemma \ref{lem: J(z ; usigma; lambda) relations to H +}, whereas $J(z; \ulambda; \xi)$ is a multiple of  $J(\pm \xi z; \ulambda; \pm 1)$ due to Lemma \ref{lem: J(z; lambda; xi) relations to J(xi z; lambda; 1)}. Furthermore, the equality, up to  constant, between $H^{\pm} (z; \ulambda)$ and  $J( z; \ulambda; \pm 1)$ has just been established in Theorem \ref{thm: bridge between H + (z; lambda) and J( z; lambda; 1)}. We then arrive at the following corollary.

\begin{cor}\label{cor: connetions, 1}
Let $L_\pm (\usigma) = \{ l      : \varsigma_l      = \pm \}$ and $n_\pm (\usigma) = \left| L_\pm (\usigma) \right|$ be as in  Definition {\rm \ref{defn: signature}}. Let $c(\usigma, \ulambda) = e \lp \mp \frac {n-1} 8 \pm \frac {(n-1) n_{\pm}(\usigma)} {4 n} \mp \frac 1 2 {\sum_{l      \in L_{\pm}(\usigma)} \lambda_{l      }}   \rp$ and  $\xi (\usigma) = \mp e^{\mp \pi i \frac { n_{\pm} (\usigma)} n}$. Then 
\begin{equation*} 
J (z; \usigma, \ulambda) = \frac { ( 2\pi  )^{\frac { n-1} 2} c(\usigma, \ulambda)  } {\sqrt n}    J(z; \ulambda;  \xi (\usigma) ).
\end{equation*}
Here, it is understood that $\arg \xi (\usigma) = \frac {n_- (\usigma)} n \pi  = \pi -  \frac {n_+ (\usigma)} n \pi $.
\end{cor}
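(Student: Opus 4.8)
The plan is to assemble the identity by chaining together three results already established, reading everything with the upper sign throughout; the lower-sign statement then follows from the simultaneous sign convention, since the entire computation is symmetric under $+ \leftrightarrow -$ together with $L_+ \leftrightarrow L_-$. First I would apply Lemma \ref{lem: J(z ; usigma; lambda) relations to H +} to collapse $J(z; \usigma, \ulambda)$ into a single $H$-Bessel function evaluated at a rotated argument,
\begin{equation*}
J(z; \usigma, \ulambda) = e \lp \tfrac 1 2 {\textstyle \sum_{l \in L_-(\usigma)} \lambda_l} \rp H^+\Big( e^{\pi i \frac { n_-(\usigma)} n} z; \ulambda \Big).
\end{equation*}
This reduces the whole problem to understanding $H^+$ at the point $e^{\pi i n_-(\usigma)/n} z \in \BU$.

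Next I would substitute the bridge identity of Theorem \ref{thm: bridge between H + (z; lambda) and J( z; lambda; 1)}, namely $H^+(w; \ulambda) = n^{-\frac 1 2} (2\pi i)^{\frac {n-1} 2} J(w; \ulambda; +1)$, taken at $w = e^{\pi i n_-(\usigma)/n} z$, to pass to a Bessel function of the second kind of parameter $+1$, still at the rotated argument. The key observation here is that $\xi(\usigma) = e^{\pi i n_-(\usigma)/n}$, viewed as a point of $\BU$ with $\arg \xi(\usigma) = \tfrac {n_-(\usigma)} n \pi$, is exactly the root appearing in the statement, and that it lies in $\BX_{2n}(+)$ because $\xi(\usigma)^{2n} = e^{2\pi i n_-(\usigma)} = 1$. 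I would then invoke Lemma \ref{lem: J(z; lambda; xi) relations to J(xi z; lambda; 1)} in the rearranged form $J(\xi(\usigma) z; \ulambda; +1) = \xi(\usigma)^{-\frac{n-1}2} J(z; \ulambda; \xi(\usigma))$ to trade the rotated argument for the prescribed sign parameter $\xi(\usigma)$ at the argument $z$. Stringing the three steps together already yields $J(z; \usigma, \ulambda) = (\text{const}) \cdot J(z; \ulambda; \xi(\usigma))$.

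The main work, and the step most prone to sign slips, is reconciling the accumulated constant with $(2\pi)^{\frac{n-1}2} c(\usigma, \ulambda)/\sqrt n$. Using the convention $(\pm i)^a = e^{\pm \frac 1 2 i \pi a}$ I would expand $(2\pi i)^{\frac{n-1}2} = (2\pi)^{\frac{n-1}2} e(\tfrac {n-1} 8)$ and $\xi(\usigma)^{-\frac{n-1}2} = e(- \tfrac{(n-1) n_-(\usigma)}{4n})$, so the constant becomes $n^{-\frac 1 2} (2\pi)^{\frac{n-1}2}$ times $e(\,\cdot\,)$ with argument $\tfrac {n-1} 8 - \tfrac{(n-1) n_-(\usigma)}{4n} + \tfrac 1 2 \sum_{l \in L_-(\usigma)} \lambda_l$. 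To match the stated $c(\usigma, \ulambda)$, whose argument is $- \tfrac {n-1} 8 + \tfrac{(n-1) n_+(\usigma)}{4n} - \tfrac 1 2 \sum_{l \in L_+(\usigma)} \lambda_l$, I would verify that the difference of the two arguments vanishes: it equals $\tfrac{n-1} 4 - \tfrac{(n-1)(n_+(\usigma) + n_-(\usigma))}{4n} + \tfrac 1 2 \sum_{l=1}^n \lambda_l$, which is zero because $n_+(\usigma) + n_-(\usigma) = n$ and $\sum_{l=1}^n \lambda_l = 0$ by the normalization $\ulambda \in \BL^{n-1}$. The only genuinely delicate point is keeping the branch of $\xi(\usigma)$ on $\BU$ consistent throughout; this is precisely why its argument must be pinned to $\tfrac{n_-(\usigma)}{n}\pi$ as in the statement, and with that choice the rotation lemma applies unambiguously and the identity propagates to all of $\BU$ by analyticity.
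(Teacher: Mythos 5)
Your proposal is correct and follows essentially the same route as the paper: the corollary is obtained there precisely by chaining Lemma \ref{lem: J(z ; usigma; lambda) relations to H +}, Theorem \ref{thm: bridge between H + (z; lambda) and J( z; lambda; 1)} and Lemma \ref{lem: J(z; lambda; xi) relations to J(xi z; lambda; 1)}, exactly as you do. Your constant bookkeeping also checks out, including the identification $\xi(\usigma) = e^{\pi i n_-(\usigma)/n}$ with $\arg \xi(\usigma) = \tfrac{n_-(\usigma)}{n}\pi$ and the cancellation of the phase discrepancy via $n_+(\usigma)+n_-(\usigma)=n$ and $\sum_{l=1}^{n} \lambda_l = 0$ (which in fact shows the two $\pm$-readings of $c(\usigma,\ulambda)$ and $\xi(\usigma)$ in the statement are the same quantity, so both are covered at once).
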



Corollary \ref{cor: connetions, 1} shows that $J (z; \usigma, \ulambda)$ should really be categorized in the class of Bessel functions of the second kind. 
Moreover, the asymptotic behaviours of the Bessel functions $J (z; \usigma, \ulambda)$ are classified by their signatures $(n_+ (\usigma), n_- (\usigma) )$. Therefore, $J (z; \usigma, \ulambda)$ is \textit{uniquely} determined by its  signature up to a constant multiple.

\subsection{Relations connecting the two kinds of Bessel functions}


From Lemma \ref{lem: J(z; lambda; xi) relations to J(xi z; lambda; 1)} and Theorem \ref{thm: bridge between H + (z; lambda) and J( z; lambda; 1)}, one sees that $J(z; \ulambda ; \xi)$ is a constant multiple of $H^+ (\xi z; \ulambda)$.
On the other hand, $H^+ (z; \ulambda)$ can be expressed in terms of Bessel functions of the first kind in view of Lemma \ref{lem: J(x; sigma; lambda) as a sum of J ell (x; sigma; lambda)}.
Finally, using Lemma \ref{lem: J ell (z; sigma; lambda) relations}, the following corollary is readily established.
\begin{cor}\label{cor: J(z; lambda; xi) and the J-Bessel functions}
Let $\varsigma \in \{+,-\}$. If $\xi \in \BX_n ( \varsigma )$, then
\begin{equation*} 
\begin{split}
J(z; \ulambda ; \xi) =  \sqrt n \lp - \frac {\pi i \xi } {2 } \rp^{\frac {n-1} 2}  
 \sum_{l      = 1}^{n}  \big( i  \xoverline \xi \big)^{n \lambda_l     } S_l      (\ulambda) J_l      ( z; \varsigma, \ulambda),
\end{split}
\end{equation*}
with $S_l      (\ulambda) = \prod_{k \neq l     } 1/ \sin \lp \pi (\lambda_l      - \lambda_{k} ) \rp  $. According to our convention, we have $\lp   - i \xi  \rp^{  \frac {n-1} 2 } = e^{ \frac {n-1} 2 \lp - \frac 1 2 \pi i + i \arg \xi \rp }  $ and $\big( i  \xoverline \xi \big)^{n \lambda_l     } = e^{ \frac 1 2 \pi i n \lambda_l      - i n \lambda_l      \arg \xi }$. When $\ulambda $ is not generic, the right hand side should be replaced by its limit.
\end{cor}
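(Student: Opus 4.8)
The plan is to follow the three-step reduction indicated just before the statement: first express $J(z; \ulambda; \xi)$ as a constant multiple of the $H$-Bessel function $H^+(\xi z; \ulambda)$, then expand $H^+$ into Bessel functions of the first kind, and finally rotate the argument back to $z$. All three ingredients are already available; the only genuine work is the bookkeeping of the multiplicative constants and of the branches of the fractional powers viewed as points of $\BU$.

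First I would observe that $\xi \in \BX_n(\varsigma)$ satisfies $\xi^{\, n} = \varsigma 1$, hence $\xi^{\, 2n} = 1$ and $\xi \in \BX_{2n}(+)$, so Lemma \ref{lem: J(z; lambda; xi) relations to J(xi z; lambda; 1)} applies with the upper sign to give $J(z; \ulambda; \xi) = \xi^{\frac{n-1}{2}} J(\xi z; \ulambda; 1)$. Inverting Theorem \ref{thm: bridge between H + (z; lambda) and J( z; lambda; 1)}, namely $J(w; \ulambda; 1) = n^{\frac 1 2}(2\pi i)^{-\frac{n-1}{2}} H^+(w; \ulambda)$, and substituting $w = \xi z$, yields
\begin{equation*}
J(z; \ulambda; \xi) = n^{\frac 1 2}\, \xi^{\frac{n-1}{2}} (2\pi i)^{-\frac{n-1}{2}}\, H^+(\xi z; \ulambda).
\end{equation*}
Next I would specialize Lemma \ref{lem: J(x; sigma; lambda) as a sum of J ell (x; sigma; lambda)} to $\usigma = (+, \dots, +)$, for which $L_+(\usigma) = \{1, \dots, n\}$, $L_-(\usigma) = \O$, and $(-)^{n_-(\usigma)} = +$. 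Here $E(\usigma, \ulambda) = e\lp -\tfrac 1 4 \sum_{k=1}^n \lambda_k \rp = 1$ by the normalization $\sum_k \lambda_k = 0$, while $E_l(\usigma, \ulambda) = e\lp \tfrac n 4 \lambda_l \rp$, so
\begin{equation*}
H^+(w; \ulambda) = \pi^{n-1} \sum_{l=1}^n e\lp \tfrac{n \lambda_l}{4}\rp S_l(\ulambda)\, J_l(w; +, \ulambda).
\end{equation*}

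To rotate the argument I would write $\xi = e^{\pi i \frac a n}$ with $a = \frac{n \arg \xi}{\pi}$; since $\xi^{\, n} = \varsigma 1$ this $a$ is an integer with $(-)^a = \varsigma$. Lemma \ref{lem: J ell (z; sigma; lambda) relations} then gives $J_l(\xi z; +, \ulambda) = e^{-\pi i a \lambda_l} J_l(z; \varsigma, \ulambda) = e^{-i n \lambda_l \arg \xi} J_l(z; \varsigma, \ulambda)$. Combining the factor $e\lp \tfrac n 4 \lambda_l\rp = e^{\frac 1 2 \pi i n \lambda_l}$ with $e^{-in\lambda_l\arg\xi}$ produces exactly $(i\xoverline\xi)^{n\lambda_l} = e^{\frac 1 2 \pi i n \lambda_l - i n \lambda_l \arg \xi}$, so that $H^+(\xi z; \ulambda) = \pi^{n-1}\sum_{l} (i\xoverline\xi)^{n\lambda_l} S_l(\ulambda) J_l(z; \varsigma, \ulambda)$. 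Substituting back and collecting the scalar prefactor, I would compute
\begin{equation*}
n^{\frac 1 2} \xi^{\frac{n-1}{2}}(2\pi i)^{-\frac{n-1}{2}}\pi^{n-1} = \sqrt n \lp \tfrac{\pi}{2}\rp^{\frac{n-1}{2}} e^{\frac{n-1}{2}\lp -\frac 1 2 \pi i + i \arg \xi\rp} = \sqrt n \lp -\tfrac{\pi i \xi}{2}\rp^{\frac{n-1}{2}},
\end{equation*}
the last equality being precisely the recorded convention $(-i\xi)^{\frac{n-1}{2}} = e^{\frac{n-1}{2}(-\frac 1 2 \pi i + i \arg \xi)}$ together with $-\tfrac{\pi i\xi}{2} = \tfrac{\pi}{2}(-i\xi)$. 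This produces the asserted identity.

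Finally, when $\ulambda$ is not generic the products $S_l(\ulambda)$ may be undefined, and I would interpret the identity as the limit $\boldsymbol\lambda' \to \ulambda$ with $\boldsymbol\lambda' \in \BD^{n-1}$, exactly as in Lemma \ref{lem: J(x; sigma; lambda) as a sum of J ell (x; sigma; lambda)}; both sides being analytic in $\ulambda$, the limit exists and the equality persists. The main (and essentially only) obstacle is the consistent tracking of arguments on $\BU$: one must verify that the branch of $\xi^{\frac{n-1}{2}}$ coming from Lemma \ref{lem: J(z; lambda; xi) relations to J(xi z; lambda; 1)}, the branch of $(2\pi i)^{-\frac{n-1}{2}}$ (modulus $2\pi$, argument $\tfrac{\pi}{2}$), and the definition of $(i\xoverline\xi)^{n\lambda_l}$ combine into precisely the two conventions stated in the corollary, rather than into branches differing by a factor $e^{2\pi i k}$.
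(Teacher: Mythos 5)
Your proposal is correct and follows precisely the route the paper indicates: Lemma \ref{lem: J(z; lambda; xi) relations to J(xi z; lambda; 1)} together with Theorem \ref{thm: bridge between H + (z; lambda) and J( z; lambda; 1)} to reduce $J(z;\ulambda;\xi)$ to a constant multiple of $H^+(\xi z;\ulambda)$, then Lemma \ref{lem: J(x; sigma; lambda) as a sum of J ell (x; sigma; lambda)} specialized to $\usigma = (+,\dots,+)$, and finally Lemma \ref{lem: J ell (z; sigma; lambda) relations} to rotate the argument — with the constant bookkeeping and branch conventions worked out correctly.
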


We now fix an integer $a$ and let $\xi _{  j} = e^{ \pi i \frac { {  2 j + a - 2}  } n} \in \BX_n \lp (-)^a \rp $, with   $j = 1, ..., n$. 
It follows from Corollary \ref{cor: J(z; lambda; xi) and the J-Bessel functions} that
\begin{equation*}
X  (z; \ulambda) 
= \sqrt n \lp \frac  {\pi } {2  } \rp^{\frac {n-1} 2} e^{- \frac 1 4 \pi i (n-1)}  \cdot  D  V(\ulambda) S (\ulambda) E  (\ulambda) 
Y (z; \ulambda),
\end{equation*}
with
\begin{align*}
	& X  (z; \ulambda)   = \big(J  \lp  z; \ulambda ; \xi _{  j}  \rp \big)_{j=1}^n, \hskip 10 pt   Y  (z; \ulambda) = \big(J_l       \lp  z; (-)^a , \ulambda \rp \big)_{l      =1}^n, \\ 
& D    = \mathrm{diag} \Big( \xi_{  j} ^{\frac {n-1} 2} \Big)_{j = 1}^n, \hskip 10 pt   E  (\ulambda)  = \mathrm{diag} \left( e^{ \pi i \lp \frac 1 2 n -  a \rp \lambda_l     } \right)_{l      = 1}^n,  \hskip 10 pt
S (\ulambda)   = \mathrm{diag}  \textstyle \big(    S_l      (\ulambda)  \big)_{l      = 1}^n, \\
& V(\ulambda)  = \lp e^{- 2 \pi i (j - 1) \lambda_{l     }} \rp_{j,\, l      = 1}^n.
\end{align*}
Observe that $V(\ulambda)$ is a {\it Vandermonde matrix}. 

\begin{lem}
	\label{8lem: Vandermonde}
	For an $n$-tuple $\ux = (x_1, ..., x_n) \in \BC^n$ we define the  Vandermonde matrix $V = \big(  x_{l     }^{ j - 1 } \big)_{j,\, l      = 1}^n$.
	For $d = 0, 1, ..., n-1$ and $m = 1, ..., n$, let $\sigma_{m, d} $ denote the elementary symmetric polynomial in $x_1, ..., \widehat {x_m}, ..., x_n$ of degree $d$, and let $\tau_m = \prod_{k \neq m} (x_m - x_k) $.
	If $\ux$ is generic in the sense that all the components of $\ux$ are distinct, then $V$ is invertible, and furthermore, the inverse of $V$ is $\lp (-)^{n-j } \sigma_{m, n-j}  \tau_m\- \rp_{m,\, j = 1}^n $.
\end{lem}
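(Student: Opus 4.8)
The plan is to verify directly that the proposed matrix $W = \big( (-)^{n-j}\sigma_{m, n-j}\,\tau_m\- \big)_{m,\, j = 1}^n$ is a left inverse of $V = \big( x_\ell^{\, j-1} \big)_{j,\, \ell = 1}^n$; invertibility of $V$ then follows for free, though it is in any case classical from the Vandermonde determinant $\det V = \prod_{j < k}(x_k - x_j)$, which is nonzero precisely because the components of $\ux$ are distinct. The whole content of the identity $VW = I$ is a single polynomial expansion together with Lagrange-interpolation bookkeeping, and I would organize the argument around it.

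First I would record the expansion of the monic degree-$(n-1)$ polynomial obtained by deleting the $m$-th factor,
\begin{equation*}
\prod_{k \neq m}(t - x_k) = \sum_{d = 0}^{n-1} (-)^d \sigma_{m, d}\, t^{\, n - 1 - d},
\end{equation*}
which is just the statement that the coefficients of a product of linear factors are, up to sign, the elementary symmetric polynomials in the roots $\{ x_k : k \neq m \}$. Next I would evaluate this identity at $t = x_\ell$. If $\ell \neq m$, then $x_\ell$ is one of the deleted roots, so the product vanishes; if $\ell = m$, the product equals $\tau_m$ by definition. Hence, for all $m, \ell$,
\begin{equation*}
\sum_{d = 0}^{n-1} (-)^d \sigma_{m, d}\, x_\ell^{\, n - 1 - d} = \tau_m\, \delta_{m, \ell}.
\end{equation*}

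Finally I would reindex by setting $d = n - j$, so that the exponent $n - 1 - d$ becomes $j - 1$ and, as $d$ runs over $0, \dots, n-1$, the new index $j$ runs over $1, \dots, n$. Dividing through by $\tau_m$ gives
\begin{equation*}
\sum_{j = 1}^n (-)^{n - j} \sigma_{m, n - j}\, \tau_m\- \, x_\ell^{\, j - 1} = \delta_{m, \ell}.
\end{equation*}
The left-hand side is exactly the $(m, \ell)$-entry of $W V$, so $W V = I$ and therefore $W = V\-$, as claimed. Equivalently, this identity says that $\tau_m\- \prod_{k \neq m}(t - x_k) = \prod_{k \neq m}(t - x_k)/(x_m - x_k)$ is the $m$-th Lagrange basis polynomial attached to the nodes $x_1, \dots, x_n$, whose value at $x_\ell$ is $\delta_{m, \ell}$; the rows of $W$ are precisely its coefficient vectors.

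There is no genuine obstacle here: once the polynomial identity above is in place, everything is a formal consequence. The only point demanding care is the matching of conventions, namely keeping straight that the $(j, \ell)$-entry of $V$ is $x_\ell^{\, j-1}$ with the power tied to the \emph{row} index, and checking that the reindexing $d = n - j$ reproduces the sign $(-)^{n-j}$ and the subscript $n - j$ in the stated formula for $W$. I would also remark that distinctness of the $x_\ell$ is used twice: once to guarantee $\tau_m \neq 0$ so that $W$ is well defined, and once to make the off-diagonal products vanish.
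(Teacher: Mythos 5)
Your proof is correct and is essentially the paper's argument: both rest on the fact that $\tau_m^{-1}\prod_{k\neq m}(t-x_k)$ is the $m$-th Lagrange basis polynomial, whose coefficients in the monomial basis are $(-)^{n-j}\sigma_{m,n-j}\tau_m^{-1}$ and whose values at the nodes are $\delta_{m,\ell}$. The only cosmetic difference is direction: the paper starts from the defining relation for the entries of $V^{-1}$ and identifies the resulting interpolating polynomial, while you expand the candidate row polynomials and verify $WV=I$ directly.
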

\begin{proof}[Proof of Lemma \ref{8lem: Vandermonde}]
	It is a well-known fact that $V$ is invertible whenever $\ux$ is generic.
	If one denotes by $w_{m, j}$ the $(m, j)$-th entry of $V\-$, then
	$$\sum_{j=1}^n w_{m, j} x_l     ^{j-1} = \delta_{m, l     }.$$
	The Lagrange interpolation formula implies the following identity of polynomials
	$$\sum_{j=1}^n w_{m, j} x^{j-1} = \prod_{k \neq m} \frac {x - x_k} {x_m - x_k}.$$
	Identifying the coefficients of $x^{j-1}$ on both sides yields the desired formula of $w_{m, j}$.
\end{proof}

\begin{cor}\label{8cor: inverse connection}
	Let $a$ be a given integer. For $j = 1, ..., n$ define $\xi _{  j} = e^{ \pi i \frac { {2 j  + a - 2}  } n}$. For $d = 0, 1, ..., n-1$ and $ l      = 1, ..., n$, let $\sigma_{l     , d} (\ulambda) $ denote the elementary symmetric polynomial in $e^{- 2 \pi i \lambda_1}, ..., \widehat {e^{- 2 \pi i \lambda_l     }}, ..., e^{- 2 \pi i \lambda_n}$ of degree $d$. Then 
	\begin{equation*}
	J_l      \big( z; (-)^a , \ulambda \big) = \frac {e^{\frac 3 4 \pi i (n-1)}} {\sqrt n (2\pi)^{\frac {n-1} 2}}   e^{\pi i \lp \frac 1 2 n +  a   - 2 \rp \lambda_l     } \sum_{j=1}^n (-)^{n-j} \xi_{  j} ^{- \frac {n-1} 2}  \sigma_{l     , n-j} (\ulambda) J \lp z; \ulambda; \xi_{  j} \rp.
	\end{equation*} 
\end{cor}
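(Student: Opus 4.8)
The plan is to simply invert the matrix identity displayed immediately before the corollary. Writing that identity as $X(z;\ulambda) = c_n\, D\, V(\ulambda)\, S(\ulambda)\, E(\ulambda)\, Y(z;\ulambda)$ with $c_n = \sqrt n\, (\pi/2)^{(n-1)/2} e^{-\frac14 \pi i (n-1)}$, I would note that $D$, $S(\ulambda)$ and $E(\ulambda)$ are diagonal, hence trivially invertible for generic $\ulambda$, while $V(\ulambda)$ is the Vandermonde matrix in the variables $x_l = e^{-2\pi i \lambda_l}$. Solving for $Y$ gives
\[
Y(z;\ulambda) = c_n^{-1}\, E(\ulambda)^{-1} S(\ulambda)^{-1} V(\ulambda)^{-1} D^{-1} X(z;\ulambda).
\]
The inverse $V(\ulambda)^{-1}$ is supplied directly by Lemma \ref{8lem: Vandermonde}: its $(l,j)$-entry is $(-)^{n-j}\sigma_{l, n-j}(\ulambda)\tau_l^{-1}$, where $\tau_l = \prod_{k \neq l}\lp e^{-2\pi i \lambda_l} - e^{-2\pi i \lambda_k}\rp$ and $\sigma_{l,d}(\ulambda)$ is precisely the elementary symmetric polynomial named in the statement. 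Reading off the $l$-th row then expresses $J_l\big(z;(-)^a,\ulambda\big)$ as $c_n^{-1}$ times the product of the diagonal scalar $e^{-\pi i(\frac12 n - a)\lambda_l} S_l(\ulambda)^{-1}\tau_l^{-1}$ with the sum $\sum_{j=1}^n (-)^{n-j}\sigma_{l,n-j}(\ulambda)\,\xi_j^{-(n-1)/2}\, J\lp z;\ulambda;\xi_j\rp$, which already has the required shape; the branch of $\xi_j^{-(n-1)/2}$ is inherited unchanged from $D^{-1}$.

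The only genuine work is to show that the scalar prefactor $c_n^{-1} e^{-\pi i(\frac12 n - a)\lambda_l} S_l(\ulambda)^{-1}\tau_l^{-1}$ collapses to the constant in the corollary. For this I would factor each Vandermonde difference as
\[
e^{-2\pi i \lambda_l} - e^{-2\pi i \lambda_k} = -2i\, e^{-\pi i(\lambda_l + \lambda_k)}\sin\lp \pi(\lambda_l - \lambda_k)\rp,
\]
so that $\tau_l = (-2i)^{n-1}\big(\prod_{k\neq l} e^{-\pi i(\lambda_l+\lambda_k)}\big)\prod_{k\neq l}\sin\lp\pi(\lambda_l-\lambda_k)\rp$. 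Since $S_l(\ulambda)^{-1} = \prod_{k\neq l}\sin\lp\pi(\lambda_l-\lambda_k)\rp$, the sine products cancel in $S_l(\ulambda)^{-1}\tau_l^{-1}$, leaving $(-2i)^{-(n-1)}\prod_{k\neq l} e^{\pi i(\lambda_l+\lambda_k)}$. Invoking the normalization $\sum_{k=1}^n \lambda_k = 0$ turns $\prod_{k\neq l} e^{\pi i(\lambda_l+\lambda_k)}$ into $e^{\pi i (n-2)\lambda_l}$, and with the convention $-i = e^{-\frac12 \pi i}$ one has $(-2i)^{-(n-1)} = 2^{-(n-1)} e^{\frac12 \pi i(n-1)}$.

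It then remains to assemble constants: the powers of two combine via $(\pi/2)^{-(n-1)/2}\cdot 2^{-(n-1)} = (2\pi)^{-(n-1)/2}$, the argument-free phases combine as $e^{\frac14 \pi i(n-1)}\cdot e^{\frac12 \pi i(n-1)} = e^{\frac34 \pi i(n-1)}$, and the $\lambda_l$-dependent phases combine as $e^{-\pi i(\frac12 n - a)\lambda_l}\cdot e^{\pi i(n-2)\lambda_l} = e^{\pi i(\frac12 n + a - 2)\lambda_l}$, which are exactly the three factors in the corollary. The main obstacle is therefore bookkeeping rather than conceptual: one must keep the branch conventions for the fractional power $(-2i)^{-(n-1)}$ and the exponential phases consistent throughout, since a single sign slip in a half-integer power would corrupt the global constant. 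Finally, both sides are analytic in $\ulambda$ and $\BD^{n-1}$ is dense in $\BL^{n-1}$, so the non-generic case follows by the limiting convention already adopted for $S_l(\ulambda)$, giving the identity for all $\ulambda \in \BL^{n-1}$.
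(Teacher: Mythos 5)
Your proposal is correct and follows essentially the same route as the paper: the paper's proof consists precisely of substituting $x_l = e^{-2\pi i \lambda_l}$ into Lemma \ref{8lem: Vandermonde}, recording that $V(\ulambda)^{-1}$ has entries $(-2i)^{1-n}(-)^{n-j}\sigma_{l,n-j}(\ulambda) e^{\pi i (n-2)\lambda_l} S_l(\ulambda)$, and then performing "straightforward calculations" — which are exactly the sine-factorization of the Vandermonde differences and the constant bookkeeping you carry out explicitly (and correctly). Your closing remark on handling non-generic $\ulambda$ by analytic continuation is a welcome explicit touch that the paper leaves implicit.
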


\begin{proof}
	Choosing $x_{l     } = e^{-2\pi i \lambda_l     }$ in Lemma \ref{8lem: Vandermonde}, one sees that if  $\ulambda$ is generic then the matrix $V(\ulambda)$ is invertible and its inverse is given by $$ \lp (-2i)^{1-n} \cdot (-)^{n-j }  \sigma_{l     , n-j} (\ulambda) e^{\pi i (n-2) \lambda_l     } S_{l     } (\ulambda) \rp_{l     ,\, j = 1}^n .$$
	Some straightforward calculations then complete the proof.
\end{proof}

\begin{rem}
	In view of Proposition {\rm \ref{prop: Classical Bessel functions}}, Remark {\rm \ref{rem: n=2 J-Bessel function}} and {\rm \ref{rem: n=2, asymptotics}}, when $n=2$, Corollary {\rm \ref{cor: J(z; lambda; xi) and the J-Bessel functions}} corresponds to the connection formulae {\rm ({\it \cite[3.61(5, 6), 3.7 (6)]{Watson}})}, 
	\begin{align*} 
	&  	H^{(1)}_\nu (z) = \frac {J_{-\nu} (z) - e^{- \pi i \nu} J_\nu (z) }{i \sin ( \pi \nu)}, \hskip 10 pt 
	H^{(2)}_\nu (z) = \frac { e^{\pi i \nu} J_\nu (z) - J_{-\nu} (z)}{i \sin ( \pi \nu)},\\
	&  K_{\nu} (z) =  \frac {\pi \lp I_{-\nu} (z) - I_\nu (z) \rp} {2 \sin (\pi \nu)}, \hskip 27 pt 
	\pi I_\nu (z) - i e^{\pi i \nu} K_\nu (z) = 
	\frac {\pi i \lp e^{-\pi i \nu} I_{ \nu} (z) - e^{\pi i \nu} I_{-\nu} (z) \rp} {2 \sin (\pi \nu)},
	\end{align*}
	whereas Corollary {\rm \ref{8cor: inverse connection}}, with $a = 0$ or $1$, amounts to the formulae  {\rm ({\it see \cite[3.61(1, 2), 3.7 (6)]{Watson}})}
	\begin{align*}
	& J_\nu (z) = \frac {H_\nu^{(1)} (z) + H_\nu^{(2)} (z)} 2, \hskip 61 pt 
	J_{-\nu} (z) =  \frac {e^{\pi i \nu} H_\nu^{(1)} (z) + e^{-\pi i \nu} H_\nu^{(2)} (z) } 2, \\
	& I_{\nu} (z) = \frac {i e^{ \pi i \nu} K_{\nu} (z) + \lp \pi I_{\nu} (z) - i e^{\pi i \nu} K_{\nu} (z)  \rp } {\pi},  
	I_{- \nu} (z) = \frac {i e^{-\pi i \nu} K_{\nu} (z) + \lp \pi I_{\nu} (z) - i e^{\pi i \nu} K_{\nu} (z)  \rp } {\pi} .
	\end{align*}
\end{rem}


\section{$H$-Bessel functions and $K$-Bessel functions, II}\label{sec: H-Bessel functions and K-Bessel functions, II}

\delete{
\begin{term}\label{term: H, K , I-Bessel functions}
Let $\xi$ be a $2n$-th root of unity. $J (z; \ulambda; 1)$ and $J(z; \ulambda; -1)$ are called $H$-Bessel functions. $J(z; \ulambda; \xi)$ is called a $K$-Bessel function, respectively an $I$-Bessel function, if $\Im \xi$ is positive, respectively negative.
\end{term}
The above classification of Bessel functions of the second kind is in accordance with their asymptotic behaviours on $\BR _+$, where the $H$-Bessel functions
$J (x; \ulambda; \pm 1)$ oscillate and decay proportionally to $x^{- \frac { n-1} 2}$, whereas the $K$-Bessel functions and the $I$-Bessel functions are exponentially decaying and growing functions on $\BR _+$ respectively.

\begin{rem}
The term $I$-Bessel function may not be the most appropriate, since it is seen in Remark {\rm \ref{rem: n=2, asymptotics}} that $J(z; \lambda, -\lambda; - i) = 2 \sqrt \pi I_{2 \lambda} (2 z) - \frac {2 i}  {\sqrt \pi} e^{2\pi i \lambda} K_{2 \lambda} ( 2 z)$ is not exactly a classical $I$-Bessel function. However, $J(z; \lambda, -\lambda; - i)$ and $2 \sqrt \pi I_{2\lambda} (2 z)$ have the same asymptotic expansion on $\BR _+$. Moreover, the classical $I$-Bessel function has already been categorized as a Bessel function of the first kind according to Remark {\rm\ref{rem: n=2 J-Bessel function}}. Therefore, no confusion is caused in our terminology system.
\end{rem}

In view of Corollary \ref{cor: connetions, 1}, the $H$-Bessel functions and the $K$-Bessel functions defined in Terminology \ref{term: Bessel functions of K-type and H-Bessel functions} and Terminology \ref{term: H, K , I-Bessel functions} are essentially consistent.
}

In this concluding section, we apply Theorem  \ref{thm: error bound}  to  improve the results in \S \ref{sec: Bessel functions of K-type and H-Bessel functions} on the asymptotics of Bessel functions $J (x; \usigma, \ulambda)$ and the Bessel kernel $J_{(\ulambda, \udelta)} (\pm x)$ for $x \ggg \mathfrak C^2$. 

\subsection{Asymptotic expansions of $H$-Bessel functions} \label{8sec: Asymptotics of H-Bessel functions}

The following proposition is a direct  consequence of Theorem  \ref{thm: error bound} and \ref{thm: bridge between H + (z; lambda) and J( z; lambda; 1)}.

\begin{prop}\label{prop: improved asymptotic}
Let $0 < \vartheta < \frac 1 2 \pi  $.

{ \rm (1).} Let $M$ be a positive integer. We have 
\begin{equation} \label{7eq: asymptotic expansion H pm, improved}
\begin{split}
H^{\pm}  (z; \ulambda) = n^{- \frac 1 2}  &  (\pm 2 \pi i)^{ \frac { n-1} 2} e^{ \pm i n z} z^{ - \frac { n-1} 2} \\
& \lp \sum_{m=0}^{M-1} ( \pm i )^{ - m} B_{m} (\ulambda) z^{- m} + O_{M, \vartheta, n} \left( \mathfrak C^{2 M} |z|^{-M} \right) \rp,
\end{split}
\end{equation}
for all $z \in  \BS'_{\pm 1} (\vartheta)$ such that $|z| \ggg_{M, \vartheta, n} \fC^2$.

{ \rm (2).} Define $W^{\pm} (z;  \ulambda) = \sqrt n (\pm 2 \pi i)^{- \frac { n-1} 2} e^{\mp i n z} H^{\pm}  (z; \ulambda)$. Let $M - 1 \geq j \geq 0$. We have
\begin{equation} 
W^{\pm, (j)} (z;  \ulambda) = z^{- \frac { n-1} 2} \lp \sum_{m = j }^{M-1} ( \pm i )^{j - m} B_{m, j} (\ulambda) z^{- m } + O_{M, \vartheta, j , n} \left( \fC^{2 M - 2j} |z|^{-M}\right) \rp,
\end{equation}
 for all $z \in  \BS'_{\pm 1} (\vartheta)$  such that $|z| \ggg_{M, \vartheta, n} \fC^2$.
\end{prop}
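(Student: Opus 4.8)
The plan is to derive Proposition \ref{prop: improved asymptotic} as an essentially immediate corollary of the two results it cites, Theorem \ref{thm: error bound} and Theorem \ref{thm: bridge between H + (z; lambda) and J( z; lambda; 1)}, by carefully bookkeeping the substitution $\xi = \pm 1$ and the scalar factor relating $H^{\pm}$ to $J(z;\ulambda;\pm 1)$. First I would specialize Theorem \ref{thm: error bound} to $n = n$, $\varsigma = (\pm)^n$, and $\xi = \pm 1 \in \BX_n((\pm)^n)$. Since $\arg(i\xoverline{(\pm 1)})$ equals $\arg(\pm i) = \pm\frac{\pi}2$, the sector $\BS'_{\pm 1}(\vartheta) = \{ z : |\arg z \mp \frac{\pi}2| < \pi + \frac{\pi}n - \vartheta\}$ is exactly the one appearing in the statement, so no reshaping of the domain is needed. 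Theorem \ref{thm: error bound} then gives, for $|z| > c\fC^2$ with $c = c(M,\vartheta,n)$,
\begin{equation*}
J(z;\ulambda;\pm 1) = e^{\pm i n z} z^{-\frac{n-1}2}\lp \sum_{m=0}^{M-1} B_m(\ulambda;\pm 1) z^{-m} + O_{M,\vartheta,n}\lp \fC^{2M}|z|^{-M}\rp \rp.
\end{equation*}

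Next I would multiply through by the constant $n^{-\frac 12}(\pm 2\pi i)^{\frac{n-1}2}$ and invoke Theorem \ref{thm: bridge between H + (z; lambda) and J( z; lambda; 1)}, which supplies both the identity $H^{\pm}(z;\ulambda) = n^{-\frac 12}(\pm 2\pi i)^{\frac{n-1}2} J(z;\ulambda;\pm 1)$ and the coefficient translation $B_m(\ulambda;\pm 1) = (\pm i)^{-m} B_m(\ulambda)$. Substituting the latter into the sum converts $\sum_m B_m(\ulambda;\pm 1) z^{-m}$ into $\sum_m (\pm i)^{-m} B_m(\ulambda) z^{-m}$, which is precisely the main term in \eqref{7eq: asymptotic expansion H pm, improved}. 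The condition $|z| > c\fC^2$ is recorded in the statement as $|z| \ggg_{M,\vartheta,n}\fC^2$. This establishes part (1); the only things to check are that the constant multiplication passes through the $O$-term harmlessly (it does, being $\ulambda$-independent up to absorbing the fixed factor $(2\pi)^{(n-1)/2}$) and that $\mathfrak C = \max\{|\lambda_l|\}+1$ here coincides with the $\fC$ of \S\ref{sec: Error Bounds for the asymptotic expansions}, which it does since the indices $\unu$ and $\ulambda$ differ only by the relation $\nu_l = \lambda_l - \lambda_{n}$ and $\sum \lambda_l = 0$, so $\max|\nu_l|$ and $\max|\lambda_l|$ are comparable up to an absolute factor absorbed into the implied constant.

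For part (2), I would apply the statement in Theorem \ref{thm: error bound} that the same type of asymptotic holds for all derivatives of $J(z;\ulambda;\xi)$, with the implied constant allowed to depend on the order $j$. Writing $W^{\pm}(z;\ulambda) = \sqrt n(\pm 2\pi i)^{-\frac{n-1}2} e^{\mp inz} H^{\pm}(z;\ulambda) = e^{\mp inz} J(z;\ulambda;\pm 1)$, I would differentiate $j$ times. The cleanest route is to use Proposition \ref{prop: formal solution}, which guarantees that $\widehat J^{(j)}$ is again of the form $e^{in\xi z} z^{-\frac{n-1}2}\sum_m B_{m,j}(\ulambda;\xi) z^{-m}$, together with the derivative version of Theorem \ref{thm: error bound}; peeling off $e^{\pm inz}$ and using $B_{m,j}(\ulambda;\pm1) = (\pm i)^{j-m}B_{m,j}(\ulambda)$ (the $j$-dependent analogue of the Theorem \ref{thm: bridge between H + (z; lambda) and J( z; lambda; 1)} translation, which follows by the same uniqueness-of-asymptotics matching as in the proof of that theorem) yields the claimed expansion, now with error $O(\fC^{2M-2j}|z|^{-M})$ reflecting Lemma \ref{5lem: coefficient}'s degree bound $\deg B_{m,j} = 2m-2j$.

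The step I expect to require the most care is \emph{not} any single estimate but the bookkeeping of phases and coefficient normalizations: verifying that the convention $(\pm i)^a = e^{\pm\frac 12 i\pi a}$ is applied consistently so that the factor $(\pm i)^{-m}$ in part (1) and $(\pm i)^{j-m}$ in part (2) emerge with the correct signs, and confirming that the $W^{\pm}$ defined here matches the $W^{\pm}$ of Theorem \ref{thm: asymptotic expansion}(1) so that the improved estimate genuinely strengthens the earlier one (replacing the range $x \geq \mathfrak C$ and error $\fC^{2M}x^{-M}$ valid on $\BR_+$ by the wider sectorial range $\BS'_{\pm 1}(\vartheta)$). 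The genuine mathematical content — existence, uniqueness, and the error bound — is entirely supplied by Theorem \ref{thm: error bound}; the present proposition is the act of transporting that content from the $J(z;\ulambda;\xi)$ normalization back to the $H$-Bessel normalization.
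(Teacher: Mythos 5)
Your part (1) is correct and is exactly the paper's route: the paper's entire proof is the single remark that the proposition is a direct consequence of Theorem \ref{thm: error bound} and Theorem \ref{thm: bridge between H + (z; lambda) and J( z; lambda; 1)}, and your specialization $\xi = \pm 1$, identification of $\BS'_{\pm 1}(\vartheta)$, and substitution of $B_m(\ulambda;\pm 1) = (\pm i)^{-m}B_m(\ulambda)$ fill in precisely those details. (Your worry about matching constants is moot: Lemma \ref{lem: entries of Pm} and Theorem \ref{thm: error bound} are already formulated in terms of $\ulambda$, so the $\fC$ there is literally $\mathfrak C = \max\{|\lambda_l|\}+1$.)

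Part (2), however, has a genuine gap in the error term. The mechanism you invoke --- Leibniz on $W^{\pm,(j)} = \sum_k \binom{j}{k}(\mp in)^{j-k}e^{\mp inz}J^{(k)}(z;\ulambda;\pm 1)$ together with the derivative clause of Theorem \ref{thm: error bound}, with the factor $\fC^{2M-2j}$ ``reflecting'' the degree bound $\deg B_{m,j} = 2m-2j$ --- does not deliver the stated bound. That degree bound controls the retained and omitted \emph{main} terms, but the error supplied by Theorem \ref{thm: error bound} for each $J^{(k)}$ is $O(\fC^{2M}|z|^{-M})$ (the formal derivative $\widehat J^{(k)}$ has coefficients of full degree $2m$, since its leading contribution is $(\pm in)^k B_m$), and these errors simply add: your argument proves the expansion with error $O_{M,\vartheta,j,n}(\fC^{2M}|z|^{-M})$, which in the critical regime $|z| \asymp \fC^2$ falls short of the claim by a factor $\fC^{2j}$. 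Taking the expansions of $J^{(k)}$ to a higher order $M' > M$ does not repair this, because their error $\fC^{2M'}|z|^{-M'}$ improves only by a bounded factor per unit increase of $M'$ when $|z| \asymp \fC^2$. A correct route to $\fC^{2M-2j}$ is: apply your part (1) at order $M$ on the slightly larger sector $\BS'_{\pm 1}(\vartheta/2)$, write $E_M(w)$ for its error, and differentiate $E_M$ by Cauchy's integral formula over the circle $|w-z| = \epsilon|z|$ with $\epsilon = \epsilon(\vartheta, n)$ small enough that the circle stays inside $\BS'_{\pm 1}(\vartheta/2)$ and inside $\{|w| \ggg_{M,\vartheta,n} \fC^2\}$. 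This gives $E_M^{(j)}(z) = O_{M,\vartheta,j,n}\lp \fC^{2M}|z|^{-M-j-\frac{n-1}2}\rp$, and now the hypothesis $|z| \ggg \fC^2$ converts the extra factor $|z|^{-j}$ into $\lll \fC^{-2j}$, i.e. $E_M^{(j)}(z) \lll \fC^{2M-2j}|z|^{-M-\frac{n-1}2}$. Differentiating the partial sum termwise, absorbing the $j$ overflow terms of order $z^{-m'-\frac{n-1}2}$ with $M \leq m' \leq M+j-1$ (each $\lll \fC^{2M-2j}|z|^{-M-\frac{n-1}2}$ by the same use of $|z| \ggg \fC^2$), and identifying the surviving coefficients with $(\pm i)^{j-m}B_{m,j}(\ulambda)$ by uniqueness of asymptotic expansions against Theorem \ref{thm: asymptotic expansion} (1) on $\BR_+$ --- which is the matching you correctly describe --- completes the proof.
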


Observe that \begin{equation*}
\begin{split}
& \BH^{\pm} = \left\{ z \in \BC : 0 \leq \pm \arg z \leq \pi \right\} \\
\subset &\ \BS'_{\pm 1} (\vartheta) = \left\{ z \in \BU : - \lp \frac 1 2 - \frac 1 n \rp \pi - \vartheta < \pm \arg z < \lp \frac 3 2 + \frac 1 n \rp \pi + \vartheta \right\}.
\end{split}
\end{equation*} 
Fixing $\vartheta$ and restricting to the domain $\left \{ z \in \BH^\pm : |z| \ggg_{M, n} \fC^2 \right\}$, Proposition \ref{prop: improved asymptotic} improves Theorem \ref{thm: asymptotic expansion}.

\subsection{Exponential decay of $K$-Bessel functions}

Now suppose that $J (z; \usigma, \ulambda)$ is a $K$-Bessel function so that $0 < n_\pm (\usigma) < n$. Since $\BR_+ \subset \BS'_{\xi (\usigma)} (\vartheta)$, Corollary \ref{cor: connetions, 1} and Theorem \ref{thm: error bound} imply that $J (x; \usigma, \ulambda)$, as well as  all its derivatives, is not only a Schwartz function at infinity, which was shown in Theorem \ref{thm: Bessel functions of K-type}, but also a function of exponential decay on $\BR_+$. 

\begin{prop}\label{8prop: K}
	If $J (x; \usigma, \ulambda)$ is a $K$-Bessel function, then for all $ x \ggg_{ n} \fC^2$
	\begin{equation*}
	J^{(j)} (x; \usigma, \ulambda) \lll_{j, n} x^{-\frac { n-1} 2} e^ { {- \pi  \Im \Lambda (\usigma, \ulambda)  - n I (\usigma) x}  }, 
	\end{equation*}
	where  $\Lambda (\usigma, \ulambda) = \mp \sum_{l      \in L_\pm (\usigma)} \lambda_{l     }$ and $I (\usigma) = \Im \xi (\usigma) = \sin \lp \frac { n_\pm (\usigma)} n \pi \rp > 0$. In particular, we have  
	\begin{equation*}
	J^{(j)} (x; \usigma, \ulambda) \lll_{j, n} x^{-\frac { n-1} 2} e^ { {  \pi  \mathfrak I   - n \sin \lp \frac 1 n \pi \rp x}  }, 
	\end{equation*}
	for all $K $-Bessel functions $J  (x; \usigma, \ulambda)$ with given $\ulambda$, where $ \mathfrak I  = \max \left\{ \left| \Im \lambda_l      \right| \right\} $.
	
\end{prop}


\subsection{\texorpdfstring{The asymptotic of the Bessel kernel $J_{(\ulambda, \udelta)}$}{The asymptotic of the Bessel kernel $J_{(\lambda, \delta)}$}}


In comparison with Theorem \ref{thm: asymptotic Bessel kernel, 1}, we have the following theorem.

\begin{thm}\label{thm: improved  asymptotic Bessel kernel}
	Let notations be as in Theorem {\rm \ref{thm: asymptotic Bessel kernel, 1}}. Then, for $x \ggg_{ n} \mathfrak C^{2 }$, we have
		\begin{align*}
		W_{ \ulambda }^{\pm } (x) =    \sum_{m=0}^{M-1}  B^{\pm}_{m } (\ulambda) x^{-   m -  \frac {n-1} 2 }  
		+ O_{  M ,  n} \left( \fC^{2 M } x^{- M - \frac {n-1} 2 }\right), 
		\end{align*} 
		and
		\begin{equation*}
		E^{\pm }_{(\ulambda, \udelta)} \left( x \right) =  O_{ n} \lp x^{- \frac {n-1} {2 } } \exp \lp { \pi   \mathfrak I - 2 \pi n \sin \lp \tfrac 1 n \pi \rp  x } \rp \rp,
		\end{equation*}
		with $\mathfrak I = \max \left\{|\Im \lambda_l     | \right\}$.
\delete{	
	{\rm (1).} If $n$ is even, then for $x \ggg_{M, n} \mathfrak C^{2 }$  
	\begin{align*}
	J_{(\ulambda, \udelta)} \left(x^n \right) =  
	\sum_{\pm} (\pm)^{\sum  \delta_l     }  \frac { e \lp \pm   \frac {n-1} 8  \pm   n x   \rp }  {\sqrt{n} x^{\frac {n-1} {2 } }}  \sum_{m=0}^{M-1}  (\pm 2 \pi i)^{- m} & B_{m} (\ulambda) x^{-  m  }  \\
	& + O_{M, n} \lp \mathfrak C^{2M} x^{- \frac {n-1} {2 } -  M   } \rp,
	\end{align*}
	and  for $x \ggg_{ n} \mathfrak C^{2 }$  
	\begin{align*}
	J_{(\ulambda, \udelta)} \left(- x^n \right) =  O_{ n} \lp x^{- \frac {n-1} {2 } } \exp \lp {\tfrac 1 2 \pi  n \mathfrak I - 2 \pi n \sin \lp \tfrac 1 n \pi \rp  x } \rp \rp,
	\end{align*} 
	with $\mathfrak I = \max \left\{|\Im \lambda_l     | \right\}$.
	
	{\rm (2).} If $n$ is odd, then  for $x \ggg_{M, n} \mathfrak C^{2  }$  
	\begin{equation*}
	\begin{split}
	J_{(\ulambda, \udelta)} \left(\pm x^n \right) = (\pm)^{\sum \delta_l     }  \frac { e \lp \pm   \frac {n-1} 8  \pm   n x   \rp } {\sqrt{n} x^{\frac {n-1} {2 } }}   \sum_{m=0}^{M-1}  (\pm 2 \pi i)^{- m} & B_{m} (\ulambda) x^{- m  } \\
	& + O_{M, n} \lp \mathfrak C^{2M} x^{- \frac {n-1} {2 } -   M } \rp.
	\end{split}
	\end{equation*}
}

\end{thm}


{\appendix

\section{An alternative approach to asymptotic expansions}\label{appendix: asymptotic} 

When $n=3$, the application of Stirling's asymptotic formula in deriving the asymptotic expansion of  a Hankel transform was first found in \cite[\S 4]{Miller-Wilton}. The asymptotic was later formulated more explicitly in \cite[Lemma 6.1]{XLi}, where the author attributed the arguments in her proof to \cite{Ivic}. Furthermore, using similar ideas as in \cite{Miller-Wilton},  \cite{Blomer} simplified the proof of \cite[Lemma 6.1]{XLi} (see the proof of \cite[Lemma 6]{Blomer}).  
This method using Stirling's asymptotic formula is however the only known approach so far in the literature.

Closely following \cite{Blomer}, we shall  prove the asymptotic expansions of $H$-Bessel functions $H^{\pm} (x; \ulambda)$ of any rank $n$ by means of  Stirling's asymptotic formula.

From  (\ref{2eq: definition of J (x; sigma)}, \ref{2eq: definition of G(s;  sigma, lambda)}) we have
\begin{equation}\label{10def: H pm via Mellin inversion}
H^{\pm} (x; \ulambda) = \frac 1  {2 \pi i}\int_{\EuScript C} \lp \prod_{l      = 1}^{n} \Gamma (s - \lambda_l      ) \rp e \left( \pm \frac {n s } 4 \right) x^{- n s} d s.
\end{equation}
In view of the condition $\sum_{l      = 1}^n \lambda_l      = 0$,  Stirling's asymptotic formula yields
\begin{equation*}
\prod_{l      = 1}^{n} \Gamma (s - \lambda_l      ) = n^{-ns}   \Gamma \lp n s - \frac {n-1} 2 \rp   \exp \lp \sum_{m=0}^{M } C_m(\ulambda) s^{-m} \rp \lp 1+ R_{M+1} (s) \rp 
\end{equation*}
for some constants $C_m(\ulambda)$ and remainder term $R_{M+1} (s) = O_{\ulambda, M, n} \lp |s|^{-M-1} \rp$. Using the Taylor expansion for the exponential function and some straightforward algebraic manipulations, the right hand side can be written as
\begin{equation*}
n^{-ns}   \sum_{m=0}^{M } \widetilde C_m(\ulambda) \Gamma \lp n s - \frac {n-1} 2 - m \rp \lp 1+ \widetilde R_{M+1, m} (s) \rp
\end{equation*}
for certain constants $\widetilde C_m(\ulambda)$ and   similar functions $\widetilde R_{M+1, m}(s) = O_{\ulambda, M, n} \lp |s|^{-M-1} \rp$. Suitably choosing the contour $\EC$, it follows from \eqref{2eq: n = 1, Mellin inversion} that
\begin{equation*}
\begin{split}
&\frac 1  {2 \pi i}\int_{\EuScript C} \Gamma \lp n s - \frac {n-1} 2 - m \rp e \left( \pm \frac {n s } 4 \right) (n x)^{- n s} d s \\
= &\, \frac {e \lp \pm \lp \frac {n-1} 8  + \frac 14 m   \rp \rp} {n (n x)^{\frac {n-1} 2 + m}} \cdot \frac 1  {2 \pi i}\int_{ n \EuScript C - \frac {n-1} 2 - m} \Gamma (s) e \lp \pm \frac s 4 \rp (nx)^{-s} ds = \frac {  (\pm i)^{ \frac {n-1} 2 + m} } {  n^{\frac {n+1} 2 + m} } \cdot \frac { e^{\pm i n x} } {  x^{ \frac {n-1} 2 + m} }.
\end{split}
\end{equation*}
As for the error estimate, let us assume $x \geqslant 1$. Insert the part containing $\widetilde R_{M+1, m}(s)$ into \eqref{10def: H pm via Mellin inversion} and shift the contour to the vertical line of real part $ \frac 1 n (M - \frac 1 2) + \frac 1 2 $. By Stirling's formula, the integral remains absolutely convergent  and is of size $O_{\ulambda, M, n} \big( x^{- M  - \frac {n-1} 2 }\big)$. Absorbing the last main term into the error, 
we arrive at the following asymptotic expansion
\begin{equation}\label{10eq: asymptotic expansion of H pm (x; lambda)}
\begin{split}
H^{\pm}  (x; \ulambda)  = e^{ \pm i n x} x^{ - \frac { n-1} 2} 
\lp \sum_{m=0}^{M-1} C^{\pm}_{m} (\ulambda) x^{- m} + O_{ \ulambda, M, n} \left( x^{- M } \right) \rp, \hskip 10 pt x \geqslant 1,
\end{split}
\end{equation}
where $ C^{\pm}_{m} (\ulambda)$ is some constant depending on $\ulambda$.

\begin{rem}\label{rem: appendix only for R+}
	For the analytic continuation  $H^{\pm}  (z; \ulambda)$, we have   the Barnes type  integral representation as in Remark {\rm \ref{rem: Barnes integral}}. This however does not yield an asymptotic expansion of $H^{\pm}  (z; \ulambda)$ along with the above method. The obvious issue is with the error estimate, as $\left| z^{-ns} \right|$ is unbounded on the integral contour if $|z| \ra \infty$.
\end{rem}

Finally, we make some comparisons between the three asymptotic expansions \eqref{10eq: asymptotic expansion of H pm (x; lambda)}, \eqref{5eq: asymptotic expansion 1} and \eqref{7eq: asymptotic expansion H pm, improved} obtained from 
\begin{itemize}
\item[-] Stirling's asymptotic formula,
\item[-] the method of stationary phase,
\item[-] the asymptotic method of ordinary differential equations.
\end{itemize}
Recall that  $\mathfrak C =  \max  \left\{ | \lambda_l      | \right \} + 1$, $\mathfrak R = \max  \left\{ |\Re \lambda_l      | \right \}$. Firstly, the admissible domains of these asymptotic expansions are
\begin{equation*}
\begin{split}
& \{ x \in \BR_+ : x \geq 1 \},\\
& \left\{ z \in \BC : |z| \geq \mathfrak C, \ 0 \leq \pm \arg z \leq \pi \right \}, \\
& \left\{ z \in \BU : |z| \ggg_{M, \vartheta, n} \mathfrak C^2, \ - \lp \frac 1 2 - \frac 1 n \rp \pi - \vartheta < \pm \arg z < \lp \frac 3 2 + \frac 1 n \rp \pi + \vartheta \right \},
\end{split}
\end{equation*}
respectively. The range of argument is extending while that of modulus is reducing. Secondly, the  error estimates are
\begin{equation*}
O_{\ulambda, M,   n} \left( x^{- M - \frac { n-1} 2} \right),\
O_{\mathfrak R, M, n} \left( \mathfrak C^{2 M} |z|^{-M} \right),\
O_{M, \vartheta, n} \left( \mathfrak C^{2 M} |z|^{- M - \frac { n-1} 2 } \right),
\end{equation*}
respectively. Thus, in the error estimate, the dependence of the implied constant  on $\ulambda$ is improving in all aspects.

\bibliographystyle{alphanum}
\bibliography{references}

\end{document}